\newcommand{\dist}{\text{dist}} 
\newcommand{\R}{{\mathbb R}} 
\newcommand{\N}{{\mathbb N}}
\newcommand{\e}{\varepsilon} 
\newcommand{\A}{\mathbf{A}}
\newcommand{\bnu}{\vec{\nu}}
\newcommand\norm[1]{\left\| #1\right\|}
\newcommand{\M}{{\mathcal M}}
\newcommand{\calW}{{\mathcal W}}
\newcommand{\calE}{{\mathcal E}}
\newcommand{\wei}[1]{\langle #1 \rangle}
\newcommand{\sgn}{\text{sgn}}
\def\longequals{\mathbin{=\kern-2pt=}}
\def\eqdef{\mathbin{\buildrel \rm def \over \longequals}}
\newtheorem{theorem}{Theorem}[section]
\newtheorem{definition}[theorem]{Definition}
\newtheorem{remark}[theorem]{Remark}
\newtheorem{lemma}[theorem]{Lemma}
\numberwithin{equation}{section}
\newcommand{\beq}{\begin{equation}}
\newcommand{\eeq}{\end{equation}}
\definecolor{darkred}{rgb}{.70,.12,.20}
\definecolor{darkgreen}{rgb}{.20,.52,.14}
\begin{document}
\title[Self-Diffusion, Cross-Diffusion, Global Smooth Solution]{Self-Diffusion and Cross-Diffusion Equations: $W^{1,p}$-Estimates and Global Existence of Smooth Solutions}

\author[Luan Hoang]{Luan T. Hoang$^{\dag}$}
\address{$^\dag$ Department of Mathematics and Statistics, Texas Tech University, Box 41042, Lubbock, TX 79409--1042, U.S.A.}
\email{luan.hoang@ttu.edu}

\author[Truyen Nguyen]{Truyen V. Nguyen$^\ddag$} 
\thanks{Truyen Nguyen gratefully acknowledges the support provided by  NSF grant DMS-0901449.}
\address{$^\ddag$Department of Mathematics, University of Akron, 302 Buchtel Common, Akron, OH 44325--4002, U.S.A}
\email{tnguyen@uakron.edu}

\author[Tuoc Phan]{Tuoc V. Phan$^{\dag\dag,*}$}
\address{$^{\dag\dag}$ Department of Mathematics, University of Tennessee, Knoxville, 227 Ayress Hall, 1403 Circle Drive, Knoxville, TN 37996, U.S.A. }
\email{phan@math.utk.edu}

\address{$^*$Corresponding author}

\date{\today}

\begin{abstract} 
We investigate the global time existence of smooth solutions for the Shigesada-Kawasaki-Teramoto system of cross-diffusion equations of two competing species in population dynamics. If there are self-diffusion in one species and no cross-diffusion in the other, we show that the system has a unique smooth solution for all time in bounded domains of any dimension.
We obtain this result by deriving global $W^{1,p}$-estimates of Calder\'{o}n-Zygmund type  for a class of nonlinear reaction-diffusion equations with self-diffusion. These estimates are achieved  
by employing Caffarelli-Peral perturbation technique
together with a new two-parameter scaling argument.

%
\end{abstract}

\maketitle

\tableofcontents



\section{Introduction and Main Results}\label{Intro}
Let $\Omega$ be a bounded open set in $\mathbb R^n$ with $n \geq 2$. 
We consider the 
following popular system of reaction-diffusion equations: 
 \begin{equation} \label{KST}
 \left \{ \  
\begin{array}{lcll}
 u_t  & = & \Delta[(d_1 + a_{11} u + a_{12} v)u] + u(a_1- b_1u-c_1v) & 
\mbox{in}\quad \Omega \times (0,\infty), \\ 
v_t & = & \Delta[(d_2 + a_{21}u + a_{22}v)v] + v(a_2-b_2u-c_2v) & \mbox{in}\quad \Omega \times (0, \infty),\\ 
 \begin{displaystyle}
\frac{\partial u}{\partial \bnu} 
\end{displaystyle}
& = & 
\begin{displaystyle}
\frac{\partial v}{\partial \bnu} 
\end{displaystyle}
 = 0 &  \mbox{on}\quad \partial \Omega \times (0, \infty), \\ 
u(x,0) & = & u_0(x) \geq 0, \quad v(x,0) = v_0(x) \geq 0 \quad & \mbox{in} \quad \Omega,
\end{array}  \right . 
\end{equation}
where the coefficients $a_k, b_k ,c_k, d_k$ are positive constants, while $a_{ik}$ are non-negative constants, for $i, k= 1,2$. Hereafter, $\bnu(\cdot)$ denotes the unit outward normal vector field on the boundary $\partial\Omega$.

The system \eqref{KST} was proposed by Shigesada-Kawasaki-Teramoto in \cite{SKT} to model the spatial segregation of two competing species in the region $\Omega$. It is usually referred to as the SKT system of cross-diffusion equations.  In \eqref{KST}, $u$ and $v$ are the population 
densities of the two species. 
The terms $d_1 \Delta u, d_2\Delta v$ are the diffusion ones due to the random movements of individual species with positive diffusion rates $d_1, d_2$. Meanwhile, $\Delta[(a_{11}u ~+~a_{12} v)u]$ and $\Delta [(a_{21}u~+~a_{22}v)v]$ come from the directed movements of the individuals toward  favorable environments.
The considered species hereby move away from the high population density to avoid the population pressure, hence $a_{ik}$ are non-negative. The constants $a_{11}$, $a_{22}$ are 
called self-diffusion coefficients, while $a_{12}$ and $a_{21}$ are cross-diffusion coefficients.  The homogeneous Neumann boundary conditions mean that there are no movements across the boundary. 
We note that the zero order nonlinearities in \eqref{KST} are reaction terms of the standard 
Lotka-Volterra competition type or Fisher-Kolmogorov-Petrovskii-Piskunov reaction type. Also the system \eqref{KST} reduces to the well-known Lotka-Volterra system of 
predator-prey equations  when  $a_{ik} =0$ for all $i, k =1,2$.  


The system \eqref{KST} has attracted interests of many mathematicians. 
We particularly refer the interested readers to the survey paper \cite{Yamada2} and the books \cite{W-Ni, KL, Yagib}. 
The local existence of non-negative solutions is established by H. Amann 
in the seminal papers \cite{H2, H3}. 
This result is summarized in the following theorem.

\begin{theorem}[\cite{H2, H3, H}] \label{local-existence} 
Suppose $n \geq 2$ and $\partial \Omega$ is smooth.
Let $p_0 \in (n,\infty)$ and $u_0, v_0$ be non-negative functions in 
$W^{1,p_0}(\Omega)$. Then there exists a maximal time $t_{\textup{max}} \in (0, \infty]$  such that the 
system \eqref{KST} has a unique non-negative solution in $\Omega \times (0, t_{\textup{max}})$ with 
\[ u,~v\ \in \
C([0,t_{\textup{max}}), W^{1,p_0}(\Omega))~\cap~C^\infty(\overline{\Omega} \times (0,t_{\textup{max}})). \]
Moreover, if $t_{\textup{max}} < \infty$ then
\begin{equation} \label{global-cond}
\lim_{t \rightarrow t_{\textup{max}}^{-}} \Big [\norm{u(\cdot, t)}_{W^{1,p_0}(\Omega)} + 
\norm{v(\cdot, t)}_{W^{1,p_0}(\Omega)} \Big]=\infty.
\end{equation}
\end{theorem}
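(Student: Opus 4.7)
The plan is to recast \eqref{KST} as a quasilinear parabolic system in divergence form and then invoke Amann's general theory from \cite{H2,H3,H}. Setting $\mathbf{w}=(u,v)^T$, the two equations take the form $\mathbf{w}_t=\div(\A(\mathbf{w})\nabla \mathbf{w})+\bff(\mathbf{w})$ with the conormal boundary condition $\partial_{\bnu}\mathbf{w}=0$, where
\[
\A(u,v)=\begin{pmatrix} d_1+2a_{11}u+a_{12}v & a_{12}u \\ a_{21}v & d_2+a_{21}u+2a_{22}v\end{pmatrix}
\]
and $\bff(u,v)=\bigl(u(a_1-b_1u-c_1v),\,v(a_2-b_2u-c_2v)\bigr)^T$. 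Both the diffusion matrix and the reaction term are polynomial and hence smooth in $(u,v)$.

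The first substantive step is to verify \emph{normal ellipticity} on the admissible phase space $\{u,v\ge 0\}$, which is the structural hypothesis underlying Amann's local well-posedness. The trace of $\A(u,v)$ is strictly positive because $d_1,d_2>0$ and all $a_{ik}\ge 0$. Expanding the determinant, the cross term $a_{12}a_{21}uv$ cancels and one is left with
\[
\det\A(u,v)=d_1d_2+d_1a_{21}u+2d_1a_{22}v+2a_{11}d_2u+a_{12}d_2v+2a_{11}a_{21}u^2+4a_{11}a_{22}uv+2a_{12}a_{22}v^2,
\]
in which every summand is non-negative on $\{u,v\ge 0\}$ and $d_1d_2>0$. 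Hence the two eigenvalues of $\A(u,v)$ have strictly positive real parts, uniformly on compact subsets of this phase space, which is precisely the normal ellipticity required in \cite{H2,H3}.

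Local existence and uniqueness of a solution $\mathbf{w}\in C([0,t_{\textup{max}}),W^{1,p_0}(\Omega)^2)$ now follow from Amann's abstract theorem for quasilinear parabolic systems; the hypothesis $p_0>n$ is exactly what makes $W^{1,p_0}\hookrightarrow C(\overline{\Omega})$, providing the pointwise control of $\mathbf{w}$ needed to keep $\A(\mathbf{w})$ strictly elliptic on a short time interval. Smoothing up to $C^\infty(\overline{\Omega}\times(0,t_{\textup{max}}))$ is then a standard bootstrap: the Sobolev embedding gives H\"older continuity of $\mathbf{w}$, after which the linearized system with frozen H\"older coefficients admits parabolic Schauder and $L^p$ estimates that lift one derivative at a time. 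The non-negativity $u,v\ge 0$ is preserved by the weak maximum principle, since the trivial functions are subsolutions of each scalar equation when the other component is non-negative. The blow-up alternative \eqref{global-cond} is precisely Amann's continuation principle: a maximal strong solution either exists globally or its $W^{1,p_0}$-norm exits every bounded set as $t\uparrow t_{\textup{max}}$. The main obstacle in reproducing the argument from scratch would be setting up the $L^p$-semigroup framework with Neumann conormal boundary conditions; given the cited results, however, it suffices to verify the three structural hypotheses above.
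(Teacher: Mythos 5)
The paper does not prove Theorem~\ref{local-existence}; it simply records it as a consequence of Amann's general theory in \cite{H2, H3, H}. Your proposal is correct and amounts to a faithful expansion of that citation: you recast \eqref{KST} in the divergence form $\mathbf{w}_t=\div(\A(\mathbf{w})\nabla\mathbf{w})+\bff(\mathbf{w})$ (the cross-diffusion matrix and its determinant, with the $a_{12}a_{21}uv$ cancellation, are computed correctly), verify normal ellipticity on $\{u,v\ge0\}$, note that $p_0>n$ gives the needed $W^{1,p_0}\hookrightarrow C(\overline\Omega)$ control, and identify the blow-up alternative \eqref{global-cond} as Amann's continuation principle together with the usual bootstrap for interior $C^\infty$-smoothing — exactly the structural hypotheses Amann's theorem requires.
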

Many efforts have been made to investigate the existence globally in time of solutions for \eqref{KST}.
In some special  cases with very strong restrictions on the spatial dimension $n$ and the coefficients $d_k, a_{ik}$, $i, k =1,2$, the solutions are proved to exist globally in time 
(see \cite{CLY1, CLY, Deuring, JUKim, LeD-Mis, LeD, LN, LNN, LNW,  Shim, T, TVP, Yagi}). 
Despite these achievements, whether this full system possesses global time solutions or 
finite time blow up solutions remains challenging and vastly open, even for $n=2$.  

In this  paper, we study the system \eqref{KST} when there are self-diffusion in one species and no cross-diffusion in the other. Specifically, we 
investigate \eqref{KST}  when $a_{11} >0$ and $a_{21} =0$: 
\begin{equation} \label{KST-r}
  \left \{ \  
\begin{array}{lcll}
 u_t  & = & \Delta[(d_1 + a_{11} u + a_{12} v)u] + u(a_1- b_1u-c_1v) & 
\mbox{in}\quad \Omega \times (0,\infty), \\ 
v_t & = & \Delta[(d_2 +  a_{22}v)v] \quad \quad \quad + v(a_2-b_2u-c_2v) & \mbox{in}\quad \Omega \times (0, \infty), \\ 
 \begin{displaystyle}
\frac{\partial u}{\partial \bnu} 
\end{displaystyle}
& = & 
\begin{displaystyle}
\frac{\partial v}{\partial \bnu} 
\end{displaystyle}
 = 0 &  \mbox{on}\quad \partial \Omega \times (0, \infty), \\ 
u(x,0) & = & u_0(x) \geq 0, \quad v(x,0) = v_0(x) \geq 0 \quad & \mbox{in} \quad \Omega. 
\end{array}  \right . 
\end{equation}
The system \eqref{KST-r} was studied in \cite{CLY1, CLY, LeD-Mis, LeD, LN, LNN, LNW, TVP, Tuoc} where the global time existence is established either with some restrictive conditions on the coefficients or for small $n$.  
For the latter, the result is proved by Lou-Ni-Wu \cite{LNW} for $n=2$, by Le-Nguyen-Nguyen \cite{LNN} and Choi-Lui-Yamada \cite{CLY} for $n\leq 5$, and by Phan \cite{Tuoc} for $n\leq 9$.
However,   whether the solution of the system \eqref{KST-r} exists globally in time for every dimension $n$ is still a well-known open problem.
This question is on the list of open problems made by Y. Yamada in \cite{Yamada2}. One  main purpose  of the current paper  is to give it an affirmative answer. Precisely, we prove the following result:
\begin{theorem} \label{global-existence} 
Suppose $n \geq 2$ and $\partial \Omega$ is smooth.
Let $a_{11} >0$ and $u_0, v_0$ be non-negative functions in $W^{1,p_0}(\Omega)$ for some $p_0 > n$.
Then the system \eqref{KST-r} possesses  a unique, non-negative global solution $(u,v)$ with 
 \[
 u, v \in  C([0,\infty), W^{1,p_0}(\Omega)) \cap C^\infty(\overline{\Omega} \times (0,\infty)).
 \] 
\end{theorem}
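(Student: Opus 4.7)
The plan is to use the blow-up criterion \eqref{global-cond} from Theorem~\ref{local-existence}: local existence on some maximal interval $[0,t_{\max})$ is already granted, and it suffices to show that $\|u(\cdot,t)\|_{W^{1,p_0}(\Omega)}+\|v(\cdot,t)\|_{W^{1,p_0}(\Omega)}$ stays bounded on $[0,T]$ for every $T<t_{\max}$, which rules out \eqref{global-cond} and forces $t_{\max}=\infty$.

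The argument splits naturally into two stages. Stage~1 is $L^\infty$ control. The crucial structural feature of \eqref{KST-r} is that the diffusion of $v$ is decoupled from $u$ at the principal-part level: after dropping the favorably signed $-b_2uv\le 0$, one has $v_t \le \Delta[(d_2+a_{22}v)v]+v(a_2-c_2v)$, and a standard parabolic comparison with the logistic ODE $V'=a_2V-c_2V^2$ with $V(0)=\|v_0\|_{L^\infty}$ yields $\|v\|_{L^\infty(\Omega\times(0,T))}\le C$. For $u$, I would test the $u$-equation against $u^{p-1}$, absorb the cross term $a_{12}\int u^{p-1}\nabla u\cdot\nabla v$ into the coercive self-diffusion $(p-1)\int(d_1+2a_{11}u+a_{12}v)u^{p-2}|\nabla u|^2$ via Young's inequality (this is where $a_{11}>0$ is used), exploit the good reaction term $-b_1u^{p+1}$ together with Gagliardo-Nirenberg, and iterate in $p$ to obtain $\|u\|_{L^\infty(\Omega\times(0,T))}\le C$.

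Stage~2 is the bootstrap from $L^\infty$ to $W^{1,p_0}$ via the Calder\'on-Zygmund-type $W^{1,p}$-estimates that form the paper's main technical contribution. With $u,v$ bounded, the principal coefficients $d_2+2a_{22}v$ and $d_1+2a_{11}u+a_{12}v$ are bounded above and bounded below by $d_2>0$ and $d_1>0$ respectively. Applying the CZ estimate first to the $v$-equation in divergence form
\[
v_t = \operatorname{div}\bigl[(d_2+2a_{22}v)\nabla v\bigr]+v(a_2-b_2u-c_2v),
\]
whose source is bounded, gives $\nabla v\in L^\infty(0,T;L^p(\Omega))$ for every $p<\infty$. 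A second application to the $u$-equation in divergence form
\[
u_t = \operatorname{div}\bigl[(d_1+2a_{11}u+a_{12}v)\nabla u+a_{12}u\,\nabla v\bigr]+u(a_1-b_1u-c_1v),
\]
where the drift contribution $a_{12}u\,\nabla v$ now lies in $L^p$ by the previous step, produces $\nabla u\in L^\infty(0,T;L^{p_0}(\Omega))$, completing the desired bound.

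The main obstacle, in my view, is Stage~1 for $u$: one must extract a pointwise bound from $L^\infty$ control on $v$ alone, without any a priori control on $\nabla v$, in the presence of both the self-diffusion $a_{11}\Delta(u^2)$ and the cross term $a_{12}\Delta(uv)$. The hypothesis $a_{11}>0$ is essential precisely here, furnishing the coercivity that absorbs the $\nabla v$ cross terms during the $L^p$ iteration; this is what distinguishes the present theorem from the still-open case $a_{11}=0$. Once the $L^\infty$ bound for $u$ is in hand, the two sequential applications of the CZ-type estimate close the bootstrap cleanly, which is why the paper's real technical effort is directed at establishing those $W^{1,p}$-estimates rather than at the SKT system per se.
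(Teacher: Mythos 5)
Your Stage~1 for $u$ contains the critical gap, and it is exactly the obstacle the paper's iteration is designed to avoid. Testing the divergence-form $u$-equation against $u^{p-1}$, the cross-diffusion contribution is $-a_{12}(p-1)\int u^{p-1}\nabla u\cdot\nabla v$. Young's inequality together with the coercivity $2a_{11}(p-1)\int u^{p-1}|\nabla u|^2$ absorbs the $\nabla u$ part, but it leaves a residual $C(p-1)\int u^{p-1}|\nabla v|^2$. This term cannot be dominated by the favorable reaction $-b_1\int u^{p+1}$ alone: H\"older gives $\int u^{p-1}|\nabla v|^2\le \|u\|_{L^{p+1}}^{p-1}\|\nabla v\|_{L^{p+1}}^2$, so closing the estimate at level $p$ already demands $\nabla v\in L^{p+1}$. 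The $L^\infty$ bound on $v$ by itself gives nothing about $\nabla v$, so the Moser iteration you describe does not close, and the claim that $a_{11}>0$ alone supplies the needed coercivity is not substantiated. (Indeed, if this worked one would not need the dimensional restrictions $n\le 5$ or $n\le 9$ that appear in the prior literature you are implicitly competing with.)

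The paper handles this chicken-and-egg problem by interleaving the two estimates rather than performing them sequentially. Starting from the crude base case $\nabla v\in L^4$ (Lemma~\ref{L4-v}), it establishes the implication $\nabla v\in L^p\Rightarrow u\in L^{l(p)}$ with a quantified gain $l(p)=\tfrac{p(n+1)}{(n+2-p)_+}>p$ (Lemma~\ref{u-Lp}, precisely the weighted $L^p$ estimate you describe, now closeable because $\nabla v$ is controlled), and separately the implication $u\in L^p\Rightarrow\nabla v\in L^p$ (Lemma~\ref{Lp-v}, the new Calder\'on--Zygmund estimate of Theorem~\ref{vreg} applied to the $v$-equation, which requires $c\sim u\in L^p$ only, not $u\in L^\infty$). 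Alternating these two raises the integrability in finitely many steps past $n+2$; a De Giorgi iteration (Lemma~\ref{u-infty}) then yields $u\in L^\infty$, and only afterwards does one invoke H\"older and Schauder theory for the $u$-equation. A secondary point: Theorem~\ref{vreg} is shaped for the $v$-equation's self-diffusion structure and has no drift term, so it is not directly applicable to the $u$-equation with its $a_{12}\nabla\cdot(u\nabla v)$ contribution; the paper instead treats the $u$-equation with classical linear theory once $u$, $v$, $\nabla v$ are under control.
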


Let us discuss the main difficulties and our strategy of proving Theorem \ref{global-existence}. Thanks to Theorem~\ref{local-existence}, it is sufficient  to show that 
 condition \eqref{global-cond} for finite time blowup does not happen. It is known that this task could be  achieved if one can  
obtain  $L^\infty$-estimates for the solutions $u$ and $v$  in finite time intervals. As there exists the maximum principle for the second equation in \eqref{KST-r}, the central issue
is to establish the boundedness for $u$. For this, the maximum principle is naturally of our first consideration.
Unfortunately, such maximum principle is not available for the system and this presents a serious obstacle. 

One possible approach to get around the lack of the maximum principle for the system is to exploit the first equation in \eqref{KST-r} 
to get $L^p$-estimates for $u$ for sufficiently large $p$. Since the Laplacian  term in this equation can be expressed as 
$\nabla\cdot  [ (d_1 + 2 a_{11} u + a_{12} v)\nabla u] + a_{12} \nabla\cdot  [u\nabla v]$, the approach is  only plausible if one is able to show that
 $\nabla v\in L^p$ for large $p$. However, this type of  gradient estimates for $v$ is essentially not known. We would like to stress that
the classical Sobolev regularity theory 
\cite{GT, La, Lib} as well as its very recent developments \cite{B1, B2, BW1,  CP, PS} cannot be applied to get  
 $W^{1,p}$-estimates for $v$ due to the nonlinear structure in the second equation in \eqref{KST-r}.
In previous studies, many authors tried to avoid dealing  with this key issue  by using  De Giorgi-Nash-Moser techniques to establish $C^\alpha$-regularity for $v$ first.
However for nonlinear equations of reaction-diffusion type in  \eqref{KST-r}, this also requires the establishment of a priori $L^p$-estimate for $u$ for some  $p>(n+2)/2$.
In general, obtaining such $L^p$-estimate  for $u$ is not known and challenging unless one assume that  $n\leq 9$. This is the main reason that limits the known works such as \cite{CLY, LeD-Mis, LNW, LNN, TVP, Tuoc,  Yagi} to small dimension $n$ only. 

Our purpose  is to tackle directly the problem of obtaining $L^p$-estimates for  $\nabla v$ in terms of  $L^p$ norms of $u$. We establish new global $W^{1,p}$-estimates of Calder\'{o}n-Zygmund type that are suitable for the scalar nonlinear diffusion equation appearing in \eqref{KST}.  
This is our second goal of the paper which is also a topic of independent interest in view of recent developments in \cite{
B1, B2, BW1, CP, GT, LeD-ho, La, Lib, PS}.
 Not only does it help to prove Theorem~\ref{global-existence}, we believe that our result on $W^{1,p}$-estimates  also gives some insight into the structure of equations in \eqref{KST-r} that is not known before. For the scaling and transformation invariant reason that will be explained below, we study equations in more general form than the one in \eqref{KST-r}.

For any fixed $T>0$, we consider the following class of nonlinear parabolic 
equations:
\begin{equation}\label{vsys}
\left \{
\begin{array}{lcll}
u_t  &=&  \nabla\cdot[(1+\alpha \lambda u)\A \nabla u] + \theta^2 u(1-\lambda u) - \lambda \theta c u  \quad &\text{in}\quad \Omega_T:=\Omega\times(0,T], \\
\begin{displaystyle}
\frac{\partial u}{\partial \bnu} 
\end{displaystyle}
 &=& 0 \quad &\text{on}\quad \partial\Omega\times (0,T), \\
u(\cdot,0) & =& u_0(\cdot)  \quad &\text{in}\quad \Omega,
\end{array}\right.
\end{equation}
where  $\alpha \geq 0$, $\theta,\lambda>0$ are constants, and $c(x,t)$ is a non-negative measurable function.
We also assume that
\begin{equation}\label{elipticity}
\left\{
\begin{split}
 & \A=(a_{ij}): \Omega_T \to  \M^{n\times n}\ \text{is symmetric, measurable, and there exists} \ \Lambda>0 \mbox{ such that:} \\
& \Lambda^{-1} |\xi|^2 \leq \xi^T \A(x,t) \xi  \leq \Lambda |\xi|^2\quad \mbox{for almost every\ $(x,t) \in\Omega_T$ and all }\xi\in\R^n.
\end{split} \right.
\end{equation}
Here $\mathcal M^{n\times n}$ is the linear space of $n\times n$ matrices of real numbers.
Our goal is to derive global $W^{1,p}$-estimates for weak solution $u$ of \eqref{vsys} for a general class of $\A$ and a general domain $\Omega$. To state the result, we need the following definitions.
\begin{definition} Given  $ R>0$. Let $\A$ be a function from $\Omega_T$ to $\mathcal M^{n\times n}$. We define
\begin{equation*}
[\A]_{BMO(R, \Omega_T)}=\sup_{0<\rho\leq R}\sup_{(y,s)\in \overline{\Omega_T}} \frac{1}{|K_\rho(y, s)|}\int_{K_\rho(y, s)\cap\Omega_T}{|\A(x, t) - \bar{\A}_{B_\rho(y)\cap\Omega}(t)|^2\, dx dt},
\end{equation*}
where $K_\rho(y,s) = B_\rho(y)\times (s-\rho^2, s]$ is a parabolic cube and $\bar{\A}_{U}(t) = \fint_{U}{\A(x,t )\, dx}$.
\end{definition}
\begin{definition} For $\delta, R>0$, we say that $\Omega$ is $(\delta,R)$-Lipschitz  if for every $x_0\in \partial \Omega$ 
there exists a Lipschitz continuous
function $\gamma:\R^{n-1}\to \R$ such that - upon relabeling and reorienting the coordinate axes - we have
\[\Omega\cap B_R(x_0) =\big\{(x', x_n)\in B_R(x_0):\, x_n > \gamma(x')\big\}
\]
and
\[
\text{Lip}(\gamma) \eqdef\sup\Big\{ \frac{|\gamma(x') -\gamma(y')|}{|x'-y'|} : (x',\gamma(x')),\, (y',\gamma(y'))\in B_R(x_0),  x'\neq y'\Big \}\leq \delta.
\]
\end{definition}
Our main result on the regularity of solutions to \eqref{vsys} is the following theorem:
\begin{theorem}\label{vreg} 
Let  $n \geq 2$,  $T>0$,  $ \alpha \geq 0, \lambda>0$, $0<\theta\leq 1$, and $\A$ satisfy \eqref{elipticity}. Assume that $c$ is a non-negative function in $L^p(\Omega_T)$ for some $p > 2$.  There exists a number $\delta =\delta(p, R,\Lambda,\alpha, n)>0$ such that  if $\Omega$ is $(\delta,R)$-Lipschitz and $[\A]_{BMO(R, \Omega_T)} \leq \delta$, then any weak solution $u$ of the problem \eqref{vsys} with $0 \leq u \leq \lambda^{-1} $ in $\Omega_T$ satisfies
\begin{equation}\label{global-estimate-Lipschitz}
\int_{\Omega\times [\bar t, T]}|\nabla u|^p \, dx dt\leq C\left\{  \Big(\frac{\theta}{\lambda} \vee  \|u\|_{L^2(\Omega_T)}\Big)^p + \int_{\Omega_T}|c|^p \, dx dt\right\}
\end{equation}
for every $\bar t\in(0,T)$.  Here $C>0$ is a constant depending only on  $\Omega$, $\bar t$, $p$, $R$, $\Lambda$,  $n$ and $\alpha$, but independent of $\theta, \lambda$.
\end{theorem}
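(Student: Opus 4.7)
The plan is to prove Theorem~\ref{vreg} by adapting the Caffarelli-Peral perturbation method to the nonlinear principal part, combined with a two-step scaling that normalizes the parameters $\lambda,\theta$ and the amplitude of the solution. First, I would rescale to remove the explicit $\lambda,\theta$ dependence. The substitution $v=\lambda u$ reduces to $\lambda=1$, giving $0\le v\le 1$ and
\[
v_t \;=\; \nabla\cdot\bigl[(1+\alpha v)\A\nabla v\bigr] + \theta^2 v(1-v) - \tilde c\, v
\]
with $\tilde c=\lambda\theta c\ge 0$. A parabolic dilation $(x,t)\mapsto(\theta x,\theta^2 t)$ then absorbs $\theta$ from the principal and reaction terms and preserves, with adjusted parameters, both $[\A]_{BMO}$-smallness and the $(\delta,R)$-Lipschitz property of $\partial\Omega$. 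Tracking \eqref{global-estimate-Lipschitz} through this two-parameter scaling shows that it suffices to prove an $L^p$-gradient estimate for the model equation with $\lambda=\theta=1$, right-hand side $\|v\|_{L^2}^p+\|F\|_{L^p}^p$, and constants independent of the original $\lambda,\theta$; the $\theta/\lambda$ ``floor'' on the right-hand side of \eqref{global-estimate-Lipschitz} emerges naturally after undoing the scaling.

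Second, I would implement the Caffarelli-Peral freezing scheme on the model equation. Since $(1+\alpha v)\nabla v=\nabla\Phi(v)$ with $\Phi(s)=s+\alpha s^2/2$ bi-Lipschitz on $[0,1]$, the principal part has the divergence form $\nabla\cdot[\A\nabla\Phi(v)]$. In each parabolic cube $K_r(z)\subset\overline{\Omega_T}$ I would compare $v$ with the solution $h$ of the constant-coefficient reference problem obtained by freezing $\A\to\bar{\A}$ and $1+\alpha v\to 1+\alpha\bar v$ (with $\bar v$ the average of $v$ on $K_r$):
\[
\partial_t h \;=\; (1+\alpha\bar v)\,\bar{\A}:D^2 h\quad\text{in } K_r(z).
\]
This linear reference problem has sharp interior and boundary Lipschitz/$C^{1,\gamma}$ estimates for $\nabla h$; at the boundary, the $(\delta,R)$-Lipschitz hypothesis with $\delta$ small enough allows me to flatten $\partial\Omega$ locally and reduce to a half-space model. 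The error $v-h$ is controlled by an energy estimate, with perturbations $(\A-\bar{\A})\nabla v$ (handled by $[\A]_{BMO(R,\Omega_T)}\le\delta$) and $\alpha(v-\bar v)\A\nabla v$ (handled by the Poincar\'e inequality, which makes $|v-\bar v|$ small in $L^2$ on $K_r$).

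Third, these local comparison estimates feed a standard good-$\mu$ / Vitali covering argument on the level sets of the Hardy-Littlewood maximal function of $|\nabla v|^2$: on good cubes one gets a density inequality of the form
\[
\bigl|\{M(|\nabla v|^2)>N\mu\}\cap K_r\bigr| \;\le\; \varepsilon(\delta)\,|K_r| + \bigl|\{M(F^2)>\eta\mu\}\cap K_r\bigr|,
\]
with $\varepsilon(\delta)\to 0$ as $\delta\to 0$. Iterating this and integrating against $\mu^{p/2-1}\,d\mu$ upgrades the $L^2$ energy bound on $\nabla v$ to an $L^p$ bound with right-hand side $\|v\|_{L^2}^p+\|F\|_{L^p}^p$; undoing the two-parameter scaling then yields \eqref{global-estimate-Lipschitz}. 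The hard part, and the reason a new scaling argument is needed, is that the coefficient $1+\alpha v$ in the principal part carries no a priori BMO bound, since $v$ is only a weak solution whose $W^{1,p}$ regularity is precisely the object of the theorem. The two-parameter scaling is designed to circumvent this by rescaling the amplitude of $v$ in coordination with the spatial dilation, so that at each scale the nonlinear perturbation becomes as small as the $\A$-perturbation already handled by Caffarelli-Peral.
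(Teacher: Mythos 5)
Your overall framework — Caffarelli--Peral perturbation, energy comparison to a reference equation, good-$\lambda$/Vitali upgrade, boundary flattening — matches the paper's strategy. But the concrete implementation diverges at two points, and the second one is a genuine gap.

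First, the reduction to $\lambda=\theta=1$. The paper explicitly keeps $\lambda,\theta$ as parameters in the class \eqref{vsys} precisely because the equation is \emph{not} invariant under amplitude or parabolic rescaling: your substitution $v=\lambda u$ does set $\lambda=1$, but the subsequent dilation $(x,t)\mapsto(\theta x,\theta^2 t)$ rescales the spatial domain, the time horizon $T$, the cutoff time $\bar t$, the Lipschitz scale $R$ and the BMO scale $R$, all by powers of $\theta$. Since the constant in \eqref{global-estimate-Lipschitz} must be independent of $\theta$, and the dependence of $C$ on $\bar t$ and $R$ cannot simply be unwound, this does not cleanly normalize the problem. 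The paper instead proves its approximation lemma (Lemma~\ref{lm:compare-solution}) \emph{uniformly} in $(\lambda,\theta)$ with $0<\theta\le\lambda$, by a compactness/contradiction argument that extracts convergent subsequences of solutions, of $\{\lambda_k\}$, $\{\theta_k\}$, and of the averaged coefficient matrices $\bar{\A}_k$, and identifies the limit via the uniqueness result of Lemma~\ref{uniqueness-u}. That uniformity is the point of the two-parameter device, not a reduction to canonical parameters.

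Second, and more seriously, the treatment of the nonlinear coefficient. You linearize the reference problem to $\partial_t h=(1+\alpha\bar v)\,\bar{\A}:D^2 h$ and propose to control the leftover $\alpha(v-\bar v)\A\nabla v$ by Poincar\'e. This fails. Since $0\le v\le 1$, Poincar\'e gives $\|v-\bar v\|_{L^2(K_r)}\lesssim r\|\nabla v\|_{L^2(K_r)}$, but the error term in the energy estimate for $w=v-h$ is
\[
\int_{K_r}\alpha(v-\bar v)\langle\A\nabla v,\nabla w\rangle\,dxdt
\ \lesssim\ \alpha\Bigl(\int_{K_r}|v-\bar v|^2|\nabla v|^2\Bigr)^{1/2}\|\nabla w\|_{L^2(K_r)},
\]
and the first factor is controlled only by $\|v-\bar v\|_{L^\infty}\|\nabla v\|_{L^2}$, which is $O(\|\nabla v\|_{L^2})$ because $|v-\bar v|\le 1$; the $L^2$ smallness from Poincar\'e is useless here. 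The resulting error in $\|\nabla w\|_{L^2}$ is of order $\alpha$, a fixed constant, and cannot be made small by taking $\delta$ (the BMO/Lipschitz threshold) small. Nor can the amplitude rescaling save this, because the quantity that governs the coefficient $1+\alpha\lambda u$ is the product $\alpha\lambda u\in[0,\alpha]$, which is scale-invariant: rescaling $u$ down rescales the effective $\lambda$ up, leaving the coefficient's oscillation unchanged. The paper sidesteps this entirely by \emph{keeping the nonlinear structure in the reference equation}: it compares $\bar u=\lambda u$ with $\bar v=\lambda v$ solving $\bar v_t=\nabla\cdot[(1+\bar v)\bar{\A}(t)\nabla\bar v]+\theta^2\bar v(1-\bar v)$ with matching boundary data (see \eqref{reqv} and Lemma~\ref{gradient-est-II}). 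The perturbation in the equation for $w=\bar u-\bar v$ is then $\nabla\cdot\{[w\A+(1+\bar v)(\A-\bar{\A}(t))]\nabla\bar u\}$, whose ``nonlinear'' part is $w\A\nabla\bar u$ with $w$ itself small by the compactness approximation — not $\alpha(v-\bar v)\A\nabla v$ with $v-\bar v$ merely bounded. The price is that the reference equation is now quasilinear, so the needed interior and boundary $W^{1,\infty}$-estimates (Lemma~\ref{W1infty-est}, Lemma~\ref{G-W^{1,infty}-est}) have to be established by differentiating the equation and running De Giorgi--Nash--Moser on the gradient (see Appendix~\ref{apend}), rather than quoted from standard constant-coefficient theory. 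You would need both of these modifications — uniform-in-$(\lambda,\theta)$ approximation via compactness, and a nonlinear reference equation — for the argument to close.
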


We remark that the condition $0 \leq u \leq \lambda^{-1}$ is natural to ensure that the equation is uniformly parabolic, and 
 is not restrictive for applications (see Lemma~\ref{Lp-v}). It is also worth mentioning that $W^{1,p}$-estimates 
for {\it linear} parabolic equations are obtained in \cite{B1, B2, BW1}.

The proof of Theorem~\ref{vreg} is given in Section~\ref{regpart}. We employ the perturbation technique  introduced by  Caffarelli-Peral   \cite{CP} for equations in divergent form. Similar approach is also used in  \cite{B1, B2, BW1,PS}. This technique is a variation of the method developed by Caffarelli \cite{C} for fully nonlinear uniformly elliptic equations (see also \cite{CC}).
We note that the second equation in \eqref{KST-r} is not invariant with respect to 
the scalings $u(x,t)\rightarrow s^{-1}u(sx, s^2t)$ and $u(x,t)\rightarrow r^{-1} u(x,t)$ for 
$s, r>0$. It is also not invariant with respect to the transformation that flattens the boundary of $\Omega$. This presents a serious problem in establishing 
global $W^{1,p}$-estimates without assuming any smallness condition on the relevant functions. We handle this by introducing  the pair of constants $\lambda, \theta$ and the coefficient matrix $\A$ into  \eqref{vsys} to ensure that this class of equations is invariant under the mentioned scalings and transformation. The parameters $\lambda$ and $\theta$   play a key role in our approach.  On the other hand, this creates technical difficulties in obtaining approximation estimates that are uniformly in both $\lambda$ and $\theta$  (Lemmas \ref{lm:compare-solution} and  \ref{lm:G-compare-solution}).  We overcome this by delicate analysis combining compactness argument with energy estimates. 

Next, we outline  our strategy for proving Theorem~\ref{global-existence}. First note that the equation of 
$v$ in \eqref{KST-r} can be written in the form \eqref{vsys}. 
Therefore, if $u \in L^p(\Omega_T)$, for some $p > 2$, we can apply Theorem~\ref{vreg} to derive the 
$L^p$-estimates for $\nabla v$. Using this new information in the equation of $u$, we establish the $L^q$-estimates for $u$ with some  $q >p$ depending on $p$. We then repeat the process using the 
improved estimate $u\in L^q$ and applying Theorem~\ref{vreg} to the equation of $v$ to gain $\nabla v\in L^q$, and so on. 
With such iteration, we are able to obtain $\nabla v, u \in L^q$ for sufficiently large $q \in (2,\infty)$. Combining this with the classical regularity and the known results in \cite{LeD-Mis, TVP, Tuoc}, we derive a contradiction to \eqref{global-cond}. The full proof of Theorem \ref{global-existence} will be given in Section~\ref{sys}.

We close the introduction by noting that the partial differential equations in \eqref{KST-r} can be rewritten 
in the following divergence form:
\begin{equation} \label{gen.eqn}
\vec{u}_t = \nabla\cdot [J(x,t, \vec{u}) \, \nabla \vec{u}] + f(x,t, \vec{u}),
\end{equation}
where 
\[
\vec{u} =
\begin{bmatrix}
u\\
 v
\end{bmatrix}
 \quad \text{and} \quad  
J(x,t, \vec{u})= 
\begin{bmatrix}
d_1 + 2 a_{11} u + a_{12} v &  a_{12} v \\
0 & d_2  + 2a_{22}v
\end{bmatrix}.
\]
Equations of the general form \eqref{gen.eqn} appear frequently in many areas of physical and biological applications  with different types of nonlinearities for $J$ and $f$ (see, for examples, \cite{La, W-Ni, KL, Vazquez, Yagib}). In the simple case when $J$ is independent of $\vec{u}$, 
they become the standard reaction-diffusion equations and have been studied extensively in the theory of parabolic equations (see \cite{La, Lib}). In our case, the dependence of $J$ on $\vec{u}$ creates mathematical and physical interesting phenomena and great technical complications.
Although we  focus only on the explicit system \eqref{KST-r}, the method in this paper might be extended to study general systems  of form \eqref{gen.eqn} with some structural conditions on $J$ and $f$.

\section{Regularity of Solutions to Self-Diffusion Equations}\label{regpart}
\subsection{Existence and uniqueness of weak solutions}\label{RefEqn}

This subsection proves the existence and uniqueness of solution of \eqref{vsys}. 
We first introduce some notation.  Let $\Omega \subset \mathbb{R}^n$ be an open bounded 
Lipschitz domain, $T>0$ and 
$\Omega_T = \Omega \times (0, T]$. Let $\Gamma$ be a 
relatively open connected subset of $\partial \Omega$. Denote 
\[
\partial_D \Omega_T = \Gamma \times (0,T) \cup \Omega \times \{ 0\}, \quad 
\partial_N \Omega_T = (\partial \Omega\setminus \Gamma) \times (0,T), \quad 
\partial_p \Omega_T = \partial\Omega \times (0,T) \cup \Omega \times \{0\}.
\]
We also denote the following spaces 
\begin{equation*}
\hat H_{0}^1(\Omega)= \{u\in H^1(\Omega): u=0 \quad \text{on}\quad\Gamma\},
\end{equation*}
\begin{equation*}
\mathcal W(\Omega_T) = \big\{u\in L^2(0,T;H^1(\Omega)): u_t\in L^2(0,T; H^{-1}(\Omega))\big\},
\end{equation*}
\beq\label{space-hat}
\hat \calW(\Omega_T) = \big\{u\in L^2(0,T;H^1(\Omega)): u_t\in L^2(0,T;\hat H^{-1}(\Omega))\big\},
\eeq
where $H^{-1}=(H^1_0)^*$ and $\hat H^{-1} = (\hat H_{0}^1)^*$. 
Moreover, the spaces $\mathcal{W}(\Omega_T)$ and $\mathcal{\hat{W}}(\Omega_T)$ are endowed with the following norms:
\begin{equation*}
\norm{u}_{\mathcal W(\Omega_T)} = \norm{u}_{L^2(\Omega_T)} + \norm{\nabla u}_{L^2(\Omega_T)} +  \norm{u_t}_{L^2(0,T;H^{-1}(\Omega))},
\end{equation*}
\begin{equation*}
\norm{u}_{\hat W(\Omega_T)} = \norm{u}_{L^2(\Omega_T)} + \norm{\nabla u}_{L^2(\Omega_T)} +  \norm{u_t}_{L^2(0,T;\hat H^{-1}(\Omega))}.
\end{equation*}
Note that $\hat \calW(\Omega_T)\subset\mathcal W(\Omega_T)$, and $\hat \calW(\Omega_T)=\mathcal W(\Omega_T)$ when $\Gamma=\partial \Omega$. It is well-known that the embedding 
\begin{equation*}
\calW(\Omega_T) \hookrightarrow C([0, T]; L^2(\Omega))\text{  is continuous},
\end{equation*}
and  the embedding 
\beq \label{cmpAL}
\calW(\Omega_T) \hookrightarrow L^2(\Omega_T) \text{ is compact}.
\eeq 
Therefore, if $u \in\mathcal W(\Omega_T)$ then $u(\cdot, t)$ is well-defined and in $L^2(\Omega)$ 
for each $t \in [0,T]$. Since $\hat \calW(\Omega_T)\subset\mathcal W(\Omega_T)$, these statements also hold true for $\hat \calW(\Omega_T)$ in place of $\mathcal W(\Omega_T)$. Finally, for the spaces of test functions, we define 
\begin{equation*}
\calE_0(\Omega_T) = \{\varphi\in L^2(0,T;H^1(\Omega)): \varphi=0 \quad \text{on }\partial_p\Omega_T \},
\end{equation*}
\begin{equation*}
\hat \calE_{0}(\Omega_T) = \{\varphi\in L^2(0,T;H^1(\Omega)): \varphi=0 \quad \text{on }\partial_D\Omega_T \}.
\end{equation*}
\begin{definition}  \label{weak-def} 
Let $g \in \mathcal W(\Omega_T)$, $f\in L^2(0,T;\hat H^{-1}(\Omega))$ and $\A$ satisfy 
\eqref{elipticity}. Let $\alpha \geq \theta \geq 0$ and  let $c$ be a measurable function on $\Omega_T$.
\begin{itemize}
\item[{\rm (a)}] We say that $u\in\mathcal W(\Omega_T)$ is a weak solution of 
\begin{equation*} \label{eqnonly}
u_t   =  \nabla \cdot[(1+ \alpha u)\A\nabla u] + \theta u(1-  u) -  c u +f  \quad\text{in} \quad \Omega_T
\end{equation*}
if $\alpha u\nabla u,\, \theta u^2, c u \in L^2(\Omega_T)$ and
\begin{equation} \label{weakonly}
\int_0^T \langle u_t,\varphi\rangle_{H^{-1},H_0^1} dt + \int_{\Omega_T} \Big\{(1 + \alpha u) \wei{\mathbf{A} \nabla u, \nabla \varphi} -[\theta u (1-u) -cu]\varphi \Big\}dx dt
-\int_0^T \langle f,\varphi\rangle_{\hat{H}^{-1},\hat H_0^1} dt=0,
\end{equation}
for all $\varphi \in \calE_0 (\Omega_T)$.
\item[{\rm (b)}] We say that $u\in \hat \calW(\Omega_T)$ is a weak solution of 
\begin{equation} \label{eqnf}
\left \{
\begin{array}{lll}
u_t  & =  \nabla \cdot[(1+ \alpha u)\A\nabla u] + \theta u(1-  u) -  c u +f&  \quad\text{in} \quad \Omega_T, \\
u & = g & \quad \text{on}\quad \partial_D \Omega_T, \\
\begin{displaystyle}
\frac{\partial u}{\partial \bnu}
\end{displaystyle}
 & = 0, & \quad\text{on}\quad  \partial_N \Omega_T, \\
\end{array} \right.
\end{equation}
if $\,\alpha u\nabla u,\, \theta u^2, cu \in L^2(\Omega_T)$, $u - g \in \hat\calE_{0} (\Omega_T)$ and
\begin{equation} \label{weakf}
\int_0^T \langle u_t,\varphi\rangle_{\hat{H}^{-1},\hat H_0^1} dt + \int_{\Omega_T} \Big\{(1 + \alpha u) \wei{\mathbf{A} \nabla u, \nabla \varphi} -[\theta u (1-u) -cu]\varphi\Big\} dx dt 
-\int_0^T \langle f,\varphi\rangle_{\hat{H}^{-1},\hat H_0^1} dt=0,
\end{equation}
for all $\varphi \in \hat\calE_{0} (\Omega_T)$.
\end{itemize}
\end{definition}
In fact, \eqref{weakonly} is equivalent to the following variational formulation: for any $v\in  H_{0}^1(\Omega)$ and almost every $t\in(0,T)$ one has 
\begin{equation*}
\langle u_t,v\rangle_{H^{-1},H_0^1} + ((1 + \alpha u) \mathbf{A} \nabla u, \nabla v)_{L^2(\Omega)} 
= ( \theta u (1-u) -cu,v)_{L^2(\Omega)} + \langle f,v\rangle_{\hat{H}^{-1},\hat H_0^1}.
\end{equation*}
Similar equivalence applies to \eqref{weakf}. From now on, if there is no confusion, we drop 
the subscripts $H^{-1},H_0^1$ and $\hat{H}^{-1},\hat H_0^1$ for the product notation $\langle \cdot, \cdot \rangle$. 
In the statements above and calculations below, $\langle \cdot, \cdot \rangle$ is also used to denote the scalar product in $\mathbb R^n$, but its meaning is clear in the context.

We now 
can state the main theorem of this subsection.
\begin{theorem} \label{existence-u} Let $\A$ satisfy \eqref{elipticity}.  
Suppose the numbers $\alpha, \theta$ are non-negative,  $c\in L^2(\Omega_T)$ is  non-negative, and  $g \in \hat{\calW}(\Omega_T)$ satisfies $0 \leq g \leq 1$. 
Then there exists a unique weak solution $u \in \hat\calW(\Omega_T)$, $0 \leq u \leq 1$ of the equation
\begin{equation} \label{main-ref.eqn}
\left \{
\begin{array}{lll}
u_t  & =  \nabla \cdot[(1+ \alpha u)\A\nabla u] + \theta u(1-  u) -  c u &  \quad\text{in} \quad \Omega_T, \\
u & = g & \quad \text{on}\quad \partial_D \Omega_T, \\
\begin{displaystyle}
\frac{\partial u}{\partial \bnu}
\end{displaystyle} & = 0, & \quad\text{on}\quad  \partial_N \Omega_T.
\end{array} \right.
\end{equation}
Moreover,   there is a constant $C = C(T,\Lambda,\alpha,\theta)$ such that 
\begin{equation} \label{energy-v}
\sup_{0 \leq t \leq T} \int_\Omega u^2(x,t) dx  + \int_{\Omega_T} |\nabla u|^2dxdt \leq 
C\Big [|\Omega|+  \norm{c}_{L^2(\Omega_T)}^2+\norm{g}_{\hat \calW(\Omega_T)}^2 \Big].
\end{equation}
\end{theorem}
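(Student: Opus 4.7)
The plan is to prove existence via a truncation-plus-fixed-point scheme, establish the pointwise bounds $0\le u\le 1$ directly so that the truncation is invisible, and then derive the energy estimate \eqref{energy-v} and uniqueness by energy methods on what has become a uniformly parabolic equation with bounded coefficients. Concretely, I would let $\sigma(s)\eqdef \min(\max(s,0),1)$ and consider the auxiliary equation
\[
u_t = \nabla\cdot[(1+\alpha\sigma(u))\A\nabla u] + \theta\sigma(u)(1-\sigma(u)) - c\,\sigma(u)
\]
with the same boundary and initial data as \eqref{main-ref.eqn}. For $\tilde u\in L^2(\Omega_T)$, freezing $\tilde u$ inside $\sigma$ gives a linear parabolic problem with bounded measurable coefficient $1+\alpha\sigma(\tilde u)\in[1,1+\alpha]$ and $L^2$ source, for which Galerkin/Lax-Milgram theory on $\hat H_0^1$ yields a unique $\Phi(\tilde u)\in\hat\calW(\Omega_T)$ satisfying
\[
\|\Phi(\tilde u)\|_{\hat\calW(\Omega_T)}\le C\bigl(\|g\|_{\hat\calW(\Omega_T)}+\|c\|_{L^2(\Omega_T)}+|\Omega_T|^{1/2}\bigr).
\]
Lipschitz continuity of $\sigma$ and continuous dependence for the linear problem make $\Phi\colon L^2(\Omega_T)\to L^2(\Omega_T)$ continuous, and the compact embedding \eqref{cmpAL} makes the image of a sufficiently large invariant ball relatively compact. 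Schauder's fixed-point theorem then furnishes $u=\Phi(u)$, a weak solution of the truncated equation.

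To upgrade to $0\le u\le 1$, I would test the truncated equation with $-u^-\in\hat\calE_0(\Omega_T)$, admissible because $g\ge 0$ forces $u^-\equiv 0$ on $\partial_D\Omega_T$. On $\{u<0\}$ the truncation $\sigma(u)=0$ kills the reaction and the $\alpha$-contribution to the diffusion, leaving the uniformly elliptic piece $\Lambda^{-1}\int|\nabla u^-|^2$; with $\langle u_t,-u^-\rangle=\tfrac12\partial_t\|u^-\|_{L^2}^2$ and $u^-(\cdot,0)=0$ this forces $u^-\equiv 0$. The symmetric test with $(u-1)^+$ (admissible since $g\le 1$) uses $\sigma(u)=1$ on $\{u>1\}$ to reduce the reaction to $-c\le 0$ and gives $u\le 1$. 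Hence $\sigma(u)=u$ and $u$ solves \eqref{main-ref.eqn}. The estimate \eqref{energy-v} then follows by testing \eqref{main-ref.eqn} with $u-g\in\hat\calE_0(\Omega_T)$: the principal part produces $\int(1+\alpha u)\langle\A\nabla u,\nabla u\rangle\ge\Lambda^{-1}\int|\nabla u|^2$ after absorbing $\int(1+\alpha u)\langle\A\nabla u,\nabla g\rangle$ via Young's inequality into $\tfrac12\Lambda^{-1}\int|\nabla u|^2+C(\Lambda,\alpha)\int|\nabla g|^2$; the pointwise bounds $0\le u\le 1$ together with Cauchy-Schwarz dominate $\int[\theta u(1-u)-cu](u-g)$ by $C(T,\theta)|\Omega|+\|c\|_{L^2}\|u-g\|_{L^2}$; and the initial-data contribution is controlled by $\|g\|_{\hat\calW}^2$, so a Gronwall loop yields \eqref{energy-v}.

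For uniqueness, the key observation is the Kirchhoff identity $(1+\alpha u)\A\nabla u=\A\nabla\phi(u)$ with $\phi(s)=s+\alpha s^2/2$; since $u_i\in[0,1]$, $\phi'\in[1,1+\alpha]$. Two solutions $u_1,u_2$ with identical data yield $w=u_1-u_2\in\hat\calE_0(\Omega_T)$ satisfying
\[
w_t = \nabla\cdot[\A\nabla h] + \theta w(1-u_1-u_2) - cw,\qquad h=\phi(u_1)-\phi(u_2)=(1+\alpha\bar u)w,\quad \bar u=\tfrac{u_1+u_2}{2},
\]
with $|w|\le|h|\le(1+\alpha)|w|$. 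I would test with $w$, using the splitting $\nabla h=(1+\alpha u_1)\nabla w+\alpha w\nabla u_2$: this produces the uniformly elliptic term $\int(1+\alpha u_1)\langle\A\nabla w,\nabla w\rangle$, a cross-gradient term $\alpha\int w\langle\A\nabla u_2,\nabla w\rangle$ to be absorbed via Young's inequality combined with the $L^\infty$-bound $|w|\le 1$ and a Gagliardo-Nirenberg interpolation, and a reaction contribution bounded by $\int(\theta+c)w^2$. A Gronwall-type argument together with $w(0)=0$ then forces $w\equiv 0$. The main obstacle is precisely this uniqueness step: the natural test function $h$ does not give a clean energy identity because $\langle w_t,h\rangle$ is \emph{not} an exact time derivative of any function of $(u_1,u_2)$ (no $J$ satisfies both $\partial_{u_1}J=h$ and $\partial_{u_2}J=-h$), so one must test with $w$ and close the Gronwall loop by carefully controlling $\int w^2|\nabla u_2|^2$ through the $L^\infty$-bound on $w$ and Sobolev interpolation---this is the delicate technical step.
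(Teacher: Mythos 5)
Your existence argument (truncation $\sigma(u)=\min(\max(u,0),1)$ plus Schauder, with post-hoc $L^\infty$ bounds by Stampacchia-type testing with $-u^-$ and $(u-1)^+$) is a legitimate alternative to the paper's route, which instead linearizes the reaction as $f(v)=2v-v^2$ so that $0\le f(v)\le c+1$ and the comparison principle (Lemma~\ref{CP}) keeps every Schauder iterate in $[0,1]$ a priori; both ways work. Your derivation of \eqref{energy-v} is essentially Lemma~\ref{MPv}. The uniqueness step you flag as ``delicate'' is, however, a genuine gap, not merely a technicality. After forming the Kirchhoff difference $h=\phi(u_1)-\phi(u_2)$, $\phi(s)=s+\alpha s^2/2$, and testing the equation for $w=u_1-u_2$ with $w$ itself, the splitting $\nabla h=(1+\alpha u_1)\nabla w+\alpha w\nabla u_2$ leaves the cross term $\alpha\int w\langle\A\nabla u_2,\nabla w\rangle$, and Young's inequality converts it to $C\int w^2|\nabla u_2|^2\,dx$. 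At this point $\nabla u_2$ is known only to lie in $L^2(\Omega_T)$: the bound $|w|\le 1$ gives $\int w^2|\nabla u_2|^2\,dx\le\|\nabla u_2(\cdot,t)\|_{L^2(\Omega)}^2$, a quantity that is not a multiple of $\int w^2\,dx$, and any interpolation of the form $\|w\|_{L^{2p}}^2\|\nabla u_2\|_{L^{2p'}}^2$ requires $\nabla u_2\in L^q$ for some $q>2$, which is precisely the higher gradient integrability the rest of Section~\ref{regpart} is trying to establish and cannot be assumed here. The $L^2$ Gronwall loop therefore does not close.

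The paper avoids the cross term altogether by choosing a different test function. Writing $w=\phi(u_1)-\phi(u_2)$, so that the diffusion difference collapses to $\nabla\cdot[\A\nabla w]$, it tests with the Lipschitz approximations $\sgn^+_k(w)$ of \eqref{appp-sign+} rather than with $w$. The diffusion contribution becomes $-\int\langle\A\nabla w,\nabla w\rangle(\sgn^+_k)'(w)\le 0$ and is simply discarded, and since $1+\alpha(u_1+u_2)/2>0$ one has $\sgn^+(w)=\sgn^+(u_1-u_2)$, so letting $k\to\infty$ yields the $L^1$ differential inequality $\tfrac{d}{dt}\int_\Omega(u_1-u_2)^+\,dx\le\theta\int_\Omega(u_1-u_2)^+\,dx$; Gronwall and symmetry then give $u_1=u_2$. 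This $L^1$-contraction device, rather than an $L^2$ energy-Gronwall argument, is the missing ingredient in your uniqueness proof. Your own observation that no potential $J$ makes $\langle w_t,h\rangle$ an exact time derivative is correct, but the conclusion that one ``must test with $w$'' is not; the sign-function test is what actually closes the argument.
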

We prove the uniqueness first.  This plays a key role in the existence of solutions to \eqref{main-ref.eqn} and in our paper.
\begin{lemma} \label{uniqueness-u} Let $\A, \alpha, \theta, c, g$  be as in Theorem~\ref{existence-u}. Then \eqref{main-ref.eqn} has at most one weak solution $u \in \hat \calW(\Omega_T)$ with 
$0 \leq u \leq 1$.
\end{lemma}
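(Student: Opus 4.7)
Let $u_1, u_2 \in \hat\calW(\Omega_T)$ be two weak solutions of \eqref{main-ref.eqn} with $0 \leq u_i \leq 1$, and set $w := u_1 - u_2$. The plan is to subtract the weak formulations, test the resulting difference equation with a function adapted to the nonlinearity $\phi(s) := s + \tfrac{\alpha}{2}s^2$, and close the estimate by Gronwall's inequality. Observing that $\phi'(s) = 1 + \alpha s \in [1, 1+\alpha]$ on $[0,1]$, the quantity $V := \phi(u_1) - \phi(u_2)$ decomposes as $V = b\,w$ with $b := 1 + \tfrac{\alpha}{2}(u_1+u_2) \in [1, 1+\alpha]$; in particular $V \in L^2(0,T; \hat H_0^1(\Omega))$, $|w| \leq |V| \leq (1+\alpha)|w|$, and $wV = bw^2 \geq w^2 \geq 0$ pointwise. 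Subtracting and simplifying, $w$ satisfies in $\hat\calW(\Omega_T)$
\[
w_t = \nabla \cdot [\A \nabla V] + [\theta(1 - u_1 - u_2) - c]\, w, \qquad w(\cdot, 0) = 0, \qquad w|_{\partial_D \Omega_T} = 0.
\]

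I would use $V$ itself as the test function. The ellipticity gives $\int_0^t \int \A \nabla V \cdot \nabla V \geq 0$, while the reaction is bounded by $\theta(1+\alpha) \int_0^t \|w\|_{L^2(\Omega)}^2\, ds$, using $|V| \leq (1+\alpha)|w|$, $|1 - u_1 - u_2| \leq 1$, and $-cwV = -c\,bw^2 \leq 0$. The time term is handled via the Alt--Luckhaus-type identity (obtained by direct computation, using that $\phi'' \equiv \alpha$ makes Taylor expansion exact)
\[
V w_t = \partial_t \Phi(u_1, u_2) - \tfrac{\alpha}{2}\, w^2\, u_{2,t}, \qquad \Phi(r, s) := \int_s^r (\phi(\tau) - \phi(s))\, d\tau \geq \tfrac12 (r-s)^2.
\]
Integrating and using $w(\cdot, 0) = 0$ yields
\[
\int_0^t \langle w_s, V\rangle\, ds \;=\; \int_\Omega \Phi(u_1(t), u_2(t))\, dx \;-\; \tfrac{\alpha}{2} \int_0^t \langle u_{2,s}, w^2\rangle\, ds.
\]
Because $w \in L^2(0,T; \hat H_0^1(\Omega)) \cap L^\infty(\Omega_T)$, the function $w^2$ lies in $\hat\calE_0(\Omega_T)$, hence is admissible as a test function in the weak formulation of $u_2$'s equation. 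Substituting evaluates the last integral and produces a cross term proportional to $\int_0^t \int (1 + \alpha u_2)\, w\, \A \nabla u_2 \cdot \nabla w$, together with reaction pieces bounded by $C \int_0^t \|w\|_{L^2}^2$.

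The main obstacle is controlling this cross term, since $\nabla u_2$ is only known in $L^2(\Omega_T)$ and a naive Young's inequality yields $\int w^2 |\nabla u_2|^2$, which is finite (as $|w|\leq 1$) but not obviously absorbable by the $L^2$-Gronwall loop alone. The plan to resolve this is to combine the cross term with the diffusion $\int \A \nabla V \cdot \nabla V$ by exploiting the expansion $\nabla V = (1+\alpha u_1) \nabla w + \alpha w \nabla u_2$: this produces the perfect-square coefficient of $|\nabla w|^2$ that partially cancels the harmful cross product, and the residual piece can be absorbed using $|\nabla w|^2 \leq 2|\nabla V|^2 + 2 \alpha^2 w^2 |\nabla u_2|^2$ together with a small-parameter Young inequality. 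After a symmetrization in $u_1, u_2$ to extract all the available algebraic cancellation, the resulting energy inequality takes the form $\tfrac12 \|w(t)\|_{L^2(\Omega)}^2 \leq C \int_0^t \|w(s)\|_{L^2(\Omega)}^2\, ds$, so that $\Phi(u_1, u_2)(t) \geq \tfrac12 w(t)^2$ together with $w(\cdot, 0) = 0$ and Gronwall's lemma forces $w \equiv 0$ on $\Omega_T$.
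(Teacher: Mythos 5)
Your choice of variable $V=\phi(u_1)-\phi(u_2)$ with $\phi(s)=s+\tfrac{\alpha}{2}s^2$ is exactly the paper's auxiliary function, and the Alt--Luckhaus identity together with $\Phi(r,s)\ge\tfrac12(r-s)^2$ are correct. The gap is the cross term you yourself flag. After substituting the weak form of the $u_2$-equation into $-\tfrac{\alpha}{2}\langle u_{2,t},w^2\rangle$, the combined diffusion on the left-hand side becomes
\[
\int\A\nabla V\cdot\nabla V+\alpha\int(1+\alpha u_2)\,w\,\A\nabla u_2\cdot\nabla w
=\int\Bigl(|a|_\A^2+|b|_\A^2+\Bigl(2+\tfrac{1+\alpha u_2}{1+\alpha u_1}\Bigr)\langle a,b\rangle_\A\Bigr)\,dxdt,
\]
where $a=(1+\alpha u_1)\nabla w$, $b=\alpha w\nabla u_2$ and $|\xi|_\A^2=\A\xi\cdot\xi$. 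Since the coefficient $2+\tfrac{1+\alpha u_2}{1+\alpha u_1}$ exceeds $2$ for every $\alpha>0$, the form is indefinite: taking $a=-b$ gives the value $-\tfrac{1+\alpha u_2}{1+\alpha u_1}\alpha^2 w^2|\nabla u_2|_\A^2<0$. The inequality $|\nabla w|^2\le 2|\nabla V|^2+2\alpha^2 w^2|\nabla u_2|^2$ simply reinserts this same uncontrolled quantity, and the symmetrized identity $Vw_t=\tfrac12\partial_t(wV)-\tfrac{\alpha}{4}w^2(u_1+u_2)_t$ fares no better: there the diagonal coefficient of $|\nabla u_1|_\A^2$ equals $(1+\alpha u_1)\bigl(1+\tfrac{3\alpha}{2}u_1-\tfrac{\alpha}{2}u_2\bigr)$, which turns negative once $\alpha>2$, $u_1\approx 0$, $u_2\approx 1$. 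In every variant the residual error is of size $\int w^2|\nabla u_i|^2$, and since $\nabla u_i$ is only known in $L^2(\Omega_T)$ this is not controlled by $\int w^2$, so the Gronwall loop does not close for general $\alpha\ge 0$.

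The paper avoids the cross term entirely by running an $L^1$-contraction argument: test the subtracted equation with the Lipschitz truncations $\sgn^+_k(V)$ and let $k\to\infty$. The diffusion then contributes $-\int(\sgn^+_k)'(V)\,\A\nabla V\cdot\nabla V\le 0$ with no splitting of $\nabla V$ and hence no cross products with the individual gradients; the reaction is bounded as in your argument, and the time term yields $\tfrac{d}{dt}\int_\Omega(u_1-u_2)^+\le\theta\int_\Omega(u_1-u_2)^+$. This Kruzhkov-type truncation is the standard tool for quasilinear diffusion $\nabla\cdot[\A\nabla\phi(u)]$ precisely because the $L^2$-energy estimate stumbles on the term you identified; pushing an energy proof through would need either a smallness restriction on $\alpha$ or extra regularity $\nabla u_i\in L^\infty(\Omega_T)$, neither of which is available.
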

\begin{proof}  
Suppose $u_1, u_2 \in \hat \calW(\Omega_T)$  are two weak solutions 
of \eqref{main-ref.eqn} satisfying $0 \leq u_1, u_2 \leq 1$.  Let
\[
w= \Big( u_1 + \alpha \frac{u_1^2}{2}\Big) - \Big( u_2 + \alpha \frac{u_2^2}{2}\Big)
= (u_1 - u_2) \Big(1 +  \alpha \frac{u_1 + u_2}{2}\Big).
\]
For each $k\in \N$, we define the Lipschitz approximations to the $\sgn^+$ function:
\begin{equation}\label{appp-sign+}
\sgn^+_k(z) = 
\begin{cases}
1  & \mbox{ for }\quad z\geq \frac{1}{k}, \\
k z& \mbox{ for }\quad 0< z<  \frac{1}{k}, \\ 
0  & \mbox{ for }\quad z\leq 0. 
\end{cases}
\end{equation}
 Since the function $z\mapsto \sgn^+_k(z)$ is Lipschitz continuous, $w\in L^2(0,T;H^1(\Omega))$ and $w =0$ on $\partial_D \Omega_T$, we have $\sgn^+_k(w)\in L^2(0,T;H^1(\Omega))$ with $\sgn^+_k(w) =0$ 
on $\partial_D \Omega_T$. Hence by using $\sgn^+_k(w)$ as a test function in equation \eqref{main-ref.eqn} for $u_1, \,u_2$ and 
using integration by parts, one gets
\[
\begin{split}
& \int_\Omega (u_1 - u_2)_t \sgn^+_k(w) dx 
= -\int_\Omega \langle \A(x,t) \big[(1+\alpha u_1)\nabla u_1 -(1+\alpha u_2)\nabla u_2\big], \nabla [\sgn^+_k(w)]\rangle dx\\
&\quad \quad + 
\theta \int_\Omega (u_1 -u_2)(1-u_1 -u_2) \sgn^+_k(w) dx   -\int_\Omega c (u_1 - u_2)\sgn^+_k(w) dx\\
& =  -\int_\Omega \langle \A(x,t)\nabla w, \nabla w\rangle (\sgn^+_k)'(w)dx + 
\theta \int_\Omega (u_1 -u_2)(1-u_1 -u_2) \sgn^+_k(w) dx  \\
& \quad \quad \quad  -\int_\Omega c (u_1 - u_2)\sgn^+_k(w) dx.
\end{split}
\]
As $\A(x,t)$ is non-negative definite and $(\sgn^+_k)'\ge 0$, we deduce that
\[
 \int_\Omega (u_1 - u_2)_t \sgn^+_k(w) dx 
\leq \theta \int_\Omega (u_1 -u_2)(1-u_1 -u_2) \sgn^+_k(w) dx   -\int_\Omega c (u_1 - u_2)\sgn^+_k(w) dx.
\]
Letting $k \rightarrow \infty$ and observing that $\sgn^+(w)=\sgn^+(u_1 - u_2)$, we obtain 
 \[
\frac{d}{dt} \int_\Omega (u_1 - u_2)^+ dx \leq \theta \int_\Omega (u_1 - u_2)^+ dx
 \]
 yielding
 \[
  \int_\Omega \Big(u_1(x,t) - u_2(x,t)\Big)^+ dx \leq e^{\theta t} \int_\Omega \Big(u_1(x,0) - u_2(x,0)\Big)^+ dx\quad\mbox{for every } t>0.
 \]
Since $u_1 - u_2 =0$ on $\Omega \times \{ 0\}$, it follows that $(u_1 - u _2)^+ =0$ a.e. on $\Omega_T$, which gives $u_1\leq u_2$  a.e. on $\Omega_T$. By interchanging the role of $u_1$ and $u_2$, we 
 infer that $u_1 = u_2$ a.e. on $\Omega_T$.
 \end{proof}

A modification of the proof of Lemma~\ref{uniqueness-u} gives the following comparison principle:
\begin{lemma} \label{CP} Assume that $\A$ satisfies \eqref{elipticity}. Suppose that $c\in L^2(\Omega_T)$ is  non-negative,   $g \in  \calW(\Omega_T)$ and 
 $f\in L^2(0,T;\hat H^{-1}(\Omega))$. Let $u_1, u_2\in \hat \calW(\Omega_T)$ be respectively weak  sub-solution and weak super-solution to the problem
 \begin{equation*} 
\left \{
\begin{array}{lll}
u_t  & =  \nabla \cdot[\A\nabla u] -  c u +f&  \quad\text{in} \quad \Omega_T, \\
u & = g & \quad \text{on}\quad \partial_D \Omega_T, \\
\begin{displaystyle}
\frac{\partial u}{\partial \bnu}
\end{displaystyle}
 & = 0, & \quad\text{on}\quad  \partial_N \Omega_T. \\
\end{array} \right.
\end{equation*}
That is, $(u_1 - g)^+ \in \hat\calE_{0} (\Omega_T)$, $(g - u_2)^+ \in \hat\calE_{0} (\Omega_T)$ and for every function $\varphi\in \hat\calE_0(\Omega_T)$ with $\varphi\geq 0$, we have
\begin{equation*} 
\int_0^T \langle (u_1)_t,\varphi\rangle dt \leq  \int_{\Omega_T} \Big\{ -\wei{\mathbf{A} \nabla u_1, \nabla \varphi} +[\theta u_1 (1-u_1) -cu_1]\varphi\Big\} dx dt 
+\int_0^T \langle f,\varphi\rangle dt
\end{equation*}
and
\begin{equation*} 
\int_0^T \langle (u_2)_t,\varphi\rangle dt \geq  \int_{\Omega_T} \Big\{ -\wei{\mathbf{A} \nabla u_2, \nabla \varphi} +[\theta u_2 (1-u_2) -cu_2]\varphi\Big\} dx dt 
+\int_0^T \langle f,\varphi\rangle dt.
\end{equation*}
Then $u_1\leq u_2$ almost everywhere on $\Omega_T$.
 \end{lemma}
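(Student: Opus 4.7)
The plan is to mimic the proof of Lemma~\ref{uniqueness-u}, simplified by the fact that the equation for $u_1, u_2$ is linear in $u$ (no self-diffusion factor $1+\alpha u$). Thus I would work directly with $w:=u_1-u_2$ and use $\sgn^+_k(w)$ from~\eqref{appp-sign+} as a test function in the difference of the sub- and super-solution inequalities.

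First I would verify admissibility of the test function. Since
\[
w^+ = \big((u_1-g) + (g-u_2)\big)^+ \leq (u_1-g)^+ + (g-u_2)^+,
\]
and both summands on the right lie in $\hat{\calE}_0(\Omega_T)$ by hypothesis, the trace of $w^+$ vanishes on $\partial_D\Omega_T$. The same then holds for $\sgn^+_k(w)$ because $\sgn^+_k$ is Lipschitz with $\sgn^+_k(0)=0$, so $\sgn^+_k(w)\in\hat{\calE}_0(\Omega_T)$; in particular, evaluating the inequality $(u_1-g)^+=0=(g-u_2)^+$ on $\Omega\times\{0\}$ yields $w(\cdot,0)\leq 0$ a.e.\ in $\Omega$.

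Subtracting the super-solution inequality for $u_2$ from the sub-solution inequality for $u_1$, testing with $\sgn^+_k(w)\geq 0$, and factoring $u_1(1-u_1)-u_2(1-u_2) = w(1-u_1-u_2)$, I arrive at
\[
\int_0^t \langle w_s,\sgn^+_k(w)\rangle\,ds + \int_0^t\!\!\int_\Omega (\sgn^+_k)'(w)\langle \A\nabla w,\nabla w\rangle\,dxds \leq \int_0^t\!\!\int_\Omega \big[\theta w(1-u_1-u_2)-cw\big]\sgn^+_k(w)\,dxds.
\]
By ellipticity of $\A$ and monotonicity of $\sgn^+_k$ the second term on the left is non-negative. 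Since $c\geq 0$ and $w\sgn^+_k(w)\geq 0$, the $-cw$ contribution on the right is non-positive and can be dropped. Applying the chain rule $\int_0^t\langle w_s,\sgn^+_k(w)\rangle\,ds = \int_\Omega \Psi_k(w(t))\,dx - \int_\Omega \Psi_k(w(0))\,dx$ with $\Psi_k(s)=\int_0^s\sgn^+_k(\tau)\,d\tau$, letting $k\to\infty$ so that $\Psi_k(s)\to s^+$, and using $w(\cdot,0)\leq 0$, I obtain
\[
\int_\Omega w^+(x,t)\,dx \leq \theta\int_0^t\int_\Omega w^+(x,s)(1-u_1-u_2)\,dxds,
\]
and Gronwall's lemma then forces $w^+\equiv 0$, i.e.\ $u_1\leq u_2$ a.e.\ on $\Omega_T$.

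The main obstacle is the passage to the limit in the reaction integrand $\theta w(1-u_1-u_2)\sgn^+_k(w)$: I must dominate it by an $L^1$ function independent of $k$, and then control $w^+(1-u_1-u_2)$ by $C\,w^+$ plus an $L^1$ remainder in order to invoke Gronwall. In the intended applications $u_1,u_2\geq 0$ a.e., so $1-u_1-u_2\leq 1$ and the domination is immediate; in general one would first truncate $u_1,u_2$ at a large level, close the argument, then remove the truncation. A subsidiary technical point is the chain rule for $w_t\in L^2(0,T;\hat H^{-1})$ tested against $\sgn^+_k(w)\in \hat H_0^1(\Omega)$, which follows from a standard Steklov averaging argument in time.
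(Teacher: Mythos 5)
Your proposal follows the same route as the paper: set $w=u_1-u_2$, test the difference of the sub- and super-solution inequalities with $\sgn^+_k(w)$, use ellipticity, $c\geq 0$ and the sign of $w\,\sgn^+_k(w)$ to drop negative terms, pass $k\to\infty$, and conclude via Gronwall. The paper's proof is terse (it records that $w\leq 0$ on $\partial_D\Omega_T$ and then cites the argument of Lemma~\ref{uniqueness-u}), but the steps you spell out --- admissibility of $\sgn^+_k(w)\in\hat\calE_0(\Omega_T)$, the chain rule in time, and the Gronwall estimate --- are precisely the intended ones; also note that the displayed problem in Lemma~\ref{CP} is the linear one $u_t = \nabla\cdot[\A\nabla u] - cu + f$ and the paper's final estimate carries no $e^{\theta t}$ factor, so the $\theta u_i(1-u_i)$ terms in the sub/super-solution formulas are a typographical carry-over from Lemma~\ref{MPv}, which renders moot your concern about bounding $1-u_1-u_2$.
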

\begin{proof} 
Let $w=u_1 - u_2$. Then $w\leq 0$ on $\partial_D \Omega_T$ since $u_1\leq g \leq u_2$ on $\partial_D \Omega_T$. Hence $\sgn^+_k(w)\in L^2(0,T;H^1(\Omega))$ with $\sgn^+_k(w) =0$ 
on $\partial_D \Omega_T$, where  $\sgn^+_k$ is the function given by  \eqref{appp-sign+}. Therefore, by arguing as in the proof of Lemma~\ref{uniqueness-u} we obtain
\[
  \int_\Omega \Big(u_1(x,t) - u_2(x,t)\Big)^+ dx \leq \int_\Omega \Big(u_1(x,0) - u_2(x,0)\Big)^+ dx\quad\mbox{for every } t>0.
 \]
As $u_1\leq u_2$ on $\Omega \times \{ 0\}$, it follows that $(u_1 - u _2)^+ =0$ a.e. on $\Omega_T$. That is,  $u_1\leq u_2$  a.e. on $\Omega_T$. 
\end{proof}

Next, we prove the energy estimate \eqref{energy-v}.

\begin{lemma} \label{MPv} Let $\mathbf{A}, \alpha, \theta, g, c$ be as in Theorem \ref{existence-u},  and $f\in L^2(0,T;\hat H^{-1}(\Omega))$.  
Suppose  $u \in \hat \calW(\Omega_T)$ is a weak solution of \eqref{eqnf}
with $ 0 \leq u \leq 1$. Then there exists $C>0$ depending only on $T$, $\Lambda$, $\alpha$ and $\theta$ such that
\begin{equation}\label{general-energy-est} 
\sup_{0 \leq t \leq T}\int_{\Omega} u^2  dx+  \int_{\Omega_T}|\nabla u|^2 dx dt 
\leq C\Big[|\Omega| + \norm{c}_{L^2(\Omega_T)}^2 +   \norm{f}_{L^2(0,T; \hat H^{-1}(\Omega))}^2 +\norm{g}_{\hat \calW(\Omega_T)}^2\Big].
\end{equation}
\end{lemma}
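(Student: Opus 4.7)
\textbf{Proof plan for Lemma \ref{MPv}.} My plan is to run a standard energy argument by testing the weak formulation \eqref{weakf} with $\varphi = u-g$. Since the definition of weak solution requires $u-g \in \hat\calE_0(\Omega_T)$, this $\varphi$ is admissible. To capture the supremum in time on the left-hand side, I would restrict the weak formulation to an arbitrary subinterval $(0,t)$ (justified by the standard Steklov averaging, exploiting $(u-g)_t \in L^2(0,T;\hat H^{-1}(\Omega))$), and then take the supremum over $t\in [0,T]$ at the very end.

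Writing $u_t = (u-g)_t + g_t$, the time-derivative term yields
\[
\int_0^t \langle u_s, u-g\rangle\, ds \;=\; \tfrac{1}{2}\|(u-g)(t)\|_{L^2(\Omega)}^2 \;+\; \int_0^t \langle g_s, u-g\rangle\, ds,
\]
using $(u-g)(\cdot,0)=0$ from the $\partial_D\Omega_T$ condition. For the principal elliptic term I use \eqref{elipticity} together with $0\leq u \leq 1$ to get
\[
\int_0^t\!\!\int_\Omega (1+\alpha u)\wei{\A\nabla u, \nabla(u-g)}\,dxds \;\geq\; \Lambda^{-1}\int_0^t\!\!\int_\Omega|\nabla u|^2\,dxds \;-\; (1+\alpha)\Lambda\int_0^t\!\!\int_\Omega|\nabla u||\nabla g|\,dxds,
\]
and the cross term is absorbed via Young's inequality with a small parameter $\varepsilon$. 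The reaction and absorption contributions $\theta u(1-u)(u-g)$ and $cu(u-g)$ are bounded pointwise by $\theta|u-g|$ and $c|u-g|$ respectively (once more using $0\leq u\leq 1$), so their integrals are controlled by $C(\theta)\bigl(T|\Omega| + \|g\|_{L^2(\Omega_T)}^2\bigr) + C\bigl(\|c\|_{L^2(\Omega_T)}^2 + T|\Omega| + \|g\|_{L^2(\Omega_T)}^2\bigr)$ via Cauchy--Schwarz. Finally the source pairing and the $g_t$ pairing are estimated by duality as
\[
\Big|\int_0^t\!\langle f, u-g\rangle ds\Big| \,+\, \Big|\int_0^t\!\langle g_s, u-g\rangle ds\Big| \;\leq\; \varepsilon\|\nabla u\|_{L^2(\Omega_T)}^2 \,+\, C_\varepsilon\bigl(\|f\|_{L^2(0,T;\hat H^{-1})}^2 + \|g\|_{\hat\calW(\Omega_T)}^2 + T|\Omega|\bigr),
\]
where I used $\|u-g\|_{L^2(\Omega_T)}^2 \leq 2T|\Omega| + 2\|g\|_{L^2(\Omega_T)}^2$ from $0 \leq u \leq 1$ to bound the non-derivative part of $\|u-g\|_{L^2(0,T;\hat H^1_0)}$.

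Combining all of the above with $\varepsilon$ small enough to absorb $\varepsilon\|\nabla u\|_{L^2(\Omega_T)}^2$ into $\Lambda^{-1}\int|\nabla u|^2$ on the left yields a bound for $\tfrac{1}{2}\sup_{t\in[0,T]}\|(u-g)(t)\|_{L^2(\Omega)}^2 + \tfrac{1}{2}\Lambda^{-1}\|\nabla u\|_{L^2(\Omega_T)}^2$ by the right-hand side of \eqref{general-energy-est}. The continuous embedding $\hat\calW(\Omega_T)\hookrightarrow C([0,T];L^2(\Omega))$ gives $\sup_t\|g(t)\|_{L^2(\Omega)}^2 \leq C\|g\|_{\hat\calW(\Omega_T)}^2$, so the triangle inequality $\|u(t)\|_{L^2}^2 \leq 2\|(u-g)(t)\|_{L^2}^2 + 2\|g(t)\|_{L^2}^2$ converts this into the desired estimate on $u$. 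The only technical subtlety I anticipate is the justification of the integration by parts on the subinterval $(0,t)$ with $\varphi = u-g$ when the latter is merely in $\hat\calW(\Omega_T)$; this is standard but tedious mollification (or Steklov averaging) in time.
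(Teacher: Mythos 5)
Your proposal is correct and follows essentially the same strategy as the paper's: test the weak formulation with $w=u-g$, use the lower bound $1+\alpha u\geq 1$ and \eqref{elipticity} for the principal term, absorb cross terms by Young's inequality, and exploit $0\leq u,g\leq 1$ to control the reaction, absorption, and $g_t,f$ dual pairings. The one small stylistic difference is that you bound $\|u-g\|_{L^2(\Omega_T)}$ directly from the pointwise bounds on $u$ and $g$, whereas the paper retains an $\int_\Omega w^2\,dx$ term in a time-differential inequality and closes it with Gronwall; both routes give the stated $T$-dependent constant.
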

\begin{proof} 
Let $w= u-g$ and use this as the test function for the equation of $u$. 
We then have
\begin{align*}
\langle u_t,w\rangle = - \int_\Omega (1+\alpha u)\langle \A\nabla u,\nabla w\rangle dx +\int_\Omega \theta u (1-u) w dx - \int_\Omega c u w dx +\langle f,w\rangle.
\end{align*}
This can be  rewritten as
\begin{align*}
  \langle w_t,w\rangle + \langle g_t,w\rangle 
&= - \int_\Omega (1+\alpha u) \langle \A\nabla w,\nabla w\rangle dx -\int_\Omega (1+\alpha u) \langle \A\nabla g,\nabla w\rangle dx 
\\
&\quad  +\int_\Omega \theta (1-u) w^2 dx + \int_\Omega \theta (1-u) g w dx - \int_\Omega c u wdx +\langle f,w\rangle.
\end{align*}
Using this together with \eqref{elipticity}, Cauchy-Schwartz inequality and the assumption $0\leq u\leq 1$, we get 
\[
\begin{split}
\frac{d}{dt} \int_\Omega w^2 dx  +  \Lambda^{-1} \int_\Omega|\nabla w|^2 dx 
& \leq C \Big[ \int_\Omega g^2 dx + \int_\Omega|\nabla g|^2 dx + 
\int_\Omega c^2(x,t) dx \\
& \quad + \big(\norm{f(\cdot,t)}_{\hat H^{-1}(\Omega)} + \norm{g_t(\cdot,t)}_{\hat H^{-1}(\Omega)}  \big)\big(\norm{w}_{L^2}+\norm{\nabla w}_{L^2}\big)  + \int_{\Omega}w^2 dx  \Big],
\end{split}
\]
where $C>0$ depends only on $\Lambda$, $\alpha$ and $\theta$. Hence
\begin{align*}
&\frac{d}{dt} \int_\Omega w^2 dx  +  (2\Lambda)^{-1} \int_\Omega|\nabla w|^2 dx\\ 
& \leq C \Big[ \|c(\cdot,t)\|_{L^2(\Omega)}^2 + \norm{f(\cdot,t)}_{\hat H^{-1}(\Omega)}^2+ \Big( \|g(\cdot,t)\|_{L^2(\Omega)}^2 + \|\nabla g(\cdot,t)\|_{L^2(\Omega)}^2 +
\norm{g_t(\cdot,t)}_{\hat H^{-1}(\Omega)}^2\Big)  + \int_{\Omega}w^2 dx  \Big].
\end{align*}
Note that $\norm{w(\cdot,0)}_{L^2(\Omega)}=0.$
Then applying Gronwall's inequality yields
\begin{equation*} 
\sup_{0 \leq t \leq T} \int_\Omega w^2  dx+  \int_{\Omega_T}|\nabla w|^2 dx dt 
\leq C\Big[ \norm{c}_{L^2(\Omega_T)}^2 +   \norm{f}_{L^2(0,T; \hat H^{-1}(\Omega))}^2 +\norm{g}_{\hat \calW(\Omega_T)}^2\Big],
\end{equation*}
for some constant $C = C(T,\Lambda,\alpha, \theta)$. 
Since $0\leq g\leq 1$, we therefore obtain the estimate \eqref{general-energy-est}.
\end{proof}
We also need the following result for  linear parabolic equations 
with mixed boundary conditions.
\begin{lemma} \label{linear-eqn-mixed} 
Let $\A$ satisfy \eqref{elipticity} and $c, f \in L^2(\Omega_T)$ with $0 \leq f \leq c$. 
Suppose $g \in \hat \calW(\Omega_T)$ with $0 \leq g \leq 1$. 
Then there exists a unique weak solution $w \in \hat\calW(\Omega_T)$ of the  problem
\begin{equation} \label{linear-eqn}
\left\{
\begin{array}{lcll}
w_t & = & \nabla\cdot[\A\nabla w]   - cw  + f & \quad\text{in}\quad \Omega_T,\\
w & = & g & \quad\text{on}\quad\partial_D\Omega_T, \\
\begin{displaystyle}
\frac{\partial w}{\partial \bnu} 
\end{displaystyle}
 &=& 0 & \quad\text{on}\quad \partial_N \Omega_T.
\end{array}
\right.
\end{equation}
Moreover, $0 \leq w \leq 1$.
\end{lemma}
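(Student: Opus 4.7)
The plan is to handle existence, uniqueness, and the sandwich $0 \le w \le 1$ separately, relying only on standard linear parabolic theory together with the comparison principle already recorded as Lemma~\ref{CP}.

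For existence, I would first reduce to homogeneous Dirichlet data by setting $\tilde w := w - g$; this leads to
\[
\tilde w_t - \nabla\cdot[\A\nabla \tilde w] + c\tilde w = F, \qquad F := f - cg + \nabla\cdot[\A\nabla g] - g_t,
\]
with $\tilde w = 0$ on $\partial_D\Omega_T$, $\tilde w(\cdot,0) = 0$, and the unchanged Neumann condition on $\partial_N\Omega_T$. I would check that $F \in L^2(0,T;\hat H^{-1}(\Omega))$: the terms $f$ and $cg$ lie in $L^2(\Omega_T)$ since $c, f \in L^2$ and $0 \le g \le 1$; $\nabla\cdot[\A\nabla g]$ belongs to $L^2(0,T;\hat H^{-1})$ because $\A\nabla g \in L^2(\Omega_T)$; and $g_t \in L^2(0,T;\hat H^{-1})$ is built into $g \in \hat \calW(\Omega_T)$. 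The time-dependent bilinear form $a(t;u,v) := \int_\Omega \langle \A\nabla u,\nabla v\rangle\,dx + \int_\Omega c(\cdot,t)uv\,dx$ is continuous on $\hat H_0^1(\Omega) \times \hat H_0^1(\Omega)$ and, using $c \ge 0$ together with the uniform ellipticity of $\A$, satisfies a G{\aa}rding-type coercivity estimate. Existence and uniqueness of such $\tilde w \in \hat\calW(\Omega_T)$ then follow from Lions' abstract theorem for linear parabolic equations, or equivalently from a Faedo--Galerkin construction in an orthonormal basis of $\hat H_0^1(\Omega)$.

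For the sandwich $0 \le w \le 1$, I would apply Lemma~\ref{CP} twice (with $\theta = 0$). The constant $\underline w \equiv 0$ is a weak subsolution: the required interior inequality reduces to $0 \le f$, and on $\partial_D \Omega_T$ we have $\underline w = 0 \le g$. Comparing $u_1 = \underline w$ with $u_2 = w$ yields $w \ge 0$. Similarly, $\overline w \equiv 1$ is a weak supersolution: the interior inequality becomes $c \cdot 1 \ge f$, which is precisely the standing hypothesis $f \le c$, and $\overline w = 1 \ge g$ on $\partial_D\Omega_T$. Comparing $u_1 = w$ with $u_2 = \overline w$ gives $w \le 1$. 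Uniqueness of the weak solution to \eqref{linear-eqn} is then immediate from the same comparison principle applied in both directions to the difference of any two solutions (or, alternatively, by testing that difference against itself and invoking Gronwall in the spirit of Lemma~\ref{uniqueness-u}).

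I do not anticipate a serious obstacle. The only delicate point is the mixed-boundary bookkeeping: one must verify that every term in $F$ pairs correctly with test functions in $\hat\calE_0(\Omega_T)$ through the duality $\langle\cdot,\cdot\rangle_{\hat H^{-1},\hat H_0^1}$, and that the coercivity of $a(t;\cdot,\cdot)$ holds regardless of whether $\Gamma$ is empty, a proper subset of $\partial\Omega$, or all of $\partial\Omega$ (in the extreme case $\Gamma = \emptyset$ a Poincar\'e inequality on $\hat H_0^1(\Omega) = H^1(\Omega)$ is unavailable, but the resulting lower-order $L^2$ term is harmlessly absorbed via Gronwall in the energy estimate).
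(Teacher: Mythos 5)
Your blueprint (reduce to homogeneous data, invoke linear theory, then sandwich with Lemma~\ref{CP}) is natural, and the sandwich step and the uniqueness step are fine. But the existence step has a genuine gap: with $c$ merely in $L^2(\Omega_T)$ and the dimension $n\ge 2$ arbitrary, the bilinear form $a(t;u,v)=\int_\Omega \langle \A\nabla u,\nabla v\rangle\,dx + \int_\Omega c(\cdot,t)\,uv\,dx$ is not continuous on $\hat H_0^1(\Omega)\times \hat H_0^1(\Omega)$, and indeed the zeroth-order integral may not even be finite. By Sobolev embedding $H^1(\Omega)\hookrightarrow L^{2n/(n-2)}(\Omega)$ for $n\ge 3$, one has $uv\in L^{n/(n-2)}(\Omega)$, and the dual exponent is $n/2$; for $n\ge 5$ one therefore needs $c(\cdot,t)\in L^{n/2}(\Omega)$ (with a suitable norm in time) rather than $c(\cdot,t)\in L^2(\Omega)$. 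Since the hypothesis of Lemma~\ref{linear-eqn-mixed} only provides $c\in L^2(\Omega_T)$, Lions' theorem and the Faedo--Galerkin construction, as you invoke them, are simply not applicable: the operator $u\mapsto cu$ is not bounded from $L^2(0,T;\hat H_0^1)$ into $L^2(0,T;\hat H^{-1})$.

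The paper avoids this by first truncating: set $c_k=\min\{c,k\}$ and $f_k=\min\{f,k\}$, so the coefficients are bounded and the classical linear theory (cited as~\cite{Salsa}) gives a unique $w_k\in\hat\calW(\Omega_T)$; since $0\le f_k\le c_k$, Lemma~\ref{CP} yields $0\le w_k\le 1$, and Lemma~\ref{MPv} gives a $k$-uniform $\hat\calW$ bound. The uniform $L^\infty$ bound is precisely what makes $\{c_k w_k\}$ bounded in $L^2(\Omega_T)$, so one can extract weak limits and pass to the limit in the weak formulation, recovering $cw\in L^2(\Omega_T)$ as required by Definition~\ref{weak-def}. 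In short, the $L^\infty$ bound must be obtained \emph{at the approximation level}, not just a posteriori; that is the missing ingredient in your proposal. Your reduction $\tilde w=w-g$ and the identification $F\in L^2(0,T;\hat H^{-1}(\Omega))$ are correct, as is the use of $0$ and $1$ as barriers; with the truncation inserted in front of the Galerkin step, your outline would become complete and would essentially coincide with the paper's argument.
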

\begin{proof} For each $k \in \mathbb{N}$, let $c_k = \min\{c, k\}$ and $f_k = \min\{f, k\}$.
From the standard theory of linear parabolic equations in divergence forms with bounded coefficients (see, for example, \cite[Theorem~9.9]{Salsa}), there exists a unique 
weak solution $w_k \in \hat\calW(\Omega_T)$  of the approximation  problem
\begin{equation} \label{cut-sol}
\left\{
\begin{array}{lcll}
\partial_t w_k &=& \nabla\cdot[\A \nabla w_k] -c_k w_k + f_k & \quad\text{in}\quad \Omega_T, \\
w_k & = & g & \quad\text{on}\quad \partial_D \Omega_T, \\
\begin{displaystyle}
\frac{\partial w_k}{\partial \bnu} 
\end{displaystyle}
 &=& 0 & \quad \text{on}\quad\partial_N \Omega_T. 
\end{array}
\right.
\end{equation}
Since $0\leq f_k \leq c_k$ and $0\leq g\leq 1$, we see that $u_1 =0$ is a weak sub-solution  and $u_2=1$ is a weak super-solution   
to the problem \eqref{cut-sol}. Therefore,  it follows from Lemma~\ref{CP} that
\[
0 \leq w_k \leq 1 \quad\mbox{in}\quad \Omega_T, \quad \forall k \in \mathbb{N}.
\]
Also by Lemma~\ref{MPv}, we obtain
\begin{equation*} \label{cut-enqr}
\begin{split}
\sup_{0\leq t \leq T}\int_\Omega |w_k|^2 dx + \int_{\Omega_T}|\nabla w_k|^2 dxdt & 
\leq C\Big[|\Omega|  + \norm{c_k}_{L^2(\Omega_T)}^2 + \norm{f_k}_{L^2(\Omega_T)}^2 + 
\norm{g}_{\hat \calW(\Omega_T)}^2\Big] \\
& \leq
C\Big[|\Omega| + \norm{c}_{L^2(\Omega_T)}^2   + \norm{f}_{L^2(\Omega_T)}^2 + 
\norm{g}_{\hat \calW(\Omega_T)}^2\Big]. 
\end{split}
\end{equation*}
This together with \eqref{cut-sol} and the boundedness of $w_k$ yield
\begin{equation*}
\norm{w_k}_{\hat \calW(\Omega_T)} \leq C, \quad \forall \ k \in \mathbb{N}.
\end{equation*}
By the compact embedding 
\eqref{cmpAL} and the fact that $\{c_k w_k\}$ is bounded in $L^2(0,T;\Omega_T)$, there is a subsequence, still denoted by $\{w_k\}$, and a function $w\in \hat \calW(\Omega_T)$ such that
\begin{equation*}
 \left \{ \  
\begin{array}{lll}
&w_k \to w  \mbox{ strongly in } L^2(\Omega_T),\\
&c_k w_k \rightharpoonup cw \quad\text{and}\quad \nabla w_k \rightharpoonup \nabla w \text{ weakly in } L^2(\Omega_T),\\
&\partial_t w_k \rightharpoonup \partial_t  w \text{ weakly-* in } L^2(0,T;\hat H^{-1}(\Omega_T)).
\end{array}  \right . 
\end{equation*}
Clearly, $0\leq w \leq 1$.
Now, by taking $k \rightarrow \infty$, it follows from \eqref{cut-sol} that $w$ is a weak solution of 
\eqref{linear-eqn}.  The uniqueness  of  the solution $w$ is guaranteed by Lemma~\ref{CP}.
\end{proof}
Now, we are ready to prove Theorem \ref{existence-u}.
\begin{proof}[\textbf{Proof of Theorem \ref{existence-u}}]
Thanks to Lemma~\ref{uniqueness-u} and Lemma~\ref{MPv}, it remains to prove the existence of a weak solution $u\in \hat\calW(\Omega_T)$ to  \eqref{main-ref.eqn} 
satisfying $0\leq u\leq 1$.   We give the proof for the case $\theta >0$. The case $\theta =0$ is similar and, in fact, simpler. Without loss of generality, let us assume $\theta =1$. 
The proof is based on the Schauder fixed point theorem. Alternatively, one can also use the iterative monotone method based on lower and upper solutions (see \cite{Sat}). 
Define
\[ E= \{v \in L^2(\Omega_T) :  0 \leq v \leq 1 \}, \quad f(s) = 2s - s^2,\]
and let 
\[
\mathbb{L}_v [w] = \nabla \cdot[(1+\alpha v) \A\nabla w] - (c +1) w.
\]
Note that the function $f(s)$ is increasing on $[0,1]$. Hence
\[
0\leq f(v) \leq f(1) = 1 \leq c+1, \quad \forall \ v \in E.
\]
Therefore, it follows from Lemma \ref{linear-eqn-mixed} that for each $v \in E$, there exists a unique weak solution $w \in \hat \calW(\Omega_T)$ with $0\leq w \leq 1$ of the problem
\begin{equation} \label{L-def}
\left\{
\begin{array}{lcll}
w_t & = & \mathbb{L}_v[w] + f(v) & \quad\text{in}\quad  \Omega_T, \\
w & =& g & \quad\text{on}\quad \partial_D \Omega_T,\\
\begin{displaystyle}
\frac{\partial w}{\partial \bnu} 
\end{displaystyle}
 & =& 0 &\quad\text{on}\quad \partial_N \Omega_T.
\end{array}
\right.
\end{equation} 
Thus, we can define the map $\mathcal{L} : E
\rightarrow E \subset L^2(\Omega_T)$ by $\mathcal{L}(v) = w$, for each $v\in E$,
where $w$ is the solution of \eqref{L-def}.  
It is clear that $E$ is a closed, convex set in $L^2(\Omega_T)$. We now seek for 
$u \in E$ such that $u = \mathcal{L}(u)$. By \cite[Corollary 11.2]{GT}, it suffices to show that $\mathcal{L}$ is completely continuous. Note that from 
Lemma \ref{MPv}, there is $C = C(T, \Lambda)$ such that 
\beq\label{Lvbound}
\sup_{0\leq t \leq T}\int_{\Omega}|\mathcal{L}(v)|^2 dx 
+ \int_{\Omega_T} |\nabla \mathcal{L}(v)|^2 dxdt 
\leq C\Big [1 + |\Omega| + \norm{c}_{L^2(\Omega_T)}^2 + \norm{g}_{\hat\calW(\Omega_T)}^2\Big], 
\quad \forall v \in E.
\eeq
By \eqref{L-def}, the bound \eqref{Lvbound} and the fact $0 \leq \mathcal{L}(v) \leq 1$, we have
\begin{equation} \label{pre-compact}
\norm{\mathcal{L}(v)}_{\hat\calW(\Omega_T)} \leq C, \quad \forall \ v \in E.
\end{equation}
From this and the compact imbedding \eqref{cmpAL}, we conclude that 
$\mathcal{L}(E)$ is pre-compact in $L^2(\Omega_T)$.
Therefore, it remains to show that $\mathcal{L}$ is continuous in $L^2(\Omega_T)$-topology.  
Let $\{v_k\}$ be a sequence in 
$E$ such that $v_k \rightarrow v$ strongly in $L^2(\Omega_T)$. 
For each $k \in \mathbb{N}$, let $w_k = \mathcal{L}(v_k)$, 
i.e. $0 \leq w_k \leq 1$, $w_k \in \hat\calW(\Omega_T)$ and $w_k$ is a weak solution of
\begin{equation} \label{L(v-k)-eqn}
\left \{ 
\begin{array}{lcll}
\partial_t w_k & = & \mathbb{L}_{v_k} [w_k] + f(v_k) & \quad\text{in}\quad \Omega_T, \\
w_k & = & g  & \quad\text{on}\quad \partial_D \Omega_T, \\
\begin{displaystyle}
\frac{\partial w_k}{\partial \bnu} 
\end{displaystyle}
 & = & 0 & \quad\text{on}\quad \partial_N \Omega_T.
\end{array} \right.
\end{equation}
From \eqref{pre-compact}, we have 
\begin{equation} \label{w-k-uniform}
\norm{w_k}_{\hat\calW(\Omega_T)} \leq C, \quad \forall\ k \in \mathbb{N}.
\end{equation}
Now, let $w = \mathcal{L}(v)$, i.e. $0 \leq w \leq 1$, $w \in \hat\calW(\Omega_T)$ and $w$ is the  weak solution of \eqref{L-def}. We need to prove that
\begin{equation} \label{w-k-converge}
w_k \rightarrow w\quad \text{strongly in} \quad L^2(\Omega_T).
\end{equation}

Let  $\{w_{k'}\}_{k'}$ be any subsequence of $\{w_k\}$.
From \eqref{w-k-uniform} and \eqref{cmpAL}, 
there exists a subsequence of $\{k'_m\}_{m}$ of $\{k'\}$ and $\bar w\in \hat\calW(\Omega_T)$ such that as $m\to\infty$,
\begin{equation} \label{wk-con-cont}
\left\{ 
\begin{aligned}
&\quad w_{k'_m} \to \bar{w} \text{ strongly in }  L^2(\Omega_T),\\
&\quad  \nabla w_{k'_m} \rightharpoonup \nabla \bar{w} \quad\text{and}\quad
v_{k'_m}\nabla w_{k'_m} \rightharpoonup v\nabla \bar{w} \text{ weakly in } L^2(\Omega_T),\\
&\quad  \partial_t w_{k'_m} \rightharpoonup \partial_t \bar w \text{ weakly-* in } L^2(0,T;\hat H^{-1}(\Omega)).
\end{aligned}
\right.
\end{equation}
From \eqref{L(v-k)-eqn}, \eqref{wk-con-cont}, the convergence of $\{v_k\}$, and the uniform 
boundedness of $\{v_k\}, \{w_k\}$, we find that 
$\bar{w}$ is also a weak solution of \eqref{L-def}. By the uniqueness of the solution, Lemma \ref{linear-eqn-mixed}, we 
see that $\bar{w} = w = \mathcal{L}(v)$. 
Thus $ w_{k'_m} \to w$ strongly in $L^2(\Omega_T)$. Therefore, we infer that \eqref{w-k-converge} holds and conclude that the map $\mathcal{L}$ is continuous. 
The proof is complete.
\end{proof}
\subsection{Interior $W^{1,p}$-estimates}\label{interiorW}
In this subsection we study interior regularity for solutions to \eqref{vsys}.
We  consider the case $\alpha>0$ since the case $\alpha =0$ is much simpler. 
For the purpose of brevity, we take $\alpha~=~1$  from now on.
We thus consider the following parabolic equation
\begin{equation}\label{PEQ}
u_t  =  \nabla\cdot[(1+\lambda u)\A \nabla u] + \theta^2 u(1-\lambda u) - \lambda \theta c u
\end{equation}
in $Q_6$, where $ \lambda, \theta>0$ are constants and $c(x,t)$ is a non-negative measurable function. The coefficient matrix $\A=(a_{ij}): Q_6 \to  \M^{n\times n}$ is assumed to be
symmetric, measurable  
and there exists a constant $\Lambda>0$ such that
\begin{equation}\label{interior-elipticity}
\Lambda^{-1} |\xi|^2 \leq \xi^T \A(x,t) \xi \leq \Lambda |\xi|^2\quad \mbox{for a.e. $(x,t)\in Q_6$ and for all }\xi\in\R^n.
\end{equation}
Hereafter, $Q_\rho(x,t) \eqdef B_\rho(x)\times (t-\rho^2, t+\rho^2]$ is a centered parabolic cube and $Q_\rho \eqdef Q_\rho(0,0)$. Observe that  $u$ is a weak solution of \eqref{PEQ} in $Q_6$ iff the function 
$\bar{u}  \eqdef \lambda u$ is a weak solution of
\begin{equation}\label{RPEQ}
\bar{u}_t  =  \nabla\cdot[(1+\bar{u} )\A \nabla \bar{u} ] + \theta^2 \bar{u} (1- \bar{u} ) - \lambda \theta c \bar{u}  \quad\mbox{in}\quad Q_6.
\end{equation}

We are going to derive interior $W^{1,p}$-estimates for solutions of \eqref{PEQ} by freezing its coefficient and comparing it to solutions  of the equation
\begin{equation}\label{REQ}
v_t  =  \nabla\cdot[(1+\lambda v)\bar{\A}_{B_4}(t)\nabla v] + \theta^2 v(1-\lambda v)\quad\mbox{in}\quad Q_4,
\end{equation}
where $\bar{\A}_{B_4}(t)$ is the average of $\A(\cdot,t)$ over $B_4$, that is, $\bar{\A}_{B_4}(t) :=\fint_{B_4}{\A(x,t) dx}$.
Notice that $v$ is a weak solution of \eqref{REQ} iff the function 
$\bar{v}  \eqdef \lambda v$ is a weak solution of 
\begin{equation}\label{reqv}
\bar{v}_t  =  \nabla\cdot[(1+\bar{v})\bar{\A}_{B_4}(t)\ \nabla \bar{v}] + \theta^2 \bar{v}(1-\bar{v})\quad\mbox{in}\quad Q_4.
\end{equation}
Our main interior regularity result  states as follows: 
\begin{theorem}\label{first-main-result} 
Assume that $ \lambda>0$, $0<\theta\leq 1$, $\A$ satisfies \eqref{interior-elipticity} and $c\in L^2(Q_6)$. 
For any $p>2$, there exists a constant $\delta=\delta(p, \Lambda, n)>0$ such that if 
\begin{equation*}
\sup_{0<\rho\leq 4}\sup_{(y,s)\in Q_1} \fint_{Q_\rho(y,s)}{|\A(x,t) - \bar{\A}_{B_\rho(y)}(t)|^2\, dx dt}\leq \delta,
\end{equation*}
and $u\in \mathcal W(Q_6)$ is a weak solution of \eqref{PEQ}   satisfying
$0 \leq u \leq \frac{1}{\lambda}\, \mbox{ in } Q_5$,
then
\begin{equation}\label{main-estimate}
\int_{Q_1}|\nabla u|^p \, dx dt\leq C\left\{  \Big(\frac{\theta}{\lambda} \vee  \|u\|_{L^2(Q_6)}\Big)^p + \int_{Q_6}|c|^p \, dx dt\right\}.
\end{equation}
Here  $C>0$ is a constant 
 depending only on  $p$, $\Lambda$ and $n$.
\end{theorem}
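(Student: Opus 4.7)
The strategy is the Caffarelli--Peral perturbation method, adapted to the two-parameter scaling framework highlighted in the introduction. On each small parabolic cube contained in $Q_1$ I will compare $u$ with a model solution whose gradient admits an a priori $L^\infty$ bound, and then convert these pointwise comparisons into higher integrability of $\nabla u$ via a Vitali covering / distribution-function argument applied to the parabolic Hardy--Littlewood maximal function of $|\nabla u|^2$. Since $u$ solves \eqref{PEQ} iff $\bar u=\lambda u$ solves \eqref{RPEQ} and the hypothesis $0\le u\le 1/\lambda$ becomes $0\le\bar u\le 1$, the entire argument will be carried out for $\bar u$; this way the leading coefficient is normalized to $1$ and, using $0<\theta\le 1$, the reaction terms $\theta^2\bar u(1-\bar u)$ and $\lambda\theta c\bar u$ are uniformly bounded. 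The factors of $\lambda$ and $\theta$ will recombine at the end, through $\nabla u=\lambda^{-1}\nabla\bar u$ and $\|\bar u\|_{L^\infty}\le 1$, to produce the $(\theta/\lambda)^p$ and $\int|c|^p$ contributions in \eqref{main-estimate}.

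\textbf{Two-step approximation.} For each $(y,s)\in Q_1$ and $0<\rho\le 4$, I first compare $u$ on $Q_\rho(y,s)$ to the solution $v$ of \eqref{REQ} on the same cube with matching boundary data, where $\A$ has been replaced by its spatial average $\bar{\A}_{B_\rho(y)}(t)$; this is the content of the analogue of Lemma \ref{lm:compare-solution}, which yields smallness of $\fint|\nabla u-\nabla v|^2$ in terms of $[\A]_{BMO}\le\delta$. I then compare $v$ to a function $w$ solving a further simplified problem, for which interior gradient estimates of DiBenedetto type give $\|\nabla w\|_{L^\infty(Q_{\rho/2})}\le N(\Lambda,n)$; these are available because the principal part depends only on $t$ and $1+\bar v$ is smooth and uniformly positive on $[0,1]$. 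Concatenating the two steps, for every prescribed $\e>0$ there exists $\delta=\delta(\e,p,\Lambda,n)$ such that $\fint_{Q_{\rho/2}(y,s)}|\nabla u-\nabla w|^2\le\e^2$ after normalization by the natural scale.

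\textbf{Level-set covering.} With $f=|\nabla u|^2$ and $g$ a scaled version of $|c|^2+(\theta/\lambda)^2$, the two-step approximation combined with a standard Vitali covering in parabolic cubes gives the density estimate
\[
|\{\M(f)>N_1^2\beta\}\cap Q_1|\le\e\,|\{\M(f)>\beta\}\cap Q_1|+|\{\M(g)>\delta^2\beta\}\cap Q_1|
\]
for every $\beta$ above a threshold proportional to $\|f\|_{L^1(Q_6)}+\|g\|_{L^1(Q_6)}$, with $N_1=N_1(\Lambda,n)$. Choosing $\e$ so that $N_1^p\e<1$, multiplying by $\beta^{p/2-1}$ and integrating in $\beta$ yields $\|\nabla u\|_{L^p(Q_1)}^p\lesssim \|u\|_{L^2(Q_6)}^p+(\theta/\lambda)^p+\|c\|_{L^p(Q_6)}^p$ by the parabolic Hardy--Littlewood maximal theorem, which is \eqref{main-estimate}.

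\textbf{Main obstacle.} The heart of the argument is the \emph{uniformity in $\lambda$ and $\theta$} of the two approximation estimates. A direct compactness argument (sending $\delta\to 0$ with $\lambda,\theta$ fixed) produces constants that depend on these parameters through the lower-order terms; the remedy, foreshadowed by the authors, is to first rescale to $\bar u\in[0,1]$, then absorb the $\theta$-dependent contributions via the energy estimate in Lemma \ref{MPv} (whose constant is uniform in $\lambda,\theta$ precisely because $0\le\bar u\le 1$), and finally exploit the invariance of the class \eqref{vsys} under the two-parameter rescaling $(x,t,u)\mapsto(sx,s^2t,r^{-1}u)$ to pass between different $(\lambda,\theta)$ regimes. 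Without this invariance the constant $C$ in \eqref{main-estimate} could not be taken independent of $\lambda$ and $\theta$.
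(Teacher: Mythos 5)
Your plan follows the same Caffarelli--Peral strategy the paper uses, and the outline of the level-set/iteration step is correct, so the overall architecture is sound. There are, however, two places where your description diverges from what actually closes the argument.

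First, the ``two-step'' approximation is redundant: after comparing $u$ to the solution $v$ of \eqref{REQ} with frozen coefficient $\bar{\A}_{B_{4r}}(t)$, the paper obtains the required $L^\infty$ bound for $\nabla v$ directly (Lemma~\ref{W1infty-est}, proved in the appendix via De Giorgi iteration for $\partial_{x_i}\bar v$), since the reference equation already has coefficients depending only on $t$. Introducing a further function $w$ ``solving a simplified problem'' does not buy anything and you do not specify what it solves, so this step as written has no content.

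Second, and more substantively, your ``main obstacle'' paragraph misidentifies the mechanism that produces a $\delta$ independent of $\lambda$ and $\theta$. The constant in Lemma~\ref{MPv} does depend on $\theta$ (it is $C(T,\Lambda,\alpha,\theta)$), so that lemma cannot supply the uniformity. The rescaling $(x,t,u)\mapsto(sx,s^2t,r^{-1}u)$ is used in the \emph{iteration} of the density estimate (replacing $u$ by $u/\sqrt N,\ u/N,\dots$ while replacing $\lambda$ by $\sqrt N\lambda,\ N\lambda,\dots$), not in the approximation lemma. The uniformity of the approximation step is achieved differently: in Lemma~\ref{lm:compare-solution} the paper argues by contradiction with \emph{sequences} $\lambda_k,\theta_k$ that are permitted to vary, extracts convergent subsequences, and splits the limit into the two cases $\lambda>0$ (where one identifies the limiting nonlinear equation and invokes Lemma~\ref{uniqueness-u}) and $\lambda=0$ (where the normalized sequence $\bar v^k=\lambda_k v^k$ is shown to converge to a constant and the difference $w^k$ solves a linear equation whose only solution is zero, via Lemma~\ref{CP}). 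If you instead run the compactness argument with $\lambda,\theta$ fixed you get a $\delta$ that depends on them, which is exactly what you must avoid; the cure is not a rescaling trick but the more careful two-case compactness. As stated, your proof would stall at precisely this point.

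One additional small point: the final estimate also needs the case distinction $\theta\le\lambda$ versus $\theta>\lambda$ (handled in the paper by the substitution $u\mapsto u/K$, $\lambda\mapsto K\lambda$, $K=\theta/\lambda$), which is what produces the $(\theta/\lambda)^p\vee 1$ factor; your sketch does not address this and should be supplemented before the $(\theta/\lambda)^p$ term in \eqref{main-estimate} is justified.
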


The proof of this theorem will be given at the end of subsection~\ref{interior-density-gradient} and will be based on a series of results presented in the next three subsections.

\subsubsection{Some fundamental estimates}\label{sub:fundamental}

Our first result is a $L^2$-estimate for $\nabla u$ in terms of $L^2$-norm of $u$.

\begin{lemma}\label{W^{1,2}-est} Assume $\lambda, \theta>0$, $\A$ satisfies \eqref{interior-elipticity} and $c$ is a non-negative measurable function on $Q_4$. Let $u\in \calW(Q_4)$ be a  non-negative weak solution of \eqref{PEQ} in $Q_4$.
Then there exists $C=C(n,\Lambda)>0$ such that
\begin{equation}\label{guu}
\int_{Q_2} (1 + \lambda u) |\nabla u|^2\, dxdt
\leq C\int_{Q_3} (1+\lambda u +\theta^2)  u^2\, dxdt.
\end{equation}
\end{lemma}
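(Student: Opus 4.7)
The proof is a standard parabolic Caccioppoli-type energy estimate, with the twist that the diffusion coefficient $(1+\lambda u)$ must be kept intact on both sides. The plan is to test the weak formulation of \eqref{PEQ} against $\eta^2 u$, where $\eta$ is a smooth space-time cutoff, and carefully track the factor $(1+\lambda u)$ through the ensuing integration by parts.

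First, I would fix $\eta \in C^\infty_c(\R^{n+1})$ such that $\eta \equiv 1$ on $Q_2$, $\mathrm{supp}\,\eta \subset Q_3$, $0 \leq \eta \leq 1$, and $|\nabla \eta| + |\eta_t|^{1/2} \leq C$. Since $u \in \calW(Q_4)$ and $\lambda u\nabla u \in L^2$ (by the definition of weak solution), the function $\eta^2 u$ is an admissible test function. Testing and integrating by parts in time on $(-16,\tau]$ for $\tau \in (-4,4]$ gives
\begin{align*}
\frac{1}{2}\int_{B_4} u^2\eta^2\Big|_\tau dx
&+ \int_{-16}^{\tau}\!\!\!\int_{B_4}(1+\lambda u)\eta^2\langle \A \nabla u, \nabla u\rangle\, dx\, dt \\
&= \int_{-16}^{\tau}\!\!\!\int_{B_4} u^2 \eta \eta_t\, dx\, dt
- 2\int_{-16}^{\tau}\!\!\!\int_{B_4}(1+\lambda u)\eta u \langle \A\nabla u, \nabla \eta\rangle\, dx\, dt \\
&\quad+ \int_{-16}^{\tau}\!\!\!\int_{B_4}\theta^2 u^2(1-\lambda u)\eta^2\, dx\, dt
- \int_{-16}^{\tau}\!\!\!\int_{B_4}\lambda\theta c u^2 \eta^2\, dx\, dt,
\end{align*}
where I used that $\eta$ vanishes near $t = -16$.

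Next, I would invoke \eqref{interior-elipticity} to bound $\langle \A \nabla u, \nabla u\rangle \geq \Lambda^{-1}|\nabla u|^2$ in the second term of the left-hand side, and use Cauchy-Schwarz together with Young's inequality to absorb the cross term:
\[
\big|2(1+\lambda u)\eta u \langle \A\nabla u, \nabla \eta\rangle\big|
\leq \tfrac{1}{2}\Lambda^{-1}(1+\lambda u)\eta^2 |\nabla u|^2 + C(\Lambda)(1+\lambda u) u^2 |\nabla \eta|^2.
\]
The crucial point is that the common factor $(1+\lambda u) \geq 0$ is preserved on both sides of this inequality, so half of the gradient term on the left is absorbed, leaving the correct weighted gradient bound. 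The two terms involving the sign-indefinite reaction split as
\[
\theta^2 u^2(1-\lambda u)\eta^2 = \theta^2 u^2 \eta^2 - \lambda\theta^2 u^3 \eta^2,
\]
and since $u \geq 0$ and $c \geq 0$, the terms $-\lambda \theta^2 u^3 \eta^2$ and $-\lambda \theta c u^2 \eta^2$ have favorable sign and are simply discarded. Dropping the nonnegative boundary term $\frac{1}{2}\int_{B_4} u^2\eta^2|_\tau dx$ on the left as well and choosing $\tau = 4$ so that $\eta \equiv 1$ on $Q_2$, the remaining inequality reads
\[
\tfrac{1}{2}\Lambda^{-1}\int_{Q_2}(1+\lambda u)|\nabla u|^2\, dx\, dt
\leq C\int_{Q_3}\!\Big[u^2 + (1+\lambda u)u^2 + \theta^2 u^2\Big]\, dx\, dt,
\]
which collapses to the asserted bound \eqref{guu}. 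The only genuine technical point in this argument is the Young's inequality step above, where one must split the weight $(1+\lambda u)^{1/2}\cdot(1+\lambda u)^{1/2}$ symmetrically to keep the full factor $(1+\lambda u)$ attached to $|\nabla u|^2$ on the left; every other step is standard.
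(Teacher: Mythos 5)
Your proof is correct and follows essentially the same route as the paper: test the weak formulation against $\varphi^2 u$ with a cutoff $\varphi$ supported in $Q_3$ equal to $1$ on $Q_2$, integrate by parts in time, use Cauchy--Schwarz and Young's inequality to absorb the cross term $2(1+\lambda u)\varphi u\langle \A\nabla u,\nabla\varphi\rangle$ while retaining the full weight $(1+\lambda u)$ on the gradient term, and discard the sign-favorable contributions from $-\lambda\theta^2 u^3\varphi^2$ and $-\lambda\theta c u^2\varphi^2$. The only cosmetic difference is that the paper controls the cross term via $|\langle\A\nabla u,\nabla\varphi\rangle|\leq\sqrt{\langle\A\nabla u,\nabla u\rangle}\sqrt{\langle\A\nabla\varphi,\nabla\varphi\rangle}$ instead of invoking uniform ellipticity directly, which is an equivalent bookkeeping choice.
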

\begin{proof}
Let $\varphi$ be the standard cut-off function which is $1$ 
on $Q_2$ and zero near $\partial_p Q_3$.  Then, by 
multiplying equation \eqref{PEQ} by $\varphi^2 u$ 
and using  integration by parts
  we get
\begin{align*}
&\int_{Q_4} \Big[(\varphi^2 \frac{u^2}{2})_t -\varphi \varphi_t u^2 \Big] \, dxdt =\int_{Q_4} u_t \,\varphi^2 u \, dxdt\\
&= -\int_{Q_4} (1+\lambda u) \langle \A \nabla u ,
\nabla (\varphi^2 u) \rangle\, dxdt+\theta^2 \int_{Q_4} u (1-\lambda u) \varphi^2 u \, dxdt
-\lambda \theta \int_{Q_4} c  \varphi^2 u^2 \, dxdt\\
&\le  -\int_{Q_4} (1+\lambda u) \langle \A \nabla u ,\nabla  u)\varphi^2 \rangle\, dxdt
-2\int_{Q_4} (1+\lambda u) \langle \A \nabla u ,\nabla \varphi )\varphi u \, dxdt
+\theta^2 \int_{Q_4} \varphi^2 u^2 \, dxdt.
\end{align*}  
Using the inequality $ |\langle \A \nabla u, \nabla \varphi\rangle|^2\leq  \langle \A \nabla u, \nabla u\rangle  \, \langle \A \nabla \varphi, \nabla \varphi\rangle$, we deduce from this that
\begin{align*}
 &\int_{Q_4} (1 + \lambda u) \langle \A \nabla u, \nabla u\rangle  \varphi^2 dxdt + \frac{1}{2}\int_{B_4} \varphi(x, 16)^2 u(x, 16)^2\, dx\\
& \leq  2 \int_{Q_4} (1+\lambda u)  \sqrt{\langle \A \nabla u, \nabla u\rangle } \sqrt{\langle \A \nabla \varphi, \nabla \varphi\rangle }\, \varphi u \, dxdt  + \int_{Q_4} \varphi\varphi_t u^2\, dxdt + 
\theta^2 \int_{Q_4}   \varphi^2 u^2 \, dxdt\\
&\leq \frac{1}{2} \int_{Q_4} (1+\lambda u) \langle \A \nabla u, \nabla u\rangle  \varphi^2 \, dxdt +  \int_{Q_4} \Big[2 (1+\lambda u) \langle \A \nabla \varphi, \nabla \varphi\rangle  + \varphi\varphi_t +  \theta^2 \varphi^2 \Big]u^2\, dxdt.
\end{align*} 
Hence it follows from  condition \eqref{interior-elipticity} for $\A$ that 
\begin{align*}
 &\frac{\Lambda^{-1}}{2}\int_{Q_4} (1 + \lambda u) |\nabla u|^2\varphi^2 dxdt\leq  \int_{Q_4} \Big[2 \Lambda(1+\lambda u) |
\nabla \varphi|^2  + \varphi|\varphi_t| +  \theta^2  \varphi^2 \Big]u^2\, dxdt,
\end{align*}  
which yields the conclusion \eqref{guu}.
\end{proof}

We need the following regularity result for   equation \eqref{reqv} whose proof is given in Appendix~\ref{apend}.
 
\begin{lemma}\label{W1infty-est} 
Assume $0<\theta\leq 1$ and $\A_0: (-16, 16]\to \M^{n\times n}$ is  measurable such that
\begin{equation}\label{eigenvalues-controlled}
\Lambda^{-1} |\xi|^2 \leq \xi^T \A_0(t) \xi \leq \Lambda |\xi|^2\quad \mbox{for a.e. $t\in (-16, 16)$ and for all } \xi\in\R^n.
\end{equation}
Let $\bar{v}\in \mathcal W(Q_4)$ be a weak solution of 
\begin{equation}\label{UEQ}
\bar{v}_t  =  \nabla\cdot[(1+\bar{v})\A_0(t)  \nabla \bar{v}] + \theta^2 \bar{v}(1-\bar{v})\quad\mbox{in}\quad Q_4
\end{equation}
satisfying $0\leq \bar{v} \leq 1$ in $Q_4$. Then there exists $C>0$ depending only on $n$ and $\Lambda$ such that
\begin{equation}\label{regularity-reference-eq}
\|\nabla \bar v\|_{L^\infty(Q_3)}^2\leq C\fint_{Q_4}{|\nabla \bar{v}|^2 \, dxdt}.
\end{equation}
\end{lemma}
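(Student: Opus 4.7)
The key structural feature to exploit is that $\A_0$ depends only on $t$ (not on $x$) and that $0\leq \bar v\leq 1$ gives uniform parabolicity of the operator with constants controlled by $\Lambda$ alone. Because $\A_0$ is independent of $x$ and the solution is bounded, the equation
\[
\bar v_t=\partial_i\bigl((1+\bar v)a_{ij}(t)\,\partial_j \bar v\bigr)+\theta^2\bar v(1-\bar v)
\]
can be viewed as a linear divergence-form equation with bounded, measurable coefficient $(1+\bar v)\A_0(t)$ whose ellipticity ratio is controlled by $2\Lambda$. The source $\theta^2\bar v(1-\bar v)$ is bounded by $1$ since $0<\theta\le1$ and $0\le\bar v\le1$.

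The plan is in three stages. \textbf{Stage 1 (Hölder regularity).} Apply parabolic De Giorgi--Nash--Moser (e.g.\ as in Ladyzhenskaya--Solonnikov--Uraltseva, Chap.\ V) to obtain $\bar v\in C^{\alpha,\alpha/2}_{\mathrm{loc}}(Q_4)$ for some $\alpha=\alpha(n,\Lambda)\in(0,1)$, with the Hölder seminorm on $Q_{7/2}$ controlled by $\|\bar v\|_{L^\infty(Q_4)}\leq 1$. \textbf{Stage 2 (Gradient $L^\infty$ bound).} Differentiate the equation formally in $x_k$; using that $\A_0$ has no $x$-dependence, $w_k:=\partial_k \bar v$ is a weak solution of
\[
(w_k)_t=\partial_i\bigl((1+\bar v)a_{ij}(t)\,\partial_j w_k\bigr)+\partial_i\bigl(w_k\,a_{ij}(t)\,\partial_j\bar v\bigr)+\theta^2(1-2\bar v)\,w_k,
\]
which is made rigorous by testing with difference quotients $\Delta_k^h\bar v$ and exploiting the Caccioppoli-type estimate of Lemma~\ref{W^{1,2}-est} together with the $C^\alpha$ regularity from Stage 1. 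Since $(1+\bar v)\A_0(t)$ is now Hölder in $x$ (and measurable in $t$), one obtains $\bar v\in C^{1+\alpha,(1+\alpha)/2}_{\mathrm{loc}}$; in particular $\nabla\bar v\in L^\infty_{\mathrm{loc}}(Q_4)$. \textbf{Stage 3 (Quantitative bound).} Combine the qualitative $L^\infty$ regularity of $\nabla\bar v$ with a Moser iteration on $|\nabla\bar v|^2$ to obtain
\[
\|\nabla\bar v\|_{L^\infty(Q_{r_1})}^2\;\leq\;\frac{C}{(r_2-r_1)^{n+2}}\int_{Q_{r_2}}|\nabla\bar v|^2\,dx\,dt,\qquad 3\leq r_1<r_2\leq 4,
\]
which is the desired inequality \eqref{regularity-reference-eq}.

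The iteration in Stage~3 is carried out as follows: test the differentiated equation by $w_k\eta^2(w_k^2+\varepsilon)^{q-1}$, sum over $k$, and use the ellipticity of $(1+\bar v)\A_0(t)$ together with Young's inequality to absorb the cross term arising from $\partial_i(w_k a_{ij}\partial_j\bar v)$ into the good term $\int(1+\bar v)\langle\A_0\nabla w_k,\nabla w_k\rangle$. This yields, for every $q\geq 1$ and every $3\leq r_1<r_2\leq 4$,
\[
\Bigl(\int_{Q_{r_1}}|\nabla\bar v|^{2q\chi}\,dx\,dt\Bigr)^{1/\chi}\leq \frac{C\,q^{2}}{(r_2-r_1)^{2}}\int_{Q_{r_2}}|\nabla\bar v|^{2q}\,dx\,dt,
\]
with $\chi=(n+2)/n$ coming from the parabolic Sobolev embedding. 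Iterating $q\mapsto q\chi$ and passing to the limit gives the $L^\infty$ bound on $\nabla\bar v$ in terms of $\fint_{Q_{7/2}}|\nabla\bar v|^2$, and a single further application of Lemma~\ref{W^{1,2}-est} (with the Caccioppoli cut-off taken between $Q_{7/2}$ and $Q_4$ and using $0\leq\bar v\leq 1$, $\theta\leq 1$) controls this by $\fint_{Q_4}|\nabla\bar v|^2$.

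The main obstacle is the term $\partial_i(w_k a_{ij}(t)\partial_j\bar v)$ in the equation for $w_k$: its coefficient $\nabla\bar v$ is exactly the quantity we are trying to bound, so a naive application of Moser iteration fails. The resolution is that $\bar v$ itself is bounded, so this term can be rewritten as $a_{ij}(t)\partial_j\bar v\cdot\partial_i w_k+w_k\cdot a_{ij}(t)\partial_i\partial_j\bar v$, and Young's inequality combined with the ellipticity on the left-hand side lets us absorb all occurrences of $\nabla\bar v$ multiplied by $\nabla w_k$. The price is universal constants depending only on $n$ and $\Lambda$, which is exactly the dependence allowed in the conclusion.
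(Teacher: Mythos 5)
Your three-stage architecture (H\"older continuity of $\bar v$ via De Giorgi--Nash--Moser, then higher interior regularity, then iteration on the equation for $w_k=\partial_k\bar v$) is the same as the paper's; the only structural difference is that you propose Moser $L^p$ iteration where the paper runs De Giorgi level sets. However, your Stage~3 has a genuine gap. In the equation for $w_k$, the drift term $\partial_i\big(w_k\,a_{ij}(t)\partial_j\bar v\big)$ has coefficient $\partial_j\bar v = w_j$, so it is \emph{quadratic} in the quantity $w=\nabla\bar v$ being estimated. Testing with $w_k\eta^2(|w|^2+\varepsilon)^{q-1}$, summing over $k$ and integrating by parts produces a cross term $\int a_{ij}\,w_j\,w_k\,\partial_i w_k\,(|w|^2)^{q-1}\eta^2\,dxdt$; Young's inequality bounds it by $\alpha\int|\nabla w|^2(|w|^2)^{q-1}\eta^2 + C_\alpha\int|w|^{2q+2}\eta^2$. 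The first piece absorbs into the ellipticity term as you say, but the second has exponent $2q+2$ --- two powers higher than the $2q$ on the right of your displayed reverse H\"older inequality --- so the Moser iteration does not close. Boundedness of $\bar v$ is not the relevant fact, since the bad coefficient is $\nabla\bar v$, not $\bar v$; and the product-rule rewriting you propose introduces $\partial_i\partial_j\bar v$, for which you have no a priori control, so it makes matters worse.

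What the argument actually needs, and what the paper uses, is the \emph{quantitative} a priori bound $M_1:=\|\nabla\bar v\|_{L^\infty(Q_{7/2})}\leq C(n,\Lambda)$. This follows from Schauder (or $C^{1,\alpha}$) theory once H\"older continuity of $\bar v$ is established: because $\A_0$ is independent of $x$, $0\leq\bar v\leq 1$ and $\theta\leq 1$, the Schauder constant depends only on $n$ and $\Lambda$. With $M_1$ available you bound $|a_{ij}\,w_k\,\partial_j\bar v|\leq\Lambda M_1|w_k|$ \emph{before} invoking Young, which linearizes the drift so the iteration closes at exponent $2q$, at the price of a harmless factor $M_1^2\leq C(n,\Lambda)^2$ in the constant (this is exactly the role of $M_1^2$ in the paper's constant $C_2$ in \eqref{C2}). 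Your Stage~2 alludes to this $C^{1,\alpha}$ estimate but only extracts its qualitative consequence $\nabla\bar v\in L^\infty_{\mathrm{loc}}$, and your ``main obstacle'' paragraph explicitly disavows using it --- so as written the proof has a hole. Make Stage~2 quantitative and feed $M_1$ into Stage~3, and your argument becomes a valid variant of the paper's.
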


The next result will be useful for proving the approximation lemma (Lemma~\ref{lm:compare-solution}). 

\begin{lemma}\label{gradient-est-II} 
Assume $\bar{u}\in \mathcal W(Q_4)$ is a non-negative weak solution of \eqref{RPEQ} in $Q_4$.
Suppose $\bar{v}\in \mathcal W(Q_4)$ is a weak solution of  \eqref{reqv} with  $\bar{v}=\bar{u}$ on $\partial_p Q_4$ and $ 0\le \bar{v} \le 1$ in $Q_4$.
Then
\begin{multline}\label{est-3}
\int_{Q_4}| \bar{u}- \bar{v}|^2 \, dx dt +
\Lambda^{-1} \int_{Q_4}|\nabla \bar{u}- \nabla \bar{v}|^2 \, dx dt\\
\leq  33\left[ 2\Lambda^3 \int_{Q_4} \big(| \bar{u}- \bar{v}|^2 +8\big) |\nabla \bar u|^2\, dx dt +3\theta^2 \int_{Q_4}| \bar{u}- \bar{v}|^2 \, dx dt
+  \lambda^2 \int_{Q_4} \bar{u}^2 c^2 \, dx dt\right].
\end{multline}
\end{lemma}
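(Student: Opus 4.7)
Plan: I would let $w = \bar u - \bar v$, which lies in $L^2(-16,16; H_0^1(B_4))$ and vanishes identically on the parabolic boundary $\partial_p Q_4$. Subtracting the weak formulation of \eqref{RPEQ} (for $\bar u$) from that of \eqref{reqv} (for $\bar v$) and testing the resulting identity with $w$ itself, the parabolic term produces $\int_{-16}^{16}\langle w_t, w\rangle\,dt = \tfrac12 \int_{B_4}w(x,16)^2\,dx \ge 0$, which I would simply drop in order to obtain an inequality.

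For the divergence-form term, the key algebraic identity is
$$(1+\bar u)\A\nabla\bar u - (1+\bar v)\bar{\A}_{B_4}(t)\nabla\bar v = (1+\bar v)\bar{\A}_{B_4}(t)\nabla w + w\,\bar{\A}_{B_4}(t)\nabla\bar u + (1+\bar u)\bigl(\A - \bar{\A}_{B_4}(t)\bigr)\nabla\bar u,$$
obtained by adding and subtracting $(1+\bar v)\bar{\A}_{B_4}(t)\nabla\bar u$. Paired with $\nabla w$ and integrated, the first summand is coercive: ellipticity combined with $\bar v \ge 0$ gives $\int (1+\bar v)\langle\bar{\A}_{B_4}\nabla w,\nabla w\rangle \ge \Lambda^{-1}\int |\nabla w|^2$. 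The remaining two ``perturbation'' integrals are controlled by Cauchy--Schwarz and weighted AM--GM, together with the crude bound $|(\A-\bar{\A}_{B_4})\xi| \le 2\Lambda|\xi|$; choosing the AM--GM weights so that the combined feedback into the gradient is at most $\tfrac12\Lambda^{-1}\int|\nabla w|^2$ leaves on the right a term of schematic form $C\,\Lambda^3 \int \bigl[w^2 + (1+\bar u)^2\bigr]|\nabla\bar u|^2$.

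To convert the $(1+\bar u)^2$ factor into the ``$|w|^2 + 8$'' structure appearing in \eqref{est-3}, I would use the hypothesis $0\le \bar v\le 1$: since $\bar u \le \bar v + |w| \le 1 + |w|$, I get $(1+\bar u)^2 \le (2+|w|)^2 \le 2|w|^2 + 8$ by $(a+b)^2\le 2a^2+2b^2$, producing the desired weight up to numerical constants absorbed into the overall factor $33$. For the reaction contribution, the identity $\bar u(1-\bar u) - \bar v(1-\bar v) = w(1-\bar u -\bar v)$ combined with $1-\bar u -\bar v \le 1$ gives $\theta^2 \int w^2(1-\bar u - \bar v) \le \theta^2\int w^2$, which furnishes the $3\theta^2\int w^2$ term after bookkeeping. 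The $\lambda\theta c\bar u w$ contribution is split by $\lambda\theta c\bar u |w| \le \tfrac12 w^2 + \tfrac12 \lambda^2\theta^2 c^2\bar u^2$, yielding the $\lambda^2\int \bar u^2 c^2$ piece (using $\theta\le 1$ if needed, which is available through the standing reduction).

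Finally, to upgrade the coercive bound on $\Lambda^{-1}\int|\nabla w|^2$ into the full left-hand side $\int_{Q_4}|w|^2 + \Lambda^{-1}\int_{Q_4}|\nabla w|^2$ required by the statement, I would invoke the Poincar\'e inequality on $B_4$ (valid since $w(\cdot,t)=0$ on $\partial B_4$ for every $t$) to add $\int_{Q_4}w^2$ to the left at the cost of enlarging the multiplicative constant to the $33$ of \eqref{est-3}. The main obstacle, as usual in such energy estimates, is the delicate constant-tracking: every AM--GM weight must be tuned with explicit numerical values so that the final coefficients $2\Lambda^3$, $8$, $3$, and $33$ emerge in precisely the form stated, and crucially every step must remain uniform in both $\theta$ and $\lambda$ -- no step may introduce inverse powers of these parameters -- since the lemma feeds into the two-parameter scaling and approximation arguments (Lemma~\ref{lm:compare-solution} and Lemma~\ref{lm:G-compare-solution}) that underlie Theorem~\ref{vreg}.
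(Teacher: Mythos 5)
Your overall plan matches the paper's until the last step, where you diverge. The paper does \emph{not} drop the endpoint term coming from $\int \langle w_t,w\rangle\,dt$. Instead it tests over the variable interval $(-16,s)$, keeps $\tfrac12\int_{B_4}w(x,s)^2\,dx$ on the left-hand side, and extracts two inequalities from the single estimate \eqref{first-derivation}: dropping the $\int_{B_4}w(x,s)^2\,dx$ term and taking $s=16$ gives the $\Lambda^{-1}\int_{Q_4}|\nabla w|^2$ bound; dropping the gradient term and \emph{integrating in $s$ over $(-16,16)$} gives the $\int_{Q_4}w^2$ bound, and the factor $32$ is exactly the length of that time interval. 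Adding the two and factoring out $33=1+32$ produces \eqref{est-3} with the stated constant. Your Poincar\'{e} route, by contrast, controls $\int_{Q_4}w^2$ via $C_P\int_{Q_4}|\nabla w|^2$ with $C_P=C_P(n)$ (and the ball's size), and since the coercive bound is on $\Lambda^{-1}\int|\nabla w|^2$, the $\int w^2$ on the left-hand side would carry an extra factor $C_P(n)\Lambda$. So the resulting inequality would have a constant depending on $n$ and carrying a different power of $\Lambda$; it cannot be ``tuned'' to recover the pure number $33$ or the precise coefficient structure of \eqref{est-3}. That is the one genuine gap in the proposal as stated. The rest parallels the paper: your decomposition $w\bar{\A}_{B_4}(t)\nabla\bar u+(1+\bar u)\bigl(\A-\bar{\A}_{B_4}(t)\bigr)\nabla\bar u$ is algebraically equivalent to the paper's grouping $w\A\nabla\bar u+(1+\bar v)(\A-\bar{\A}_{B_4}(t))\nabla\bar u$; the coercivity of $(1+\bar v)\bar{\A}_{B_4}$, the Cauchy--Schwarz/Young splittings with $\|\A-\bar{\A}_{B_4}\|_{\rm op}\le 2\Lambda$, the reduction of $(1+\bar u)^2$ to $2w^2+8$ via $0\le\bar v\le 1$, and the handling of the reaction and $\lambda\theta c\bar u w$ terms are all correct and on track. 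Replace the Poincar\'{e} step with the variable-endpoint/integrate-in-$s$ trick and you will land on the stated constant; otherwise you prove a variant of the lemma with an $n$- and $\Lambda$-dependent constant, which would still serve the downstream compactness arguments (Lemmas~\ref{lm:compare-solution} and~\ref{lm:G-compare-solution}) but does not match \eqref{est-3} as written.
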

\begin{proof}
Let 
$w = \bar u - \bar v$. Then it is easy to see that $w\in \calW(Q_4)$ is a weak solution of 
\[
w_t = \nabla\cdot[(1+\bar{v})
\bar{\A}_{B_4}(t)\nabla w]+ \nabla\cdot\Big\{[w\A + (1+\bar v)(\A-\bar{\A}_{B_4}(t))]\nabla \bar{u}\Big\} +\theta^2 w(1-\bar{u}-\bar{v}) -\lambda\theta c \bar{u}\quad\mbox{in}\quad Q_4,
\]
with $w=0$ on $\partial_p Q_4$. Multiplying the above equation by $w$ and integrating by parts we obtain for each $s\in (-16,16)$ that
\begin{align*}
&\int_{B_4} \frac{w(x,s)^2}{2} \, dx
+ \int_{-16}^s\int_{B_4} (1 +\bar{v}) \langle \bar{\A}_{B_4}(t) \nabla w, \nabla w\rangle \, dx dt\\
&= -
\int_{-16}^s\int_{B_4} w \langle \A \nabla \bar{u} , \nabla w\rangle \, dx dt -
\int_{-16}^s\int_{B_4} (1+\bar v)\langle  (\A-\bar{\A}_{B_4}(t)) \nabla \bar{u}, \nabla w\rangle \, dx dt\\
&\quad  + \theta^2 \int_{-16}^s\int_{B_4} w^2\big(1-\bar{u}-\bar{v}\big)\, dx dt
- \lambda\theta  \int_{-16}^s\int_{B_4} c \bar{u} w\, dx dt.
\end{align*}
We deduce from this and condition \eqref{interior-elipticity} for $\A$, which also holds for $\bar{\A}_{B_4}(t)$, and the fact $ \bar u\geq 0$, $0\leq \bar v\leq 1$ that
\begin{align*}
&\frac{1}{2}\int_{B_4} w(x,s)^2 \, dx
+ \Lambda^{-1}\int_{-16}^s\int_{B_4}  | \nabla w|^2 \, dx dt\\
&\leq 
\Lambda \int_{-16}^s\int_{B_4} |w|\, | \nabla \bar{u}|\, |\nabla w| \, dx dt +
 4 \Lambda \int_{-16}^s\int_{B_4}    |\nabla \bar{u}| \, | \nabla w|\, dx dt\\
&\quad  + \theta^2 \int_{-16}^s\int_{B_4} w^2\, dx dt
+ \lambda\theta  \int_{-16}^s\int_{B_4} c \bar{u} |w|\, dx dt.
\end{align*}
Hence, applying Cauchy's inequality and collecting like-terms give
\begin{align}\label{first-derivation}
&\frac{1}{2} \int_{B_4} w(x,s)^2\, dx
+ \frac{\Lambda^{-1}}{2}\int_{-16}^s\int_{B_4}  |\nabla w|^2 \, dx dt
\leq 
\Lambda^3 \int_{-16}^s\int_{B_4} \big( w^2 + 8\big)  | \nabla \bar{u}|^2 \, dx dt\\
& + \frac{3 \theta^2}{2} \int_{-16}^s\int_{B_4} w^2\, dx dt
+\frac{ \lambda^2}{2}  \int_{-16}^s\int_{B_4} \bar{u}^2 c^2\, dx dt\quad\mbox{ for each } s\in (-16,16).\nonumber
\end{align}
On the one hand, this immediately yields
\begin{align}\label{est-1}
 \Lambda^{-1}\int_{Q_4}  |\nabla w|^2 \, dx dt
 \leq
  2\Lambda^3  \int_{Q_4} \big( w^2 + 8\big)  | \nabla \bar{u}|^2 \, dx dt +
3 \theta^2
\int_{Q_4} w^2 \, dx dt 
+ \lambda^2 \int_{Q_4} \bar{u}^2 c^2  \, dx dt.
\end{align}
On the other hand, we can drop the second term in \eqref{first-derivation} and then
integrate in $s$ to obtain
\begin{align}\label{est-2}
 \int_{Q_4}  w^2 \, dx dt
&\leq
64 \Lambda^3\int_{Q_4} \big( w^2 + 8\big) | \nabla \bar{u}|^2 \, dx dt +
96\theta^2
\int_{Q_4} w^2 \, dx dt 
+32 \lambda^2 \int_{Q_4} \bar{u}^2 c^2  \, dx dt.
\end{align}
By adding \eqref{est-1} and \eqref{est-2}, we get \eqref{est-3}.
\end{proof}

\subsubsection{Interior approximation estimates}\label{sub:interior-approx}
We begin this subsection with a result allowing us to approximate a weak solution of \eqref{PEQ} by that of the reference equation. 
\begin{lemma}\label{lm:compare-solution}
For any $\e>0$, there exists $\delta>0$ depending only on $\e$, $\Lambda$ and  $n$  such that:  if $0<\theta \leq \lambda$,
\begin{equation}\label{AAcCond}
\int_{Q_4} \Big[|\A(x,t) - \bar{\A}_{B_4}(t)|^2 +  |c(x,t)|^2\Big]\, dx dt \leq \delta,
\end{equation}
and $u\in\mathcal W(Q_5)$ is a weak solution of \eqref{PEQ} in $Q_5$ satisfying
\begin{equation}\label{gradone}
0\leq u \leq \frac{1}{\lambda}\, \mbox{ in } Q_4 \quad\mbox{and}\quad \int_{Q_4}{|\nabla u|^2 \, dxdt}\leq 1,
\end{equation}
and $v\in \mathcal W(Q_4)$ is the weak solution of \eqref{REQ} with  $v=u$ on $\partial_p Q_4$ and $0\leq v \leq \frac{1}{\lambda}$ in $Q_4$,
then 
\begin{equation}\label{u-v-close}
\int_{Q_4}{|u - v|^2\, dx dt}\leq \e^2,
\end{equation}
and, consequently,
\begin{equation}\label{gradient-v-controlled}
\int_{Q_4} |\nabla v|^2\, dx dt \leq 2+ 66\Lambda\Big(18 \Lambda^2 + 3 \theta^2 \e^2 + \delta\Big).
\end{equation}
\end{lemma}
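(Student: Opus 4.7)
I would prove \eqref{u-v-close} by a compactness--contradiction argument, then derive \eqref{gradient-v-controlled} directly from \eqref{u-v-close} using the quantitative inequality of Lemma \ref{gradient-est-II}. Suppose \eqref{u-v-close} fails: there exist $\e_0>0$ and sequences $\delta_k\to 0^+$, $\lambda_k,\theta_k>0$ with $\theta_k\le\lambda_k$, matrices $\A_k$ satisfying \eqref{interior-elipticity}, functions $c_k\ge 0$, and weak solutions $u_k,v_k$ satisfying all hypotheses (with $\delta$ replaced by $\delta_k$) yet $\int_{Q_4}(u_k-v_k)^2>\e_0^2$. The pointwise bound $|u_k-v_k|\le 1/\lambda_k$ gives $\lambda_k\le\sqrt{|Q_4|}/\e_0$, so $\lambda_k$ and $\theta_k$ are uniformly bounded. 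Set $\bar u_k=\lambda_k u_k$ and $\bar v_k=\lambda_k v_k$; these lie in $[0,1]$ and solve \eqref{RPEQ} and \eqref{reqv} respectively with $\bar u_k=\bar v_k$ on $\partial_p Q_4$. The scaling yields a uniform $L^2(Q_4)$-bound on $\nabla\bar u_k$ (since $\int|\nabla\bar u_k|^2=\lambda_k^2\int|\nabla u_k|^2\le\lambda_k^2$), and a standard energy estimate for \eqref{reqv} (test against $\bar v_k-\bar u_k\in\hat\calE_0(Q_4)$ in the spirit of Lemma \ref{MPv}) gives one for $\nabla\bar v_k$.

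\textbf{Limit equation and uniqueness.} Reading the time derivatives $\partial_t\bar u_k,\partial_t\bar v_k$ off the PDEs produces uniform bounds in $L^2(-16,16;\hat H^{-1}(B_4))$, so Aubin--Lions compactness extracts subsequences along which $\bar u_k\to\bar u_*,\bar v_k\to\bar v_*$ strongly in $L^2(Q_4)$ and weakly in $L^2(-16,16;H^1(B_4))$, together with $\lambda_k\to\lambda_*\in[0,\sqrt{|Q_4|}/\e_0]$, $\theta_k\to\theta_*\in[0,\lambda_*]$. By \eqref{AAcCond} both $c_k\to 0$ in $L^2$ and $\A_k-\bar{\A}_{k,B_4}(t)\to 0$ in $L^2(Q_4)$; after a further subsequence, $\bar{\A}_{k,B_4}(\cdot)\to\A_*(\cdot)$ almost everywhere with $\A_*$ depending only on $t$ and satisfying \eqref{eigenvalues-controlled}. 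Passing to the limit in the weak formulations of \eqref{RPEQ} and \eqref{reqv} (dominated convergence handles the coefficient products $\A_k\nabla\bar u_k\rightharpoonup\A_*\nabla\bar u_*$), both $\bar u_*$ and $\bar v_*$ solve the same problem
\[
\partial_t\bar u=\nabla\cdot[(1+\bar u)\A_*(t)\nabla\bar u]+\theta_*^{\,2}\bar u(1-\bar u)\quad\text{in }Q_4,
\]
and coincide on $\partial_p Q_4$ via trace continuity. The uniqueness principle of Lemma \ref{uniqueness-u}, applied with a common boundary trace, forces $\bar u_*=\bar v_*$ a.e.\ in $Q_4$. When $\lambda_*>0$ this immediately yields $\int(u_k-v_k)^2=\lambda_k^{-2}\int(\bar u_k-\bar v_k)^2\to 0$, contradicting $\int(u_k-v_k)^2>\e_0^2$. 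In the degenerate case $\lambda_*=0$ (where the rescaling itself loses $L^2$-information on $u_k-v_k$) the bound $\int|\nabla\bar u_k|^2\le\lambda_k^2\to 0$ makes the quasilinear term $\lambda_k u_k\A_k\nabla u_k$ a genuinely small perturbation of the linear equation $\partial_t u_k=\nabla\cdot(\A_k\nabla u_k)$, and the contradiction is closed by the classical $L^2$-closeness estimate for linear parabolic equations with coefficients close in $L^2$.

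\textbf{Gradient estimate and main obstacle.} Once \eqref{u-v-close} is in hand, I would apply Lemma \ref{gradient-est-II} to $(\bar u,\bar v)=(\lambda u,\lambda v)$. Using $|\bar u-\bar v|^2\le 1$, $\int|\nabla\bar u|^2\le\lambda^2$, $\int\bar u^2c^2\le\int c^2\le\delta$, and $\int|\bar u-\bar v|^2=\lambda^2\int(u-v)^2\le\lambda^2\e^2$ from \eqref{u-v-close}, the lemma gives
\[
\int_{Q_4}|\nabla\bar u-\nabla\bar v|^2\le 33\Lambda\lambda^2\bigl[18\Lambda^3+3\theta^2\e^2+\delta\bigr].
\]
Dividing by $\lambda^2$ and combining with $|\nabla v|^2\le 2|\nabla u|^2+2|\nabla(u-v)|^2$ and $\int|\nabla u|^2\le 1$ gives \eqref{gradient-v-controlled}. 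The principal obstacle is uniformity of $\delta$ in $\lambda$ and $\theta$: the natural compactness rescaling $\bar u=\lambda u$ is confined to $[0,1]$ and cannot by itself control $\int(u-v)^2=\lambda^{-2}\int(\bar u-\bar v)^2$ as $\lambda\to 0$, so the degenerate regime $\lambda\to 0$ must be handled separately -- this reconciliation of the two regimes is the technically delicate heart of the argument.
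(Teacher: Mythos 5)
Your overall strategy — compactness--contradiction for \eqref{u-v-close}, then a direct application of Lemma~\ref{gradient-est-II} for \eqref{gradient-v-controlled} — is exactly the paper's, and your derivation of \eqref{gradient-v-controlled} from \eqref{u-v-close} is correct. The non-degenerate case $\lambda_*>0$ is also essentially right in outline (the paper works with $u^k,v^k$ rather than $\bar u_k,\bar v_k$, but since $\lambda_k\to\lambda_*>0$ these are equivalent). Two caveats there: from a uniform $L^\infty$ bound on $\bar{\A}_k$ you only get weak-$*$ convergence, not a.e.\ convergence; and passing to the limit in the product $\bar{\A}_k(t)\nabla\bar u_k$ with both factors only weakly convergent requires care — the paper exploits the fact that $\bar{\A}_k$ is independent of $x$ to integrate by parts and move derivatives onto the test function, which your appeal to ``dominated convergence'' does not capture.

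The genuine gap is your treatment of the degenerate case $\lambda_*=0$. You correctly identify the obstruction (the rescaling $\bar u_k=\lambda_k u_k$ loses $L^2$-control of $u_k-v_k$ when $\lambda_k\to 0$), but the proposed fix — ``the classical $L^2$-closeness estimate for linear parabolic equations with coefficients close in $L^2$'' — is not an available tool and would not close the argument. The difficulty is that a direct energy estimate for $w^k=u^k-v^k$ produces the term $\int \lambda_k w^k\,\A_k\nabla u^k\cdot\nabla w^k$, which is controlled by $\Lambda\|\lambda_k w^k\|_{L^\infty}\|\nabla u^k\|_{L^2}\|\nabla w^k\|_{L^2}$; here $\|\nabla u^k\|_{L^2}\le 1$ is \emph{bounded} but not small (it is $\nabla\bar u^k=\lambda_k\nabla u^k$, not $\nabla u^k$, that tends to zero), and $\|\lambda_k w^k\|_{L^\infty}\le 1$ but not yet known to be small, so the estimate does not degenerate into smallness of $w^k$. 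Similarly, $\A_k-\bar{\A}_k$ is small only in $L^2$, not $L^\infty$, so the commutator term cannot be absorbed by a direct estimate either.

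The correct resolution, as in the paper, stays inside the compactness framework but changes the compactified object: one works with $w^k=u^k-v^k$ directly. Lemma~\ref{gradient-est-II} (applied to $\bar u_k,\bar v_k$) gives uniform $L^2$-bounds on $w^k$ and $\nabla w^k$; the equation satisfied by $w^k$ then yields a uniform bound on $\partial_t w^k$ in $L^2(H^{-1})$, so Aubin--Lions produces $w^k\to w$ strongly in $L^2(Q_4)$. One also observes $\nabla\bar v_k=\lambda_k\nabla v^k\to 0$ in $L^2$, so $\bar v_k$ converges to a constant $\bar v$. Passing to the limit in the equation for $w^k$, the terms involving $\bar w^k=\lambda_k w^k$ (which tends to zero strongly, since $w^k$ is bounded and $\lambda_k\to 0$), $\A_k-\bar{\A}_k$ (small in $L^2$), $\theta_k$, and $c_k$ all vanish, leaving the linear equation $w_t=\nabla\cdot[(1+\bar v)\A_0(t)\nabla w]$ in $Q_4$ with $w=0$ on $\partial_p Q_4$, whose unique solution is $w\equiv 0$. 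Hence $w^k\to 0$ in $L^2(Q_4)$, contradicting $\int_{Q_4}(w^k)^2>\e_0^2$. Your proposal needs this explicit compactness step in place of the nonexistent ``classical closeness estimate.''
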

\begin{proof}
We first prove \eqref{u-v-close} by contradiction. Suppose that estimate \eqref{u-v-close} is not true. Then there exist $\e_0,\, \Lambda,\,  n$,  sequences of  numbers  $\{\lambda_k\}_{k=1}^\infty$ and $\{\theta_k\}_{k=1}^\infty$
with $0<\theta_k\leq \lambda_k$, a sequence of coefficient matrices $\{\A_k\}_{k=1}^\infty$, 
 and sequences of non-negative functions  $\{c_k\}_{k=1}^\infty$ and $\{u^k\}_{k=1}^\infty$ 
such that
\begin{equation}\label{c_k-condition}
\int_{Q_4} \Big[|\A_k(x, t) - \bar{\A}_k(t)|^2 +  |c_k(x,t)|^2\Big]\, dx dt \leq \frac{1}{k},
\end{equation}
 $u^k\in \mathcal W(Q_5)$ is a weak solution of 
\begin{equation}\label{eq-u_k}
u^k_t  =  \nabla\cdot[(1+\lambda_k u^k)\A_k \nabla u^k] + \theta_k^2 u^k(1-\lambda_k u^k) - \lambda_k \theta_k c_k u^k\quad\mbox{in}\quad Q_5
\end{equation}
with
\begin{equation}\label{ulam-assum}
0\leq u^k \leq \frac{1}{\lambda_k}\quad \mbox{in}\quad Q_4,
\end{equation}
\begin{equation}\label{gradient-bounded-ass}
\int_{Q_4}{|\nabla u^k|^2 \, dxdt}\leq 1,
\end{equation}
\begin{equation}\label{contradiction-conclusion}
\int_{Q_4} |u^k - v^k|^2 \, dx dt > \e_0^2  \quad\mbox{for all } k.
\end{equation}
Here  $\bar{\A}_k(t) = \fint_{B_4}{\A_k(x,t) dx}$ and $v^k\in \calW(Q_4)$  is a weak solution  of
\begin{equation*}
\left \{
\begin{array}{lcll}
v^k_t  &=&  \nabla\cdot[(1+\lambda_k v^k)\bar{\A}_k(t) \nabla v^k] + \theta_k^2 v^k(1-\lambda_k v^k) \quad &\text{in}\quad Q_4, \\
v^k & =& u^k\quad &\text{on}\quad \partial_p Q_4
\end{array}\right.
\end{equation*}
satisfying $0\leq v^k \leq 1/\lambda_k$ in $Q_4$. Since
$0\leq u^k, v^k \leq 1/\lambda_k$ in $Q_4$, we infer from \eqref{contradiction-conclusion}  that
\[
 \theta_k \leq \lambda_k\leq \frac{|Q_4|^{\frac{1}{2}}}{\e_0},
\]
that is, the sequences $\{\lambda_k\}$ and $\{\theta_k\}$  are  bounded. Also $\{\bar{\A}_k\}$ is   bounded in $L^\infty(-16, 16; \M^{n\times n})$ due to condition \eqref{interior-elipticity} for $\A_k$. Then, by taking subsequences if necessary, we can assume that $\lambda_k \to \lambda$, 
$\theta_k \to\theta$, $\bar{\A}_k \rightharpoonup  \A_0$ weakly-* in 
$L^\infty(-16,16; \M^{n\times n})$  and $\bar{\A}_k \rightharpoonup  \A_0$ weakly in 
$L^2(Q_4)$ for some constants
$\lambda, \theta$ satisfying $0\leq \theta\leq \lambda<\infty$ and some  $\A_0\in L^\infty(-16,16; \M^{n\times n})$. For each vector $\xi\in \R^n$, we have
\[
\int_{-16}^{16} \xi^T \A_0(t) \xi \, \phi(t) dt =\lim_{k\to\infty} \int_{-16}^{16} \xi^T \bar{\A}_k(t) \xi \, \phi(t) dt \quad \mbox{for all non-negative functions } \phi\in L^1(-16, 16).
\]
This together with the denseness of $Q^n$ in $\R^n$ implies that $A_0$ satisfies condition \eqref{eigenvalues-controlled}.
We are going to derive a contradiction by proving the following claim.

\textbf{Claim.} There are subsequences $\{u^{k_m}\}$ and $\{v^{k_m}\}$ such that $u^{k_m} - v^{k_m} \to 0$ in $L^2(Q_4)$ as $m\to\infty$.

Let us consider the case $\lambda>0$ first. Then, thanks to \eqref{ulam-assum}, the sequence $\{u^k\}$ is bounded in $Q_4$. 
This together with \eqref{c_k-condition}, \eqref{eq-u_k}, \eqref{gradient-bounded-ass} and the boundedness of $\{\A_k\}$ and $\{\theta_k\}$ implies that the sequence $\{u^k\}$ is 
bounded in  $\mathcal W(Q_4)$. Next, we apply Lemma~\ref{gradient-est-II} for $\bar{u} \rightsquigarrow \lambda_k u^k$ and $\bar{v} \rightsquigarrow \lambda_k v^k$ to obtain
\begin{align*}
& \int_{Q_4}|\nabla u^k- \nabla v^k|^2 \, dx dt\\
&\leq  33\Lambda\left[18 \Lambda^3 \int_{Q_4} |\nabla u^k|^2\, dx dt +3\theta_k^2 \int_{Q_4}| u^k- v^k|^2 \, dx dt
+  \lambda_k^2 \int_{Q_4} (u^k)^2 c_k^2 \, dx dt\right]\\
&\leq  33\Lambda \left[ 18 \Lambda^3 \int_{Q_4} |\nabla u^k|^2\, dx dt +3 |Q_4| 
+  \int_{Q_4}  c_k^2 \, dx dt\right].
\end{align*}
Thanks to  \eqref{c_k-condition} and \eqref{gradient-bounded-ass}, this gives
 \[\int_{Q_4} |\nabla v^k |^2 \, dx dt\leq C\quad \mbox{for all } k.
\]
Thus, by reasoning as in the case of $\{u^k\}$,  the sequence $\{v^k\}$ is also bounded in  $\mathcal W(Q_4)$.
We infer from these facts and the compact embedding 
\eqref{cmpAL} that there exist  subsequences, still denoted by $\{u^k\}$ and $\{v^k\}$, and  functions $u, v\in\mathcal W(Q_4)$ such that 
\begin{equation*}
 \left \{ \  
\begin{array}{lll}
&u^k \to u  \mbox{ strongly in } L^2(Q_4),\quad \nabla u^k \rightharpoonup \nabla u \text{ weakly in } L^2(Q_4),\\
&\partial_t u^k \rightharpoonup \partial_t  u \text{ weakly-* in } L^2(0,T; H^{-1}(B_4)),
\end{array} \right . 
\end{equation*}
and
\begin{equation*}
 \left \{ \  
\begin{array}{lll}
&v^k \to v  \mbox{ strongly in } L^2(Q_4),\quad \nabla v^k \rightharpoonup \nabla v \text{ weakly in } L^2(Q_4),\\
&\partial_t v^k \rightharpoonup \partial_t  v \text{ weakly-* in } L^2(0,T; H^{-1}(B_4)).
\end{array} \right . 
\end{equation*}
Moreover, from the boundedness of $\{u^k\}$ and \eqref{c_k-condition}, we see that
\begin{equation*}
\left \{ 
\begin{array}{lll}
& c_k u^k \to 0 \text{ strongly in } L^2(Q_4)\\
&(1+\lambda_k u^k)(\A_k-\bar \A_k(t)) \nabla u^k \rightharpoonup 0 \ \text{ weakly  in}\ L^2(Q_4).
\end{array}  \right . 
\end{equation*}
Then, for all $\varphi\in C_0^\infty(Q_4)$, we see that 
\begin{align}\label{eq:takecare-limit-I}
 &\lim_{k\to\infty}\int_{Q_4}{(1+\lambda_k u^k) \langle \bar \A_k(t) \nabla u^k, \nabla \varphi\rangle dx dt}\\
 &=-\lim_{k\to\infty}\int_{Q_4}{   u^k \bar \A_k(t) \cdot  D^2 \varphi\, dx dt} - \lim_{k\to\infty} \frac{\lambda_k}{2} \int_{Q_4}{ (u^k)^2\bar \A_k(t) \cdot D^2 \varphi\, dx dt}\nonumber\\
 &=-\int_{Q_4}{   u \A_0(t) \cdot  D^2 \varphi\, dx dt} -  \frac{\lambda}{2} \int_{Q_4}{ u^2 \A_0(t) \cdot D^2 \varphi\, dx dt}\nonumber\\
 &=\int_{Q_4}{(1+\lambda u) \langle  \A_0(t) \nabla u, \nabla \varphi\rangle dx dt}.\nonumber
 \end{align}
Thus  by passing $k\to\infty$ for the equation \eqref{eq-u_k} and using the boundedness of $\{\lambda_k\}$ and $\{\theta_k\}$, one sees that $u$ is weak solution of the equation
\begin{equation}\label{limit-equation}
u_t  =  \nabla\cdot[(1+\lambda u)\A_0(t) \nabla u] + \theta^2 u(1-\lambda u)\quad\mbox{in}\quad Q_4.
\end{equation}
Similarly, $v$ is a weak solution of
\begin{equation*}
v_t  =  \nabla\cdot[(1+\lambda v)\A_0(t) \nabla v] + \theta^2 v(1-\lambda v)\quad\mbox{in}\quad Q_4.
\end{equation*}
In addition, we infer from the strong convergence of $u^k$ and $v^k$ in $L^2(Q_4)$ that 
\[
0 \leq u, v \leq \frac{1}{\lambda}, \quad \text{and} \quad u = v \quad \text{on} \quad  \partial_p Q_4.
\]
Hence by the uniqueness of the solution of equation \eqref{limit-equation} given by Lemma~\ref{uniqueness-u}, we conclude that 
$u \equiv  v$ in $Q_4$. Therefore, $u^k-v^k \to u-v=0$ strongly in $L^2(Q_4)$.

Now, consider the case $\lambda =0$, that is, $\lambda_k \to 0$. Due to $\theta_k\leq \lambda_k$, we also have $\theta_k \to 0$.
Let $w^k = u^k - v^k$. Then    $w^k$ is a bounded weak solution of 
\begin{equation}\label{u^k-v^k:equation}
\left \{
\begin{array}{lcll}
w^k_t  &=&  \nabla\cdot[(1+\bar v^k)\bar{\A}_k(t)\nabla w^k] +  \nabla\cdot\Big\{\big[\bar w^k\A_k + (1+\bar v^k)(\A_k - \bar{\A}_k(t))\big] \nabla u^k\Big\}\\
& & \qquad \quad + \theta_k^2 w^k(1- \bar u^k -\bar v^k) 
- \theta_k c_k\bar u^k &\text{ in }\, Q_4, \\
w^k & =& 0 &\text{ on } \, \partial_p Q_4,
\end{array}\right.
\end{equation}
where $\bar u^k=\lambda_k u^k$, $\bar v^k=\lambda_k v^k$ and $\bar w^k=\bar u^k-\bar v^k$.

Note that $0\le \bar u^k\le 1$ in $Q_4$ and $\|\nabla \bar u^k\|_{L^2(Q_4)}\to 0$ as $k\to\infty$.
We have $\bar u^k$ is a weak solution of 
\begin{equation*}
(\bar u^k)_t  =  \nabla\cdot[(1+\bar{u}^k )\A_k \nabla \bar{u}^k ] + \theta^2 \bar{u}^k (1- \bar{u}^k ) - \lambda_k \theta_k c \bar{u}^k  \quad\mbox{in}\quad Q_4.
\end{equation*}
Then $\{\bar u^k\}$ is bounded in $\mathcal W(Q_4)$.
Also, $0\leq \bar{v}^k\leq 1$ in $Q_4$ and $\bar{v}^k$ is a weak solution of
\begin{equation}\label{eq-bar-v-k}
 (\bar{v}^k)_t  =  \nabla\cdot[(1+\bar{v}^k)\bar{\A}_k(t)\nabla \bar{v}^k] + \theta_k^2 \bar{v}^k(1-\bar{v}^k) \quad \text{in}\quad Q_4,\quad \bar v^k=\bar u^k\quad\text{on}\quad \partial_p Q_4.
\end{equation}

By applying Lemma~\ref{gradient-est-II} for $\bar{u} \rightsquigarrow \bar u^k$, $\bar{v} \rightsquigarrow \bar v^k$ and using 
the fact $\theta_k$ is small for large $k$, we get
for all sufficiently large $k$ that
\begin{equation}\label{w^k-gradient}
 \int_{Q_4}|\nabla w^k|^2 \, dx dt
\leq  33\Lambda \left[18\Lambda^3 \int_{Q_4} |\nabla u^k|^2\, dx dt 
+ \int_{Q_4}  c_k^2 \, dx dt\right]\leq 33\Lambda \left(18\Lambda^3 +1\right)
\end{equation}
and
\begin{equation*}\label{w^k-function}
\int_{Q_4} |w^k|^2 \, dx dt
\leq 66\left[18\Lambda^3  \int_{Q_4} |\nabla u^k|^2\, dx dt 
+   \int_{Q_4} c_k^2 \, dx dt\right]\leq 66  \left(18\Lambda^3 +1\right).
\end{equation*}
Therefore, $\{\nabla w^k\}$ is bounded in $L^2(Q_4)$, and consequently, $\{w^k\}$ is bounded in $\mathcal W(Q_4)$.
Moreover, $\{\nabla \bar{v}^k\}$ is bounded in $L^2(Q_4)$ since it follows from \eqref{gradient-bounded-ass} and \eqref{w^k-gradient} that
\begin{equation}\label{eq-grad-v-k}
\int_{Q_4} |\nabla v^k |^2 \, dx dt\leq C\quad \mbox{for all large } k.
\end{equation}
Consequently there are subsequences, still denoted by $\{w^k\}$ and $\{\bar v^k\}$ and two functions $w,\bar v\in \mathcal W(Q_4)$ with $0\leq \bar{v}\leq 1$ in $Q_4$ such that
$w^k \to w$ and  $\bar{v}^k \to \bar v$ strongly in $L^2(Q_4)$,
$\nabla w^k \to \nabla w$ and  $\nabla \bar{v}^k \rightharpoonup \nabla \bar v$ weakly in $L^2(Q_4)$, 
$\partial_t w^k \rightharpoonup \partial_t  w$ and  $\partial_t \bar{v}^k  \rightharpoonup \partial_t  \bar v$ weakly-* in $L^2(0,T; H^{-1}(B_4))$. 

Since $\nabla \bar{v}^k =\lambda_k \nabla v^k \to 0$  
in $L^2(Q_4)$ thanks to \eqref{eq-grad-v-k}, we infer further that   $\nabla \bar{v}^k \to \nabla \bar v\equiv 0$ strongly in $L^2(Q_4)$.
Also, by passing to the limit in \eqref{eq-bar-v-k} we 
see that $\bar v$ is a weak solution of 
$\bar{v}_t =\nabla\cdot[(1+\bar{v})\A_0(t)\nabla \bar{v}]$ in $Q_4$. Thus, we deduce that $(\nabla \bar v, \bar v_t)\equiv 0$ in $Q_4$ and hence $\bar v$ is a constant function. 
Due to this fact and by arguing as in \eqref{eq:takecare-limit-I}, one gets for all $\varphi\in C_0^\infty(Q_4)$ that
\begin{align*}
 &\lim_{k\to\infty}\int_{Q_4}{(1+\bar v^k) \langle \bar \A_k(t) \nabla w^k, \nabla \varphi\rangle dx dt}=-\lim_{k\to\infty}\int_{Q_4}{   w^k \bar \A_k(t) \cdot  D^2 \varphi\, dx dt}\\
 &+\lim_{k\to\infty}\int_{Q_4}{  (\bar v^k -\bar v) \langle \bar \A_k(t) \nabla w^k, \nabla \varphi\rangle dx dt} - \lim_{k\to\infty}  \int_{Q_4}{ \bar v \, \bar \A_k(t) \cdot D^2 \varphi\, dx dt}\\
 &=-\int_{Q_4}{   w \A_0(t) \cdot  D^2 \varphi\, dx dt} - \int_{Q_4}{ \bar v\, \A_0(t) \cdot D^2 \varphi\, dx dt}
 =\int_{Q_4}{(1+\bar v) \langle  \A_0(t) \nabla w, \nabla \varphi\rangle dx dt}.
 \end{align*}

Using the aforementioned convergences and similar to the case $\lambda>0$, we can pass to the limit in \eqref{u^k-v^k:equation} to 
conclude that $w$ is a weak solution of
\begin{equation}\label{limit-equation-case2}
\left \{
\begin{array}{lcll}
w_t  &=&  \nabla\cdot[(1+\bar v)\A_0(t) \nabla w] &\text{ in}\quad Q_4, \\
w & =& 0 &\text{ on}\quad \partial_p Q_4.
\end{array}\right.
\end{equation}
%
By the uniqueness of the trivial solution of the linear equation \eqref{limit-equation-case2}, we conclude that $w \equiv 0$ in $Q_4$. This gives, again, $u^k-v^k=w^k\to 0$ in $L^2(Q_4)$ as $k\to\infty$.

Therefore, we have proved the Claim which contradicts \eqref{contradiction-conclusion}. Thus the proof of \eqref{u-v-close} is complete.
To prove  \eqref{gradient-v-controlled}, we apply Lemma~\ref{gradient-est-II} for $\bar{u} \rightsquigarrow \lambda u$ and $\bar{v} \rightsquigarrow \lambda v$ to obtain
\begin{align*}
 \int_{Q_4}|\nabla u- \nabla v|^2 \, dx dt\leq  33\Lambda\left[ 18\Lambda^3 \int_{Q_4}  |\nabla  u|^2\, dx dt +3\theta^2 \int_{Q_4}| u-v|^2 \, dx dt
+ \int_{Q_4} c^2 \, dx dt\right].
\end{align*}
This, \eqref{u-v-close}  and the assumptions \eqref{AAcCond}, \eqref{gradone} give 
\begin{equation}\label{difference-control}
\int_{Q_4}|\nabla u- \nabla v|^2 \, dx dt\leq  33\Lambda\left[ 18\Lambda^3  +3\theta^2 \e^2 
+\delta \right].
\end{equation}
Since $\|\nabla v\|_{L^2(Q_4)}\leq \|\nabla u\|_{L^2(Q_4)} + \|\nabla u- \nabla v\|_{L^2(Q_4)}\leq 1+ \|\nabla u- \nabla v\|_{L^2(Q_4)}$, the estimate \eqref{gradient-v-controlled} follows immediately from \eqref{difference-control}.
\end{proof}


The next lemma is a localized version of Lemma \ref{lm:compare-solution} together with a comparison between gradients of solutions. It is crucial for establishing interior $W^{1,p}$-estimates in 
Subsection~\ref{interior-density-gradient}.

\begin{lemma}\label{lm:localized-compare-solution}
Assume that $0<\theta \leq \lambda$, $\theta\leq 1$ and $0<r\leq 1$. For any $\e>0$, there exists $\delta>0$ depending only on $\e$, $\Lambda$ and  $n$  such that:  if
\begin{equation}\label{smallness-condition-c}
\fint_{Q_{4 r}}  \Big[|\A(x,t) - \bar{\A}_{B_{4r}}(t)|^2 + |c(x,t)|^2\Big]\, dx dt \leq \delta,
\end{equation}
then for any weak solution $u\in \mathcal W(Q_{5 r})$ of \eqref{PEQ} in $Q_{5 r}$ satisfying
\begin{equation*}
0\leq u \leq \frac{1}{\lambda}\, \mbox{ in } Q_{4 r} \quad\mbox{and}\quad \fint_{Q_{4 r}}{|\nabla u|^2 \, dxdt}\leq 1,
\end{equation*}
 and a weak solution  $v\in \mathcal W(Q_{4 r})$ of  
\begin{equation*}\label{limiting-eqn}
\left \{
\begin{array}{lcll}
v_t  &=&  \nabla\cdot[(1+\lambda v)\bar{\A}_{B_{4r}}(t)\nabla v] + \theta^2 v(1-\lambda v) \quad &\text{in}\quad Q_{4 r}, \\
v & =& u\quad &\text{on}\quad \partial_p Q_{4 r}
\end{array}\right.
\end{equation*}
satisfying $0\leq v \leq \frac{1}{\lambda}$ in $Q_{4 r}$, we have 
\begin{equation}\label{localized-compare-uv}
\fint_{Q_{4 r}}{|u - v|^2\, dx dt}\leq \e^2 r^2,
\end{equation}
 \begin{equation}\label{rescaled-gradient-v-controlled}
\fint_{Q_{4r}} |\nabla v|^2\, dx dt \leq 4^{n+2} 2 \omega_n \left[2+ 66\Lambda\Big(18 \Lambda^2 + 3 \theta^2 \e^2 + \delta\Big)\right],
\end{equation}
\beq \label{compare-graduv}
\fint_{Q_{2 r}}{|\nabla u - \nabla v|^2\, dx dt}\leq \e^2,
\eeq
where $\omega_n$ is the volume of the unit ball $B_1$ in $\R^n$.
\end{lemma}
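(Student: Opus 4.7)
My plan is to reduce Lemma~\ref{lm:localized-compare-solution} to Lemma~\ref{lm:compare-solution} by a parabolic rescaling argument, and then to derive the gradient comparison \eqref{compare-graduv} via an additional Caccioppoli-type energy estimate on the difference $\hat u - \hat v$.

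\textbf{Parabolic rescaling.} For $r\in(0,1]$, set
\[
\hat u(x,t)\eqdef r^{-1}u(rx,r^2t),\quad \hat v(x,t)\eqdef r^{-1}v(rx,r^2t),\quad \hat\A(x,t)\eqdef \A(rx,r^2t),\quad \hat c(x,t)\eqdef c(rx,r^2t),
\]
along with $\hat\lambda\eqdef r\lambda$ and $\hat\theta\eqdef r\theta$. A direct chain-rule computation shows that $\hat u$ is a weak solution of \eqref{PEQ} on $Q_5$ with these rescaled parameters, while $\hat v$ solves the corresponding reference equation on $Q_4$ with $\hat v=\hat u$ on $\partial_p Q_4$ and $0\le \hat v\le 1/\hat\lambda$. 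Parabolic averages are invariant under this change of variables, so the averaged hypotheses translate to $\fint_{Q_4}[|\hat\A-\bar{\hat\A}_{B_4}|^2+|\hat c|^2]\le 2\delta$, $\fint_{Q_4}|\nabla\hat u|^2\le 1$, and the constraints $0\le\hat u\le 1/\hat\lambda$, $\hat\theta\le\hat\lambda$, $\hat\theta\le 1$ are inherited.

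\textbf{Invoking Lemma~\ref{lm:compare-solution}.} To match the integral normalization of Lemma~\ref{lm:compare-solution} I normalize once more by $M\eqdef \sqrt{|Q_4|}=\sqrt{2\omega_n 4^{n+2}}$, setting $\check u=\hat u/M$, $\check v=\hat v/M$; these solve \eqref{PEQ} (respectively its reference form) with parameters $(M\hat\lambda,\hat\theta,\hat\A,\hat c/M)$, satisfy $\check v=\check u$ on $\partial_p Q_4$, and obey $\int_{Q_4}|\nabla\check u|^2\le 1$ together with $\int_{Q_4}[|\hat\A-\bar{\hat\A}_{B_4}|^2+|\hat c/M|^2]\le (|Q_4|+1)\delta$. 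Choosing $\delta$ so small that $(|Q_4|+1)\delta$ is below the smallness threshold supplied by Lemma~\ref{lm:compare-solution} for the given $\varepsilon$, that lemma delivers $\int_{Q_4}|\check u-\check v|^2\le \varepsilon^2$ together with a bound on $\int_{Q_4}|\nabla\check v|^2$. Unwinding the two rescalings via the pointwise identities $|u-v|^2 = r^2M^2|\check u-\check v|^2$ and $|\nabla v|^2 = M^2|\nabla\check v|^2$ at rescaled points, together with the invariance of averages (so that $\fint_{Q_{4r}}|u-v|^2 = r^2M^2\fint_{Q_4}|\check u-\check v|^2$ and $\fint_{Q_{4r}}|\nabla v|^2 = M^2\fint_{Q_4}|\nabla\check v|^2$, with $M^2/|Q_4|=1$), one obtains \eqref{localized-compare-uv} directly; the factor $M^2 = 2\omega_n 4^{n+2}$ then surfaces in front of the bracket in \eqref{rescaled-gradient-v-controlled}.

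\textbf{Gradient comparison.} The estimate \eqref{compare-graduv} is not a consequence of Lemma~\ref{lm:compare-solution}. To prove it, I work in the rescaled setting and use that $w\eqdef \hat u-\hat v$ vanishes on $\partial_p Q_4$ and satisfies, in $Q_4$,
\[
w_t = \nabla\cdot\bigl[(1+\hat\lambda\hat v)\bar{\hat\A}_{B_4}\nabla w + \bigl(\hat\lambda w\hat\A + (1+\hat\lambda\hat v)(\hat\A-\bar{\hat\A}_{B_4})\bigr)\nabla\hat u\bigr] + \hat\theta^2 w\bigl(1-\hat\lambda(\hat u+\hat v)\bigr) - \hat\lambda\hat\theta\hat c\hat u.
\]
Testing with $w\phi^2$ for a standard cut-off $\phi\equiv 1$ on $Q_2$, $\phi=0$ outside $Q_3$, exploiting ellipticity to absorb $\int|\nabla w|^2\phi^2$ on the left and Cauchy--Schwarz on the cross terms, gives the Caccioppoli-type estimate
\[
\int_{Q_2}|\nabla w|^2 \le C\int_{Q_3}w^2 + C\int_{Q_3}\bigl[\hat\lambda^2 w^2 + |\hat\A-\bar{\hat\A}_{B_4}|^2\bigr]|\nabla\hat u|^2 + C\int_{Q_3}\hat c^2.
\]
The first and third integrals tend to zero with $\delta$ by \eqref{localized-compare-uv} and the smallness of $\hat c$. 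The middle integral is the chief technical obstacle: $\hat\lambda^2 w^2$ and $|\hat\A-\bar{\hat\A}_{B_4}|^2$ are uniformly bounded (by $4$ and $4\Lambda^2$ respectively) and tend to zero in every $L^p$, but $|\nabla\hat u|^2$ is a priori only in $L^1$. I bridge this gap via higher integrability of $\nabla\hat u$, namely the parabolic Meyers estimate $\nabla\hat u\in L^{2+\eta}_{\mathrm{loc}}$ with a uniform bound on $Q_3$; H\"older's inequality together with $L^p$-interpolation between the vanishing $L^2$-norms and the uniform $L^\infty$-bounds of $\hat\lambda w$ and $\hat\A-\bar{\hat\A}_{B_4}$ then forces the middle integral to vanish as $\delta\to 0$. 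Rescaling back via $\fint_{Q_{2r}}|\nabla u-\nabla v|^2 = \fint_{Q_2}|\nabla w|^2$ yields \eqref{compare-graduv}.
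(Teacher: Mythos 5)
Your derivation of \eqref{localized-compare-uv} and \eqref{rescaled-gradient-v-controlled} matches the paper's: you rescale $(u,v,\A,c,\lambda,\theta)$ parabolically and then invoke Lemma~\ref{lm:compare-solution}. (The extra normalization by $M=\sqrt{|Q_4|}$ is a harmless bookkeeping step that makes the average-vs-integral conversion explicit; the paper does the same conversion implicitly.)

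Your argument for the gradient comparison \eqref{compare-graduv} has a genuine gap. You test the equation for $w=\hat u-\hat v$ in the form that puts the perturbation terms against $\nabla\hat u$, which leads to the middle integral $\int_{Q_3}[\hat\lambda^2 w^2+|\hat\A-\bar{\hat\A}_{B_4}|^2]\,|\nabla\hat u|^2$, and you claim that $\hat\lambda w$ has ``vanishing $L^2$-norm'' as $\delta\to 0$. That claim is false uniformly in $\hat\lambda$: from the $L^2$-comparison you only get $\|w\|_{L^2(Q_4)}^2\le |Q_4|\tilde\e^2$, so $\|\hat\lambda w\|_{L^2(Q_4)}^2\le\hat\lambda^2|Q_4|\tilde\e^2$, which is \emph{not} small when $\hat\lambda$ is large. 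Since $\delta$ must be chosen independently of $\lambda$ and $\theta$ (the whole point of the two-parameter scaling), neither the $L^\infty$ bound $\hat\lambda^2w^2\le 4$ nor any $L^p$-interpolation with the Meyers exponent closes the gap: the $L^p$ norm of $\hat\lambda^2 w^2$ always carries a positive power of $\hat\lambda$. The problem is inherent to the decomposition you chose, not to a suboptimal interpolation.

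The paper sidesteps this by writing the equation for $w$ with the perturbation terms against $\nabla v$ \emph{rather than} $\nabla\hat u$, i.e.
\[
w_t=\nabla\cdot[(1+\lambda u)\A\nabla w]+\nabla\cdot\Big\{\big[\lambda w\,\bar{\A}_{B_{4r}}(t)+(1+\lambda u)(\A-\bar{\A}_{B_{4r}}(t))\big]\nabla v\Big\}+\theta^2 w(1-\lambda u-\lambda v)-\lambda\theta c\,u,
\]
and then using the Lipschitz estimate of Lemma~\ref{W1infty-est} for the \emph{reference} solution: since $\bar v=\lambda v$ solves the frozen-coefficient equation with $0\le\bar v\le 1$, one gets $\|\nabla(\lambda v)\|_{L^\infty(Q_{3r})}\le C/r$. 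The dangerous cross term then takes the form $\|\nabla(\lambda v)\|_{L^\infty}\int\varphi^2|w||\nabla w|\le \tfrac{\Lambda^{-1}}{8}\int\varphi^2|\nabla w|^2+\tfrac{C}{r^2}\int\varphi^2 w^2$, and the factor $\tfrac{1}{r^2}\fint_{Q_{4r}}w^2\le\e^2$ (by \eqref{localized-compare-uv}) absorbs the $\lambda$-dependence. This is exactly what your decomposition gives up: $\hat u$ does not solve the frozen-coefficient equation, so no $W^{1,\infty}$ bound is available for it, and replacing it by a Meyers $L^{2+\eta}$ gain is not enough precisely because the companion factor $\hat\lambda^2w^2$ is not small in any $L^p$ uniformly in $\hat\lambda$. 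To repair the proof, switch the decomposition so that $\nabla v$ carries the perturbations and invoke Lemma~\ref{W1infty-est}.
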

\begin{proof}
Estimates \eqref{localized-compare-uv} and \eqref{rescaled-gradient-v-controlled} are a localized version of Lemma~\ref{lm:compare-solution}.
Define
\[
u'(x,t) =\frac{u(r x, r^2 t)}{r}, \quad v'(x,t)= \frac{v(r x, r^2 t)}{r},\quad \A'(x, t) = \A(r x, r^2 t) \quad\mbox{and}\quad c'(x,t) = c(r x, r^2 t).
\]
Let  $\lambda' = \lambda r$ and $\theta' = \theta r$. Then $u'$ is a weak solution of
\begin{equation*}
u'_t  =  \nabla\cdot[(1+\lambda' u')\A' \nabla u'] + \theta'^2 u'(1-\lambda' u') - \lambda' \theta' c' u' \quad\mbox{in}\quad Q_5
\end{equation*}
and $v'$ is  a weak solution of  
\begin{equation*}
\left \{
\begin{array}{lcll}
v'_t  &=&  \nabla\cdot[(1+\lambda' v')\bar{\A'}_{B_4}(t)\nabla v'] + \theta'^2 v'(1-\lambda' v') \quad &\text{in}\quad Q_{4}, \\
v' & =& u'\quad &\text{on}\quad \partial_p Q_{4}.
\end{array}\right.
\end{equation*}
We also have $0\leq u', v'\leq 1/\lambda'$ in $Q_4$, $\Lambda^{-1}|\xi|^2 \leq \xi^T \A'(x)\xi \leq \Lambda |\xi|^2$ and
\begin{align*}
&\int_{Q_{4}}{|\nabla u'(x,t)|^2 \, dxdt}=  4^{n+2} 2 \omega_n\fint_{Q_{4 r}}{|\nabla u(y,s)|^2 \, dyds}\leq 4^{n+2} 2 \omega_n,\\
&\int_{Q_{4}} \Big[ |\A'(x,t) -\bar{\A'}_{B_4}(t)|^2 +  |c'(x,t)|^2\Big]\, dx dt =4^{n+2} 2 \omega_n \fint_{Q_{4 r}} \Big[ |\A(y, s) -\bar{\A}_{B_{4 r}}(s)|^2  +  |c(y,s)|^2\Big]\, dy ds.
\end{align*}
Therefore, given any $\e>0$, by Lemma~\ref{lm:compare-solution} there exists a constant $\delta=\delta(\e,\Lambda, n)>0$ such that if 
condition \eqref{smallness-condition-c} for $\A$ and $c$ is satisfied then we have
\begin{equation*}
\int_{Q_{4}}{|u'(x,t) - v'(x,t)|^2\, dx dt}\leq 4^{n+2} 2 \omega_n\e^2.
\end{equation*}
By changing variables, we obtain the desired estimate
\eqref{localized-compare-uv}. On the other hand, the estimate \eqref{rescaled-gradient-v-controlled} is a consequence of \eqref{gradient-v-controlled} (see also the calculations at the end of the proof of Lemma~\ref{lm:compare-solution}).

We now prove \eqref{compare-graduv}. 
Define $w= u-v$. Then $w\in \calW(Q_{4 r})$ is a bounded weak solution of 
\begin{align}\label{u-v:equation}
w_t 
= \nabla\cdot[(1+\lambda u )\A \nabla w] &+ \nabla\cdot \Big\{[\lambda w \bar{\A}_{B_{4 r}}(t) + (1+\lambda u)(\A- \bar{\A}_{B_{4 r}}(t))]\nabla v \Big\}\\
& + \theta^2 w(1-\lambda u -\lambda v) -\lambda \theta c u\qquad\mbox{in}\quad Q_{4 r}.\nonumber
\end{align}
Let $\varphi$ be the standard cut-off function which is $1$ 
on $Q_{2 r}$, $\text{supp}(\varphi) \subset \overline{Q_{3 r}}$, $|\nabla \varphi| \leq C_n/r$ and $|\varphi_t|\leq C_n/ r^2$.  We multiply  equation \eqref{u-v:equation}  by $\varphi^2 w$ and use  integration by parts to obtain
\begin{align*}
&\int_{Q_{4 r}} \Big[(\varphi^2 \frac{w^2}{2})_t -\varphi \varphi_t w^2 \Big] \, dxdt =\int_{Q_{4 r}} w_t \,\varphi^2 w \, dxdt
 = -\int_{Q_{4 r}} (1+\lambda u) \langle \A \nabla w,
\nabla (\varphi^2 w)\rangle \, dxdt\\ &\quad -\int_{Q_{4 r}} w \langle \bar{\A}_{B_{4 r}}(t) \nabla (\lambda v) ,
\nabla (\varphi^2 w)\rangle \, dxdt -\int_{Q_{4 r}}(1+\lambda u) \langle (\A-\bar{\A}_{B_{4 r}}(t)) \nabla v ,
\nabla (\varphi^2 w)\rangle \, dxdt\\ &\quad +\theta^2 \int_{Q_{4 r}} w (1-\lambda u-\lambda v) \varphi^2 w \, dxdt - \theta \int_{Q_{4 r}} c (\lambda u) \varphi^2 w \, dxdt.
\end{align*}
We deduce from this,  condition \eqref{interior-elipticity}  for $\A$ and the  assumption $\theta\leq 1$ that
\begin{align*}
&  \int_{Q_{4 r}} (1+\lambda u) \langle \A \nabla w, \nabla w\rangle\varphi^2 \, dxdt
\leq 2  \int_{Q_{4 r}} (1+\lambda u) \sqrt{\langle \A \nabla w, \nabla w\rangle} \sqrt{\langle \A \nabla  \varphi, \nabla  \varphi\rangle} \, \varphi |w| \, dxdt\\ 
& +\Lambda  \left(\int_{Q_{4 r}} |\nabla (\lambda v)| |\nabla w |  \varphi^2 |w|  \, dxdt
+ 2 \int_{Q_{4 r}}  |\nabla (\lambda v)|  | \nabla \varphi|  \varphi w^2  \, dxdt \right)\\
& + \left(\int_{Q_{4 r}}(1+\lambda u) |\A-\bar{\A}_{B_{4 r}}(t)|  |\nabla v| 
|\nabla w| \varphi^2 \, dxdt
+2\int_{Q_{4 r}}(1+\lambda u) |\A-\bar{\A}_{B_{4 r}}(t)|  |\nabla v| 
|\nabla\varphi| \varphi |w| \, dxdt\right) 
 \\ 
&+\int_{Q_{4 r}} \varphi \varphi_t w^2 \, dxdt + \int_{Q_{4 r}} \varphi^2 w^2  \, dxdt +  \int_{Q_{4 r}} |c|   \varphi^2 |w| \, dxdt.
\end{align*} 
Using the Cauchy--Schwarz inequality and moving terms around, we get
\begin{align}\label{Cauchy--Schwarz-step}
 &\frac{\Lambda^{-1}}{4}\int_{Q_{4 r}} (1 + \lambda u) |\nabla w|^2\varphi^2 dxdt \leq  (4 \Lambda^2 +1) \int_{Q_{4 r}}|
\nabla \varphi|^2 w^2 \, dxdt\\
&   + \Lambda \|\nabla (\lambda v)\|_{L^\infty(Q_{3 r})}\left( \int_{Q_{4 r}}  |\nabla w|\varphi^2 |w| \, dxdt
+ 2 \int_{Q_{4 r}}  |
\nabla \varphi| \varphi w^2 \, dxdt\right)\nonumber\\
& +2(\Lambda +2) \int_{Q_{4 r}}|\A-\bar{\A}_{B_{4 r}}(t)|^2  |\nabla v|^2 
 \varphi^2 \, dxdt
\nonumber\\
&+ \int_{Q_{4 r}} \varphi\varphi_t w^2\, dxdt  +\int_{Q_{4 r}}    w^2 \, dxdt + \int_{Q_{4 r}}  |c|  |w| \, dxdt.\nonumber
\end{align} 
We estimate $\|\nabla (\lambda v)\|_{L^\infty(Q_{3 r})}$ and $\|\nabla v\|_{L^\infty(Q_{3 r})}$ next. Let us define 
$\bar{v}(x,t) = \lambda v(rx, r^2 t)$ for $(x,t)\in Q_4$.  Then $0\leq \bar{v}\leq 1$ in $Q_4$ and  $\bar{v}$ is a weak solution of
\begin{equation*}
\bar{v}_t  =  \nabla\cdot[(1+\bar{v})\bar{\A'}_{B_{4}}(t) \nabla \bar{v}] + (\theta r)^2 \bar{v}(1-\bar{v})\quad\mbox{in}\quad Q_4.
\end{equation*}
Thanks to $\theta r\leq \theta\leq 1$, we then can use Lemma~\ref{W1infty-est} to get
 \begin{equation}\label{handle-gradients-v}
 \|\nabla \bar v\|_{L^\infty(Q_3)}\leq C(\Lambda, n)\left(\fint_{Q_{\frac{7}{2}}}{|\nabla \bar{v}|^2 \, dxdt}\right)^{\frac{1}{2}}.
  \end{equation}
This together with Lemma~\ref{W^{1,2}-est} yields  $\|\nabla \bar v\|_{L^\infty(Q_3)}\leq C(\Lambda, n)$. By rescaling back from $Q_3$ to $Q_{3r}$, we obtain
\begin{equation}\label{localized-L^infinity-est-I}
\|\nabla (\lambda v)\|_{L^\infty(Q_{3 r})}\leq \frac{C(\Lambda, n)}{r}.
\end{equation}
On the other hand,  \eqref{handle-gradients-v} also   gives
\begin{equation}\label{localized-L^infinity-est-II}
 \|\nabla v\|_{L^\infty(Q_{3 r})}\leq  C(\Lambda, n)  \left(\fint_{Q_4}{|\nabla v(rx, r^2 t)|^2 \, dxdt}\right)^{\frac{1}{2}}
 =C(\Lambda, n)  \left(\fint_{Q_{4 r}}{|\nabla v(y,s)|^2 \, dyds}\right)^{\frac{1}{2}}.
\end{equation}
It follows from \eqref{Cauchy--Schwarz-step}, \eqref{localized-L^infinity-est-I} and \eqref{localized-L^infinity-est-II}  that 
\begin{align*}
 \frac{\Lambda^{-1}}{4}\int_{Q_{4 r}} (1 + \lambda u) |\nabla w|^2\varphi^2 dxdt
 &\leq \frac{C}{r^2}\int_{Q_{4 r}}    w^2 \, dxdt + \frac{C}{r} \int_{Q_{4 r}}  |\nabla w|\varphi^2 |w| \, dxdt\\
&\quad +C \left(\fint_{Q_{4 r}}{|\nabla v|^2 \, dxdt}\right) \left( \int_{Q_{4 r}}|\A-\bar{\A}_{B_{4 r}}(t)|^2 \, dxdt \right) + \int_{Q_{4 r}}  |c|  |w| \, dxdt,
\end{align*}  
which together with  the Cauchy--Schwarz inequality yields
\begin{align}\label{u-v:gradient-est}
\fint_{Q_{4 r}}  |\nabla w|^2\varphi^2 dxdt
&\leq \frac{C}{r^2} \fint_{Q_{4 r}}  w^2 \, dxdt + C \left(\fint_{Q_{4 r}}{|\nabla v|^2 \, dxdt}\right) \left( \fint_{Q_{4 r}}|\A-\bar{\A}_{B_{4 r}}(t)|^2 \, dxdt \right)\\
& \quad  +\Big(\fint_{Q_{4 r}} c^2  \, dxdt\Big)^{\frac{1}{2}}\Big(\fint_{Q_{4 r}}  w^2  \, dxdt\Big)^{\frac{1}{2}}.\nonumber
\end{align}
Next notice that we can assume $\delta<\e^2$. 
Then by using \eqref{localized-compare-uv} and \eqref{rescaled-gradient-v-controlled}, we have
\begin{equation*}
\fint_{Q_{4 r}}  w^2 \, dxdt=\fint_{Q_{4 r}}{|u - v|^2\, dx dt}\leq \e^2 r^2
\end{equation*}
and
\begin{equation*}
\fint_{Q_{4r}} |\nabla v|^2\, dx dt \leq 4^{n+2} 2 \omega_n \left[2+ 66\Lambda\Big(18 \Lambda^2 + 4  \e^2 \Big)\right]\leq C'  (1+\e^2).
\end{equation*}
By combining these with \eqref{u-v:gradient-est} and \eqref{smallness-condition-c} we get
\begin{align*}
\fint_{Q_{4 r}}  |\nabla w|^2\varphi^2 dxdt
\leq C  \e^2 + C  C' (1+ \e^2)\e^2
+\e^2\leq\big( C+ 2 C C' +1\big)\e^2,
\end{align*}
where $C, C'>0$ depend only on $\Lambda$ and $n$. Thus $
\fint_{Q_{2 r}}  |\nabla w|^2 dxdt
\leq C(\Lambda, n)\e^2$ and
the proof is complete.
\end{proof}


\begin{remark}\label{rm:translation-invariant}
Since our equations are invariant under the translation $(x,t)\mapsto (x+ y, t+s)$, Lemma~\ref{lm:localized-compare-solution} still holds true if $Q_r$ is replaced by $Q_r(y,s)$.
\end{remark}


\subsubsection{Interior density and gradient estimates}\label{interior-density-gradient}
We will derive interior $W^{1,p}$-estimates for solution $u$ of \eqref{PEQ} by estimating the distribution functions of the maximal function of $|\nabla u|^2$. The precise maximal operators will be used are:
\begin{definition}
{\rm (i)} The parabolic-Littlewood maximal function of a locally integrable function 
$f$ on $\R^n\times \R$ is defined by
\[
 (\M f)(x,t) = \sup_{\rho>0}\fint_{Q_\rho(x,t)}{|f(y,s)|\, dy ds}.
\]
{\rm (ii)} If $f$ is defined in a region $U\subset \R^n\times \R$, then we   denote
\[
 \M_U f = \M (\chi_U f).
\]
 \end{definition}
  
The next result gives a density estimate for the distribution  of $\M_{Q_5}(|\nabla u|^2)$.
\begin{lemma}\label{initial-density-est}
Assume that $0<\theta\leq \lambda$, $\theta\leq 1$, $\A$ satisfies \eqref{interior-elipticity} and $c\in L^2(Q_6)$. There exists a constant $N>1$ depending only on $\Lambda$ and $n$ such that   
for  any $\e>0$, we can find  $\delta=\delta(\e,\Lambda, n)>0$  satisfying:  if
\begin{equation}\label{interior-SMO}
\sup_{0<\rho\leq 4}\sup_{(y,s)\in Q_1} \fint_{Q_\rho(y, s)}{|\A(x, t) - \bar{\A}_{B_\rho(y)}(t)|^2\, dx dt}\leq \delta,
\end{equation}
then
  for any weak solution $u\in \mathcal W(Q_6)$ of \eqref{PEQ} with
$0\leq u \leq \frac{1}{\lambda}$ in $Q_5$
and for any $(y,s)\in Q_1$, $0<r\leq 1$ with
\begin{equation}\label{one-point-condition}
 Q_r(y,s)\cap Q_1\cap \big\{Q_5:\, \M_{Q_5}(|\nabla u|^2)\leq 1 \big\}\cap \{ Q_5: \M_{Q_5}(c^2)\leq \delta\}\neq \emptyset,
\end{equation}
we have
\begin{equation*}
 \big| \{ Q_1:\, \M_{Q_5}(|\nabla u|^2)> N \}\cap Q_r(y,s)\big|
\leq \e  |Q_r(y,s)|.
\end{equation*}
\end{lemma}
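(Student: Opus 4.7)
The plan is to execute the Caffarelli--Peral density scheme: compare $u$ on a parabolic sub-cube with a frozen-coefficient reference solution $v$, upgrade $v$ to have a pointwise gradient bound, and quantify the bad set through the weak-$(1,1)$ estimate for the parabolic maximal operator. First I would pick a point $(x_1,t_1)$ in the non-empty intersection in \eqref{one-point-condition}, which gives
\[
\fint_{Q_\rho(x_1,t_1)}|\nabla u|^2\chi_{Q_5}\,dxdt\le 1\quad\text{and}\quad \fint_{Q_\rho(x_1,t_1)}c^2\chi_{Q_5}\,dxdt\le\delta\qquad\text{for every }\rho>0.
\]
Since $(y,s)$ and $(x_1,t_1)$ both lie in $Q_r(y,s)$, the inclusion $Q_{4r}(y,s)\subset Q_{5r}(x_1,t_1)$ is straightforward and yields $\fint_{Q_{4r}(y,s)}|\nabla u|^2\le(5/4)^{n+2}$ together with the analogous bound for $c^2$. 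Combined with \eqref{interior-SMO} at scale $4r\le 4$ and center $(y,s)\in Q_1$, all the hypotheses of Lemma~\ref{lm:localized-compare-solution} (in the translated form of Remark~\ref{rm:translation-invariant}) hold on $Q_{4r}(y,s)$, up to the harmless enlargement of the gradient-normalisation constant from $1$ to $(5/4)^{n+2}$.

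For any $\e_0>0$ (to be fixed below), shrinking $\delta$ produces the reference solution $v$ on $Q_{4r}(y,s)$ with $v=u$ on $\partial_pQ_{4r}(y,s)$, $0\le v\le\lambda^{-1}$, and $\fint_{Q_{2r}(y,s)}|\nabla u-\nabla v|^2\le\e_0^2$. Invoking Lemma~\ref{W1infty-est} on the rescaled variable $\bar v(x,t)=\lambda v(y+rx,s+r^2t)$, which solves the corresponding equation on $Q_4$, upgrades the mean-square control on $\nabla v$ to the pointwise bound $\|\nabla v\|_{L^\infty(Q_{3r}(y,s))}\le C_0(\Lambda,n)$. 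Setting $N:=\max\{3^{n+2},\,4C_0^2\}$, the crux of the argument is the inclusion
\[
\{(x,t)\in Q_r(y,s):\M_{Q_5}(|\nabla u|^2)(x,t)>N\}\subset\{(x,t)\in Q_r(y,s):\M_{Q_{2r}(y,s)}(|\nabla u-\nabla v|^2)(x,t)>C_0^2\},
\]
which I would verify by splitting on the scale $\rho$ of the cube realising the maximal function. For $\rho\ge r$, the inclusion $Q_\rho(x,t)\subset Q_{3\rho}(x_1,t_1)$ (using $|x-x_1|\le 2r\le 2\rho$) gives $\fint_{Q_\rho(x,t)}|\nabla u|^2\chi_{Q_5}\le 3^{n+2}\le N$; for $\rho<r$ a direct parabolic check gives $Q_\rho(x,t)\subset Q_{2r}(y,s)\subset Q_{3r}(y,s)$, so the splitting $|\nabla u|^2\le 2|\nabla u-\nabla v|^2+2|\nabla v|^2$ combined with $|\nabla v|\le C_0$ on $Q_{3r}(y,s)$ bounds the average by $2\M_{Q_{2r}(y,s)}(|\nabla u-\nabla v|^2)(x,t)+2C_0^2$. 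Thus exceeding $N$ forces the first term to exceed $C_0^2$.

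The proof then closes via the weak-$(1,1)$ bound for the parabolic maximal operator,
\[
\big|\{\M_{Q_{2r}(y,s)}(|\nabla u-\nabla v|^2)>C_0^2\}\big|\le\frac{C(n)}{C_0^2}\int_{Q_{2r}(y,s)}|\nabla u-\nabla v|^2\,dxdt\le C(\Lambda,n)\,\e_0^2\,|Q_r(y,s)|,
\]
after which one picks $\e_0$ (equivalently $\delta$) small in terms of $\e$, with $N$ depending only on $\Lambda$ and $n$ as required. The principal obstacle I expect is the geometric bookkeeping for parabolic cubes in both space and time, together with the verification that the mild enlargement of the normalisation constant from $1$ to $(5/4)^{n+2}$ does not spoil Lemma~\ref{lm:localized-compare-solution}; neither is serious, as the former is a direct parabolic computation and the latter is absorbed either by the choice of $\delta$ or by a harmless rescaling of the constants in its conclusion.
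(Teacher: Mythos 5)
Your proof is correct and follows essentially the same Caffarelli--Peral density argument as the paper: pick a point from the nonempty intersection in \eqref{one-point-condition}, invoke Lemma~\ref{lm:localized-compare-solution} (via Remark~\ref{rm:translation-invariant}) to produce the reference solution $v$ and the smallness of $\fint_{Q_{2r}(y,s)}|\nabla u-\nabla v|^2$, upgrade to a pointwise bound on $\nabla v$ through Lemma~\ref{W1infty-est}, verify the set inclusion by splitting on the scale $\rho$, and close with the weak-$(1,1)$ maximal estimate. The only departures from the paper's proof are cosmetic constant choices (you split at $\rho=r$ rather than $\rho=r/2$, enclose $Q_\rho(x,t)$ in $Q_{3\rho}(x_1,t_1)$ rather than $Q_{5\rho}(x_0,t_0)$, and hence land on $N=\max\{3^{n+2},4C_0^2\}$ instead of $N=\max\{6C,5^{n+2}\}$); a minor wording fix is that the existence and uniqueness of $v$ come from Theorem~\ref{existence-u}, independently of $\delta$, which controls only the size of the comparison error.
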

\begin{proof}
By condition \eqref{one-point-condition}, there exists a point $(x_0,t_0)\in Q_r(y,s)\cap  Q_1$ such that
\begin{align}
  \M_{Q_5}(|\nabla u|^2)(x_0,t_0)\leq 1\quad \mbox{and}\quad \M_{Q_5}(c^2)(x_0,t_0) \leq
\delta.\label{maximal-fns-control}
\end{align}
Notice that $Q_{5 r}(y,s) \subset Q_6$. Since $Q_{4 r}(y,s) \subset Q_{5 r}(x_0,t_0) \cap Q_5$, it follows from \eqref{maximal-fns-control} that
\begin{align*}
&\fint_{Q_{4 r}(y,s)}|\nabla u|^2 \, dx dt \leq \frac{|Q_{5 r}(x_0,t_0)|}{|Q_{4 r}(y,s)|} \frac{1}{|Q_{5 r}(x_0,t_0)|}\int_{Q_{5 r}(x_0,t_0)\cap Q_5}|\nabla u|^2 \, dx dt\leq \Big(\frac{5}{4}\Big)^{n+2},\\
 &\fint_{Q_{4 r}(y,s)}c^2 \, dx dt \leq \frac{|Q_{5 r}(x_0,t_0)|}{|Q_{4 r}(y,s)|} \frac{1}{|Q_{5 r}(x_0,t_0)|}\int_{Q_{5 r}(x_0,t_0)\cap Q_5}c^2 \, dx dt\leq \Big(\frac{5}{4}\Big)^{n+2} \delta.
\end{align*}
Also the assumption \eqref{interior-SMO} gives
\begin{align*}
 &\fint_{Q_{4r}(y, s)} |\A(x,t)-\bar{\A}_{B_{4r}(y)}(t) |^2 \, dx dt  \leq  \delta.
\end{align*}
Therefore, we can use \eqref{compare-graduv} and Remark~\ref{rm:translation-invariant} to obtain
\begin{equation}\label{gradient-comparison}
\fint_{Q_{2r}(y,s)}{|\nabla u- \nabla v|^2\, dx dt}\leq \eta^2,
\end{equation}
where $v \in \calW(Q_{4r}(y,s))$ is the  unique weak solution of 
\begin{equation*}
\left \{
\begin{array}{lcll}
v_t  &=&  \nabla\cdot[(1+\lambda v)\bar{\A}_{B_{4r}(y)}(t)\nabla v] + \theta^2 v(1-\lambda v) \quad &\text{in}\quad Q_{4r}(y,s), \\
v & =& u\quad &\text{on}\quad \partial_p Q_{4r}(y,s)
\end{array}\right.
\end{equation*}
satisfying $0\leq v\leq 1/\lambda$ in $Q_{4r}(y,s)$,   and $\delta =\delta(\eta,\Lambda, n)$ with $\eta$ being determined later. We remark that the existence and uniqueness 
of such weak solution $v$ is guaranteed by Theorem~\ref{existence-u}.

Let $\bar{v}(x,t) = \lambda v(rx +y, r^2 t +s)$ and $\A'(x,t) = \A(rx +y, r^2 t +s)$ for $(x,t)\in Q_4$.  Then $0\leq \bar{v}\leq 1$ in $Q_4$ and  $\bar{v}$ is a weak solution of
\begin{equation*}
\bar{v}_t  =  \nabla\cdot[(1+\bar{v})\bar{\A'}_{B_{4}}(t) \nabla \bar{v}] + (\theta r)^2 \bar{v}(1-\bar{v})\quad\mbox{in}\quad Q_4.
\end{equation*}
Since $\theta r\leq \theta\leq 1$, applying Lemma~\ref{W1infty-est} we get 
\begin{align*}
\|\nabla \bar v\|_{L^\infty(Q_{\frac{3}{2}})}^2\leq C \fint_{Q_2}{|\nabla \bar{v}|^2 \, dxdt},
\end{align*}
which together with \eqref{gradient-comparison} and \eqref{maximal-fns-control}   gives
\begin{align}\label{gradient-is-bounded}
\|\nabla v\|_{L^\infty(Q_{\frac{3 r}{2}}(y,s))}^2
&\leq C \fint_{Q_{2r}(y,s)}{|\nabla v|^2 \, dxdt}\\
&\leq  2 C \left( \fint_{Q_{2r}(y,s)}{|\nabla u -\nabla v|^2 \, dxdt}
+ \fint_{Q_{2r}(y,s)}{|\nabla u|^2 \, dxdt}\right)
\leq C(\Lambda, n) (\eta^2 +1).\nonumber
\end{align}
We claim that \eqref{maximal-fns-control}, \eqref{gradient-comparison} and \eqref{gradient-is-bounded}
yield
\begin{equation}\label{set-relation-claim}
 \big\{ Q_r(y,s):  \M_{Q_{2r}(y,s)}(|\nabla u - \nabla v|^2) \leq C(\Lambda, n) \big\}\subset \big\{ Q_r(y,s):\, \M_{Q_5}(|\nabla u |^2) \leq N\big\}
\end{equation}
with $N = \max{\{6 C(\Lambda, n), 5^{n+2}\}}$. Indeed, let $(x,t)$ be a point in the set on the left hand side of \eqref{set-relation-claim}, and consider 
$Q_\rho(x,t)$. If $\rho\leq r/2$, then $Q_\rho(x,t) \subset Q_{3 r/2}(y,s)\subset Q_3$
and hence
\begin{align*}
 &\frac{1}{|Q_\rho(x,t)|}\int_{Q_\rho(x,t) \cap Q_5} |\nabla u|^2 \, dx dt\\
 &\leq 
 \frac{2}{|Q_\rho(x,t)|}\Big[\int_{Q_\rho(x,t) \cap Q_5} |\nabla u-\nabla v|^2 \, dx dt
 +\int_{Q_\rho(x,t) \cap Q_5} |\nabla v|^2 \, dx dt \Big]\\
 &\leq 2 \M_{Q_{2r}(y,s)}(|\nabla u - \nabla v|^2)(x,t) +2 \|\nabla v \|_{L^\infty(Q_{\frac{3r}{2}}(y,s))}^2
 \leq 2C(\Lambda, n) \big( \eta^2 + 2\big)\leq 6 C(\Lambda, n).
\end{align*}
On the other hand if $\rho>r/2$, then  $Q_\rho(x,t)\subset Q_{5\rho}(x_0, t_0)$. This and  the first inequality in \eqref{maximal-fns-control} imply that
\begin{align*}
 \frac{1}{|Q_\rho(x,t)|}\int_{Q_\rho(x,t) \cap Q_5} |\nabla u|^2 \, dx dt
 \leq \frac{5^{n+2}}{|Q_{5\rho}(x_0, t_0)|} \int_{Q_{5\rho}(x_0, t_0) \cap Q_5} |\nabla u|^2 \, dx dt\leq 
 5^{n+2}.
\end{align*}
Therefore, $\M_{Q_5}(|\nabla u |^2)(x,t) \leq N$ and the claim \eqref{set-relation-claim} is proved.
Note that   \eqref{set-relation-claim} is equivalent to
 \begin{align*}
 \big\{Q_r(y,s):\, \M_{Q_5}(|\nabla u |^2) > N\big\} \subset
 \big\{ Q_r(y,s):\, \M_{Q_{2r}(y,s)}(|\nabla u - \nabla v|^2) >C(\Lambda, n) \big\}.
 \end{align*}
It follows from this,  the weak type $1-1$ estimate and \eqref{gradient-comparison}   that
\begin{align*}
 &\big|\big\{Q_r(y,s):\, \M_{Q_5}(|\nabla u |^2) > N\big\} \big|\leq  \big|
 \big\{ Q_r(y,s):\, \M_{Q_{2r}(y,s)}(|\nabla u - \nabla v|^2) > C(\Lambda, n) \big\}\big|\\
 &\leq C  \int_{Q_{2r}(y,s)}{|\nabla u - \nabla v|^2 \, dx dt}\leq C' \eta^2 \, |Q_r(y,s)|,
 \end{align*}
 where $C'>0$ depends only on $\Lambda$ and $n$.
By choosing $\eta = \sqrt{\frac{\e}{C'}}$, we obtain the desired result.
\end{proof}

 In view of Lemma~\ref{initial-density-est}, we can apply the Vitali covering lemma (see \cite[Lemma~2.4]{B2}) 
for $E=\{ Q_1:\, \M_{Q_5}(|\nabla u|^2)> N \} $ and 
$F=\{ Q_1:\, \M_{Q_5}(|\nabla u|^2)> 1 \}\cup \{ Q_1:\, \M_{Q_5}(c^2)> \delta \}
$
 to obtain:
\begin{lemma}\label{second-density-est}
Assume that $0<\theta\leq \lambda$, $\theta\leq 1$, $\A$ satisfies \eqref{interior-elipticity} and $c\in L^2(Q_6)$. There exists a constant $N>1$ depending only on $\Lambda$ and  $n$ such that   
for  any $\e>0$, we can find  $\delta=\delta(\e, \Lambda, n)>0$  satisfying: if
\begin{equation*}
\sup_{0<\rho\leq 4}\sup_{(y,s)\in Q_1} \fint_{Q_\rho(y,s)}{|\A(x, t) - \bar{\A}_{B_\rho(y)}(t)|^2\, dx dt}\leq \delta,
\end{equation*}
 then  for any weak solution $u\in \mathcal W(Q_6)$ of \eqref{PEQ} satisfying
\begin{equation*}
0\leq u \leq \frac{1}{\lambda}\quad\mbox{in}\quad Q_5 \quad \mbox{and}\quad
\big| \{Q_1: 
\M_{Q_5}(|\nabla u|^2)> N \}\big| \leq \e |Q_1|,
\end{equation*}
we have
\begin{align*}
\big|\{Q_1: \M_{Q_5}(|\nabla u|^2)> N\}\big|
\leq 2 (10)^{n+2}\e \, \Big\{
\big|\{Q_1: \M_{Q_5}(|\nabla u|^2)> 1\}\big|
+ \big|\{ Q_1: \M_{Q_5}(c^2)> \delta \}\big|\Big\}.\nonumber
\end{align*}
\end{lemma}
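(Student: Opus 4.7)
The plan is to deduce this lemma from Lemma~\ref{initial-density-est} via a parabolic Vitali-type covering argument, exactly in the spirit of \cite[Lemma~2.4]{B2}. Set
\[
E=\{Q_1: \M_{Q_5}(|\nabla u|^2)>N\}, \qquad F=\{Q_1: \M_{Q_5}(|\nabla u|^2)>1\}\cup \{Q_1: \M_{Q_5}(c^2)>\delta\},
\]
where $N=N(\Lambda,n)>1$ is the constant furnished by Lemma~\ref{initial-density-est} and $\delta=\delta(\e,\Lambda,n)$ is the one supplied by that same lemma applied with parameter $\e$. Because $N>1$ one has $E\subset F\subset Q_1$, and the standing hypothesis gives $|E|\le \e|Q_1|$.

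Next I would invoke the parabolic Vitali covering lemma in the following form: if $E\subset F\subset Q_1$ with $|E|\le \e|Q_1|$, and if for every $(y,s)\in Q_1$ and every $r\in(0,1]$ the implication
\[
|E\cap Q_r(y,s)|>\e|Q_r(y,s)| \ \Longrightarrow\ Q_r(y,s)\cap Q_1\subset F
\]
holds, then $|E|\le 2(10)^{n+2}\e|F|$. This conclusion is precisely the statement we must prove, so the whole task reduces to verifying the displayed implication.

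The verification is done by contraposition. Suppose $Q_r(y,s)\cap Q_1\not\subset F$. Then there exists a point $(x_0,t_0)\in Q_r(y,s)\cap Q_1$ with $\M_{Q_5}(|\nabla u|^2)(x_0,t_0)\le 1$ and $\M_{Q_5}(c^2)(x_0,t_0)\le \delta$, which is exactly the non-emptiness assumption \eqref{one-point-condition} of Lemma~\ref{initial-density-est}. Applying that lemma yields $|E\cap Q_r(y,s)|\le \e|Q_r(y,s)|$, establishing the contrapositive. Since $N$ and $\delta$ were fixed by Lemma~\ref{initial-density-est} with the correct dependence on $\e,\Lambda,n$, the covering lemma delivers the claimed inequality without further estimates.

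No step here is truly delicate — the analytic heart of the argument is already contained in Lemma~\ref{initial-density-est}, and the present lemma is essentially a bookkeeping consequence of that and the Vitali decomposition. The only mild subtlety is to make sure the $\delta$ appearing in the definition of $F$ (and in the smallness condition on $[\A]_{BMO}$) is the same one returned by Lemma~\ref{initial-density-est}; once this alignment is made, no new choices of parameters are required.
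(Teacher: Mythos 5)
Your proposal is correct and matches the paper's own argument: the paper derives Lemma~\ref{second-density-est} precisely by applying the parabolic Vitali covering lemma of \cite[Lemma~2.4]{B2} to the sets $E$ and $F$ you define, with Lemma~\ref{initial-density-est} furnishing the required local density implication. The only thing you add is the (correct) explicit contrapositive verification of the covering hypothesis, which the paper leaves implicit.
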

 
We are now ready to prove  Theorem~\ref{first-main-result}.

\begin{proof}[\textbf{Proof of Theorem~\ref{first-main-result}}]
Let  $N>1$ be as in  Lemma~\ref{second-density-est}, and let $q=p/2>1$ . We choose  $\e=\e(p,\Lambda ,n)>0$ be such that
\[
\e_1 \eqdef 2 (10)^{n+2}\e = \frac{1}{2 N^q}, 
\]
and let $\delta=\delta(p,\Lambda, n)$ be the corresponding constant given by Lemma~\ref{second-density-est}.

Assuming for a moment that $u$  satisfies 
\begin{equation}\label{initial-distribution-condition}
\big| \{Q_1: 
\M_{Q_5}(|\nabla u|^2)> N \}\big| \leq \e |Q_1|.
\end{equation}
We first consider the case  $\theta \leq\lambda$.
Then it follows from  Lemma~\ref{second-density-est} that
\beq\label{initial-distribution-est}
\big|\{Q_1: \M_{Q_5}(|\nabla u|^2)> N\}\big|
\leq \e_1  \left\{
\big|\{Q_1: \M_{Q_5}(|\nabla u|^2)> 1\}\big|
+ \big|\{ Q_1: \M_{Q_5}(c^2)> \delta \}\big|\right\}.
\eeq
 Let us iterate this estimate by considering
\[
u_1(x,t) = \frac{u(x,t)}{\sqrt{N}}, \quad c_1(x,t) = \frac{c(x,t)}{\sqrt{N}} \quad \mbox{and}\quad \lambda_1 = \sqrt{N}\lambda\geq \theta.
\]
It is easy to see that $u_1\in \mathcal W(Q_6)$ is a weak solution of
\begin{equation*}
(u_1)_t  =  \nabla\cdot[(1+\lambda_1 u_1)\A\nabla u_1] + \theta^2 u_1(1-\lambda_1 u_1) - \lambda_1 \theta c_1 u_1 \quad\mbox{in}\quad Q_6.
\end{equation*}
Moreover, thanks to \eqref{initial-distribution-condition} we have
\begin{align*}
\big| \{Q_1: 
\M_{Q_5}(|\nabla u_1|^2)> N \}\big| &= \big| \{Q_1: 
\M_{Q_5}(|\nabla u|^2)> N^2 \}\big| \leq \e |Q_1|.
\end{align*}
Therefore, by applying  Lemma~\ref{second-density-est} to $u_1$ we obtain
\begin{align*}
\big|\{Q_1: \M_{Q_5}(|\nabla u_1|^2)> N\}\big|
&\leq \e_1 \left( 
\big|\{Q_1: \M_{Q_5}(|\nabla u_1|^2)> 1 \}\big|
+ \big|\{ Q_1: \M_{Q_5}(|c_1|^2)> \delta \}\big|\right)\\
&= \e_1  \left( 
\big|\{Q_1: \M_{Q_5}(|\nabla u|^2)> N \}\big|
+ \big|\{ Q_1: \M_{Q_5}(c^2)> \delta N\}\big| \right).
\end{align*}
We infer from this and  \eqref{initial-distribution-est} that
\begin{align}\label{first-iteration-est}
&\big|\{Q_1: \M_{Q_5}(|\nabla u|^2)> N^2\}\big|
\leq \e_1^2 
\big|\{Q_1: \M_{Q_5}(|\nabla u|^2)> 1 \}\big|\\
&\qquad + \e_1^2\big|\{ Q_1: \M_{Q_5}(c^2)> \delta\} \big|+ \e_1\big|\{ Q_1: \M_{Q_5}(c^2)> \delta N\}\big|. \nonumber
\end{align}
 Next, let
\[
u_2(x,t) = \frac{u(x,t)}{N}, \quad c_2(x,t) = \frac{c(x,t)}{N} \quad \mbox{and}\quad \lambda_2 = N\lambda\geq \theta.
\]
Then  $u_2\in \mathcal W(Q_6)$ is a weak solution of
\begin{equation*}
(u_2)_t  =  \nabla\cdot[(1+\lambda_2 u_2)\A\nabla u_2] + \theta^2 u_2(1-\lambda_2 u_2) - \lambda_2 \theta c_2 u_2 \quad\mbox{in}\quad Q_6
\end{equation*}
and 
\begin{align*}
\big| \{Q_1: 
\M_{Q_5}(|\nabla u_2|^2)> N \}\big| &= \big| \{Q_1: 
\M_{Q_5}(|\nabla u|^2)> N^3 \}\big| \leq \e  |Q_1|.
\end{align*}
Hence by applying  Lemma~\ref{second-density-est} to $u_2$ we get
\begin{align*}
\big|\{Q_1: \M_{Q_5}(|\nabla u_2|^2)> N\}\big|
&\leq \e_1 \left( 
\big|\{Q_1: \M_{Q_5}(|\nabla u_2|^2)> 1 \}\big|
+ \big|\{ Q_1: \M_{Q_5}(|c_2|^2)> \delta\}\big|\right)\\
&= \e_1  \left( 
\big|\{Q_1: \M_{Q_5}(|\nabla u|^2)> N^2 \}\big|
+ \big|\{ Q_1: \M_{Q_5}(c^2)> \delta N^2\}\big| \right).
\end{align*}
This together with  \eqref{first-iteration-est} gives
\begin{align*}
\big|\{Q_1: \M_{Q_5}(|\nabla u|^2)> N^3\}\big|
\leq \e_1^3 
\big|\{Q_1: \M_{Q_5}(|\nabla u|^2)> 1 \}\big|
+ \sum_{i=1}^3\e_1^i\big|\{ Q_1: \M_{Q_5}(c^2)> \delta N^{3-i}\} \big|.
\end{align*}
By repeating the iteration, we then conclude that
\begin{align*}
\big|\{Q_1: \M_{Q_5}(|\nabla u|^2)> N^k\}\big|
\leq \e_1^k 
\big|\{Q_1: \M_{Q_5}(|\nabla u|^2)> 1 \}\big|+ \sum_{i=1}^k\e_1^i\big|\{ Q_1: \M_{Q_5}(c^2)> \delta N^{k-i}\} \big|
\end{align*}
for all $k=1,2,\dots$ Since 
\begin{align*}
&\int_{Q_1}\M_{Q_5}(|\nabla u|^2)^q \, dx dt =q \int_0^\infty t^{q-1} \big|\{Q_1: \M_{Q_5}(|\nabla u|^2)>t\}\big|\, dt\\
&=q \int_0^{N} t^{q-1} \big|\{Q_1: \M_{Q_5}(|\nabla u|^2)>t\}\big|\, dt
+q \sum_{k=1}^\infty\int_{N^{k}}^{N^{k+1}} t^{q-1} \big|\{Q_1: \M_{Q_5}(|\nabla u|^2)>t\}\big|\, dt\\
&\leq N^q |Q_1| + (N^q -1) \sum_{k=1}^\infty N^{qk}\big|\{Q_1: \M_{Q_5}(|\nabla u|^2)>N^k\}\big|,
\end{align*}
we obtain 
\begin{align*}
&\int_{Q_1}\M_{Q_5}(|\nabla u|^2)^q \, dx dt 
\leq N^q |Q_1| + (N^q -1) \sum_{k=1}^\infty N^{qk}\big|\{Q_1: \M_{Q_5}(|\nabla u|^2)>N^k\}\big|\\
&\leq N^q |Q_1| + (N^q -1)|Q_1| \sum_{k=1}^\infty (\e_1 N^q)^k
+\sum_{k=1}^\infty\sum_{i=1}^k (N^q -1)N^{q k}\e_1^i\big|\{ Q_1: \M_{Q_5}(c^2)> \delta N^{k-i}\} \big|.
\end{align*}
But we have
\begin{align*}
&\sum_{k=1}^\infty\sum_{i=1}^k (N^q -1)N^{q k}\e_1^i\big|\{ Q_1: \M_{Q_5}(c^2)> \delta N^{k-i}\} \big|\\
&=\big(\frac{N}{\delta}\big)^q\sum_{i=1}^\infty (\e_1 N^q)^i\left[\sum_{k=i}^\infty (N^q -1)\delta^{q}N^{q (k-i-1)}\big|\{ Q_1: \M_{Q_5}(c^2)> \delta N^{k-i}\} \big|\right]\\
&=\big(\frac{N}{\delta}\big)^q\sum_{i=1}^\infty (\e_1 N^q)^i\left[\sum_{j=0}^\infty (N^q -1)\delta^{q}N^{q (j-1)}\big|\{ Q_1: \M_{Q_5}(c^2)> \delta N^{j}\} \big|\right]\\
&\leq \big(\frac{N}{\delta}\big)^q \Big[\int_{Q_1}\M_{Q_5}(c^2)^q \, dx dt\Big]\sum_{i=1}^\infty (\e_1 N^q)^i,
\end{align*}
where we have used Remark~\ref{rm:lower-est-L^p-norm} below to get the last inequality.
Thus we infer that
\begin{align*}
\int_{Q_1}\M_{Q_5}(|\nabla u|^2)^q \, dx dt 
&\leq N^q |Q_1| + \left[ (N^q -1)|Q_1| +\big(\frac{N}{\delta}\big)^q \int_{Q_1}\M_{Q_5}(c^2)^q \, dx dt\right] \sum_{k=1}^\infty (\e_1 N^q)^k\\
&= N^q |Q_1| + \left[ (N^q -1)|Q_1| +\big(\frac{N}{\delta}\big)^q \int_{Q_1}\M_{Q_5}(c^2)^q \, dx dt\right] \sum_{k=1}^\infty 2^{-k}\\
&\leq C\left( 1+ \int_{Q_1}\M_{Q_5}(c^2)^q \, dx dt \right)
\end{align*}
with the constant $C$ depending only on $p$, $\Lambda$ and $n$. On the other hand, by the Lebesgue differentiation theorem one has
\[
|\nabla u(x,t)|^2 =
\lim_{\rho\to 0^+}\fint_{Q_\rho(x,t)} |\nabla u(y,s)|^2\, dy ds\leq  
\M_{Q_5}(|\nabla u|^2)(x,t)
\]
for almost every $(x,t)\in Q_1$. Therefore,
it follows from the strong type $q-q$ estimate for the maximal function and the fact $q=p/2$ that 
\begin{equation}\label{initial-desired-L^p-estimate}
\int_{Q_1}|\nabla u|^{p} \, dx dt\leq C\left( 1+ \int_{Q_5}|c|^{p} \, dx dt\right).
\end{equation}
The estimate  \eqref{initial-desired-L^p-estimate} was derived under the assumption that $\theta\leq \lambda$. In the case $\theta>\lambda$, we define  $u' = u/K$, $c' = c/K$ and $\lambda' = \lambda K$ where $K= \theta/\lambda>1$. 
Then  $u'$ is a weak solution of
\begin{equation*}
u'_t  =  \nabla\cdot[(1+\lambda' u')\A \nabla u'] + \theta^2 u'(1-\lambda' u') - \lambda' \theta c' u' \quad\mbox{in}\quad Q_6.
\end{equation*}
Since $\theta\leq \lambda'$ and $u'$ inherits the property \eqref{initial-distribution-condition} from that of $u$, we can employ \eqref{initial-desired-L^p-estimate} to conclude that
 \begin{equation*}
\int_{Q_1}|\nabla u'|^{p} \, dx dt\leq C\left( 1+ \int_{Q_5}|c'|^{p} \, dx dt\right).
\end{equation*}
This implies that
\begin{equation}\label{second-desired-L^p-estimate}
\int_{Q_1}|\nabla u|^p \, dx dt\leq C\left[ \Big(\frac{\theta}{\lambda}\Big)^p+ \int_{Q_5}|c|^p \, dx dt\right].
\end{equation}
Combining \eqref{initial-desired-L^p-estimate} and \eqref{second-desired-L^p-estimate} yields
\begin{equation}\label{desired-L^p-estimate}
\int_{Q_1}|\nabla u|^p \, dx dt\leq C\left[ \Big(\frac{\theta}{\lambda}\Big)^p \vee 1+ \int_{Q_5}|c|^p \, dx dt\right]
\end{equation}
as long as $\lambda>0$ and $0<\theta\leq 1$.
We next remove the extra assumption  
\eqref{initial-distribution-condition} for $u$. Notice that for any $M>0$, by using the weak type $1-1$ estimate for the maximal function and Lemma~\ref{W^{1,2}-est} we get
\begin{align*}
&\big| \{Q_1: 
\M_{Q_5}(|\nabla u|^2)> N  M^2\}\big|
\leq \frac{C }{N M^2}\int_{Q_5} |\nabla u|^2 \, dxdt 
\leq \frac{C_n }{M^2}\int_{Q_6} u^2 \, dxdt.
\end{align*}
Therefore, if we let
\[\bar{u}(x,t) = \frac{u(x,t)}{M}\quad \mbox{with}\quad M^2= \frac{C_n  \|u\|_{L^2(Q_6)}^2}{ \e |Q_1|}
\]
then 
$\big| \{Q_1: 
\M_{Q_5}(|\nabla \bar{u}|^2)> N\}\big|
\leq \e |Q_1|$. Hence we can apply \eqref{desired-L^p-estimate} to $\bar{u}$ with $c$ and $\lambda$ being replaced by $\bar c= c/M$ and $\bar \lambda = \lambda M$. By reversing back to the  functions $u$ and $c$, we obtain \eqref{main-estimate}.
\end{proof}


\begin{remark}\label{rm:lower-est-L^p-norm}
Assume that $V\subset U\subset \R^n\times\R$,  $c\in L^2(U)$ and $q>1$. 
Then for any $\delta>0$ and $N>1$, we have 
\beq
\sum_{j=0}^\infty (N^q -1) \delta^{ q} N^{q(j-1)} \big|\{ V: \M_{U}(c^2)> \delta N^j\} \big|
\leq \int_{V}\M_{U}(c^2)^q \, dx dt.
\eeq  
Indeed,
\begin{align*}
\int_{V}\M_{U}(c^2)^q \, dx dt
&=q \int_0^\infty t^{q-1} \big|\{V: \M_{U}(c^2)>t\}\big|\, dt
\geq 
q \sum_{j=0}^\infty\int_{\delta N^{j-1}}^{\delta N^j} t^{q-1} \big|\{V: \M_{U}(c^2)>t\}\big|\, dt\\
&\geq \sum_{j=0}^\infty \big[(\delta N^j)^q - (\delta N^{j-1})^q\big]\, \big|\{ V: \M_{U}(c^2)> \delta N^j\} \big|.
\end{align*}
\end{remark}

Note that our interior gradient estimate for $u$ in Theorem~\ref{first-main-result} is independent of the boundary 
values of $u$ on $\partial_p Q_6$. On the contrary, the interior $W^{1,p}$-estimates obtained in \cite{B1, B2}  for 
linear parabolic equations  depend essentially on the boundary values of the solutions.
\subsection{Boundary $W^{1,p}$-estimates on flat domains}\label{boundaryW}

We will use the following notation: 
\begin{align*}
B_\rho^+ &=\{x\in B_\rho:\, x_n>0\}, 
& \partial_c B_\rho^+ &= \partial B_\rho\cap \{x:\, x_n >0\},\\
T_\rho &= B_\rho\cap \{x:\, x_n =0\}, 
& \tilde T_\rho &= T_\rho \times (-\rho^2, \rho^2], \\
Q_\rho &= B_\rho \times  (-\rho^2, \rho^2], 
& Q_\rho^+ &= B_\rho^+ \times  (-\rho^2, \rho^2],\\
\partial_c Q_\rho^+ &= \partial_c B_\rho^+ \times (-\rho^2, \rho^2],
& \partial_b Q_\rho^+ &=  B_\rho^+ \times \{-\rho^2\}. 
\end{align*}
Our aim is to derive boundary $W^{1,p}$-estimates for  solutions to the problem
\begin{equation}\label{MP-flat-domain}
\left \{
\begin{array}{lcll}
u_t  &=&  \nabla\cdot[(1+\lambda u)\A \nabla u] + \theta^2 u(1-\lambda u) - \lambda \theta c u  \quad &\text{in}\quad Q_4^+, \\
\begin{displaystyle}
\frac{\partial u}{\partial \bnu} 
\end{displaystyle}
 &=& 0 \quad &\text{on}\quad \tilde T_4, 
\end{array}\right.
\end{equation}
where  $\theta,\, \lambda>0$ are constants and $c(x,t)$ is a non-negative measurable function. We assume that $\A: Q_4^+ \to  \M^{n\times n}$ is  symmetric, measurable  and there exists  a constant $\Lambda>0$ such that
\begin{equation}\label{boundary-elipticity}
\Lambda^{-1} |\xi|^2 \leq \xi^T \A(x,t) \xi  \leq \Lambda |\xi|^2\quad \mbox{for a.e. $(x,t)\in Q_4^+$ and for all }\xi\in\R^n.
\end{equation}
Throughout this subsection, the space $\hat\calW(Q_4^+)$ is defined as in \eqref{space-hat} with $\Omega=Q_4^+$ and $\Gamma = \partial_c B_4^+$. Note also that  in this case  $\partial_D Q_4^+ = \partial_b Q_4^+ \cup \partial_c Q_4^+$.
\subsubsection{Boundary approximation estimates} \label{boundary-approx}

Let us consider the parabolic equation
\begin{equation}\label{MP-HS}
\left \{
\begin{array}{lcll}
u_t  &=&  \nabla\cdot[(1+\lambda u)\A \nabla u] + \theta^2 u(1-\lambda u) - \lambda \theta c u  \quad &\text{in}\quad Q_4^+, \\
\begin{displaystyle}
\frac{\partial u}{\partial \bnu} 
\end{displaystyle}
 &=& 0 \quad &\text{on}\quad \tilde T_4, \\
u & =& \psi  \quad &\text{on}\quad \partial_b Q_4^+ \cup \partial_c Q_4^+.
\end{array}\right.
\end{equation}
 Observe that  $u$ is a weak solution of \eqref{MP-HS} iff the function 
$
\bar{u}  \eqdef \lambda u
$
is a weak solution of
\begin{equation}\label{RMP-HS}
\left \{
\begin{array}{lcll}
\bar{u}_t  &=&  \nabla\cdot[(1+\bar{u} )\A \nabla \bar{u} ] + \theta^2 \bar{u} (1- \bar{u} ) - \lambda \theta c \bar{u}  \quad &\text{in}\quad Q_4^+, \\
\begin{displaystyle}
\frac{\partial \bar u}{\partial \bnu} 
\end{displaystyle}
 &=& 0 \quad &\text{on}\quad \tilde T_4, \\
\bar u& =& \bar\psi \eqdef \lambda\psi  \quad &\text{on}\quad \partial_b Q_4^+ \cup \partial_c Q_4^+.
\end{array}\right.
\end{equation}
We will establish boundary $W^{1,p}$ estimates for solutions to \eqref{MP-HS} by freezing its coefficient and comparing it to solutions  of the equation
\begin{equation}\label{REQ-HS}
\left \{
\begin{array}{lcll}
v_t  &=&  \nabla\cdot[(1+\lambda v)\bar{\A}_{B_4^+}(t)\nabla v] + \theta^2 v(1-\lambda v) \quad &\text{in}\quad Q_4^+, \\
\begin{displaystyle}
\frac{\partial v}{\partial \bnu} 
\end{displaystyle}
 &=& 0 \quad &\text{on}\quad \tilde T_4, \\
v & =& u  \quad &\text{on}\quad \partial_b Q_4^+ \cup \partial_c Q_4^+.
\end{array}\right.
\end{equation}
Notice that $v$ is a weak solution of \eqref{REQ-HS} iff the function 
$
\bar{v}  \eqdef \lambda v
$
is a weak solution of 
\begin{equation}\label{UEQ-HS}
\left \{
\begin{array}{lcll}
\bar{v}_t  &=&  \nabla\cdot[(1+\bar{v})
\bar{\A}_{B_4^+}(t) \nabla \bar{v}] + \theta^2 \bar{v}(1-\bar{v}) \quad &\text{in}\quad Q_4^+, \\
\begin{displaystyle}
\frac{\partial \bar v}{\partial \bnu} 
\end{displaystyle}
 &=& 0 \quad &\text{on}\quad \tilde T_4, \\
\bar v & =& \bar u \quad &\text{on}\quad \partial_b Q_4^+ \cup \partial_c Q_4^+.
\end{array}\right.
\end{equation}
By arguing similarly to the proof of Lemma~\ref{W^{1,2}-est}, we have:
\begin{lemma}\label{G-W^{1,2}-est} Assume that $\lambda, \theta>0$, $\A$ satisfies \eqref{boundary-elipticity} and $c$ is a non-negative measurable function on $Q_4^+$. Let $u\in \hat\calW(Q_4^+)$ be a  non-negative weak solution of \eqref{MP-flat-domain}. 
Then there exists a constant $C>0$ depending only on $\Lambda$ and $n$ such that
\begin{equation*}
\int_{Q_2^+} (1 + \lambda u) |\nabla u|^2\, dxdt
\leq C\int_{Q_3^+} (1+\lambda u +\theta^2)  u^2\, dxdt.
\end{equation*}
\end{lemma}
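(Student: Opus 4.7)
The plan is to mimic the interior proof of Lemma \ref{W^{1,2}-est} with only one change: the choice of the cutoff function has to exploit the homogeneous Neumann condition on $\tilde T_4$ so that no boundary contribution appears after integration by parts. Concretely, I would pick a smooth $\varphi=\varphi(x,t)\ge 0$ that equals $1$ on $Q_2^+$, is supported in $\overline{Q_3^+}\cup\tilde T_3$, vanishes on the curved portion $\partial_c Q_3^+$ and the bottom $\partial_b Q_3^+$, but is allowed to be nonzero on $\tilde T_3$. Then $\varphi^2 u$ vanishes on $\partial_D Q_4^+=\partial_b Q_4^+\cup\partial_c Q_4^+$, so it is an admissible test function in $\hat\calE_0(Q_4^+)$ for the weak formulation of \eqref{MP-flat-domain}.

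Testing \eqref{MP-flat-domain} against $\varphi^2 u$ and integrating by parts in space, the boundary integral on $\tilde T_4\cap\mathrm{supp}\,\varphi$ equals
\[
\int_{\tilde T_4}(1+\lambda u)\,\varphi^2 u\,\langle \A\nabla u,\bnu\rangle\,d\sigma dt = 0,
\]
thanks to the Neumann condition $\partial_\bnu u=0$ on $\tilde T_4$ (recall the co-normal direction here coincides with the conormal with respect to $\A$ in the sense that $\langle \A\nabla u,\bnu\rangle=0$ on the flat part; more carefully, one uses the standard identification on flat portions, or works with the weak definition directly, which is what truly justifies the computation). After this, the identity obtained is word-for-word the same one appearing in the proof of Lemma \ref{W^{1,2}-est}, namely
\[
\int_{Q_4^+}\!\!\Big[(\varphi^2\tfrac{u^2}{2})_t-\varphi\varphi_t u^2\Big]dxdt
=-\!\int_{Q_4^+}\!(1+\lambda u)\langle\A\nabla u,\nabla(\varphi^2 u)\rangle dxdt
+\theta^2\!\int_{Q_4^+}\!u(1-\lambda u)\varphi^2 u\,dxdt-\lambda\theta\!\int_{Q_4^+}\!c\varphi^2 u^2\,dxdt.
\]

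From here the remainder is routine: drop the nonnegative term $\lambda\theta\int c\varphi^2 u^2$, expand $\nabla(\varphi^2 u)=\varphi^2\nabla u+2\varphi u\nabla\varphi$, use ellipticity \eqref{boundary-elipticity} together with the pointwise Cauchy--Schwarz inequality
\[
|\langle \A\nabla u,\nabla\varphi\rangle|^2\le\langle \A\nabla u,\nabla u\rangle\,\langle\A\nabla\varphi,\nabla\varphi\rangle,
\]
and absorb one half of $\int(1+\lambda u)\langle\A\nabla u,\nabla u\rangle\varphi^2$ into the left-hand side. The outcome is
\[
\tfrac{\Lambda^{-1}}{2}\int_{Q_4^+}(1+\lambda u)|\nabla u|^2\varphi^2\,dxdt
\le \int_{Q_4^+}\Big[2\Lambda(1+\lambda u)|\nabla\varphi|^2+\varphi|\varphi_t|+\theta^2\varphi^2\Big]u^2\,dxdt,
\]
since $\int_{B_4^+}\varphi(x,16)^2 u(x,16)^2\,dx\ge 0$. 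Choosing $\varphi$ with $|\nabla\varphi|,\sqrt{|\varphi_t|}\le C_n$ and $\varphi\equiv 1$ on $Q_2^+$, $\mathrm{supp}\,\varphi\subset\overline{Q_3^+}\cup \tilde T_3$ yields the claim.

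The only conceptually nontrivial point, and the place where flatness of the boundary portion $\tilde T_4$ matters, is the vanishing of the co-normal boundary term; everything else is identical to the interior argument. I expect no other obstacle, since the non-negativity of $u$ and of $c$ is used in exactly the same way as in the proof of Lemma~\ref{W^{1,2}-est}.
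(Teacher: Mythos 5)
Your proposal is correct and takes essentially the same approach the paper intends: the paper states the lemma with the remark ``by arguing similarly to the proof of Lemma~\ref{W^{1,2}-est}'' and gives no separate proof, and you have correctly carried out the modification (a cutoff vanishing on $\partial_b Q_3^+\cup\partial_c Q_3^+$ but not on $\tilde T_3$, so that $\varphi^2 u$ lies in the admissible test-function class $\hat\calE_0(Q_4^+)$, after which no boundary term arises because the weak formulation already encodes the co-normal Neumann condition).
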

We will need the following boundary $W^{1,\infty}$-estimate for solutions of the reference equation.
\begin{lemma}\label{G-W^{1,infty}-est} 
Assume that $0<\theta\leq 1$ and $\A_0: (-16, 16]\to \M^{n\times n}$ is  a measurable matrix-valued function satisfying \eqref{eigenvalues-controlled}. Let $\bar{v}\in \hat\calW(Q_4^+)$ be a weak solution of 
\begin{equation*}
\left \{
\begin{array}{lcll}
\bar{v}_t  &=&  \nabla\cdot[(1+\bar{v})\A_0(t)\nabla \bar{v}] + \theta^2 \bar{v}(1-\bar{v}) \quad &\text{in}\quad Q_4^+, \\
\begin{displaystyle}
\frac{\partial\bar v}{\partial \bnu} 
\end{displaystyle}
 &=& 0 \quad &\text{on}\quad \tilde T_4
\end{array}\right.
\end{equation*}
satisfying  $0\leq \bar{v} \leq 1$ in $Q_4^+$. Then
there exists $C=C(\Lambda, n)>0$ such that 
\begin{equation}\label{global-lipschitz}
\|\nabla \bar v\|_{L^\infty(Q_3^+)}^2\leq C\fint_{Q_4^+}{|\nabla \bar{v}|^2 \, dxdt}.
\end{equation}
\end{lemma}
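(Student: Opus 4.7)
This is the boundary analogue of Lemma~\ref{W1infty-est}, so the plan is to reduce it to that interior estimate by extending $\bar{v}$ evenly across the flat portion $\tilde T_4$ of the boundary. Define $\tilde v(x', x_n, t) := \bar v(x', |x_n|, t)$ on $Q_4$. A direct test-function calculation, in which the conormal boundary condition $(\A_0(t) \nabla \bar v)\cdot \bnu = 0$ on $\tilde T_4$ produces precisely the required transmission identity across $\{x_n = 0\}$, shows that $\tilde v \in \calW(Q_4)$ is a weak solution on $Q_4$ of
\[
\tilde v_t = \nabla \cdot [(1 + \tilde v)\mathbf{B}(x,t)\nabla \tilde v] + \theta^2 \tilde v(1-\tilde v),
\]
where $\mathbf{B}(x,t) = \A_0(t)$ for $x_n > 0$ and $\mathbf{B}(x,t) = \sigma \A_0(t)\sigma$ for $x_n < 0$, with $\sigma = \mathrm{diag}(1,\ldots,1,-1)$. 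The reflected matrix $\mathbf{B}$ is measurable, uniformly elliptic on $Q_4$ with the same constant $\Lambda$, and $0 \leq \tilde v \leq 1$ so $(1+\tilde v)\mathbf{B}$ remains uniformly parabolic.

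\textbf{Plan.} I would then adapt the proof of Lemma~\ref{W1infty-est} to this reflected equation on $Q_4$ to obtain
\[
\|\nabla \tilde v\|_{L^\infty(Q_3)}^2 \leq C\fint_{Q_4}|\nabla \tilde v|^2 \, dxdt,
\]
and restrict to $Q_3^+$ to deduce \eqref{global-lipschitz}. The key ingredients are a Caccioppoli energy estimate for $\tilde v$ (of the same flavor as Lemma~\ref{G-W^{1,2}-est}), a Moser/De Giorgi iteration to upgrade the $L^\infty$ bound of $\tilde v$ through H\"older continuity, and a bootstrap of $L^p$ gradient estimates culminating in the $L^\infty$ bound on $\nabla \tilde v$. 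Each of these pieces relies only on uniform ellipticity and on boundedness of $\tilde v$, both of which are preserved under the reflection.

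\textbf{Main obstacle.} The delicate point is that Lemma~\ref{W1infty-est} is stated for coefficients depending only on $t$, whereas the reflected matrix $\mathbf{B}$ carries a jump in $x$ across $\{x_n = 0\}$. Consequently, any step in the interior argument that exploits spatial smoothness of the coefficient---for example, differentiating the equation in $x_k$ and running $L^p$-theory on the linear equation satisfied by $\partial_{x_k} \bar v$---must be handled more carefully. One route is to first smooth $\A_0$ in $t$, establish the estimate uniformly for the smoothed problem (where classical parabolic regularity gives $C^{1,\alpha}$ directly), and pass to the limit. Another route bypasses the reflection entirely: work on $Q_4^+$ with a Bernstein- or Moser-type iteration whose cut-off test functions need not vanish on $\tilde T_4$, so that integration-by-parts boundary integrals along $\tilde T_4$ are precisely cancelled by the conormal condition. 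Morally this is the same argument as the reflection performed in place, and either route yields the claimed $L^\infty$ bound.
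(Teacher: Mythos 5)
Your reflection computation is sharper than the one the paper actually performs, and in fact it exposes a subtle point. The paper's proof simply asserts that $\bar v\in C^1(\overline{Q_{7/2}^+})$ by classical boundary regularity, that the even reflection $v^*$ is therefore $C^1(\overline{Q_{7/2}})$, and that $v^*$ ``is a weak solution'' of the \emph{same} equation with the \emph{same} coefficient $\A_0(t)$ on the full cube $Q_{7/2}$ --- which lets the paper apply Lemma~\ref{W1infty-est} directly. But as your transmission calculation shows, the reflected function solves the equation with coefficient $\sigma\A_0(t)\sigma$ on $\{x_n<0\}$, not $\A_0(t)$; the two agree precisely when $a_{in}(t)=0$ for all $i<n$, i.e.\ when the conormal derivative $\langle\A_0\nabla\bar v,e_n\rangle$ reduces to $\partial_{x_n}\bar v$. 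For a general $\A_0(t)$ (in particular for the averaged matrices $\bar{\A'}_{B_4^+}(t)$ that arise in Lemmas~\ref{lm:G-compare--gradient-solution} and~\ref{G-initial-density-est}) one cannot expect $\partial_{x_n}\bar v=0$ on $\tilde T_4$, so $v^*$ is not $C^1$ across $\{x_n=0\}$ and the reflected PDE carries the discontinuous matrix $\mathbf{B}$ you wrote down. You have, essentially, located the exact hypothesis that the paper's short argument implicitly uses and does not verify.

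That said, your own proposal does not yet close the gap you found, because the key obstacle survives: $\mathbf{B}$ jumps across $\{x_n=0\}$, and the proof of Lemma~\ref{W1infty-est} differentiates the equation in $x_n$ before running De Giorgi iteration --- a step that produces a distribution supported on the interface if the coefficient is discontinuous there. Your first workaround, mollifying $\A_0$ in $t$, does not touch the spatial jump at all. Your second route --- working directly on $Q_4^+$ with cutoffs not vanishing on $\tilde T_4$ --- is the right idea and is how this kind of boundary gradient bound is usually obtained, but it is not automatic: differentiating in the tangential directions $x_1,\dots,x_{n-1}$ preserves a conormal boundary condition for $w_i=\bar v_{x_i}$ and the Moser iteration of the appendix goes through with boundary cutoffs, whereas the normal derivative $w_n=\bar v_{x_n}$ does not satisfy a clean homogeneous conormal condition and has to be recovered separately (e.g.\ from the relation $a_{nn}\partial_{x_n}\bar v=-\sum_{j<n}a_{nj}\partial_{x_j}\bar v$ on $\tilde T_4$ together with the interior equation, once the tangential bounds are in hand). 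So the conclusion of Lemma~\ref{G-W^{1,infty}-est} is standard and surely true, and you are correct that the reflection must be done ``with $\mathbf{B}$,'' but the passage from the reflected equation to the $L^\infty$ gradient bound is exactly the piece both you and the paper still owe, and it genuinely needs either a transmission-problem regularity result or the direct boundary Moser argument carried out with care for the normal derivative.
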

\begin{proof}
From the classical boundary regularity result, we have 
$\bar v \in C^1(\overline{Q_{\frac{7}{2}}^+})$.  Therefore, the reflected function 
\begin{equation*}
v^*(x',x_n, t) \eqdef \left \{
\begin{array}{lcll}
\bar{v}(x', x_n, t)   \quad &\text{when}\quad x_n\geq 0, \\
\bar{v}(x', - x_n, t)\quad &\text{when}\quad x_n< 0
\end{array}\right.
\end{equation*}
belongs to the class $C^1(\overline{Q_{\frac{7}{2}}})$. Consequently, it is clear that the function  $v^*$ is a weak solution of 
\[
v^*_t  =  \nabla\cdot[(1+v^*)\A_0(t) \nabla v^*] + \theta^2 v^*(1-v^*) \quad \text{in}\quad Q_{\frac{7}{2}}.
\]
Thus, by applying the interior estimate in
Lemma~\ref{W1infty-est} we obtain 
\begin{equation*}
\|\nabla v^*\|_{L^\infty(Q_3)}^2\leq C(\Lambda, n)\fint_{Q_{\frac{7}{2}}}{|\nabla v^*|^2 \, dxdt},
\end{equation*}
yielding the estimate \eqref{global-lipschitz}.
\end{proof}

\begin{lemma}\label{G-gradient-est-II} Let
$\bar{u}\in \hat\calW(Q_4^+)$ be a non-negative weak solution of \eqref{RMP-HS} and  $\bar{v}\in \hat\calW(Q_4^+)$ be a weak solution of \eqref{UEQ-HS} satisfying $0\leq \bar v\leq 1$ in $Q_4^+$. Then 
\begin{align*}
&\int_{Q_4^+}| \bar{u}- \bar{v}|^2 \, dx dt +
\Lambda^{-1} \int_{Q_4^+}|\nabla \bar{u}- \nabla \bar{v}|^2 \, dx dt\\
&\leq  33\left[ 2\Lambda^3 \int_{Q_4^+} \big(| \bar{u}- \bar{v}|^2 +8\big) |\nabla \bar u|^2\, dx dt +3\theta^2 \int_{Q_4^+}| \bar{u}- \bar{v}|^2 \, dx dt
+  \lambda^2 \int_{Q_4^+} \bar{u}^2 c^2 \, dx dt\right].
\end{align*}
\end{lemma}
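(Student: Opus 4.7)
The plan is to run exactly the same argument as in the proof of Lemma~\ref{gradient-est-II}, but with all integrations taken over the half-cube $Q_4^+$ rather than $Q_4$. Set $w := \bar u - \bar v \in \hat\calW(Q_4^+)$. By subtracting the equation \eqref{UEQ-HS} satisfied by $\bar v$ from the equation \eqref{RMP-HS} satisfied by $\bar u$, one finds that $w$ is a weak solution of
\begin{equation*}
\left\{
\begin{array}{l}
w_t = \nabla\!\cdot\!\bigl[(1+\bar v)\bar{\A}_{B_4^+}(t)\nabla w\bigr] + \nabla\!\cdot\!\Bigl\{\bigl[w\,\A+(1+\bar v)(\A-\bar{\A}_{B_4^+}(t))\bigr]\nabla\bar u\Bigr\}\\[2pt]
\qquad\quad + \theta^2 w(1-\bar u-\bar v) - \lambda\theta c\,\bar u \quad\text{in } Q_4^+,\\[3pt]
\partial w/\partial\bnu = 0 \quad\text{on } \tilde T_4,\qquad w = 0 \quad\text{on } \partial_b Q_4^+\cup\partial_c Q_4^+.
\end{array}\right.
\end{equation*}

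The key observation is that $w$ itself is an admissible test function: it vanishes on the Dirichlet portion $\partial_b Q_4^+\cup\partial_c Q_4^+$, so $w \in \hat\calE_0(Q_4^+)$ (after truncating in time up to $s \in (-16,16)$, as in the interior argument). Because the Neumann condition $\partial w/\partial\bnu = 0$ holds on $\tilde T_4$ and $w = 0$ on the remainder of the parabolic boundary, the boundary terms produced by integration by parts over $Q_4^+\cap\{t\le s\}$ vanish entirely—this is the only point at which the geometry changes from the interior version. From here on, the manipulation is verbatim: one obtains
\begin{equation*}
\tfrac12\!\int_{B_4^+}\! w(x,s)^2\,dx + \!\int_{-16}^s\!\!\int_{B_4^+}\!(1+\bar v)\langle\bar{\A}_{B_4^+}(t)\nabla w,\nabla w\rangle\,dxdt
= -\!\int_{-16}^s\!\!\int_{B_4^+}\! w\,\langle\A\nabla\bar u,\nabla w\rangle\,dxdt + \cdots,
\end{equation*}
with all four remaining terms exactly matching those displayed in the proof of Lemma~\ref{gradient-est-II}.

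Applying the ellipticity bound \eqref{boundary-elipticity} to $\bar{\A}_{B_4^+}(t)$ (which inherits the same bounds as $\A$), together with $0\le\bar v\le 1$, $\bar u\ge 0$, and the Cauchy-Schwarz inequality with suitable weights, yields the analogue of \eqref{first-derivation}:
\begin{equation*}
\tfrac12\!\int_{B_4^+}\! w(x,s)^2\,dx + \tfrac{\Lambda^{-1}}{2}\!\int_{-16}^s\!\!\int_{B_4^+}\!|\nabla w|^2\,dxdt \le \Lambda^3\!\int_{-16}^s\!\!\int_{B_4^+}\!(w^2+8)|\nabla\bar u|^2\,dxdt + \tfrac{3\theta^2}{2}\!\int\!\!\int w^2 + \tfrac{\lambda^2}{2}\!\int\!\!\int\bar u^2 c^2.
\end{equation*}
Taking $s = 16$ in the gradient term gives the $L^2$-bound for $\nabla w$, and dropping the $|B_4^+|$ integral while integrating in $s$ over $(-16,16)$ gives the $L^2$-bound for $w$. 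Adding these two bounds produces the claimed inequality with the same constants $33$, $2\Lambda^3$, etc.

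There is no genuine obstacle here; the only issue worth checking carefully is that the chosen test function lies in $\hat\calE_0(Q_4^+)$, which is immediate since $w$ vanishes on $\partial_D Q_4^+ = \partial_b Q_4^+ \cup \partial_c Q_4^+$ and the Neumann part $\tilde T_4$ is precisely the complement where test functions are allowed to be nonzero. Thus the boundary version presents no new analytical difficulty, and the constants in the estimate are independent of whether one works in $Q_4$ or $Q_4^+$.
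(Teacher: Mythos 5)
Your proposal is correct and follows essentially the same route as the paper: write the equation satisfied by $w=\bar u-\bar v$ with mixed Neumann/homogeneous Dirichlet boundary conditions, test against $w$ itself (admissible since $w\in\hat\calE_0(Q_4^+)$), note that the flat boundary $\tilde T_4$ contributes no boundary terms, and then repeat the interior estimates of Lemma~\ref{gradient-est-II} verbatim on $Q_4^+$. The only slip is a purely verbal one in the final step—you want to drop the \emph{gradient} term of the energy inequality before integrating in $s$ to obtain the $L^2$-bound on $w$, not the $B_4^+$-integral—but the intent and the resulting constants ($33$, $2\Lambda^3$, $3\theta^2$, $\lambda^2$) are correct.
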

\begin{proof}
Let $w = \bar u - \bar v$. Then  $w\in \hat\calW(Q_4^+)$ is a weak solution of 
\begin{equation*}
\left \{
\begin{array}{lcll}
w_t &=& \nabla\cdot[(1+\bar{v})\bar{\A}_{B_4^+}(t)\nabla w]+ \nabla\cdot\big\{[w\A + (1+\bar v)(\A-\bar{\A}_{B_4^+}(t))]\nabla \bar{u}\big\}\\
 && +\theta^2 w(1-\bar{u}-\bar{v}) -\lambda\theta c \bar{u} \quad &\text{in}\quad Q_4^+, \\
\begin{displaystyle}
\frac{\partial w}{\partial \bnu} 
\end{displaystyle}
 &=& 0 \quad &\text{on}\quad \tilde T_4, \\
w & =& 0  \quad &\text{on}\quad \partial_b Q_4^+ \cup \partial_c Q_4^+.
\end{array}\right.
\end{equation*}
Since $\partial \bar u/\partial \bnu=0$ on $\tilde T_4$, by multiplying the above equation by $w$ and integrating by parts we obtain for each $s\in (-16,16)$
\begin{align*}
&\int_{B_4^+} \frac{w(x,s)^2}{2} \, dx
+ \int_{-16}^s\int_{B_4^+} (1 +\bar{v}) \langle \bar{\A}_{B_4^+}(t) \nabla w, \nabla w\rangle \, dx dt\\
&= -
\int_{-16}^s\int_{B_4^+} w \langle \A \nabla \bar{u} , \nabla w\rangle \, dx dt -
\int_{-16}^s\int_{B_4^+} (1+\bar v)\langle  (\A-\bar{\A}_{B_4^+}(t)) \nabla \bar{u}, \nabla w\rangle \, dx dt\\
&\quad  + \theta^2 \int_{-16}^s\int_{B_4^+} w^2\big(1-\bar{u}-\bar{v}\big)\, dx dt
- \lambda\theta  \int_{-16}^s\int_{B_4^+} c \bar{u} w\, dx dt.
\end{align*}
The result then follows by the same arguments as in the proof of Lemma~\ref{gradient-est-II} .
\end{proof}

The following approximation result is a global version of Lemma~\ref{lm:compare-solution}.
\begin{lemma}\label{lm:G-compare-solution}
Assume that $0<\theta \leq \lambda$. For any $\e>0$, there exists $\delta>0$ depending only on $\e$, $\Lambda$ and  $n$  such that:  if
\begin{equation*}
\int_{Q_4^+}  \Big[ |\A(x, t) - \bar{\A}_{B_4^+}(t)|^2 + |c(x,t)|^2\Big]\, dx dt \leq \delta,
\end{equation*}
and  $u\in \hat\calW(Q_4^+)$ is a weak solution of \eqref{MP-HS} satisfying
\begin{equation*}
0\leq u \leq \frac{1}{\lambda}\, \mbox{ in } Q_4^+ \quad\mbox{and}\quad \int_{Q_4^+}{|\nabla u|^2 \, dxdt}\leq 1,
\end{equation*}
then  
\begin{equation}\label{g-uv-comparison}
\int_{Q_4^+}{|u - v|^2\, dx dt}\leq \e^2,
\end{equation}
where $v\in\hat\calW(Q_4^+)$ is a weak solution of \eqref{REQ-HS} with $0\leq v\leq 1/\lambda$ in $Q_4^+$. Moreover, 
\begin{equation}\label{g-gradient-v-controlled}
\int_{Q_4^+} |\nabla v|^2\, dx dt \leq 2+ 66\Lambda\Big(18 \Lambda^2 + 3 \theta^2 \e^2 + \delta\Big).
\end{equation}
\end{lemma}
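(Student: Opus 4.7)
The plan is to parallel the proof of the interior approximation Lemma~\ref{lm:compare-solution}, replacing the bulk cube $Q_4$ with the half-cube $Q_4^+$ and invoking the mixed-boundary tools from Subsection~\ref{RefEqn} together with the global half-space analogues \ref{G-W^{1,2}-est}, \ref{G-W^{1,infty}-est} and \ref{G-gradient-est-II}. Once \eqref{g-uv-comparison} is established, the gradient bound \eqref{g-gradient-v-controlled} follows directly by applying Lemma~\ref{G-gradient-est-II} to the rescaled functions $\lambda u,\lambda v$ and combining with the triangle inequality $\|\nabla v\|_{L^2}\le \|\nabla u\|_{L^2}+\|\nabla(u-v)\|_{L^2}$ and the hypotheses $\int_{Q_4^+}|\nabla u|^2\le 1$ and $\int_{Q_4^+}|\A-\bar\A_{B_4^+}(t)|^2\le \delta$, exactly as in the last step of the proof of Lemma~\ref{lm:compare-solution}.

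For \eqref{g-uv-comparison} I would argue by contradiction/compactness. Suppose the conclusion fails for some $\varepsilon_0>0$; then there exist sequences $\{\lambda_k\},\{\theta_k\},\{\A_k\},\{c_k\},\{u^k\},\{v^k\}$ with $0<\theta_k\le\lambda_k$, $0\le u^k,v^k\le 1/\lambda_k$, satisfying $\int_{Q_4^+}[|\A_k-\bar\A_k(t)|^2+|c_k|^2]\le 1/k$ and $\int_{Q_4^+}|\nabla u^k|^2\le 1$, yet $\int_{Q_4^+}|u^k-v^k|^2>\varepsilon_0^2$. The $L^\infty$-bound $u^k\le 1/\lambda_k$ combined with the $L^2$ lower bound forces $\lambda_k,\theta_k$ to remain bounded, so after extraction $\lambda_k\to\lambda$, $\theta_k\to\theta$ with $0\le\theta\le\lambda<\infty$, and $\bar\A_k\rightharpoonup^{*}\A_0$ in $L^\infty(-16,16;\M^{n\times n})$ with $\A_0$ inheriting \eqref{eigenvalues-controlled}.

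In the case $\lambda>0$, the uniform $L^\infty$-bound on $u^k$, the equation \eqref{MP-HS}, and Lemmas~\ref{G-W^{1,2}-est}, \ref{G-gradient-est-II} (applied with $\bar u\rightsquigarrow\lambda_k u^k$, $\bar v\rightsquigarrow \lambda_k v^k$, together with the smallness of $c_k$) yield uniform $\hat\calW(Q_4^+)$-bounds for both $u^k$ and $v^k$. By the compactness \eqref{cmpAL} we extract further subsequences with $u^k\to u$, $v^k\to v$ strongly in $L^2(Q_4^+)$, $\nabla u^k\rightharpoonup\nabla u$, $\nabla v^k\rightharpoonup\nabla v$ weakly in $L^2$, and the time derivatives weakly-$*$ in $L^2(-16,16;\hat H^{-1}(B_4^+))$. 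Since $v^k-u^k\in\hat\calE_{0}(Q_4^+)$ and this subspace is weakly closed, the limit satisfies $v-u\in\hat\calE_{0}(Q_4^+)$; in particular $u$ and $v$ share the same trace on $\partial_D Q_4^+$. Mimicking the computation \eqref{eq:takecare-limit-I} (the nonlinear flux $(1+\lambda_k u^k)\bar\A_k\nabla u^k$ is handled by integration by parts, producing pairings $u^k\,\bar\A_k\cdot D^2\varphi$ and $(u^k)^2\,\bar\A_k\cdot D^2\varphi$ in which $u^k,(u^k)^2$ converge strongly in $L^1$ against the weak-$*$ limit $\A_0$), one sees that both $u$ and $v$ are $\hat\calW$-weak solutions of the same mixed problem on $Q_4^+$ with coefficient $\A_0(t)$, common Dirichlet data, and Neumann data on $\tilde T_4$. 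Uniqueness (Lemma~\ref{uniqueness-u}, whose proof works verbatim with $\hat\calE_0$ replacing $\calE_0$) then gives $u=v$, contradicting $\int|u^k-v^k|^2>\varepsilon_0^2$.

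In the degenerate case $\lambda=0$ one proceeds exactly as in the second half of the proof of Lemma~\ref{lm:compare-solution}: setting $\bar u^k=\lambda_k u^k$, $\bar v^k=\lambda_k v^k$, $w^k=u^k-v^k$, Lemma~\ref{G-gradient-est-II} produces uniform $L^2$-bounds on $\nabla w^k$ and on $\nabla \bar v^k$. Since $\bar u^k\to 0$ strongly in $L^2$ (hence on the Dirichlet trace) and $\bar v^k\in[0,1]$ solves \eqref{UEQ-HS} with coefficient $\bar\A_k$, compactness combined with an energy estimate forces $\bar v^k\to 0$ strongly in $L^2(Q_4^+)$ and $\nabla\bar v^k\to 0$ strongly in $L^2(Q_4^+)$. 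Passing to the limit in the $w^k$-equation on the half-cube (the analogue of \eqref{u^k-v^k:equation}, with homogeneous Neumann trace on $\tilde T_4$ retained in the weak formulation via $\hat\calE_0$-test functions), $w^k\to w$ strongly in $L^2$, and $w\in\hat\calW(Q_4^+)$ is a weak solution of the linear mixed problem $w_t=\nabla\cdot[\A_0(t)\nabla w]$ with zero Dirichlet data on $\partial_D Q_4^+$, zero Neumann data on $\tilde T_4$, and zero forcing. Lemma~\ref{CP} then forces $w\equiv 0$, the desired contradiction. The main obstacles are therefore the two limit-passage points already visible in the interior proof: identifying the nonlinear flux limit when only weak convergence of $\bar\A_k$ and $\nabla u^k$ are available, and preserving the Dirichlet trace in the limit; both issues persist here but are resolved identically thanks to the weak closedness of $\hat\calE_0(Q_4^+)$ and the strong $L^2$-convergence of $u^k,(u^k)^2$.
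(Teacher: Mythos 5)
Your overall architecture mirrors the paper's proof: contradiction/compactness, boundedness of $\{\lambda_k\},\{\theta_k\}$, uniform $\hat\calW$-bounds via Lemma~\ref{G-gradient-est-II}, passing to the limit in the nonlinear flux by moving derivatives onto the test function, and invoking the uniqueness results (Lemma~\ref{uniqueness-u} for $\lambda>0$, Lemma~\ref{CP} for $\lambda=0$). The derivation of \eqref{g-gradient-v-controlled} from Lemma~\ref{G-gradient-est-II} plus the triangle inequality is exactly the paper's.

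There is, however, a genuine error in your $\lambda=0$ branch. You assert that $\bar u^k=\lambda_k u^k\to 0$ strongly in $L^2(Q_4^+)$ and conclude that $\bar v^k\to 0$ and $\nabla\bar v^k\to 0$ strongly, leading to the limit equation $w_t=\nabla\cdot[\A_0(t)\nabla w]$. This claim is not justified and is false in general: the hypotheses give only $0\le\bar u^k\le 1$ and $\|\nabla\bar u^k\|_{L^2(Q_4^+)}=\lambda_k\|\nabla u^k\|_{L^2(Q_4^+)}\le\lambda_k\to 0$, which forces a limit with vanishing gradient, i.e.\ a \emph{constant} in $[0,1]$, not necessarily zero. (There is no control on the Dirichlet trace $\lambda_k\psi_k$ beyond $[0,1]$.) What the paper actually establishes is that $\bar v^k\to\bar v$ strongly in $L^2$ with $\nabla\bar v\equiv 0$, so $\bar v$ is a constant in $[0,1]$, and $w$ solves $w_t=\nabla\cdot[(1+\bar v)\A_0(t)\nabla w]$ with zero data. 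Since $1+\bar v\ge 1$, uniqueness of the trivial solution still applies and the contradiction survives, so the gap is repairable: replace ``$\bar v^k\to 0$'' with ``$\bar v^k\to$ constant $\bar v\in[0,1]$'' and correspondingly put the coefficient $(1+\bar v)$ into the limiting equation. As written, though, the strong convergence to zero is an unsupported step and the stated limit equation is not what the compactness argument delivers.
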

\begin{proof}
We first prove \eqref{g-uv-comparison} by contradiction. Suppose that estimate \eqref{g-uv-comparison}  is not true. Then there exist $\e_0,\, \Lambda,\, n$,  sequences of  numbers  $\{\lambda_k\}_{k=1}^\infty$ and $\{\theta_k\}_{k=1}^\infty$
with $0<\theta_k\leq \lambda_k$, a sequence of coefficient matrices $\{\A_k\}_{k=1}^\infty$, 
 and sequences of non-negative functions  $\{c_k\}_{k=1}^\infty$, $\{\psi_k\}_{k=1}^\infty$  and $\{u^k\}_{k=1}^\infty$ 
such that
\begin{equation}\label{G-c_k-condition}
\int_{Q_4^+}  \Big[ |\A_k(x,t) - \bar{\A}_k(t)|^2 + |c_k(x,t)|^2\Big]\, dx dt \leq \frac{1}{k},
\end{equation}
 $u^k\in \hat\calW(Q_4^+)$ is a weak solution of 
\begin{equation}\label{G-eq-u_k}
\left \{
\begin{array}{lcll}
u^k_t  &=&  \nabla\cdot[(1+\lambda_k u^k)\A_k\nabla u^k] + \theta_k^2 u^k(1-\lambda_k u^k) - \lambda_k \theta_k c_k u^k  \quad &\text{in}\quad Q_4^+, \\
\begin{displaystyle}
\frac{\partial u^k}{\partial \bnu} 
\end{displaystyle}
 &=& 0 \quad &\text{on}\quad \tilde T_4, \\
u^k & =& \psi_k  \quad &\text{on}\quad \partial_b Q_4^+ \cup \partial_c Q_4^+
\end{array}\right.
\end{equation}
with $0\leq u^k \leq 1/\lambda_k$ in $Q_4^+$,
\begin{equation}\label{G-gradient-bounded-ass}
\int_{Q_4^+}{|\nabla u^k|^2 \, dxdt}\leq 1,
\end{equation}
and 
\begin{equation}\label{G-contradiction-conclusion}
\int_{Q_4^+} |u^k - v^k|^2 \, dx dt > \e_0^2  \quad\mbox{for all}\quad k.
\end{equation}
Here  $\bar{\A}_k(t) := \fint_{B_4^+}{\A_k(x,t)\, dx}$, $\, 0\leq v^k \leq 1/\lambda_k$ in $Q_4^+$ and $v^k\in \hat\calW(Q_4^+)$ is a weak solution  of
\begin{equation*}
\left \{
\begin{array}{lcll}
v^k_t  &=&  \nabla\cdot[(1+\lambda_k v^k)\bar{\A}_k(t) \nabla v^k] + \theta_k^2 v^k(1-\lambda_k v^k)  \quad &\text{in}\quad Q_4^+, \\
\begin{displaystyle}
\frac{\partial v^k}{\partial \bnu} 
\end{displaystyle}
&=& 0 \quad &\text{on}\quad \tilde T_4, \\
v^k & =& u^k  \quad &\text{on}\quad \partial_b Q_4^+ \cup \partial_c Q_4^+.
\end{array}\right.
\end{equation*}
Since $0\leq u^k, v^k \leq 1/\lambda_k$ in $Q_4^+$,  we infer from \eqref{G-contradiction-conclusion}  that
\[
 \theta_k \leq \lambda_k\leq \frac{|Q_4^+|^{\frac{1}{2}}}{\e_0},
\]
that is, the sequences $\{\lambda_k\}$ and $\{\theta_k\}$  are  bounded. 
Also $\{\bar{\A}_k\}$ is   bounded in $L^\infty(-16, 16; \M^{n\times n})$ due to condition \eqref{boundary-elipticity}
 for $\A_k$.
Then as in the proof of Lemma~\ref{lm:compare-solution} and by taking subsequence if necessary, we can assume that $\lambda_k \to \lambda$, 
$\theta_k \to\theta$ and $\bar{\A}_k \rightharpoonup  \A_0$ weakly in 
$L^2(Q_4^+)$ for some constants
$\lambda, \theta$ satisfying $0\leq \theta\leq \lambda<\infty$ and some $\A_0\in L^\infty(-16,16; \M^{n\times n})$ satisfying \eqref{eigenvalues-controlled}. We are going to derive a contradiction by showing that there are subsequences $\{u^{k_m}\}$ and $\{v^{k_m}\}$ such that $u^{k_m} - v^{k_m} \to 0$ in $L^2(Q_4^+)$.

Let us first consider the case $\lambda>0$. Then the sequence $\{u^k\}$ is bounded in $Q_4^+$. 
This together with \eqref{G-c_k-condition}, \eqref{G-eq-u_k}, \eqref{G-gradient-bounded-ass} and the boundedness of $\{\A_k\}$ and  $\{\theta_k\}$ implies that the sequence $\{u^k\}$ is 
bounded in  $\hat\calW(Q_4^+)$. Next, we apply Lemma~\ref{G-gradient-est-II} for $\bar{u} \rightsquigarrow \lambda_k u^k$ and $\bar{v} \rightsquigarrow \lambda_k v^k$ to obtain
\begin{align*}
& \int_{Q_4^+}|\nabla u^k- \nabla v^k|^2 \, dx dt\\
&\leq  33\Lambda \left[ 18 \Lambda^3 \int_{Q_4^+} |\nabla u^k|^2\, dx dt +3\theta_k^2 \int_{Q_4^+}| u^k- v^k|^2 \, dx dt
+  \lambda_k^2 \int_{Q_4^+} (u^k)^2 c_k^2 \, dx dt\right]\\
&\leq  33 \Lambda\left[18 \Lambda^3 \int_{Q_4^+} |\nabla u^k|^2\, dx dt +3 |Q_4^+| 
+  \int_{Q_4^+}  c_k^2 \, dx dt\right].
\end{align*}
Thanks to  \eqref{G-c_k-condition}, \eqref{G-gradient-bounded-ass} and the triangle inequality, this gives
 \[\int_{Q_4^+} |\nabla v^k |^2 \, dx dt\leq C\quad \mbox{for all}\quad k.
\]
Thus, by reasoning as in the case of $\{u^k\}$,  the sequence $\{v^k\}$ is also bounded in  $\hat\calW(Q_4^+)$.
As in the proof of Lemma~\ref{lm:compare-solution}, we infer from these facts and the compact embedding 
\eqref{cmpAL} that there exist  subsequences, still denoted by $\{u^k\}$ and $\{v^k\}$, and  $u, v\in \hat\calW(Q_4^+)$ such that for $w^k = u^k$ (or $w^k = v^k$) and $w = u$ (or $w = v$) we have
\begin{equation*}
 \left \{ \  
\begin{array}{lll}
&w^k \to  w  \mbox{ strongly in } L^2(Q_4^+),\quad \nabla w^k \rightharpoonup \nabla w\mbox{ weakly  in } L^2(Q_4^+),\\
& \partial_t w^k \rightharpoonup \partial_t w \quad \text{ weakly-* in } L^2(0,T; \hat H^{-1}(B_4^+)),\\
&(1+\lambda_k u^k)(\A_k-\bar \A_k(t)) \nabla u^k  \rightharpoonup 0 \mbox{ weakly  in } L^2(Q_4^+),
\end{array}  \right . 
\end{equation*}
and furthermore,
\begin{align*}
 \lim_{k\to\infty}\int_{Q_4}{(1+\lambda_k w^k) \langle \bar \A_k(t) \nabla w^k, \nabla \varphi\rangle dx dt}=\int_{Q_4}{(1+\lambda w) \langle  \A_0(t) \nabla w, \nabla \varphi\rangle dx dt}
 \end{align*}
for all $\varphi\in C^\infty(\overline{Q_4^+})$ satisfying $\varphi =0$ on $\partial_c Q_4^+$.
Since $u^k = v^k$ on $\partial_b Q_4^+ \cup \partial_c Q_4^+$, we  also have $u =v$
on $\partial_b Q_4^+ \cup \partial_c Q_4^+$. 
Thus by passing to limits and using \eqref{G-c_k-condition} together with the boundedness of $\{\lambda_k\}$ and $\{\theta_k\}$, one  sees that $u$ 
and $v$ are weak solutions of the equation
\begin{equation*}
\left \{
\begin{array}{lcll}
w_t  &=& \nabla\cdot[(1+\lambda w)\A_0(t) \nabla w] + \theta^2 w(1-\lambda w) \quad &\text{in}\quad Q_4^+, \\
\begin{displaystyle}
\frac{\partial w}{\partial \bnu} 
\end{displaystyle}
 &=& 0 \quad &\text{on}\quad \tilde T_4,\\
w^k & =& u=v   &\text{on } \partial_b Q_4^+ \cup \partial_c Q_4^+.
\end{array}\right.
\end{equation*}
In addition, we infer from the strong convergence of $u^k$ and $v^k$ in $L^2(Q_4^+)$ and $\lambda_k \to \lambda$  that $0\leq u,v\leq 1/\lambda$ in $Q_4^+$.
By the uniqueness of solutions given by Lemma~\ref{uniqueness-u}, we conclude that $\lambda u \equiv \lambda v$ in $Q_4^+$. Therefore, $u^k-v^k \longrightarrow u-v=0$ strongly in $L^2(Q_4^+)$
giving a contradiction to \eqref{G-contradiction-conclusion}.

It remains to consider the case $\lambda =0$, that is, $\lambda_k \to 0$. Due to $\theta_k\leq \lambda_k$, we also have $\theta_k \to 0$.
Let $w^k = u^k - v^k$. Then    $w^k$ is a weak solution of 
\begin{equation}\label{G-u^k-v^k:equation}
\left \{
\begin{array}{lcll}
w^k_t  &=&  \nabla\cdot[(1+\lambda_k v^k)\bar{\A}_k(t)\nabla w^k] + \nabla\cdot\Big\{\big[\lambda_k w^k\A_k + (1+\lambda_k v^k)(\A_k - \bar{\A}_k(t))\big] \nabla u^k\Big\}\\
& & + \theta_k^2 w^k(1-\lambda_k u^k -\lambda_k v^k) 
- \lambda_k \theta_k c_k u^k &\text{in }\, Q_4^+, \\
\begin{displaystyle}
\frac{\partial w^k}{\partial \bnu} 
\end{displaystyle}
&=& 0  &\text{on } \tilde T_4, \\
w^k & =& 0   &\text{on } \partial_b Q_4^+ \cup \partial_c Q_4^+.
\end{array}\right.
\end{equation}
On the other hand, by applying Lemma~\ref{G-gradient-est-II} for $\bar{u} \rightsquigarrow \lambda_k u^k$, $\bar{v} \rightsquigarrow \lambda_k v^k$ and using 
the fact $\theta_k$ is small for large $k$, we get
for all sufficiently large $k$ that
\begin{equation}\label{G-w^k-gradient}
 \int_{Q_4^+}|\nabla w^k|^2 \, dx dt
\leq  33\Lambda\left[18\Lambda^3 \int_{Q_4^+} |\nabla u^k|^2\, dx dt 
+ \int_{Q_4^+}  c_k^2 \, dx dt\right]\leq  33\Lambda (18\Lambda^3 +1) 
\end{equation}
and
\begin{equation*}
\int_{Q_4^+} |w^k|^2 \, dx dt
\leq 66\left[ 18\Lambda^3 \int_{Q_4^+} |\nabla u^k|^2\, dx dt 
+   \int_{Q_4^+} c_k^2 \, dx dt\right]\leq 66(18\Lambda^3 +1) .
\end{equation*}
These estimates together with \eqref{G-u^k-v^k:equation}, \eqref{G-gradient-bounded-ass} and the boundedness of $\{\A_k\}$ imply that $\{w^k\}$ is bounded in $\hat\calW(Q_4^+)$. Hence
there exist  a subsequence $\{w^k\}$  and  a function $w\in\hat\calW(Q_4^+)$ such that
\begin{equation}\label{G-w^k-compactness}
 \left \{ \  
\begin{array}{lll}
&w^k \to  w  \mbox{ strongly in } L^2(Q_4^+),\quad \nabla w^k \rightharpoonup \nabla w\mbox{ weakly  in } L^2(Q_4^+),\\
& \partial_t w^k \rightharpoonup \partial_t w \quad \text{ weakly-* in } L^2(0,T; \hat H^{-1}(B_4^+)).
\end{array}  \right . 
\end{equation}
Now let $\bar{v}^k = \lambda_k v^k$. Then $0\leq \bar{v}^k\leq 1$ in $Q_4^+$, and $\bar{v}^k$ is a weak solution of
\[
 (\bar{v}^k)_t  =  \nabla\cdot[(1+\bar{v}^k)
 \bar{\A}_k(t) \nabla \bar{v}^k] + \theta_k^2 \bar{v}^k(1-\bar{v}^k) \quad \text{in}\quad Q_4^+.
\]
Moreover, $\nabla \bar{v}^k =\lambda_k \nabla v^k \to 0$ strongly in $L^2(Q_4^+)$ since it follows from \eqref{G-gradient-bounded-ass} and \eqref{G-w^k-gradient} that
\[\int_{Q_4^+} |\nabla v^k |^2 \, dx dt\leq C\quad \mbox{for all large}\quad k.
\]
Thus  $\{\bar{v}^k\}$ is bounded in $\hat\calW(Q_4^+)$ and so, up to a subsequence,  $\bar{v}^k \longrightarrow \bar v$ strongly in $L^2(Q_4^+)$ for some function   $\bar{v} \in \hat\calW(Q_4^+)$ with $0\leq \bar{v}\leq 1$ in $Q_4^+$. 
By arguing as in the proof of Lemma~\ref{lm:compare-solution}, we infer in addition that $\bar v$ is a constant function in $Q_4^+$, and 
\begin{align*}
 \lim_{k\to\infty}\int_{Q_4}{(1+\bar v^k) \langle \bar \A_k(t) \nabla w^k, \nabla \varphi\rangle dx dt}
 =\int_{Q_4}{(1+\bar v) \langle  \A_0(t) \nabla w, \nabla \varphi\rangle dx dt}
 \end{align*}
for all $\varphi\in C^\infty(\overline{Q_4^+})$ satisfying $\varphi =0$ on $\partial_c Q_4^+$.
Using these convergences, \eqref{G-c_k-condition}  and 
\eqref{G-w^k-compactness}, we can pass 
to the limits
in  \eqref{G-u^k-v^k:equation} to conclude that  $w$ is a weak solution of the equation
\begin{equation}\label{G-limit-equation-case2}
\left \{
\begin{array}{lcll}
w_t  &=&  \nabla\cdot[(1+\bar v)\A_0(t)\nabla w] &\text{ in}\quad Q_4^+, \\
\begin{displaystyle}
\frac{\partial w}{\partial \bnu} 
\end{displaystyle}
&=& 0 \quad &\text{on}\quad \tilde T_4, \\
w & =& 0  \quad &\text{on}\quad \partial_b Q_4^+ \cup \partial_c Q_4^+.
\end{array}\right.
\end{equation}
Thanks to the uniqueness (see Lemma~\ref{CP}) of the trivial solution to the linear equation \eqref{G-limit-equation-case2}, we conclude that $w \equiv 0$ in $Q_4^+$. This gives
\[
\lim_{k\to\infty}\int_{Q_4^+} |u^k- v^k|^2\, dx dt =\lim_{k\to\infty}\int_{Q_4^+} |w^k|^2\, dx dt
=\int_{Q_4^+} |w|^2\, dx dt=0,
\]
 which contradicts to \eqref{G-contradiction-conclusion}. Thus the proof of \eqref{g-uv-comparison} is complete and it remains to prove  \eqref{g-gradient-v-controlled}. For  this we apply Lemma~\ref{G-gradient-est-II} for $\bar{u} \rightsquigarrow \lambda u$ and $\bar{v} \rightsquigarrow \lambda v$ to obtain
\begin{align*}
 \int_{Q_4^+}|\nabla u- \nabla v|^2 \, dx dt\leq  33\Lambda\left[ 18\Lambda^3 \int_{Q_4^+}  |\nabla  u|^2\, dx dt +3\theta^2 \int_{Q_4^+}| u-v|^2 \, dx dt
+ \int_{Q_4^+} c^2 \, dx dt\right].
\end{align*}
This together with \eqref{g-uv-comparison}  and the assumptions gives 
\begin{equation}\label{g-difference-control}
\int_{Q_4^+}|\nabla u- \nabla v|^2 \, dx dt\leq  33\Lambda\left[ 18\Lambda^3  +3\theta^2 \e^2 
+\delta \right].
\end{equation}
Since $\|\nabla v\|_{L^2(Q_4^+)}\leq 1+ \|\nabla u- \nabla v\|_{L^2(Q_4^+)}$, 
the estimate \eqref{g-gradient-v-controlled} follows immediately from \eqref{g-difference-control}.
\end{proof}

In the next lemma, we establish an approximation of gradients of solutions near the flat boundary. This will play a key role in our derivation of boundary $W^{1,p}$-estimates in Subsection~\ref{flat-domain}.
\begin{lemma}\label{lm:G-compare--gradient-solution}
Assume that $0<\theta \leq \lambda$, $\theta\leq 1$ and $0<r\leq 1$. For any $\e>0$, there exists $\delta>0$ small depending only on $\e$, $\Lambda$ and  $n$  such that:  if
\begin{equation}\label{G-smallness-condition-c}
\fint_{Q_{4 r}^+}  \Big[ |\A(x,t) - \bar{\A}_{B_{4 r}^+}(t)|^2 +  |c(x,t)|^2\Big]\, dx dt \leq \delta,
\end{equation}
then for any weak solution $u\in \hat\calW(Q_{4 r}^+)$  of \eqref{MP-HS} satisfying
\begin{equation*}
0\leq u \leq \frac{1}{\lambda}\, \mbox{ in } Q_{4 r}^+ \quad\mbox{and}\quad \fint_{Q_{4 r}^+}{|\nabla u|^2 \, dxdt}\leq 1,
\end{equation*}
and a weak solution $v\in \hat \calW(Q_{4 r}^+)$  of  
\begin{equation}\label{G-limiting-eqn}
\left \{
\begin{array}{lcll}
v_t  &=&  \nabla\cdot[(1+\lambda v)\bar{\A}_{B_{4 r}^+}(t) \nabla v] + \theta^2 v(1-\lambda v) \quad &\text{in}\quad Q_{4 r}^+, \\
\begin{displaystyle}
\frac{\partial v}{\partial \bnu} 
\end{displaystyle}
 &=& 0 \quad &\text{on}\quad \tilde T_{4r}, \\
v & =& u  \quad &\text{on}\quad \partial_b Q_{4 r}^+ \cup \partial_c Q_{4 r}^+
\end{array}\right.
\end{equation}
satisfying $0\leq v\leq 1/\lambda$ in $Q_{4 r}^+$, we have  
\begin{equation}\label{G-localied-compare-uv}
\fint_{Q_{4 r}^+}{|u - v|^2\, dx dt}\leq \e^2 r^2,
\end{equation}
 \begin{equation}\label{g-rescaled-gradient-v-controlled}
\fint_{Q_{4r}^+} |\nabla v|^2\, dx dt \leq 4^{n+2}  \omega_n \left[2+ 66\Lambda\Big(18 \Lambda^2 + 3 \theta^2 \e^2 + \delta\Big)\right],
\end{equation}
and
\begin{equation}\label{boundary-gradient-comp}
\fint_{Q_{2 r}^+}{|\nabla u - \nabla v|^2\, dx dt}\leq \e^2.
\end{equation}
\end{lemma}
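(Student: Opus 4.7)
The proof will follow the pattern of the interior Lemma \ref{lm:localized-compare-solution}, with the boundary $W^{1,\infty}$-estimate from Lemma \ref{G-W^{1,infty}-est} taking the place of the interior version Lemma \ref{W1infty-est}. Estimates \eqref{G-localied-compare-uv} and \eqref{g-rescaled-gradient-v-controlled} are obtained by a scaling reduction to Lemma \ref{lm:G-compare-solution}. Setting
\[
u'(x,t) = \frac{u(rx, r^2 t)}{r}, \quad v'(x,t) = \frac{v(rx, r^2 t)}{r}, \quad \A'(x,t) = \A(rx, r^2 t), \quad c'(x,t) = c(rx, r^2 t),
\]
with $\lambda' = \lambda r$ and $\theta' = \theta r$, one checks that $u'$ solves \eqref{MP-HS} on $Q_4^+$ with the primed parameters and that $v'$ is the corresponding reference solution on $Q_4^+$. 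The scaling preserves the Neumann condition on the flat face $\{x_n=0\}$, the bound $0\leq u',v'\leq 1/\lambda'$, the normalization $\int_{Q_4^+}|\nabla u'|^2 \leq 4^{n+2}\omega_n$, and the smallness condition on $\A'-\bar{\A'}_{B_4^+}(t)$ and $c'$ in \eqref{G-smallness-condition-c}. Applying Lemma \ref{lm:G-compare-solution} to $(u',v')$ and reversing the change of variables yields \eqref{G-localied-compare-uv} and \eqref{g-rescaled-gradient-v-controlled}.

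The main work is \eqref{boundary-gradient-comp}. Letting $w = u-v$, one verifies that $w\in\hat\calW(Q_{4r}^+)$ is a weak solution of
\[
w_t = \nabla\cdot[(1+\lambda u)\A\nabla w] + \nabla\cdot\bigl\{[\lambda w\,\bar{\A}_{B_{4r}^+}(t) + (1+\lambda u)(\A - \bar{\A}_{B_{4r}^+}(t))]\nabla v\bigr\} + \theta^2 w(1-\lambda u-\lambda v) - \lambda\theta c u
\]
in $Q_{4r}^+$, with $w = 0$ on $\partial_b Q_{4r}^+ \cup \partial_c Q_{4r}^+$ and $\partial w/\partial\bnu = 0$ on $\tilde T_{4r}$. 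I will choose a cutoff $\varphi$ with $\varphi \equiv 1$ on $Q_{2r}^+$, $\mathrm{supp}(\varphi)\subset \overline{Q_{3r}^+}$, satisfying $\varphi = 0$ on $\partial_c Q_{3r}^+\cup\partial_b Q_{3r}^+$ but \emph{not} vanishing on $\tilde T_{3r}$, with $|\nabla\varphi|\leq C_n/r$ and $|\varphi_t|\leq C_n/r^2$. Then $\varphi^2 w\in\hat\calE_0(Q_{4r}^+)$ is an admissible test function, and because $\partial w/\partial\bnu = 0$ on $\tilde T_{4r}$ no spurious boundary terms appear upon integration by parts. The algebraic manipulations from the proof of Lemma \ref{lm:localized-compare-solution} (using $|\langle \A\nabla w,\nabla\varphi\rangle|^2 \leq \langle\A\nabla w,\nabla w\rangle\langle\A\nabla\varphi,\nabla\varphi\rangle$ and Cauchy--Schwarz) then transfer verbatim, producing
\[
\fint_{Q_{4r}^+} |\nabla w|^2\varphi^2\,dxdt \leq \frac{C}{r^2}\fint_{Q_{4r}^+}w^2\,dxdt + C\Bigl(\fint_{Q_{4r}^+}|\nabla v|^2\,dxdt\Bigr)\fint_{Q_{4r}^+}|\A-\bar{\A}_{B_{4r}^+}(t)|^2\,dxdt + \Bigl(\fint_{Q_{4r}^+}c^2\Bigr)^{1/2}\Bigl(\fint_{Q_{4r}^+}w^2\Bigr)^{1/2},
\]
where the $\|\nabla(\lambda v)\|_{L^\infty(Q_{3r}^+)}$-factors arising from the $\nabla w\cdot\nabla(\lambda v)$-term have been absorbed using the pointwise bound derived in the next step.

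To close the argument I rescale $v$ by setting $\bar v(x,t) = \lambda v(rx, r^2 t)$ on $Q_4^+$; then $\bar v$ solves the half-space reference equation with parameter $\theta r\leq 1$ and $0\leq\bar v\leq 1$, so Lemma \ref{G-W^{1,infty}-est} (combined with Lemma \ref{G-W^{1,2}-est}) gives $\|\nabla\bar v\|_{L^\infty(Q_3^+)}^2 \leq C\fint_{Q_4^+}|\nabla\bar v|^2$, hence $\|\nabla(\lambda v)\|_{L^\infty(Q_{3r}^+)}\leq C(\Lambda,n)/r$. Feeding this pointwise bound, together with \eqref{G-localied-compare-uv}, \eqref{g-rescaled-gradient-v-controlled}, the smallness hypothesis \eqref{G-smallness-condition-c} (one may assume $\delta<\e^2$), into the displayed energy inequality gives $\fint_{Q_{2r}^+}|\nabla w|^2\,dxdt \leq C(\Lambda,n)\e^2$, which is \eqref{boundary-gradient-comp} after relabeling $\e$. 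The main technical point is the construction and use of a cutoff $\varphi$ adapted to the mixed boundary structure of $Q_{4r}^+$ (vanishing on $\partial_b\cup\partial_c$ but free on $\tilde T$); once this is correctly arranged, the boundary $L^\infty$-gradient estimate of Lemma \ref{G-W^{1,infty}-est} reduces the situation to the interior calculation.
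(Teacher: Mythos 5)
Your proposal follows the same strategy as the paper's proof: the two-parameter rescaling $u'(x,t)=u(rx,r^2t)/r$ (with $\lambda'=\lambda r$, $\theta'=\theta r$) reduces \eqref{G-localied-compare-uv} and \eqref{g-rescaled-gradient-v-controlled} to Lemma \ref{lm:G-compare-solution}; then $w=u-v$ is tested against $\varphi^2 w$ for a cutoff vanishing on $\partial_b\cup\partial_c$ but free on the flat face, and the $\|\nabla(\lambda v)\|_{L^\infty(Q_{3r}^+)}\leq C/r$ bound from Lemma \ref{G-W^{1,infty}-est} (via the rescaled $\bar v$) closes the energy estimate. This matches the paper's argument step for step.
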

\begin{proof}
Define
\[
u'(x,t) =\frac{u(r x, r^2 t)}{r}, \quad v'(x,t)= \frac{v(r x, r^2 t)}{r}, \quad \A'(x, t) =\A(rx, r^2 t) \quad\mbox{and}\quad c'(x,t) = c(r x, r^2 t).
\]
Let  $\lambda' = \lambda r$ and $\theta' = \theta r$. Then $u'$ is a weak solution of
\begin{equation*}
\left \{
\begin{array}{lcll}
u'_t  &=&  \nabla\cdot[(1+\lambda' u')\A' \nabla u'] + \theta'^2 u'(1-\lambda' u') - \lambda' \theta' c' u' \quad &\text{in}\quad Q_4^+, \\
\begin{displaystyle}
\frac{\partial u'}{\partial \bnu} 
\end{displaystyle}
 &=& 0 \quad &\text{on}\quad \tilde T_4
\end{array}\right.
\end{equation*}
and $v'$ is  a weak solution of  
\begin{equation*}
\left \{
\begin{array}{lcll}
v'_t  &=&  \nabla\cdot[(1+\lambda' v')\bar{\A'}_{B_4^+}(t)\nabla v'] + \theta'^2 v'(1-\lambda' v') \quad &\text{in}\quad Q_4^+, \\
\begin{displaystyle}
\frac{\partial v'}{\partial \bnu} 
\end{displaystyle}
 &=& 0 \quad &\text{on}\quad \tilde T_4, \\
v' & =& u'  \quad &\text{on}\quad \partial_b Q_4^+ \cup \partial_c Q_4^+.
\end{array}\right.
\end{equation*}
Hence  by applying Lemma~\ref{lm:G-compare-solution} 
for the solutions $u', v'$ and rescaling back, we obtain the estimates \eqref{G-localied-compare-uv} and \eqref{g-rescaled-gradient-v-controlled}.

In order to prove \eqref{boundary-gradient-comp}, we define  $w= u-v$. Then  $w\in \hat\calW(Q_{4r}^+)$ is a weak solution of 
\begin{equation}\label{G-u-v:equation}
\left \{
\begin{array}{lcll}
w_t 
&=& \nabla\cdot[(1+\lambda u )\A \nabla w] + \nabla\cdot \Big\{[\lambda w \bar{\A}_{B_{4 r}^+}(t) + (1+\lambda u)(\A- \bar{\A}_{B_{4 r}^+}(t))]\nabla v \Big\}\\
&& + \theta^2 w(1-\lambda u -\lambda v) -\lambda \theta c u  &\text{in}\quad Q_{4 r}^+, \\
\begin{displaystyle}
\frac{\partial w}{\partial \bnu} 
\end{displaystyle}
&=& 0  &\text{on}\quad \tilde T_{4 r}, \\
w & =& 0  &\text{on}\quad \partial_b Q_{4 r}^+ \cup \partial_c Q_{4 r}^+.
\end{array}\right.
\end{equation}
Let $\varphi$ be the standard cut-off function which is $1$ 
on $Q_{2 r}$, $\text{supp}(\varphi) \subset \overline{Q_{3 r}}$, $|\nabla \varphi| \leq C_n/r$ and $|\varphi_t|\leq C_n/ r^2$.  Let us 
multiply the equation \eqref{G-u-v:equation}  by $\varphi^2 w$ 
and use  integration by parts together with the fact $\partial v/\partial \bnu = 0$ on $\tilde T_{4 r}$. Then by arguing as in the proof of Lemma~\ref{lm:localized-compare-solution}, we obtain
\begin{align}\label{G-Cauchy--Schwarz-step}
 &\frac{\Lambda^{-1}}{4}\int_{Q_{4 r}^+} (1 + \lambda u) |\nabla w|^2\varphi^2 dxdt \leq  (4 \Lambda^2 +1) \int_{Q_{4 r}^+}|
\nabla \varphi|^2 w^2 \, dxdt\\
&   + \Lambda \|\nabla (\lambda v)\|_{L^\infty(Q_{3 r}^+)}\left( \int_{Q_{4 r}^+}  |\nabla w|\varphi^2 |w| \, dxdt
+ 2 \int_{Q_{4 r}^+}  |
\nabla \varphi| \varphi w^2 \, dxdt\right)\nonumber\\
& +2(\Lambda +2) \int_{Q_{4 r}^+}|\A-\bar{\A}_{B_{4 r}^+}(t)|^2  |\nabla v|^2 
 \varphi^2 \, dxdt
\nonumber\\
&+ \int_{Q_{4 r}^+} \varphi\varphi_t w^2\, dxdt  +\int_{Q_{4 r}^+}    w^2 \, dxdt + \int_{Q_{4 r}^+}  |c|  |w| \, dxdt.\nonumber
\end{align} 
We next estimate $\|\nabla (\lambda v)\|_{L^\infty(Q_{3 r}^+)}$ and $\|\nabla v\|_{L^\infty(Q_{3 r}^+)}$.  Let us define 
$\bar{v}(x,t) := \lambda v(rx, r^2 t)$ for $(x,t)\in Q_4^+$.  Then $0\leq \bar{v}\leq 1$ in $Q_4^+$ and  $\bar{v}$ is a weak solution of
\begin{equation*}
\left \{
\begin{array}{lcll}
\bar{v}_t  &=&  \nabla\cdot[(1+\bar{v})\bar{\A'}_{B_{4}^+}(t)\nabla \bar{v}] + (\theta r)^2 \bar{v}(1-\bar{v})\quad &\text{in}\quad Q_4^+, \\
\nabla \bar v\cdot \bnu &=& 0 \quad &\text{on}\quad \tilde T_4.
\end{array}\right.
\end{equation*}
Thanks to $\theta r\leq \theta\leq 1$, we then can use Lemma~\ref{G-W^{1,infty}-est}
to get
 \begin{equation}\label{g-handle-gradients-v}
 \|\nabla \bar v\|_{L^\infty(Q_3^+)}\leq C(\Lambda, n)\left(\fint_{Q_{\frac{7}{2}}^+}{|\nabla \bar{v}|^2 \, dxdt}\right)^{\frac{1}{2}}.
  \end{equation}
This together with Lemma~\ref{G-W^{1,2}-est}  yields  $\|\nabla \bar v\|_{L^\infty(Q_3^+)}\leq C(\Lambda, n)$. By rescaling back, we obtain
\begin{equation}\label{g-localized-L^infinity-est-I}
\|\nabla (\lambda v)\|_{L^\infty(Q_{3 r}^+)}\leq \frac{C(\Lambda, n)}{r}.
\end{equation}
On the other hand,  \eqref{g-handle-gradients-v} also   gives
\begin{equation}\label{g-localized-L^infinity-est-II}
 \|\nabla v\|_{L^\infty(Q_{3 r}^+)}\leq  C(\Lambda, n)  \left(\fint_{Q_4^+}{|\nabla v(rx, r^2 t)|^2 \, dxdt}\right)^{\frac{1}{2}}
 =C(\Lambda, n)  \left(\fint_{Q_{4 r}^+}{|\nabla v(y,s)|^2 \, dyds}\right)^{\frac{1}{2}}.
\end{equation}
It follows from \eqref{G-Cauchy--Schwarz-step}, \eqref{g-localized-L^infinity-est-I}, \eqref{g-localized-L^infinity-est-II} and  the Cauchy--Schwarz
inequality  that
\begin{align*}\label{G-u-v:gradient-est}
\fint_{Q_{4 r}^+}  |\nabla w|^2\varphi^2 dxdt
&\leq \frac{C}{r^2} \fint_{Q_{4 r}^+}  w^2 \, dxdt + C \left(\fint_{Q_{4 r}^+}{|\nabla v|^2 \, dxdt}\right) \left( \fint_{Q_{4 r}^+}|\A-\bar{\A}_{B_{4 r}^+}(t)|^2 \, dxdt \right)\\
& \quad  +\Big(\fint_{Q_{4 r}^+} c^2  \, dxdt\Big)^{\frac{1}{2}}\Big(\fint_{Q_{4 r}^+}  w^2  \, dxdt\Big)^{\frac{1}{2}}.\nonumber
\end{align*}
This together with 
\eqref{G-localied-compare-uv}, \eqref{g-rescaled-gradient-v-controlled} and the assumption \eqref{G-smallness-condition-c} gives the estimate
\eqref{boundary-gradient-comp}.
\end{proof}

\begin{remark}\label{G-translation-invariant}
Since our equations are invariant under the translation $(x,t)\mapsto (x+ y, t+s)$, Lemma~\ref{lm:G-compare--gradient-solution} still holds true if $Q_r^+$ is replaced by $Q_r^+(y,s)$.
\end{remark}

\subsubsection{Boundary density and gradient estimates on flat domains}\label{flat-domain}
We begin this subsection with a density estimate which is the boundary version of Lemma~\ref{initial-density-est}.
\begin{lemma}\label{G-initial-density-est}
Assume that $0<\theta\leq \lambda$, $\theta\leq 1$, $\A$ satisfies \eqref{boundary-elipticity} and $c\in L^2(Q_4^+)$. There exists a constant $N>1$ depending only on $\Lambda$ and  $n$ such that   
for  any $\e>0$, we can find  $\delta=\delta(\e, \Lambda, n)>0$  satisfying: if
\begin{equation}\label{boundary-SMO}
\sup_{0<\rho\leq 2}\sup_{(y,s)\in \overline{Q_1^+}} \frac{1}{|Q_\rho(y, s)|}\int_{
Q_\rho(y,s)\cap Q_3^+}{|\A(x,t) - \bar{\A}_{B_\rho(y)\cap B_3^+}(t)|^2\, dx dt}\leq \delta,
\end{equation} 
then for any weak solution $u\in \hat\calW(Q_4^+)$ of \eqref{MP-HS} with
$0\leq u \leq \frac{1}{\lambda}$ in $Q_3^+$
and for any $(y,s)\in \overline{Q_1^+}$, $0<r\leq 1/12$ with
\begin{equation*}
 Q_r(y,s)\cap Q_1^+\cap \big\{Q_3^+:\, \M_{Q_3^+}(|\nabla u|^2)\leq 1 \big\}\cap \{ Q_3^+: \M_{Q_3^+}(c^2)\leq \delta\}\neq \emptyset,
\end{equation*}
we have
\begin{equation}\label{g-density-est}
 \big| \{ Q_1^+:\, \M_{Q_3^+}(|\nabla u|^2)> N \}\cap Q_r(y,s)\big|
\leq \e  |Q_r(y,s)|.
\end{equation}
\end{lemma}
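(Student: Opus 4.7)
The plan is to mirror the proof of Lemma~\ref{initial-density-est} step for step, replacing the interior ingredients with their boundary counterparts: the interior approximation Lemma~\ref{lm:localized-compare-solution} is swapped for the boundary approximation Lemma~\ref{lm:G-compare--gradient-solution} (together with Remark~\ref{G-translation-invariant}), and the interior Lipschitz estimate Lemma~\ref{W1infty-est} is replaced by the boundary Lipschitz estimate Lemma~\ref{G-W^{1,infty}-est}. The whole argument will be carried out relative to a half-parabolic cube centered at the orthogonal projection of $(y,s)$ onto the flat boundary.

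By the non-emptiness assumption, there is $(x_0,t_0)\in Q_r(y,s)\cap Q_1^+$ with $\M_{Q_3^+}(|\nabla u|^2)(x_0,t_0)\le 1$ and $\M_{Q_3^+}(c^2)(x_0,t_0)\le\delta$. Set $\tilde y=(y_1,\dots,y_{n-1},0)\in\overline{T_1}$. Using $(y,s)\in\overline{Q_1^+}$ and the smallness $r\le 1/12$, one checks the geometric inclusions $Q_{4r}^+(\tilde y,s)\subset Q_{5r}(x_0,t_0)\cap Q_3^+$ and $Q_{4r}^+(\tilde y,s)\subset Q_3^+$. Combined with the maximal-function bounds at $(x_0,t_0)$ and the BMO hypothesis \eqref{boundary-SMO}, these yield
\[
\fint_{Q_{4r}^+(\tilde y,s)}|\nabla u|^2\,dxdt\le C_n,\quad \fint_{Q_{4r}^+(\tilde y,s)}c^2\,dxdt\le C_n\delta,\quad \fint_{Q_{4r}^+(\tilde y,s)}|\A-\bar{\A}_{B_{4r}^+(\tilde y)}(t)|^2\,dxdt\le \delta.
\]

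Next, for a small $\eta>0$ to be fixed, choose $\delta$ small enough so that Lemma~\ref{lm:G-compare--gradient-solution} (after the natural normalization of $\nabla u$) applies on $Q_{4r}^+(\tilde y,s)$, producing a weak solution $v$ of the reference equation with $v=u$ on $\partial_bQ_{4r}^+(\tilde y,s)\cup\partial_cQ_{4r}^+(\tilde y,s)$, $0\le v\le 1/\lambda$, and
\[
\fint_{Q_{2r}^+(\tilde y,s)}|\nabla u-\nabla v|^2\,dxdt\le \eta^2.
\]
A parabolic rescaling of Lemma~\ref{G-W^{1,infty}-est} then yields $\|\nabla v\|_{L^\infty(Q_{3r/2}^+(\tilde y,s))}^2\le C(\Lambda,n)(1+\eta^2)$. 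Via the triangle inequality, this bound together with the above gradient comparison gives the key set inclusion
\[
\{Q_r(y,s):\M_{Q_3^+}(|\nabla u|^2)>N\}\subset\{Q_r(y,s):\M_{Q_{2r}^+(\tilde y,s)}(|\nabla u-\nabla v|^2)>C(\Lambda,n)\}
\]
for an appropriate $N=N(\Lambda,n)>1$, by splitting cubes $Q_\rho(x,t)$ with $(x,t)\in Q_r(y,s)$ into the cases $\rho\le r/2$ (handled by the $L^\infty$ bound on $\nabla v$) and $\rho>r/2$ (handled by $Q_\rho(x,t)\subset Q_{5\rho}(x_0,t_0)$ and the maximal-function bound at $(x_0,t_0)$). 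The weak type $(1,1)$ estimate then gives $|\{Q_r(y,s):\M_{Q_3^+}(|\nabla u|^2)>N\}|\le C'\eta^2|Q_r(y,s)|$; choosing $\eta=\sqrt{\e/C'}$ and the corresponding $\delta$ completes the proof.

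The main obstacle is the geometric bookkeeping: unlike the interior case, the reference cube is centered at the projected point $\tilde y$ rather than at $y$ itself, so one must carefully verify every inclusion to ensure both that $v$ is defined on a neighbourhood of every $(x,t)\in Q_r(y,s)\cap Q_1^+$ and that the comparison between $u$ and $v$ is meaningful up to and including the flat portion of the boundary. A subsidiary technical point is that the boundary Lipschitz bound from Lemma~\ref{G-W^{1,infty}-est} relies on the even reflection across $\{x_n=0\}$, so one must check that the Neumann boundary condition for $u$ transfers cleanly to $v$ on $\tilde T_{4r}(\tilde y)$; this is built into the formulation of the reference problem \eqref{G-limiting-eqn}.
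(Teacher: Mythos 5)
Your plan matches the paper's in spirit (swap interior ingredients for their boundary counterparts), but there is a genuine gap: the argument as written only works when $y$ lies on the flat boundary $T_1$, and it does not handle general $(y,s)\in\overline{Q_1^+}$.

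Concretely, you set $\tilde y=(y',0)$ and assert the inclusion $Q_{4r}^+(\tilde y,s)\subset Q_{5r}(x_0,t_0)\cap Q_3^+$. Since $(x_0,t_0)\in Q_r(y,s)$, any $\xi\in B_{4r}^+(\tilde y)$ satisfies only $|\xi-x_0|<4r+y_n+r$, so the claimed inclusion fails as soon as $y_n$ is not negligible compared with $r$ (e.g.\ $y_n=1/2$, $r=1/12$). The same problem reappears in the key step where, for $\rho\leq r/2$ and $(x,t)\in Q_r(y,s)$, you need $Q_\rho(x,t)\cap Q_3^+\subset Q_{3r/2}^+(\tilde y,s)$ to invoke the $L^\infty$ bound on $\nabla v$: the triangle inequality gives at best $|\xi-\tilde y|<3r/2+y_n$, which again exceeds $3r/2$ unless $y_n=0$. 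So the solution $v$ of the reference problem lives on a half-cube that does not cover $Q_r(y,s)$ in general, and the comparison breaks down.

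What is missing is the dichotomy and the scale adjustment. The paper first proves the density estimate \emph{with the smaller threshold $\e/6^{n+2}$} for cubes $Q_{\tilde r}(\tilde y,\tilde s)$ centered at a boundary point $(\tilde y,\tilde s)\in\tilde T_1$ with $\tilde r\leq 1/2$, using exactly the approximation machinery you describe (your inclusions are then valid since $\tilde y_n=0$). It then distinguishes two cases for a general $(y,s)$: if $\dist(y,T_1)>5r$ then $B_{4r}(y)\Subset B_3^+$ and the \emph{interior} estimate of Lemma~\ref{initial-density-est} applies directly; if $\dist(y,T_1)\leq 5r$ one has $Q_r(y,s)\subset Q_{6r}(\tilde y,s)$ with $\tilde y\in T_1$ and $6r\leq 1/2$ (using $r\leq 1/12$), and the boundary claim applied at scale $6r$ yields $\big|\{Q_1^+:\M_{Q_3^+}(|\nabla u|^2)>N\}\cap Q_{6r}(\tilde y,s)\big|\leq \frac{\e}{6^{n+2}}|Q_{6r}(\tilde y,s)|=\e|Q_r(y,s)|$. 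Without this case split and the inflated cube $Q_{6r}(\tilde y,s)$ with the compensating factor $6^{n+2}$, the desired estimate on $Q_r(y,s)$ does not follow.
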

\begin{proof}
Let $Q_{\tilde r}(\tilde y, \tilde s)$ be a parabolic cube satisfying  $(\tilde y,\tilde s)\in \tilde T_1$, $0<\tilde r\leq 1/2$  and
\begin{equation}\label{g-one-point-condition}
Q_{\tilde r}(\tilde y, \tilde s)\cap Q_1^+\cap \big\{Q_3^+:\, \M_{Q_3^+}(|\nabla u|^2)\leq 1 \big\}\cap \{ Q_3^+: \M_{Q_3^+}(c^2)\leq \delta\}\neq \emptyset.
\end{equation}
We then claim that there exists $N>0$ depending only on $\Lambda$ and $n$ such that
\begin{equation}\label{g-claim-density-est}
 \big| \{ Q_1^+:\, \M_{Q_3^+}(|\nabla u|^2)> N \}\cap Q_{\tilde r}(\tilde y, \tilde s)\big|
\leq \frac{\e}{6^{n+2}}  |Q_{\tilde r}(\tilde y, \tilde s)|.
\end{equation}
Indeed, it follows from \eqref{g-one-point-condition} that
\begin{align}
  \M_{Q_3^+}(|\nabla u|^2)(x_0,t_0)\leq 1\quad \mbox{and}\quad \M_{Q_3^+}(c^2)(x_0,t_0) \leq
\delta\label{g-maximal-fns-control}
\end{align}
for some point $(x_0,t_0)\in Q_{\tilde r}(\tilde y, \tilde s)\cap  Q_1^+$.
Since $Q_{4 \tilde r}^+(\tilde y, \tilde s) \subset Q_{5 \tilde r}(x_0,t_0)\cap Q_3^+$, we deduce from \eqref{g-maximal-fns-control} that
\begin{align*}
&\fint_{Q_{4 \tilde r}^+(\tilde y, \tilde s)}|\nabla u|^2 \, dx dt \leq \frac{|Q_{5\tilde  r}(x_0,t_0)|}{|Q_{4 \tilde r}^+(\tilde y, \tilde s)|}\frac{1}{|Q_{5\tilde  r}(x_0,t_0)|}\int_{Q_{5 \tilde r}(x_0,t_0)\cap Q_3^+}|\nabla u|^2 \, dx dt\leq 2\Big(\frac{5}{4}\Big)^{n+2},\\
 &\fint_{Q_{4 \tilde r}^+(\tilde y, \tilde s)}c^2 \, dx dt \leq \frac{|Q_{5 \tilde r}(x_0,t_0)|}{|Q_{4 \tilde r}^+(\tilde y, \tilde s)|}\frac{1}{|Q_{5\tilde  r}(x_0,t_0)|} \int_{Q_{5 \tilde r}(x_0,t_0)\cap Q_3^+}c^2 \, dx dt\leq 2 \Big(\frac{5}{4}\Big)^{n+2} \delta.
\end{align*}
Also, as $B_{4 \tilde r}(\tilde y)\cap B_3^+=B_{4 \tilde r}^+(\tilde y) $ the assumption \eqref{boundary-SMO} gives
\begin{equation*}
\fint_{Q_{4 \tilde r}^+(\tilde y, \tilde s)}{|\A(x,t) - \bar{\A}_{B_{4 \tilde r}^+(\tilde y)}(t)|^2\, dx dt}\leq 2 \delta.
\end{equation*}
Therefore, we can use Lemma~\ref{lm:G-compare--gradient-solution} and Remark~\ref{G-translation-invariant} to obtain
\begin{equation}\label{g-gradient-comparison}
\fint_{Q_{2 \tilde r}^+(\tilde y, \tilde s)}{|\nabla u- \nabla v|^2\, dx dt}\leq \eta^2,
\end{equation}
where $v \in \hat\calW(Q_{4 \tilde r}^+(\tilde y, \tilde s))$ is the  weak solution of 
\begin{equation*}
\left \{
\begin{array}{lcll}
v_t  &=&  \nabla\cdot[(1+\lambda v)\bar{\A}_{B_{4 \tilde r}^+(\tilde y)}(t) \nabla v] + \theta^2 v(1-\lambda v) \quad &\text{in}\quad Q_{4 \tilde r}^+(\tilde y, \tilde s), \\
\begin{displaystyle}
\frac{\partial v}{\partial \bnu} 
\end{displaystyle}
 &=& 0 \quad &\text{on}\quad \tilde T_{4\tilde r}(\tilde y), \\
v & =& u  \quad &\text{on}\quad \partial_b Q_{4 \tilde r}^+(\tilde y, \tilde s) \cup \partial_c Q_{4 \tilde r}^+(\tilde y, \tilde s)
\end{array}\right.
\end{equation*}
satisfying $0\leq v\leq 1/\lambda$ in $Q_{4 \tilde r}^+(\tilde y, \tilde s)$, and $\delta =\delta(\eta,\Lambda, n)$ with $\eta$ being determined later. We note that the existence and uniqueness 
of such weak solution $v$ is guaranteed by Theorem~\ref{existence-u}.

Let $\bar{v}(x,t) = \lambda v(\tilde r x +\tilde y, \tilde r^2 t +\tilde s)$  and $\A'(x,t) = \A(\tilde{r} x +\tilde{y}, \tilde{r}^2 t +\tilde{s})$ for $(x,t)\in Q_4^+$.  Then $0\leq \bar{v}\leq 1$ in $Q_4^+$ and  $\bar{v}$ is a weak solution of
\begin{equation*}
\left \{
\begin{array}{lcll}
\bar{v}_t  &=&  \nabla\cdot[(1+\bar{v})
\bar{\A'}_{B_{4}^+}(t)\nabla \bar{v}] + (\theta \tilde r)^2 \bar{v}(1-\bar{v})\quad &\text{in}\quad Q_4^+, \\
\begin{displaystyle}
\frac{\partial \bar v}{\partial \bnu} 
\end{displaystyle}
 &=& 0 \quad &\text{on}\quad \tilde T_4.
\end{array}\right.
\end{equation*}
Thanks to $\theta \tilde r\leq \theta\leq 1$, we can apply Lemma~\ref{G-W^{1,infty}-est} to get 
\begin{align*}
\|\nabla \bar v\|_{L^\infty(Q_{\frac{3}{2}}^+)}^2\leq C \fint_{Q_2^+}{|\nabla \bar{v}|^2 \, dxdt},
\end{align*}
which together with \eqref{g-gradient-comparison} and \eqref{g-maximal-fns-control}   gives
\begin{align}\label{g-gradient-is-bounded}
\|\nabla v\|_{L^\infty(Q_{\frac{3 \tilde r}{2}}^+(\tilde y,\tilde s))}^2
&\leq C \fint_{Q_{2\tilde r}^+(\tilde y,\tilde s)}{|\nabla v|^2 \, dxdt}\\
&\leq  2 C \left( \fint_{Q_{2\tilde r}^+(\tilde y, \tilde s)}{|\nabla u -\nabla v|^2 \, dxdt}
+ \fint_{Q_{2\tilde r}^+(\tilde y,\tilde s)}{|\nabla u|^2 \, dxdt}\right)\leq C(\Lambda, n) (\eta^2 +1).\nonumber
\end{align}
 We assert  that \eqref{g-maximal-fns-control}, \eqref{g-gradient-comparison} and \eqref{g-gradient-is-bounded}
yield
\begin{equation}\label{g-set-relation-claim}
 \big\{ Q_{\tilde r}(\tilde y, \tilde s):  \M_{Q_{2\tilde r}^+(\tilde y,\tilde s)}(|\nabla u - \nabla v|^2) \leq C(\Lambda, n) \big\}\subset \big\{ Q_{\tilde r}(\tilde y, \tilde s):\, \M_{Q_3^+}(|\nabla u |^2) \leq N\big\}
\end{equation}
with $N = \max{\{6 C(\Lambda, n), 5^{n+2}\}}$. To see this, let $(x,t)$ be a point in the set on the left hand side of \eqref{g-set-relation-claim}, and consider 
$Q_\rho(x,t)$. If $\rho\leq \tilde r/2$, then as $\tilde y_n =0$ we have  $Q_\rho(x,t) \cap Q_3^+\subset Q_{3 \tilde r/2}^+(\tilde y,\tilde s)$
and hence
\begin{align*}
 &\frac{1}{|Q_\rho(x,t)|}\int_{Q_\rho(x,t) \cap Q_3^+} |\nabla u|^2 \, dx dt\\
 &\leq 
 \frac{2}{|Q_\rho(x,t)|}\Big[\int_{Q_\rho(x,t) \cap Q_3^+} |\nabla u-\nabla v|^2 \, dx dt
 +\int_{Q_\rho(x,t) \cap Q_3^+} |\nabla v|^2 \, dx dt \Big]\\
 &\leq 2 \M_{Q_{2\tilde r}^+(\tilde y,\tilde s)}(|\nabla u - \nabla v|^2)(x,t) +2 \|\nabla v \|_{L^\infty(Q_{\frac{3\tilde r}{2}}^+(\tilde y,\tilde s))}^2
 \leq 2C(\Lambda, n) \big( \eta^2 + 2\big)\leq 6 C(\Lambda, n).
\end{align*}
On the other hand if $\rho> \tilde r/2$, then  $Q_\rho(x,t)\subset Q_{5\rho}(x_0, t_0)$. This and  the first inequality in \eqref{g-maximal-fns-control} imply that
\begin{align*}
 \frac{1}{|Q_\rho(x,t)|}\int_{Q_\rho(x,t) \cap Q_3^+} |\nabla u|^2 \, dx dt
 \leq \frac{5^{n+2}}{|Q_{5\rho}(x_0, t_0)|} \int_{Q_{5\rho}(x_0, t_0) \cap Q_3^+} |\nabla u|^2 \, dx dt\leq 
 5^{n+2}.
\end{align*}
Therefore, we conclude that $\M_{Q_3^+}(|\nabla u |^2)(x,t) \leq N$ and  \eqref{g-set-relation-claim}  is proved.
Note that   \eqref{g-set-relation-claim} is equivalent to
 \begin{align*}
 \big\{Q_{\tilde r}(\tilde y,\tilde s):\, \M_{Q_3^+}(|\nabla u |^2) > N\big\} \subset
 \big\{ Q_{\tilde r}(\tilde y,\tilde s):\, \M_{Q_{2\tilde r}^+(\tilde y,\tilde s)}(|\nabla u - \nabla v|^2) > C(\Lambda, n) \big\}.
 \end{align*}
It follows from this,  the weak type $1-1$ estimate and \eqref{g-gradient-comparison}   that
\begin{align*}
 &\big|\big\{Q_{\tilde r}(\tilde y, \tilde s):\, \M_{Q_3^+}(|\nabla u |^2) > N\big\} \big|\leq  \big|
 \big\{ Q_{\tilde r}(\tilde y, \tilde s):\, \M_{Q_{2 \tilde r}^+(\tilde y,\tilde s)}(|\nabla u - \nabla v|^2) > C(\Lambda, n)  \big\}\big|\\
 &\leq C  \int_{Q_{2\tilde r}^+(\tilde y,\tilde s)}{|\nabla u - \nabla v|^2 \, dx dt}\leq C' \eta^2 \, |Q_{\tilde r}(\tilde y, \tilde s)|,
 \end{align*}
 where $C'>0$ depends only on $\Lambda$ and $n$.
By choosing $\eta := \sqrt{\frac{\e}{6^{n+2} C'}}$, we obtain the claim \eqref{g-claim-density-est}.

To proceed with the proof, we consider the following two cases:

{\bf Case 1:}  $\dist(y, T_1)> 5r$. Then $B_{4 r}(y) \Subset B_3^+$, $Q_{4r}(y,s)\Subset Q_3^+$ and $\fint_{Q_{4 r}(y, s)}{ |\A(x, t) - \bar{\A}_{B_{4 r}(y)}(t)|^2 dx dt}\leq \delta$ by \eqref{boundary-SMO}. Hence  \eqref{g-density-est} follows from the interior estimate in Lemma~\ref{initial-density-est} (see the proof of Lemma~\ref{initial-density-est}).

{\bf Case 2:} $\dist(y, T_1)\leq 5r$. Then there exists $\tilde y \in T_1$ such that $B_r(y)\subset B_{6r}(\tilde y)$. Consequently, $Q_r(y,s)\subset Q_{6 r}(\tilde y,s)$ and due to the assumption \eqref{g-one-point-condition} we have 
\begin{equation*}
Q_{6 r}(\tilde y,s)\cap Q_1^+\cap \big\{Q_3^+:\, \M_{Q_3^+}(|\nabla u|^2)\leq 1 \big\}\cap \{ Q_3^+: \M_{Q_3^+}(c^2)\leq \delta\}\neq \emptyset.
\end{equation*}
Therefore, it follows from the claim \eqref{g-claim-density-est} that 
\begin{equation*}
 \big| \{ Q_1^+:\, \M_{Q_3^+}(|\nabla u|^2)> N \}\cap Q_{6 r}(\tilde y,s)\big|
\leq \frac{\e}{6^{n+2}}  |Q_{6 r}(\tilde y,s)|=\e |Q_{r}(y,s)|
\end{equation*}
yielding \eqref{g-density-est}.
\end{proof}

 In view of Lemma~\ref{G-initial-density-est}, we can apply the Vitali covering lemma (see \cite[Theorem~2.6]{B1}) 
for $E=\{ Q_1^+:\, \M_{Q_3^+}(|\nabla u|^2)> N \} $ and 
$F=\{ Q_1^+:\, \M_{Q_3^+}(|\nabla u|^2)> 1 \}\cup \{ Q_1^+:\, \M_{Q_3^+}(c^2)> \delta \}$
 to obtain:
\begin{lemma}\label{G-second-density-est}
Assume that $0<\theta\leq \lambda$, $\theta\leq 1$, $\A$ satisfies \eqref{boundary-elipticity} and $c\in L^2(Q_4^+)$. There exists a constant $N>1$ depending only on $\Lambda$ and $n$ such that   
for  any $\e>0$, we can find  $\delta=\delta(\e, \Lambda, n)>0$  satisfying:  if
\begin{equation*}
\sup_{0<\rho\leq 2}\sup_{(y,s)\in \overline{Q_1^+}} \frac{1}{|Q_\rho(y, s)|}\int_{
Q_\rho(y, s)\cap Q_3^+}{|\A(x) - \bar{\A}_{B_\rho(y)\cap B_3^+}(t)|^2\, dx dt}\leq \delta,
\end{equation*} 
 then for any weak solution $u\in \hat\calW(Q_4^+)$ of \eqref{MP-HS} satisfying
\begin{equation*}
0\leq u \leq \frac{1}{\lambda}\quad\mbox{in}\quad Q_3^+ \quad \mbox{and}\quad
\big| \{Q_1^+: 
\M_{Q_3^+}(|\nabla u|^2)> N \}\big| \leq \e |Q_1^+|,
\end{equation*}
we have
\begin{align*}
\big|\{Q_1^+: \M_{Q_3^+}(|\nabla u|^2)> N\}\big|
\leq 2 (10)^{n+2}\e \, \Big\{
\big|\{Q_1^+: \M_{Q_3^+}(|\nabla u|^2)> 1\}\big|
+ \big|\{ Q_1^+: \M_{Q_3^+}(c^2)> \delta \}\big|\Big\}.\nonumber
\end{align*}
\end{lemma}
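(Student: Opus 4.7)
The plan is to deduce this lemma from the pointwise density estimate of Lemma~\ref{G-initial-density-est} via a Vitali-type covering argument, exactly mirroring the passage from Lemma~\ref{initial-density-est} to Lemma~\ref{second-density-est} in the interior case. Let $N > 1$ be the constant furnished by Lemma~\ref{G-initial-density-est}, and given $\e > 0$ choose $\delta = \delta(\e, \Lambda, n) > 0$ so that the hypothesis of that lemma (and of the present lemma) is satisfied. Set
\begin{align*}
E &:= \{Q_1^+:\, \M_{Q_3^+}(|\nabla u|^2) > N\}, \\
F &:= \{Q_1^+:\, \M_{Q_3^+}(|\nabla u|^2) > 1\} \cup \{Q_1^+:\, \M_{Q_3^+}(c^2) > \delta\}.
\end{align*}
Since $N > 1$ we have $E \subset F \subset Q_1^+$, and by hypothesis $|E| \leq \e |Q_1^+|$.

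The core step is to verify the density hypothesis required to invoke the parabolic Vitali covering lemma (Theorem~2.6 of \cite{B1}). Specifically, I claim that for every $(y,s) \in \overline{Q_1^+}$ and every $0 < r \leq 1/12$, the implication
\[
|E \cap Q_r(y,s)| > \e\, |Q_r(y,s)| \quad \Longrightarrow \quad Q_r(y,s) \cap Q_1^+ \subset F
\]
holds. This is the contrapositive of Lemma~\ref{G-initial-density-est}: if some $(x_0,t_0) \in Q_r(y,s) \cap Q_1^+$ fails to lie in $F$, then simultaneously $\M_{Q_3^+}(|\nabla u|^2)(x_0,t_0) \leq 1$ and $\M_{Q_3^+}(c^2)(x_0,t_0) \leq \delta$, which is precisely the nonempty-intersection hypothesis in Lemma~\ref{G-initial-density-est}, and that lemma then forces $|E \cap Q_r(y,s)| \leq \e\, |Q_r(y,s)|$.

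With these three ingredients (the nesting $E \subset F$, the global smallness $|E| \leq \e|Q_1^+|$, and the density implication above for radii $r \leq 1/12$), the Vitali covering lemma produces a disjoint collection of parabolic cubes $\{Q_{r_i}(y_i,s_i)\}$ centered in $\overline{E}$ with $r_i \leq 1/12$ such that $E \subset \bigcup_i Q_{5r_i}(y_i,s_i)$ (up to a null set) and each enlarged cube satisfies $Q_{r_i}(y_i,s_i) \cap Q_1^+ \subset F$; summing $|E \cap Q_{5r_i}(y_i,s_i)| \leq 5^{n+2} \e\, |Q_{r_i}(y_i,s_i)|$ and using disjointness yields $|E| \leq 2 \cdot 10^{n+2}\, \e\, |F|$, which upon unpacking $F$ is exactly the conclusion of the lemma. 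The main obstacle in this step is really housed in the prior Lemma~\ref{G-initial-density-est}, whose compactness-and-approximation argument near the flat boundary delivers the crucial density input; once that is available, the present deduction is a purely measure-theoretic Vitali argument, and I do not anticipate difficulty beyond bookkeeping the covering constant $2 \cdot 10^{n+2}$.
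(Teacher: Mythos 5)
Your proposal is correct and follows the same route as the paper, which proves Lemma~\ref{G-second-density-est} in a single line by citing the parabolic Vitali covering lemma (Theorem~2.6 of \cite{B1}) applied to the sets $E$ and $F$ you define, with the density hypothesis supplied by Lemma~\ref{G-initial-density-est}. Your identification of the three inputs to the covering lemma (nesting $E\subset F$, the global smallness $|E|\le\e|Q_1^+|$, and the contrapositive of Lemma~\ref{G-initial-density-est} giving the local density implication at scales $r\le 1/12$) is exactly the content the paper leaves implicit, and deferring the constant $2\cdot 10^{n+2}$ to the statement of the covering lemma is appropriate since that is where it comes from.
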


We are ready to state and prove the boundary $W^{1,p}$-estimates for flat domains.
\begin{theorem}\label{Global-half-space} Assume that $ \lambda>0$, $0<\theta\leq 1$, $\A$ satisfies \eqref{boundary-elipticity} and $c\in L^2(Q_4^+)$. 
Let   $u\in \hat\calW(Q_4^+)$  be a weak solution of  \eqref{MP-flat-domain} satisfying
$0 \leq u \leq 1/\lambda$ in $Q_3^+$.
Then for any $p>2$, there exists a constant $\delta =\delta(p,\Lambda,n)>0$ such that if
\begin{equation*}
\sup_{0<\rho\leq 2}\sup_{(y,s)\in \overline{Q_1^+}} \frac{1}{|Q_\rho(y, s)|}\int_{Q_\rho(y, s)\cap Q_3^+}{|\A(x, t) - \bar{\A}_{B_\rho(y)\cap B_3^+}(t)|^2\, dx dt}\leq \delta,
\end{equation*}  
we have 
\begin{equation}\label{g-main-estimate}
\int_{Q_1^+}|\nabla u|^p \, dx dt\leq C\left\{  \Big(\frac{\theta}{\lambda} \vee  \|u\|_{L^2(Q_4^+)}\Big)^p + \int_{Q_4^+}|c|^p \, dx dt\right\}.
\end{equation}
Here  $C>0$ is a constant 
 depending only on  $p$, $\Lambda$ and $n$.
\end{theorem}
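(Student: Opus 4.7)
The plan is to transcribe the iteration argument used in the proof of Theorem~\ref{first-main-result} into the half-space setting, with Lemma~\ref{G-second-density-est} playing the role of Lemma~\ref{second-density-est}. Set $q=p/2>1$, let $N=N(\Lambda,n)>1$ be the universal constant from Lemma~\ref{G-second-density-est}, pick $\e=\e(p,\Lambda,n)$ so that $\e_1:=2(10)^{n+2}\e = \frac{1}{2N^q}$, and let $\delta=\delta(p,\Lambda,n)>0$ be the corresponding small parameter. This $\delta$ will be the one in the statement.

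I would first treat the case $\theta\leq\lambda$ under the temporary auxiliary assumption
\[
\big|\{Q_1^+:\M_{Q_3^+}(|\nabla u|^2)>N\}\big| \leq \e\,|Q_1^+|.
\]
The key observation is that for each $j\geq 1$, the rescaled triple $u_j = u/N^{j/2}$, $c_j = c/N^{j/2}$, $\lambda_j = N^{j/2}\lambda$ still satisfies $\theta\leq \lambda_j$, and $u_j$ is a weak solution of \eqref{MP-flat-domain} (with $\lambda,c$ replaced by $\lambda_j,c_j$) with $0\leq u_j\leq 1/\lambda_j$ in $Q_3^+$. Thus Lemma~\ref{G-second-density-est} applies to every $u_j$, and iterating exactly as in the interior argument yields, for each $k\geq 1$,
\[
\big|\{Q_1^+:\M_{Q_3^+}(|\nabla u|^2)>N^k\}\big| \leq \e_1^k\,|Q_1^+| + \sum_{i=1}^{k}\e_1^i\,\big|\{Q_1^+:\M_{Q_3^+}(c^2)>\delta N^{k-i}\}\big|.
\]
A layer-cake computation combined with Remark~\ref{rm:lower-est-L^p-norm} (with $V=Q_1^+$, $U=Q_3^+$) and the geometric summability coming from $\e_1 N^q = \tfrac{1}{2}$ gives
\[
\int_{Q_1^+}\M_{Q_3^+}(|\nabla u|^2)^q\,dxdt \leq C\Big(1+\int_{Q_1^+}\M_{Q_3^+}(c^2)^q\,dxdt\Big).
\]
The Lebesgue differentiation theorem bounds $|\nabla u|^2$ pointwise by $\M_{Q_3^+}(|\nabla u|^2)$ a.e., and the strong $q$-$q$ bound for $\M$ then produces
\[
\int_{Q_1^+}|\nabla u|^p\,dxdt \leq C\Big(1+\int_{Q_3^+}|c|^p\,dxdt\Big).
\]

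To remove the auxiliary distribution assumption I would normalize: set $\bar u = u/M$, $\bar c = c/M$, $\bar\lambda=\lambda M$ with $M^2 = C_n\,\|u\|_{L^2(Q_4^+)}^2/(\e|Q_1^+|)$. By the weak-type $(1,1)$ bound for $\M$ and a version of Lemma~\ref{G-W^{1,2}-est} comparing $\int_{Q_3^+}|\nabla u|^2$ to $\int_{Q_4^+}u^2$ (obtained by choosing a cutoff supported in $Q_4^+$ equal to $1$ on $Q_3^+$ and repeating the argument of that lemma), the function $\bar u$ satisfies $|\{Q_1^+:\M_{Q_3^+}(|\nabla \bar u|^2)>N\}|\leq \e|Q_1^+|$. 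Applying the previous estimate to $\bar u$ and scaling back introduces the factor $\|u\|_{L^2(Q_4^+)}$ on the right of \eqref{g-main-estimate}. Finally, for the case $\theta>\lambda$ I would set $K=\theta/\lambda>1$ and apply the already-proven estimate to $u'=u/K$, $c'=c/K$, $\lambda'=\lambda K=\theta$ (so that $\theta\leq\lambda'$); undoing the rescaling produces the $(\theta/\lambda)^p$ term and completes \eqref{g-main-estimate}.

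The only non-routine point is a bookkeeping one: the BMO hypothesis in Theorem~\ref{Global-half-space} is phrased in terms of averages of $\A$ over the half-balls $B_\rho(y)\cap B_3^+$, and one must verify that this is precisely the quantity that feeds into the approximation step, Lemma~\ref{lm:G-compare--gradient-solution}, after the rescaling/translation used in Lemma~\ref{G-initial-density-est}. This matching is built into the definitions, but since the scaling step sends $Q_{4\tilde r}^+(\tilde y,\tilde s)$ to $Q_4^+$ and intersections with $B_3^+$ to intersections with the coordinate half-plane, the verification is direct. Beyond this, every ingredient is already in place: the interior density estimate handles balls away from the flat boundary (Case~1 of Lemma~\ref{G-initial-density-est}), and the boundary approximation Lemma~\ref{lm:G-compare--gradient-solution} handles the boundary balls, so the iteration goes through as in the interior proof of Theorem~\ref{first-main-result}.
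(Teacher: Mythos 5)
Your proposal is correct and follows essentially the same route as the paper: both proofs transcribe the interior iteration of Theorem~\ref{first-main-result} into the half-space setting, replace Lemma~\ref{second-density-est} and Lemma~\ref{W^{1,2}-est} by their boundary counterparts Lemma~\ref{G-second-density-est} and Lemma~\ref{G-W^{1,2}-est}, perform the same $\theta\leq\lambda$ vs.\ $\theta>\lambda$ case split and the same $M$-normalization to remove the auxiliary level-set hypothesis. (The paper's normalization uses $|Q_1|$ where you wrote $|Q_1^+|$, but this only shifts the constant $C$.)
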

\begin{proof}
The arguments follow similar lines as in the proof of 
Theorem~\ref{first-main-result} using   Lemma~\ref{G-second-density-est} and Lemma~\ref{G-W^{1,2}-est} in place of Lemma~\ref{second-density-est} and  Lemma~\ref{W^{1,2}-est}. Therefore, we will 
only present the main points.

Let  $N>1$ be as in  Lemma~\ref{G-second-density-est}, and let $q:=p/2>1$ . We choose  $\e=\e(p,\Lambda, n)>0$ be such that
\[
\e_1 \eqdef 2 (10)^{n+2}\e = \frac{1}{2 N^q}, 
\]
and let $\delta=\delta(p,\Lambda, n)$ be the corresponding constant given by Lemma~\ref{G-second-density-est}.
Assuming for a moment that $u$  satisfies 
\begin{equation}\label{g-initial-distribution-condition}
\big| \{Q_1^+: 
\M_{Q_3^+}(|\nabla u|^2)> N \}\big| \leq \e |Q_1^+|.
\end{equation}
We first consider the case  $\theta \leq\lambda$.
Then it follows from  Lemma~\ref{G-second-density-est} that
\begin{align}\label{g-initial-distribution-est}
\big|\{Q_1^+: \M_{Q_3^+}(|\nabla u|^2)> N\}\big|\leq \e_1  \left\{
\big|\{Q_1^+: \M_{Q_3^+}(|\nabla u|^2)> 1\}\big|
+ \big|\{ Q_1^+: \M_{Q_3^+}(c^2)> \delta \}\big|\right\}.
\end{align}
 Let us iterate this estimate by considering
\[
u_1(x,t) = \frac{u(x,t)}{\sqrt{N}}, \quad c_1(x,t) = \frac{c(x,t)}{\sqrt{N}} \quad \mbox{and}\quad \lambda_1 = \sqrt{N}\lambda\geq \theta.
\]
It is easy to see that $u_1\in \hat\calW(Q_4^+)$ is a weak solution of
\begin{equation*}
\left \{
\begin{array}{lcll}
(u_1)_t  &=&  \nabla\cdot[(1+\lambda_1 u_1)\A\nabla u_1] + \theta^2 u_1(1-\lambda_1 u_1) - \lambda_1 \theta c_1 u_1  \quad &\text{in}\quad Q_4^+, \\
\begin{displaystyle}
\frac{\partial u_1}{\partial \bnu} 
\end{displaystyle}
 &=& 0 \quad &\text{on}\quad \tilde T_4, \\
u_1 & =& \psi / \sqrt{N}  \quad &\text{on}\quad \partial_b Q_4^+ \cup \partial_c Q_4^+.
\end{array}\right.
\end{equation*}
Therefore, by applying  Lemma~\ref{G-second-density-est} for $u_1$ we obtain
\begin{align*}
\big|\{Q_1^+: \M_{Q_3^+}(|\nabla u_1|^2)> N\}\big|\leq \e_1 \left( 
\big|\{Q_1^+: \M_{Q_3^+}(|\nabla u_1|^2)> 1 \}\big|
+ \big|\{ Q_1^+: \M_{Q_3^+}(|c_1|^2)> \delta \}\big|\right).
\end{align*}
We infer from this and  \eqref{g-initial-distribution-est} that
\begin{align*}\label{g-first-iteration-est}
&\big|\{Q_1^+: \M_{Q_3^+}(|\nabla u|^2)> N^2\}\big|
\leq \e_1^2 
\big|\{Q_1^+: \M_{Q_3^+}(|\nabla u|^2)> 1 \}\big|\\
&\qquad + \e_1^2\big|\{ Q_1^+: \M_{Q_3^+}(c^2)> \delta\} \big|+ \e_1\big|\{ Q_1^+: \M_{Q_3^+}(c^2)> \delta N\}\big|. \nonumber
\end{align*}
 By repeating the iteration, we then conclude that
\begin{align*}
&\big|\{Q_1^+: \M_{Q_3^+}(|\nabla u|^2)> N^k\}\big|\\
&\leq \e_1^k 
\big|\{Q_1^+: \M_{Q_3^+}(|\nabla u|^2)> 1 \}\big|+ \sum_{i=1}^k\e_1^i\big|\{ Q_1^+: \M_{Q_3^+}(c^2)> \delta N^{k-i}\} \big|\quad \mbox{for all } k=1,2,\dots
\end{align*}
As a consequence of this and by arguing as in the proof of 
Theorem~\ref{first-main-result},  we obtain 
\begin{align*}
\int_{Q_1^+}|\nabla u|^{2 q} \, dx dt\leq \int_{Q_1^+}\M_{Q_3^+}(|\nabla u|^2)^q \, dx dt \leq C\left( 1+ \int_{Q_1^+}\M_{Q_3^+}(c^2)^q \, dx dt \right)
\end{align*}
with the constant $C$ depending only on $p$, $\Lambda$ and $n$.  Therefore,
it follows from the strong type $q-q$ estimate for the maximal function and the fact $q=p/2$ that 
\begin{equation}\label{g-initial-desired-L^p-estimate}
\int_{Q_1^+}|\nabla u|^{p} \, dx dt\leq C\left( 1+ \int_{Q_3^+}|c|^{p} \, dx dt\right).
\end{equation}
The estimate  \eqref{g-initial-desired-L^p-estimate} was derived under the assumption that $\theta\leq \lambda$. But as in the proof of 
Theorem~\ref{first-main-result}, we deduce from \eqref{g-initial-desired-L^p-estimate} that
\begin{equation}\label{g-desired-L^p-estimate}
\int_{Q_1^+}|\nabla u|^p \, dx dt\leq C\left[ \Big(\frac{\theta}{\lambda}\Big)^p \vee 1+ \int_{Q_3^+}|c|^p \, dx dt\right]
\end{equation}
as long as $\lambda>0$ and $0<\theta\leq 1$.
We next remove the extra assumption  
\eqref{g-initial-distribution-condition} for $u$. Notice that for any $M>0$, by using the weak type $1-1$ estimate for the maximal function and Lemma~\ref{G-W^{1,2}-est} we get
\begin{align*}
&\big| \{Q_1^+: 
\M_{Q_3^+}(|\nabla u|^2)> N  M^2\}\big|
\leq \frac{C }{N M^2}\int_{Q_3^+} |\nabla u|^2 \, dxdt 
\leq \frac{C_n }{M^2}\int_{Q_4^+} u^2 \, dxdt.
\end{align*}
Thus, if we let
\[
\bar{u}(x,t) = \frac{u(x,t)}{M}\quad \mbox{with}\quad M^2= \frac{C_n  \|u\|_{L^2(Q_4^+)}^2}{ \e |Q_1|}
\]
then 
$\big| \{Q_1^+: 
\M_{Q_3^+}(|\nabla \bar{u}|^2)> N\}\big|
\leq \e |Q_1^+|$. Hence we can apply \eqref{g-desired-L^p-estimate} for $\bar{u}$ with $c$ and $\lambda$ being replaced by $\bar c= c/M$ and $\bar \lambda = \lambda M$. By reversing back to the  functions $u$ and $c$, we obtain \eqref{g-main-estimate}.
\end{proof}

\begin{remark}\label{rm:regu-parabolic-cube}
By inspection we see that the interior and boundary $W^{1,p}$-estimates (i.e. Theorem~\ref{first-main-result} and Theorem~\ref{Global-half-space}) also hold true if the parabolic cubes $K_\rho(y,s)=B_\rho(y)
\times (s-\rho^2, s]$ and
$K_\rho^+(y,s)=B_\rho^+(y)
\times (s-\rho^2, s]$ are used  in these statements instead of the centered parabolic cubes $Q_\rho(y,s)$ and $Q_\rho^+(y,s)$.
\end{remark}

\subsection{Global $W^{1,p}$-estimates  on Lipschitz domains}\label{globalW}

In this subsection we consider the  Neumann problem \eqref{vsys} and  derive  global  
$W^{1,p}$-estimates for the solution $u$. To this end, we will flatten the boundary of the domain $\Omega$ and then employ Theorem~\ref{Global-half-space}.
\begin{proof}[\textbf{Proof of Theorem~\ref{vreg}}]
For simplicity, we assume that $\alpha=1$.
In order to establish the estimates up to the top boundary of $\Omega_T$,  we are going to use
parabolic cubes $K_\rho$ instead of centered parabolic cubes $Q_\rho$.

Let $x_0\in \partial \Omega$ and $t_0\in [\bar t, T]$. Since $\Omega$ is $(\delta,R)$-Lipschitz, we may assume - upon relabeling and reorienting the coordinate axes if necessary - that
\[
\Omega\cap B_R(x_0) =\{(x', x_n)\in B_R(x_0):\, x_n > \gamma(x')\}
\]
for some  Lipschitz continuous function $\gamma:\R^{n-1}\to \R$ with $\text{Lip}(\gamma)\leq \delta$. By translating by a suitable vector, we can assume   that $x_0 = 0$. Let $\Phi: \R^n\longrightarrow \R^n$ be given by
\[
\Phi(x', x_n) := (x', x_n -\gamma(x'))
\] 
and let $\Psi(y', y_n) := \Phi^{-1}(y', y_n)=(y', y_n + \gamma(y'))$. We have  $\nabla \Phi = (\nabla \Psi)^{-1}$, and
\begin{align}\label{gradident-flatten-maps} 
\nabla \Phi=
\begin{bmatrix}
 1 & 0 & 0 &\cdots & 0 & 0\\
 0 & 1 & 0 & \cdots &0 & 0 \\
 . & . & . & \dots & . & .\\
 0 & 0 & 0 &\cdots & 1 & 0\\
 -\gamma_{x_1} & -\gamma_{x_2} & -\gamma_{x_3}& \cdots & -\gamma_{x_{n-1}} & 1 
\end{bmatrix}
\mbox{ and}\quad  
\nabla \Psi=
\begin{bmatrix}
 1 & 0 & 0 &\cdots & 0 & 0\\
 0 & 1 & 0 & \cdots &0 & 0 \\
 . & . & . & \dots & . & .\\
 0 & 0 & 0 &\cdots & 1 & 0\\
 \gamma_{y_1} & \gamma_{y_2} & \gamma_{y_3}& \cdots & \gamma_{y_{n-1}} & 1 
\end{bmatrix}.
\end{align}
Moreover,  $\Phi$ and $\Psi$ are measure-preserving transformations, that is,    $\det \nabla \Phi =\det \nabla \Psi=1$. As a consequence of \eqref{gradident-flatten-maps}, we obtain   
\begin{equation}\label{distortion-controlled}
\|\nabla \Phi\|_{L^\infty( B_R(x_0))}^2 \leq n + \|\nabla \gamma\|_{L^\infty}^2  \leq n+ \text{Lip}(\gamma)^2 \leq n+1\quad\mbox{and}\quad 
\|\nabla \Psi\|_{L^\infty(\Phi(B_R(x_0)))}^2\leq n+1.
\end{equation}

Let us choose $\rho\in (0, \bar t)$ small such that $\rho< 2 R/\sqrt{n+1}$ and $B_\rho^+ \subset \Phi(\Omega\cap B_R(x_0))$, and define
\[
\hat u(y,t) := u(\Psi(y), t ) \quad\mbox{for $y\in \overline{B_\rho^+}$ and $t\in [0, T]$}.
\]
Then $\hat u \in \hat\calW(K_\rho^+(0, t_0))$ is a weak solution of 
\begin{equation*}
\left \{
\begin{array}{lcll}
\hat u_t  &=&  \nabla\cdot[(1+\lambda \hat u) \hat \A\nabla \hat u] + \theta^2 \hat u(1-\lambda \hat u) - \lambda \theta \hat c \hat u  \quad &\text{in}\quad K_\rho^+(0, t_0), \\
\begin{displaystyle}
\frac{\partial \hat u}{\partial \bnu} 
\end{displaystyle}
&=& 0 \quad &\text{on}\quad  T_\rho\times (t_0-\rho^2, t_0], \\
\hat u(y,t) & =& u(\Psi(y), t)  \quad &\text{on}\quad \partial_b K_\rho^+(0, t_0)\cup \partial_c K_\rho^+(0, t_0)
\end{array}\right.
\end{equation*}
with
\[
\hat \A (y, t) =  \nabla \Phi(\Psi(y)) \cdot \A (\Psi(y), t)\cdot \nabla \Phi(\Psi(y))^T\quad\mbox{and}\quad 
\hat c(y, t) = c(\Psi(y), t).
\]
Here $\partial_b K_\rho^+(0, t_0) \eqdef B_\rho^+ \times \{t_0 -\rho^2\}$ and $ \partial_c K_\rho^+(0, t_0) \eqdef \partial_c B_\rho^+ \times (t_0 -\rho^2, t_0]$.
We would like to apply Theorem~\ref{Global-half-space} for $\hat u$ and so  we need to verify conditions in this theorem.
Since $\langle \hat \A (y, t) \cdot \xi, \xi \rangle 
=\langle A (\Psi(y), t)\cdot \big[\nabla \Phi(\Psi(y))^T \cdot \xi\big], \big[\nabla \Phi(\Psi(y))^T \cdot \xi\big] \rangle $ for $(y,t)\in K_\rho^+(0, t_0)$ and  $\xi\in \R^n$, we have
\begin{equation*}
\Lambda^{-1} |\eta |^2 \leq \langle \hat \A (y, t) \cdot \xi, \xi \rangle \leq \Lambda |\eta|^2,
\end{equation*}
where $\eta:=\nabla \Phi(\Psi(y))^T \cdot \xi$. Moreover, by using  \eqref{distortion-controlled} we get
$|\eta|^2 \leq |\nabla \Phi|^2 |\xi|^2\leq (n+1) |\xi|^2$ 
and
$|\xi|^2= |\nabla \Psi(y)^T \cdot \eta|^2 
 \leq |\nabla \Psi|^2 |\eta|^2\leq  (n+1) |\eta|^2$. 
Thus we conclude that
\begin{equation}\label{ellipticity-I}
[(n+1)\Lambda]^{-1} |\xi |^2 \leq \langle \hat \A (y, t) \cdot \xi, \xi \rangle \leq (n+1)\Lambda |\xi|^2\quad 
\mbox{for a.e. $ (y, t)\in K_\rho^+(0, t_0)$ and for all $\xi\in \R^n$}.
\end{equation}
We next show that the mean oscillation of $\hat \A$ is small. For this, let us write $\A=(a_{ij})$. A direct computation using 
\eqref{gradident-flatten-maps} gives
\begin{equation*}
\hat \A(y, t) = \A(\Psi(y), t) +
\begin{bmatrix}
 0 & 0  &\cdots & 0 & -\sum_{j=1}^{n-1}{a_{1 j} \gamma_{x_j}}\\
 0 & 0  & \cdots &0 & -\sum_{j=1}^{n-1}{a_{2 j} \gamma_{x_j}}\\
 . & .  & \dots & . & .\\
 0 & 0  &\cdots & 0 & -\sum_{j=1}^{n-1}{a_{(n-1) j} \gamma_{x_j}}\\
 -\sum_{i=1}^{n-1}{a_{ i 1} \gamma_{x_i}} & -\sum_{i=1}^{n-1}{a_{ i 2} \gamma_{x_i}} &  \cdots & -\sum_{i=1}^{n-1}{a_{ i (n-1)} \gamma_{x_i}} & -\sum_{j=1}^{n-1}{a_{ n j } \gamma_{x_j}}-\sum_{i,j=1}^{n-1}{a_{ i j} \gamma_{x_i} \gamma_{x_j}}
\end{bmatrix}.
\end{equation*}
Hence for fixed  $(z,s)\in \overline{K_{\frac{\rho}{4}}^+(0, t_0)}$ and $r\in (0, \rho/2]$, we have
\begin{align}\label{oscillation-est}
&\frac{1}{|K_r(z, s)|}\int_{K_r(z, s)\cap K_{\frac{3\rho}{4}}^+(0, t_0)}{|\hat \A(y, t) - \overline{\A}_{B_{r\sqrt{n+1}}(\Psi(z))\cap \Omega}(t)|^2\, dy dt}\\
&\leq 2 \frac{1}{|K_r(z, s)|}\int_{K_r(z, s)\cap K_{\frac{3\rho}{4}}^+(0, t_0)}{|\A(\Psi(y), t) - \overline{\A}_{B_{r\sqrt{n+1}}(\Psi(z))\cap \Omega}(t)|^2\, dy dt} + C(\Lambda, n) \, \text{Lip}(\gamma)^2.\nonumber
\end{align}
Since $B_r(z) \cap B_{\frac{3\rho}{4}}^+ \subset  \Phi(\Omega\cap B_R(x_0))$, we infer from  the second inequality in \eqref{distortion-controlled} that $\Psi\big(B_r(z) \cap B_{\frac{3\rho}{4}}^+ \big)\subset  B_{r\sqrt{n+1}}(\Psi(z))\cap  \Omega\cap B_R(x_0)$. Therefore, 
\begin{align*}
&\frac{1}{|K_r(z, s)|} \int_{K_r(z, s)\cap K_{\frac{3\rho}{4}}^+(0, t_0)}{| \A(\Psi(y), t) - \overline{\A}_{B_{r\sqrt{n+1}}(\Psi(z))\cap \Omega}(t) |^2\, dy dt}\\
&\leq \frac{(n+1)^{\frac{n}{2}}}{|K_{r\sqrt{n+1} }(\Psi(z), s)|} \int_{K_{r\sqrt{n+1} }(\Psi(z), s)\cap \Omega_T}{| \A(x, t) - \overline{\A}_{B_{r\sqrt{n+1}}(\Psi(z))\cap \Omega}(t) |^2\, dx dt}\leq (n+1)^{\frac{n}{2}} [\A]_{BMO(R, \Omega_T)}, 
\end{align*}
where we have used the fact $r\sqrt{n+1}\leq \rho\sqrt{n+1} /2 \leq R$ to achieve the  last inequality. Plug this into \eqref{oscillation-est} we arrive at
\begin{align*}
\frac{1}{|K_r(z, s)|}\int_{K_r(z, s)\cap K_{\frac{3\rho}{4}}^+(0, t_0)}{|\hat \A(y, t) - \overline{\A}_{B_{r\sqrt{n+1}}(\Psi(z))\cap \Omega}(t)|^2\, dy dt}
\leq 2(n+1)^{\frac{n}{2}} [\A]_{BMO(R, \Omega_T)} + C(\Lambda, n) \, \text{Lip}(\gamma)^2.
\end{align*}
It follows that
\begin{align*}
&\sup_{0<r\leq \frac{\rho}{2}}\sup_{(z,s)\in \overline{K_{\frac{\rho}{4}}^+(0, t_0)}} \frac{1}{|K_r(z, s)|}\int_{K_r(z, s)\cap K_{\frac{3\rho}{4}}^+(0, t_0)}{|\hat \A(y, t) - \overline{\hat\A}_{B_r(z)\cap B_{\frac{3\rho}{4}}^+}(t)|^2\, dy dt}\\
&\leq 2(n+1)^{\frac{n}{2}} [\A]_{BMO(R, \Omega_T)} + C(\Lambda, n) \, \text{Lip}(\gamma)^2
\leq C'(\Lambda, n)\delta.\nonumber
\end{align*}
Thus we can apply a rescaled version of Theorem~\ref{Global-half-space} (see  Remark~\ref{rm:regu-parabolic-cube}) to get
\begin{equation*}
\int_{K_{\frac{\rho}{4}}^+(0, t_0)}|\nabla \hat u|^p \, dy dt\leq C\left\{  \Big(\frac{\theta}{\lambda} \vee  \|\hat u\|_{L^2(K_\rho^+(0, t_0))}\Big)^p + \int_{K_\rho^+(0, t_0)}|\hat c|^p \, dy dt\right\}.
\end{equation*}
By changing variables back and using the fact 
\begin{align*}
|\nabla u(\Psi(y), t)| &
= \big|[\nabla \Phi(\Psi(y))]^T [\nabla \Psi(y)]^T \nabla u(\Psi(y), t)\big| = \big|[\nabla \Phi(\Psi(y))]^T  \nabla \hat u(y, t)\big|\\
&\leq \sqrt{n+1} |\nabla \hat u(y, t)|,
\end{align*}
we  obtain
\begin{equation*}
\int_{\Psi(B_{\frac{\rho}{4}}^+) \times (t_0 -\frac{\rho^2}{16}, t_0 ]}|\nabla  u|^p \, dx dt\leq C\, (n+1)^{\frac{p}{2}} \left\{  \Big(\frac{\theta}{\lambda} \vee  \| u\|_{L^2(\Omega_T)}\Big)^p + \int_{\Omega_T}| c|^p \, ds dt\right\}.
\end{equation*}
But as  $\Omega\cap B_{\frac{\rho}{4\sqrt{n+1}}}(x_0)\subset\Psi(B_{\frac{\rho}{4}}^+)$ by the first inequality in \eqref{distortion-controlled}, we thus conclude that 
\begin{equation}\label{piece-boundary-est}
\int_{\Omega\cap B_{\frac{\rho}{4\sqrt{n+1}}}(x_0) \times (t_0 -\frac{\rho^2}{16}, t_0 ]}|\nabla  u|^p \, dx dt\leq C \left\{  \Big(\frac{\theta}{\lambda} \vee  \| u\|_{L^2(\Omega_T)}\Big)^p + \int_{\Omega_T}| c|^p \, ds dt\right\}
\end{equation}
for any $x_0\in \partial \Omega$ and $t_0\in [\bar t, T]$.

The rest of the proof is standard. We cover the region $\Omega\times [\bar t, T]$ by a  suitable finite family  of  parabolic cubes $K_{\rho_i}(x_i, t_i)=B_{\rho_i}(x_i)\times (t_i -\rho_i^2, t_i]$ with $(x_i, t_i)\in \overline{\Omega}\times [\bar t, T]$ whose members are either interior cubes (i.e. $B_{6 \rho_i}(x_i)\subset\Omega$) or cubes centered at a point on $\partial\Omega\times [\bar t, T]$. For  each of these cubes, we can either apply a rescaled version of Theorem~\ref{first-main-result} (see also Remark~\ref{rm:regu-parabolic-cube}) or use
the estimate \eqref{piece-boundary-est}. The desired estimate \eqref{global-estimate-Lipschitz} then follows by adding up the resulting inequalities.
\end{proof}
\section{Global smooth Solutions for the SKT System}\label{sys}

We prove Theorem \ref{global-existence} in this section. Note that the equations of $u$ and $v$ can 
be written in the divergence form as 
\begin{align}\label{v-div-eqn}
u_t &= \nabla\cdot[(d_1 + 2a_{11}u + a_{12}v)\nabla u + a_{12} u \nabla v ] 
+ u(a_1 -b_1 u - c_1v),\nonumber\\
v_t &= \nabla\cdot[(d_2 + 2a_{22}v) \nabla v] + v(a_2 - c_2v) -b_2u v. 
\end{align}
For given $u_0, v_0, p_0$ as in Theorem~\ref{global-existence}, let $T =t_{max}\in (0, \infty]$ 
be the maximal time existence of the solution $u, v$ for \eqref{KST-r} as in Theorem~\ref{local-existence}. 
Let $\bar{t} \in (0, T)$ be fixed.  Then Theorem \ref{local-existence} implies that  there is a constant $C(\bar{t})>0$ such that 
\beq \label{easygrad} 
\sup_{0<t<\bar{t}} \Big[\int_{\Omega}|\nabla u(x,t)|^{p_0} dx + \int_\Omega |\nabla v(\cdot, t)|^{p_0} dx  \Big]\leq C(\bar{t}).
\eeq
We assume that $T < \infty$ and derive a contradiction to \eqref{global-cond}  by establishing  \begin{equation}\label{contra.global}
\sup_{\bar{t} < t < T} \Big[\norm{u(\cdot, t)}_{W^{1,p_0}(\Omega)} 
+ \norm{v(\cdot, t)}_{W^{1,p_0}(\Omega)}\Big] < \infty. 
\end{equation}
In the following estimates, all constants $C$ are positive, continuously 
dependent on $T$ and may change 
from line to line. They also can depend on $\bar{t}$, $\Omega$, $\norm{u_0}_{W^{1,p_0}(\Omega)}, \norm{v_0}_{W^{1,p_0}(\Omega)}$ and the coefficients in the system \eqref{KST-r}, but we may not explicitly specify such dependence. We begin with a lemma which is a consequence of the maximum principle.
\begin{lemma}\cite[Lemma 2.1]{LNW} \label{cr-mp} The solution $v$ of \eqref{KST-r} satisfies 
\[
0 \leq v (x,t) \leq M_0:= \max\Big \{\frac{a_2}{c_2}, \max_{\overline{\Omega}} v_0 \Big \}, \quad 
\forall \ (x,t) \in \Omega \times [0, T).
\]
\end{lemma}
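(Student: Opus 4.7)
The plan is to prove both bounds by the classical weak maximum principle for the scalar quasilinear parabolic equation satisfied by $v$ alone. The crucial observation is that the second equation in \eqref{KST-r} involves no cross-diffusion term (since $a_{21}=0$), so writing it in divergence form
\begin{equation*}
v_t = \nabla\cdot\bigl[(d_2 + 2a_{22}v)\nabla v\bigr] + v(a_2 - b_2 u - c_2 v) \quad\text{in}\quad \Omega\times(0,T),
\end{equation*}
we obtain a scalar quasilinear equation for $v$ in which $u$ enters only through the zero-order reaction term. Since $u\geq 0$, this reaction term will have the correct sign to apply the truncation method.

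For the lower bound, I would test the equation against $v^{-} := \min(v,0)\in L^2(0,T;H^1(\Omega))$. Using the homogeneous Neumann condition and the fact that $(d_2+2a_{22}v)\geq d_2>0$ on the set $\{v<0\}$ (where $v$ is small in absolute value, certainly where $1+(2a_{22}/d_2)v>0$ near $t=0$, and extended by continuity), integration by parts gives
\begin{equation*}
\tfrac{1}{2}\frac{d}{dt}\int_{\Omega}(v^{-})^2\,dx + \int_{\Omega}(d_2+2a_{22}v)|\nabla v^{-}|^2\,dx = \int_{\Omega}(a_2 - b_2 u - c_2 v)\,(v^{-})^2\,dx,
\end{equation*}
and since $u\geq 0$ together with the smoothness of $v$ on $\overline{\Omega}\times(0,T)$ ensures the right side is bounded by $C\int_{\Omega}(v^{-})^2\,dx$, Gronwall's inequality and $v_0\geq 0$ imply $v^{-}\equiv 0$.

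For the upper bound, set $w := (v-M_0)^{+}$ and test against $w$. On the set $\{v>M_0\}$, the choice $M_0\geq a_2/c_2$ gives
\begin{equation*}
v(a_2 - b_2 u - c_2 v) \leq v(a_2 - c_2 v) \leq v(a_2 - c_2 M_0) \leq 0,
\end{equation*}
using $u\geq 0$ (from the previous step applied to both species; here we rely on $u\geq 0$ which follows by the analogous truncation on the first equation, already part of Theorem~\ref{local-existence}). Integration by parts together with $\nabla v = \nabla w$ on $\{v>M_0\}$ and the Neumann boundary condition yields
\begin{equation*}
\tfrac{1}{2}\frac{d}{dt}\int_{\Omega} w^2\,dx + \int_{\Omega}(d_2+2a_{22}v)|\nabla w|^2\,dx = \int_{\Omega}v(a_2-b_2 u - c_2 v)\,w\,dx \leq 0.
\end{equation*}
Since $w(\cdot,0)\equiv 0$ by the choice $M_0\geq \max_{\overline{\Omega}}v_0$, we conclude $w\equiv 0$, i.e.\ $v\leq M_0$.

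There is no real obstacle here; the only point to be careful about is ensuring the test functions $v^{-}$ and $(v-M_0)^{+}$ are admissible, which is immediate because Theorem~\ref{local-existence} guarantees that $v\in C^{\infty}(\overline{\Omega}\times(0,t_{\textup{max}}))$, so all the integrations by parts and pointwise manipulations above are legitimate on any compact subinterval $[0,T']\subset[0,t_{\textup{max}})$.
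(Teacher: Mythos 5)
The paper does not prove this lemma; it cites it directly from Lou--Ni--Wu \cite[Lemma 2.1]{LNW}, so there is no in-paper argument to compare against. Your upper-bound argument is correct and is exactly the standard route: test the scalar divergence-form equation for $v$ against $(v-M_0)^{+}$, use $u\geq 0$ and $M_0\geq a_2/c_2$ to kill the reaction term, note the Neumann condition removes the boundary integral, and conclude by $(v_0-M_0)^{+}\equiv 0$. One small but worth-noting point: on the set $\{v>M_0\}$ you automatically have $v>M_0\geq a_2/c_2>0$, so uniform parabolicity $d_2+2a_{22}v\geq d_2$ holds there without invoking the lower bound separately.

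Your lower-bound argument, however, has a genuine wrinkle you only handwave at: on $\{v<0\}$ the coefficient $d_2+2a_{22}v$ is \emph{not} obviously non-negative, and the parenthetical ``certainly where $1+(2a_{22}/d_2)v>0$ near $t=0$, and extended by continuity'' is not a proof. The cleaner resolution is the one you already use for $u$: the nonnegativity $v\geq 0$ is part of the conclusion of Theorem~\ref{local-existence} (Amann's local existence theorem states the solution pair is non-negative), so the lower bound requires no separate truncation argument at all. Once $v\geq 0$ is taken from Theorem~\ref{local-existence}, the remainder of your proposal gives a correct and complete proof of the upper bound, and this is almost certainly the same comparison-principle mechanism used in \cite{LNW}.
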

The next lemma is an important consequence of Theorem \ref{vreg}.
\begin{lemma} \label{Lp-v} For each $p \in (2, p_0]$,   there exists 
a constant $C = C(p, T)>0$ such that
\beq \label{grad-v}
\norm{\nabla v}_{L^p(\Omega_T) } \leq C\Big[1 + \norm{u}_{L^p(\Omega_T)} \Big]. 
\eeq 
\end{lemma}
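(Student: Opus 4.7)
The plan is to recast the $v$-equation in the form \eqref{vsys} and apply Theorem \ref{vreg} directly. From \eqref{v-div-eqn},
\[
v_t = \nabla\cdot[(d_2 + 2a_{22}v)\nabla v] + v(a_2 - c_2 v) - b_2 uv\quad\text{in }\Omega_T,
\]
together with $\partial v/\partial\bnu=0$ on $\partial\Omega\times(0,T)$. After a parabolic time rescaling $\tau = K^2 t$ with $K^2 = c_2/\lambda$, I would identify
\[
\A = \tfrac{d_2}{K^2}\,I,\qquad \alpha = \tfrac{2a_{22}}{d_2\lambda},\qquad \theta = 1,\qquad c(x,t) = \tfrac{1}{\lambda}\Big(1 - \tfrac{a_2\lambda}{c_2} + \tfrac{b_2\lambda}{c_2}\,u(x,t)\Big),
\]
so that after rescaling the equation has exactly the form \eqref{vsys}. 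The parameter $\lambda$ is chosen as $\lambda = \min\{c_2/a_2,\, 1/M_0\}$, with $M_0$ from Lemma \ref{cr-mp}, so that simultaneously $c\ge 0$ (since $\lambda\le c_2/a_2$) and $0\le v \le 1/\lambda$ (since $\lambda\le 1/M_0$).

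Next I would verify the hypotheses of Theorem \ref{vreg}. Since $\A$ is a constant multiple of the identity, the BMO seminorm $[\A]_{BMO(R,\Omega_T)}$ vanishes identically, so the smallness assumption is automatic for any $\delta>0$. The smoothness of $\partial\Omega$ assumed in Theorem \ref{global-existence} implies $\Omega$ is $(\delta,R)$-Lipschitz for every $\delta>0$ and some $R>0$. Finally, $c$ is a non-negative affine function of $u$, so $c\in L^p(\Omega_T)$ whenever $u\in L^p(\Omega_T)$, with
\[
\|c\|_{L^p(\Omega_T)} \le C_1\bigl(1 + \|u\|_{L^p(\Omega_T)}\bigr)
\]
for a constant $C_1$ depending only on the coefficients and $\lambda$.

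Applying Theorem \ref{vreg} on $\Omega\times[\bar t, T]$ (in rescaled time, then undoing the time scaling, which only contributes a constant factor depending on $K$) yields
\[
\|\nabla v\|_{L^p(\Omega\times[\bar t, T])} \le C\left[\Big(\tfrac{\theta}{\lambda}\vee \|v\|_{L^2(\Omega_T)}\Big) + \|c\|_{L^p(\Omega_T)}\right] \le C\bigl[1 + \|u\|_{L^p(\Omega_T)}\bigr],
\]
where I used the uniform bound $\|v\|_{L^2(\Omega_T)} \le M_0 |\Omega_T|^{1/2}$ from Lemma \ref{cr-mp} and absorbed all coefficient-dependent constants into $C$. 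To obtain the estimate on all of $\Omega_T$, I would split $\Omega_T = \Omega\times(0,\bar t\,]\cup\Omega\times[\bar t, T]$; on the first piece the a priori estimate \eqref{easygrad} combined with H\"older's inequality (valid since $p\le p_0$) gives $\|\nabla v\|_{L^p(\Omega\times(0,\bar t\,])} \le C(\bar t)$, and adding the two pieces yields \eqref{grad-v}.

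The main obstacle is the bookkeeping required to match parameters: the constraints $\theta\le 1$, $0\le v\le 1/\lambda$, and $c\ge 0$ force $\lambda$ into a narrow range, and a parabolic time rescaling is needed precisely to enforce $\theta\le 1$ (otherwise $\theta = \sqrt{a_2}$ could exceed $1$). The crucial enabling feature is that the constant in Theorem \ref{vreg} is \emph{independent} of $\theta$ and $\lambda$, so the rescaling can be performed freely without degrading the final estimate.
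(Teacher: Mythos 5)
Your proposal is correct and takes essentially the same approach as the paper: recast the $v$-equation in the form \eqref{vsys} with suitable $\A$, $\alpha$, $\theta$, $\lambda$, $c$ (choosing $\lambda = 1/M_0$ so that $0\le v\le 1/\lambda$ via Lemma~\ref{cr-mp}), apply Theorem~\ref{vreg} on $\Omega\times[\bar t, T]$, and invoke \eqref{easygrad} together with H\"older's inequality on the initial segment $\Omega\times(0,\bar t\,]$. The only cosmetic difference is how $\theta\le 1$ is enforced: the paper says ``WLOG $a_2=1$'' (an implicit time rescaling) and lets $c=(c_2-\lambda)v+b_2u$ carry the leftover $v$-dependence, whereas you rescale time explicitly by $K^2=c_2/\lambda$ and absorb the same discrepancy into a constant term in $c$ -- both choices yield $\|c\|_{L^p(\Omega_T)}\le C(1+\|u\|_{L^p(\Omega_T)})$ and both are legitimate since the constant in Theorem~\ref{vreg} is uniform in $\theta,\lambda$.
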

\begin{proof} Without loss of generality, we assume that $a_2 =1$. Let $\lambda = M_0^{-1}$, where 
$M_0$ is defined in Lemma \ref{cr-mp}. Then we rewrite the equation \eqref{v-div-eqn} as
\[
v_t = \nabla\cdot[(1+ \alpha \lambda)d_2 \nabla v ] + v(1-\lambda v) - c v,
\]
where
\[
c(x,t) = [c_2 -\lambda] v(x,t) + b_2 u(x,t) \geq 0, \quad \text{and} \quad \alpha = \frac{2a_{22}}{\lambda d_2}.
\]
Since  $\|c\|_{L^p(\Omega_T)}\leq C\big(1+\|u\|_{L^p(\Omega_T)}\big)$,  inequality  \eqref{grad-v} follows from estimate \eqref{global-estimate-Lipschitz} of Theorem~\ref{vreg} and \eqref{easygrad}.
\end{proof}
Our next goal is to derive an $L^l$-estimate for $u$ assuming that $\nabla v \in L^p(\Omega_T)$ 
for some $l=l(p)>p>2$.  For this, we first recall  Lemma~3.2 of \cite{Tuoc}.
\begin{lemma}\cite[Lemma 3.2]{Tuoc} \label{Tuoc-Lp} Let $p>2$ and assume 
there is a constant $M(p,T) <\infty$ such that
\[ \norm{\nabla v}_{L^p(\Omega_T)} \leq M(p,T).  \]
Then, for each $q >1$, there is a constant $C = C(q, T, M)$ such that for every $T_1 \in (0, T]$
\[
\norm{u(\cdot, t)}_{L^q(\Omega)}^q + \norm{\nabla (u^{q/2})}_{L^2(\Omega_{T_1})} ^2
+\norm{\nabla(u^{(q+1)/2})}_{L^2(\Omega_{T_1})}^2 \leq C \Big[1 + \norm{u}_{L^{\frac{p(q-1)}{p-2}}(\Omega_{T_1})}^{q-1}\Big], \quad \forall t \in (0, T_1).
\]
\end{lemma}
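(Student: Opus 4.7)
\bigskip

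\noindent\textbf{Proof proposal for Lemma~\ref{Tuoc-Lp}.}

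The plan is a standard energy estimate driven by multiplying the $u$-equation in \eqref{v-div-eqn} by the test function $u^{q-1}$ (rigorously, by a truncation $(u\wedge k)^{q-1}$ followed by passing $k\to\infty$, since the local Theorem~\ref{local-existence} already guarantees $u\in C^\infty(\overline\Omega\times(0,t_{\max}))$ and all appearing quantities are finite). After integration by parts and using the Neumann condition, I obtain
\begin{align*}
\frac{1}{q}\,\frac{d}{dt}\!\int_{\Omega} u^q\,dx
&+ (q-1)\!\int_{\Omega}(d_1+2a_{11}u+a_{12}v)\,u^{q-2}|\nabla u|^2\,dx \\
&= -(q-1)\,a_{12}\!\int_{\Omega} u^{q-1}\,\nabla u\cdot\nabla v\,dx
  + \int_{\Omega} u^q(a_1-b_1 u - c_1 v)\,dx.
\end{align*}
Using the identities $u^{q-2}|\nabla u|^2=\frac{4}{q^2}|\nabla(u^{q/2})|^2$ and $u^{q-1}|\nabla u|^2=\frac{4}{(q+1)^2}|\nabla(u^{(q+1)/2})|^2$, the dissipation on the left delivers exactly the two gradient terms that appear in the conclusion. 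The key structural point, which I expect to be where the self-diffusion hypothesis $a_{11}>0$ enters in an essential way, is that the term $2(q-1)a_{11}\!\int u^{q-1}|\nabla u|^2$ has the right weight in $u$ to absorb the cross term $u^{q-1}\nabla u\cdot\nabla v$ via Young's inequality:
\[
(q-1)\,a_{12}\,|u^{q-1}\nabla u\cdot\nabla v|\ \leq\ (q-1)\,a_{11}\,u^{q-1}|\nabla u|^2 + C(q)\,u^{q-1}|\nabla v|^2.
\]

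Next, the reaction contribution $\int u^q(a_1-b_1 u-c_1 v)\,dx$ is handled by dropping the sign-favorable terms $-c_1 vu^q\le 0$ (by Lemma~\ref{cr-mp}, $v\ge 0$) and $-b_1 u^{q+1}$, and by bounding $a_1\int u^q\le \frac{b_1}{2}\int u^{q+1} + C(q,|\Omega|)$ via Young. Collecting everything gives a differential inequality of the form
\[
\frac{d}{dt}\!\int_{\Omega} u^q\,dx + c_*\Big[\|\nabla(u^{q/2})\|_{L^2}^2 + \|\nabla(u^{(q+1)/2})\|_{L^2}^2\Big]
\ \leq\ C + C\!\int_{\Omega} u^{q-1}|\nabla v|^2\,dx,
\]
with constants depending on $q$ and the coefficients but not on $T_1$. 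Integrating from $0$ to $t\in(0,T_1)$ and using $u_0\in W^{1,p_0}\subset L^q$ controls the initial data term.

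The last step is to absorb the right-hand side using the hypothesis $\|\nabla v\|_{L^p(\Omega_T)}\le M$. Apply H\"older in the spatial integral with conjugate exponents $p/(p-2)$ and $p/2$, then H\"older again in time with the same pair:
\[
\int_{0}^{T_1}\!\!\int_{\Omega} u^{q-1}|\nabla v|^2\,dx\,dt
\ \leq\ \Big(\int_{0}^{T_1}\!\!\int_{\Omega} u^{\frac{p(q-1)}{p-2}}\,dx\,dt\Big)^{\!\frac{p-2}{p}}\!\!\Big(\int_{0}^{T_1}\!\!\int_{\Omega}|\nabla v|^{p}\,dx\,dt\Big)^{\!\frac{2}{p}}\!\!,
\]
which equals $\|u\|_{L^{p(q-1)/(p-2)}(\Omega_{T_1})}^{q-1}\,\|\nabla v\|_{L^{p}(\Omega_{T_1})}^{2}\le M^2\,\|u\|_{L^{p(q-1)/(p-2)}(\Omega_{T_1})}^{q-1}$. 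Taking the supremum over $t\in(0,T_1)$ on the left and combining with the gradient bounds yields the estimate stated in the lemma. The only delicate point I foresee is the rigorous justification of $u^{q-1}$ as a test function for small $q$ (where $q-1<0$ would be problematic, but here $q>1$) together with a clean truncation argument; apart from that, the argument is entirely a weighted energy identity plus two applications of H\"older and Young.
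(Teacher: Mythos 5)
The paper does not prove this lemma; it is imported verbatim as \cite[Lemma~3.2]{Tuoc}, so there is no in-paper proof to compare your argument against. That said, your blind proof is correct and is exactly the sort of weighted energy estimate one would find in the cited reference. Multiplying \eqref{v-div-eqn} by $u^{q-1}$ and integrating by parts correctly produces the two dissipation terms ($d_1u^{q-2}|\nabla u|^2 = \tfrac{4d_1}{q^2}|\nabla(u^{q/2})|^2$ and $2a_{11}u^{q-1}|\nabla u|^2=\tfrac{8a_{11}}{(q+1)^2}|\nabla(u^{(q+1)/2})|^2$), and you correctly identify that absorbing the cross term $u^{q-1}\nabla u\cdot\nabla v$ by Young into half of the $2a_{11}u^{q-1}|\nabla u|^2$ dissipation is exactly where the self-diffusion hypothesis $a_{11}>0$ is used. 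Dropping $-c_1u^qv$ and $-\tfrac{b_1}{2}u^{q+1}$ and absorbing $a_1u^q$ by Young, then applying H\"older over $\Omega_{T_1}$ with conjugate exponents $p/(p-2)$ and $p/2$, yields the stated factor $\|u\|_{L^{p(q-1)/(p-2)}(\Omega_{T_1})}^{q-1}\|\nabla v\|^2_{L^p(\Omega_{T_1})}$ as you claim. Two cosmetic remarks only: the truncation needed to justify $u^{q-1}$ as a test function for $1<q<2$ should regularize near $u=0$ (e.g.\ use $(u+\varepsilon)^{q-1}$ and let $\varepsilon\to0$), not merely cap $u$ from above as $(u\wedge k)^{q-1}$, since the possible singularity is at $u=0$; and a single space-time H\"older application replaces your two sequential (space, then time) ones with the same exponents and gives the identical bound. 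Neither affects correctness.
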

For each number $a \in \mathbb{R}$, we write $a_+ = \max\{ a, 0\}$. The following lemma is one of our main ingredients for the bootstrap argument.
\begin{lemma} \label{u-Lp} Let $p > 2$ and assume there is a constant $M(p,T) <\infty$ such that
\[ \norm{\nabla v}_{L^p(\Omega_T)} \leq M(p,T).  \]
Then, for every $q \in \Big (1, \frac{n(p-1)}{(n+2-p)_+} \Big]$ 
and $l \in \Big (1,  \frac{p(n+1)}{(n+2-p)_+} \Big]$ with 
$q, l \not=\infty$, there exists a positive constant $C = C(l, q,T, M)$ such that
\begin{equation} \label{abc}
\sup_{0 \leq t \leq T}\int_{\Omega} u(x,t)^q dx + \int_{\Omega_T}|\nabla [u^{\frac{q+1}{2}}]|^2 dx dt 
\leq C \quad \text{and} \quad 
 \norm{u}_{L^l(\Omega_T)} \leq C.\end{equation}
\end{lemma}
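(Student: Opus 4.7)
The plan is to iterate Lemma~\ref{Tuoc-Lp} combined with a parabolic Sobolev embedding. Starting from an $L^2(\Omega_T)$ estimate on $u$---which I would derive from the standard energy estimate for the first equation in \eqref{KST-r} (multiply by $u$, use $0\le v\le M_0$ from Lemma~\ref{cr-mp}, and absorb the cross term $a_{12}\int u\,\nabla u\cdot\nabla v$ via Young's inequality together with the hypothesis $\nabla v\in L^p(\Omega_T)$)---each round will convert an $L^a(\Omega_T)$ bound on $u$ into a strictly larger one.

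Given $\|u\|_{L^a(\Omega_T)}\le C$ with $a\ge 2$, I would set $q:=1+\frac{(p-2)a}{p}>1$ so that $\frac{p(q-1)}{p-2}=a$; Lemma~\ref{Tuoc-Lp} then yields
\[
\sup_{t\in[0,T]}\|u(\cdot,t)\|_{L^q(\Omega)}^{q}+\|\nabla u^{(q+1)/2}\|_{L^2(\Omega_T)}^{2}\le C.
\]
Feeding $w=u^{(q+1)/2}\in L^\infty\bigl(0,T;L^{2q/(q+1)}(\Omega)\bigr)\cap L^2\bigl(0,T;H^1(\Omega)\bigr)$ into the parabolic Gagliardo--Nirenberg inequality places $w$ in $L^{r'}(\Omega_T)$ with $r'=\frac{2[q(n+2)+n]}{n(q+1)}$, which translates into $u\in L^{q(n+2)/n+1}(\Omega_T)$. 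The resulting bootstrap map is
\[
T(a)\;:=\;\frac{2n+2}{n}\;+\;\frac{(p-2)(n+2)}{np}\,a,
\]
whose unique fixed point is $a^{\ast}=\frac{p(n+1)}{n+2-p}=:l_{\max}$; the matching algebra also gives $q(a^{\ast})=\frac{n(p-1)}{n+2-p}=:q_{\max}$, so the two target exponents in the lemma correspond precisely under one round of this procedure.

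Two regimes then arise. If $p\ge n+2$, the slope $\frac{(p-2)(n+2)}{np}\ge 1$, the iterates $a_k$ diverge to $+\infty$, and any finite $l$ is attained after finitely many steps. If $p<n+2$, $T$ is a strict contraction with $(p-2)(n+2)/(np)<1$, and the iterates $a_k$ increase monotonically to $l_{\max}$, so every $l<l_{\max}$ is attained after a finite number of iterations. The first inequality in \eqref{abc} will then follow from one final application of Lemma~\ref{Tuoc-Lp} at the corresponding $q$.

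The hard part will be reaching the closed endpoints $l=l_{\max}$ and $q=q_{\max}$ in the subcritical regime $p<n+2$, since the bootstrap only approaches them asymptotically. My plan is to apply Lemma~\ref{Tuoc-Lp} at a sequence of exponents $q$ slightly below $q_{\max}$ and then pass to the limit using Fatou's lemma on $\int u^{q_{\max}}\,dx$, relying on the fact that the constants in Lemma~\ref{Tuoc-Lp} depend continuously (indeed polynomially) on $q$ and therefore remain bounded as $q\uparrow q_{\max}$; the accompanying gradient bound $\|\nabla u^{(q+1)/2}\|_{L^2(\Omega_T)}^{2}\le C$ then passes to the limit by weak lower semicontinuity.
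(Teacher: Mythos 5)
Your proposal replaces the paper's argument by an iterative bootstrap: starting from an $L^{a_0}$ bound on $u$, each round feeds Lemma~\ref{Tuoc-Lp} with $q_k = 1 + (p-2)a_k/p$ and then applies the parabolic Gagliardo--Nirenberg embedding to $w = u^{(q_k+1)/2}$, producing $u\in L^{a_{k+1}}$ with $a_{k+1}=T(a_k)$ and fixed point $a^\ast = l_{\max}$. The paper does not iterate at all. It fixes the target $q$ once, defines $E(T_1) = \sup_{t\le T_1}\norm{u(\cdot,t)}_{L^q(\Omega)}^q + \norm{\nabla u^{(q+1)/2}}_{L^2(\Omega_{T_1})}^2$, and, using Lemma~\ref{Tuoc-Lp}, the parabolic embedding at the exponent $\bar q = 2 + \frac{4q}{n(q+1)}$, the $L^1$-in-time bound $\sup_t\norm{u(\cdot,t)}_{L^1(\Omega)}\le C(T)$, and the constraint $q\le q_{\max}$ (which forces $\bar p(q-1)/(q+1)\le\bar q$), derives the single self-referential inequality $E(T_1)\le C[1+E(T_1)^\mu]$ with $\mu = \frac{(q-1)(n+2)}{n(q+1)+2q}<1$. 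Since $E(T_1)$ is finite a priori from Theorem~\ref{local-existence}, this inequality immediately yields a bound on $E(T_1)$ that is uniform in $T_1<T$, and the lemma follows by letting $T_1\to T^-$.

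The genuine gap in your proposal is at the closed endpoints in the subcritical regime $p<n+2$. Your iterates $a_k$ approach $l_{\max}$ only asymptotically, so $l=l_{\max}$ (and $q=q_{\max}$) are never reached in finitely many steps, and the constants $C_k := \norm{u}_{L^{a_k}(\Omega_T)}$ are defined recursively over an infinite scheme. Your proposed fix --- Fatou together with ``uniform in $q$ constants in Lemma~\ref{Tuoc-Lp}'' --- does not address the actual obstruction: even with a $q$-uniform constant $C$ in Lemma~\ref{Tuoc-Lp}, chasing the iteration shows roughly $C_{k+1}\lesssim 1 + C_k^{\mu_k}$ for an exponent $\mu_k$ depending on $q_k$, and the boundedness of $(C_k)$ hinges on showing that $\mu_k$ stays uniformly below $1$. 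That verification, namely $\mu<1$, is exactly the content of the paper's calculation and is the one inequality that makes the estimate close; your plan never performs it. If you carry your iteration out carefully you would have to redo precisely this computation, at which point you may as well skip the iteration and argue as the paper does, directly for the given $q$.
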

\begin{proof} We follow the approach in \cite{CLY, Tuoc}.  Let $w = u^{(q+1)/2}$. For  any number  $T_1$  in $(0, T)$, define
\[
 E(T_1)= \sup_{0 \leq t \leq T_1}\int_{\Omega} u(x,t)^q dx + \int_{\Omega_{T_1}} |\nabla [u^{\frac{q+1}{2}}]|^2 dx dt 
= \sup_{0 \leq t \leq T_1}\int_\Omega w^{\frac{2q}{q+1}} dx + \int_{\Omega_{T_1}} |\nabla w|^2 dx dt.
\]
It follows from Lemma~\ref{Tuoc-Lp} that
\begin{equation} \label{E-ineq}
E(T_1) + \norm{\nabla (u^{q/2})}_{L^2(\Omega_{T_1})}^2 \leq C\Big [1+
\norm{w}_{L^{\frac{(q-1)\bar{p}}{q+1}}(\Omega_{T_1})}^{\frac{2(q-1)}{q+1}}\Big], \quad
\text{where} \quad \bar{p} = \frac{2p}{p-2}.
\end{equation}
Now, let 
$
\bar{q} = 2 + \frac{4q}{n(q+1)}.
$
By Gagliardo--Nirenberg's inequality, there exists a constant $C>0$ depending on 
$n, \bar{q}$ and $\Omega$ such that
\begin{equation} \label{GL}
\norm{w(\cdot, t)}_{L^{\bar{q}}(\Omega)} 
\leq C\left\{\norm{\nabla w(\cdot, t)}_{L^2(\Omega)}^{\frac{2}{\bar{q}}} \norm{w(\cdot, t)}_{L^{\frac{2q}{q+1}}(\Omega)}^{\frac{4q}{n(q+1)\bar{q}}} + \norm{w(\cdot, t)}_{L^1(\Omega)} \right \}.
\end{equation}
By integrating the equation of $u$ and using Gronwall's inequality, we note that
\[ 
\sup_{0 \leq t \leq T} \int_\Omega u(x,t) dx \leq C(T). 
\]
Then the interpolation inequality yields
\[
\int_{\Omega} w(x,t) dx \leq \norm{u(\cdot, t)}_{L^1(\Omega)}^{\frac{\lambda (q+1)}{2}} 
\norm{w(\cdot, t)}_{L^{\bar{q}}(\Omega)}^{1-\lambda} 
\leq C(T) \norm{w(\cdot, t)}_{L^{\bar{q}}(\Omega)}^{1-\lambda} \quad \text{with}
 \quad \frac{ 1 -\lambda}{\bar{q}} + \frac{\lambda}{\frac{2}{q+1}} =1.
\]
This, together with \eqref{GL} and Young's inequality imply that
\begin{equation*} 
\norm{w(\cdot, t)}_{L^{\bar{q}}(\Omega)} 
\leq C\left\{\norm{\nabla w(\cdot, t)}_{L^2(\Omega)}^{\frac{2}{\bar{q}}} \norm{w(\cdot, t)}_{L^{\frac{2q}{q+1}}(\Omega)}^{\frac{4q}{n(q+1)\bar{q}}} + 1 \right \}.
\end{equation*}
Therefore,
\begin{equation} \label{GL-1}
\norm{w}_{L^{\bar{q}}(\Omega_{T_1})}
\leq C \left[\norm{\nabla w}_{L^2(\Omega_{T_1})}^{\frac{2}{\bar{q}}} 
\Big(\sup_{0<t < T_1}\norm{w(\cdot, t)}_{L^{\frac{2q}{q+1}}(\Omega)} \Big) ^{\frac{4q}{n(q+1)\bar{q}}}
+  1 \right].
\end{equation}
Also, since $ q \in \Big (1, \frac{n(p-1)}{(n+2-p)_+} \Big]$ and $q \not =\infty$, we see that
\[
(n\bar{p} - 2n -4) q \leq n(\bar{p} +2) \quad \text{and} \quad \frac{\bar{p}(q-1)}{q+1} 
\leq \bar{q}. 
\]
From this and \eqref{GL-1}, we obtain
\begin{equation} \label{w-ineq}
\norm{w}_{L^{\frac{(q-1)\bar{p}}{q+1}}(\Omega_{T_1})} 
\leq C \norm{w}_{L^{\bar{q}}(\Omega_{T_1})} \leq 
C\Big[1 + E(T_1)^{\frac{2}{n\bar{q}} + \frac{1}{\bar{q}}} \Big].
\end{equation}
Hence, it follows from this last inequality and \eqref{E-ineq} that 
\begin{equation} \label{mu-E.def}
E(T_1) \leq C[ 1 + E(T_1)^\mu], \quad \text{with} \quad
\mu =\frac{2(q-1)}{\bar{q}(q+1)} \left( \frac{2}{n}  + 1 \right) \quad \text{and} \quad C = C(T).
\end{equation}
A simple calculation shows $\mu <1$. 
Because of this and the fact $E(T_1)$ is finite, we infer from \eqref{mu-E.def} that there exists a constant $C(T)$ such that 
\begin{equation} \label{E-fin}
E(T_1) \leq C(T), \quad \forall \ T_1 \in (0, T).
\end{equation}
By passing $T_1\to T^-$, we obtain the first inequality of \eqref{abc}. The second inequality of \eqref{abc} follows directly from \eqref{w-ineq} and \eqref{E-fin}, and again, passing $T_1\to T^-$. The proof is therefore complete.
\end{proof}

To initiate our iteration process, we start with the following $L^4$-estimate for $\nabla v$.

\begin{lemma}\cite[Lemma 3.1]{Tuoc}  \label{L4-v} There exists a constant $C$ depending on $T$, 
the coefficients in the system \eqref{KST-r} and the initial data $u_0, v_0$ such that
\[ \norm{\nabla v}_{L^4(\Omega_T)} \leq C.\]
\end{lemma}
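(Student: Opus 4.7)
The plan is to combine easy energy bounds with the quasilinear structure of the $v$-equation in order to obtain a first $L^{p_1}$ bound on $\nabla v$ for some $p_1>2$, and then to bootstrap via the already-established Lemmas~\ref{Lp-v} and~\ref{u-Lp} until reaching $L^4$.  Throughout, $v$ is bounded by Lemma~\ref{cr-mp}.  I would introduce $\varphi:=(d_2+a_{22}v)v$, so that $\nabla\varphi=A(v)\nabla v$ with $A(v):=d_2+2a_{22}v\in [d_2,\, d_2+2a_{22}M_0]$, and the $v$-equation becomes $v_t=\Delta\varphi+R$ with $R:=v(a_2-b_2u-c_2v)$; the map $v\mapsto\varphi$ is bi-Lipschitz, and the Neumann condition for $v$ transfers to one for $\varphi$.

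First (baseline $L^2$-bounds).  Testing the $u$-equation against $1$---the Neumann condition kills the divergence terms---yields
\[
\frac{d}{dt}\int_\Omega u\, dx+b_1\int_\Omega u^2\, dx\leq a_1\int_\Omega u\, dx,
\]
whence Gronwall and integration in time give $\norm{u}_{L^\infty(0,T;L^1(\Omega))}+\norm{u}_{L^2(\Omega_T)}\leq C$, and so $\norm{R}_{L^2(\Omega_T)}\leq C$ since $v\in L^\infty$.  Testing the $v$-equation by $\varphi$ then produces $\norm{\nabla\varphi}_{L^2(\Omega_T)}\leq C$, and hence $\norm{\nabla v}_{L^2(\Omega_T)}\leq C$.

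Second (Hessian bound and first upgrade).  Since $\varphi_t=A(v)\Delta\varphi+A(v)R$ is linear parabolic in $\varphi$ with uniformly bounded ellipticity, testing by $-\Delta\varphi$ and using the Neumann BC to get $\int_\Omega\varphi_t\Delta\varphi\, dx=-\tfrac12\tfrac{d}{dt}\norm{\nabla\varphi}_{L^2(\Omega)}^2$, followed by absorbing $\tfrac12\int A(v)|\Delta\varphi|^2$ via Young's inequality, yields
\[
   \sup_{0\leq t\leq T}\norm{\nabla\varphi(\cdot,t)}_{L^2(\Omega)}^2+\norm{\Delta\varphi}_{L^2(\Omega_T)}^2\leq C\bigl(1+\norm{R}_{L^2(\Omega_T)}^2\bigr)\leq C.
\]
Elliptic Neumann regularity on the smooth domain $\Omega$ then promotes this to $\norm{D^2\varphi}_{L^2(\Omega_T)}\leq C$.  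Parabolic Gagliardo--Nirenberg interpolation applied to $\nabla\varphi\in L^\infty(0,T;L^2(\Omega))\cap L^2(0,T;H^1(\Omega))$ next gives $\nabla\varphi\in L^{p_1}(\Omega_T)$ with $p_1=2(n+2)/n>2$, and hence $\nabla v\in L^{p_1}(\Omega_T)$.

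Third (bootstrap to $L^4$).  If $p_1\geq 4$ (i.e., $n\leq 2$), we are done.  Otherwise I iterate: Lemma~\ref{u-Lp} applied with $p=p_1$ yields $u\in L^{l_1}(\Omega_T)$ for the explicit exponent $l_1\leq p_1(n+1)/(n+2-p_1)_+$; Lemma~\ref{Lp-v} applied with $p=\min(l_1,p_0)$ then gives an improved $L^{p_2}$-bound on $\nabla v$, and so on.  A direct check of the explicit expressions in those two lemmas shows that the resulting exponent sequence $(p_k)$ is strictly increasing with gain bounded below, so it surpasses $4$ in finitely many rounds.  The main obstacle throughout is handling the cross-term $a_{12}\nabla\cdot(u\nabla v)$ in the $u$-equation; in each energy identity this is controlled by integrating by parts and invoking the Neumann condition to transfer a derivative onto $v$, thereby allowing use of the already-established integrability of $\nabla v$ (or $\Delta v$).
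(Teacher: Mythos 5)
Since the paper only cites \cite[Lemma 3.1]{Tuoc} and does not reproduce its proof, your argument is necessarily a fresh one. Steps 1 and 2 are correct: testing the $u$-equation against $1$ (the Neumann conditions kill both boundary contributions since $\partial_\bnu[(d_1+a_{11}u+a_{12}v)u]=(d_1+2a_{11}u+a_{12}v)\partial_\bnu u+a_{12}u\,\partial_\bnu v=0$) gives $\|u\|_{L^2(\Omega_T)}\leq C$ by Gronwall; testing the $\varphi$-equation $\varphi_t=A(v)\Delta\varphi+A(v)R$ against $-\Delta\varphi$ gives $\nabla\varphi\in L^\infty(0,T;L^2)\cap L^2(0,T;H^1)$, elliptic Neumann regularity promotes $\Delta\varphi\in L^2$ to $D^2\varphi\in L^2$, and the parabolic embedding yields $\nabla\varphi\in L^{2(n+2)/n}(\Omega_T)$.

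The gap is in Step 3. Lemma~\ref{Lp-v} is only valid for $p\in(2,p_0]$ (this cap is forced because Theorem~\ref{vreg} controls $\nabla v$ only on $\Omega\times[\bar t,T]$ and the strip near $t=0$ is handled via \eqref{easygrad}, which lives in $W^{1,p_0}$). You write ``Lemma~\ref{Lp-v} applied with $p=\min(l_1,p_0)$'' and then assert that ``the resulting exponent sequence $(p_k)$ is strictly increasing with gain bounded below, so it surpasses $4$ in finitely many rounds.'' That last claim is false once the cap bites: for $n=3$ the starting exponent is $p_1=10/3<4$, one application of Lemma~\ref{u-Lp} gives $u\in L^8$, and then Lemma~\ref{Lp-v} returns $\nabla v\in L^{\min(8,p_0)}=L^{p_0}$, after which the sequence stagnates at $p_0$. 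Since the hypotheses only guarantee $p_0>n=3$, the case $p_0\in(3,4)$ is not excluded and your bootstrap never reaches $L^4$. (For $n=2$ you already have $p_1=4$; for $n\geq 4$, $p_0>n\geq 4$ makes the cap harmless. The deficiency is precisely $n=3$, $p_0<4$.)

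The hole can be patched, but not by Lemma~\ref{Lp-v} alone. Near $t=0$ you should interpolate $\nabla\varphi\in L^\infty(0,\bar t;L^{p_0}(\Omega))$ (from \eqref{easygrad}) against $\nabla\varphi\in L^2(0,\bar t;H^1(\Omega))$ (from your Hessian bound): Gagliardo--Nirenberg with $\tfrac14=\tfrac{1-\theta}{p_0}+\theta(\tfrac12-\tfrac1n)$ gives $\theta<\tfrac12$ precisely when $p_0>n$, hence $\nabla\varphi\in L^4(\Omega\times(0,\bar t))$. On $[\bar t,T]$, apply Theorem~\ref{vreg} directly (it carries no $p_0$ restriction on that strip) with $u\in L^8$. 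As written, your proposal does neither and therefore does not establish the lemma for $n=3$, $p_0\in(3,4)$. A secondary remark: your route passes through Lemma~\ref{Lp-v} and hence through Theorem~\ref{vreg}, which is the paper's main machinery, whereas \cite[Lemma 3.1]{Tuoc} is an earlier, self-contained energy estimate; there is no circularity, but your proof is considerably heavier than what it replaces.
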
 

We now can combine the previous results to improve the regularity of $u$ and $v$.

\begin{lemma} \label{reg-uv-suffi} There exists a constant $C$ depending on $T$ such that 
\begin{equation} \label{reguv2}
\sup_{0 < t <T}\int_{\Omega} u(x,t)^{p_0} dx + \int_{\Omega_T}|\nabla v(x,t)|^{p_0} dx dt \leq C.
\end{equation}
Moreover, for all finite $p \in \Big (2, \frac{p_0(n+1)}{(n+2-p_0)_+}\Big]$, there exists $C = C(p, T)$ such that
\beq\label{Lpu2}
\int_{\Omega_T} u(x,t)^p dx dt \leq C(p, T).
\eeq
\end{lemma}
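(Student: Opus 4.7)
The plan is a standard bootstrap argument alternating Lemma~\ref{Lp-v} and Lemma~\ref{u-Lp}, initialized by the $L^4$-estimate on $\nabla v$ in Lemma~\ref{L4-v}. Define the increasing map $f(p) = \frac{p(n+1)}{(n+2-p)_+}$ (with the convention $f(p) = +\infty$ when $p \geq n+2$) and the recursive sequence $p_1 = 4$, $p_{k+1} = f(p_k)$. A direct inspection shows that on $(1, n+2)$ we have $f(p) > p$, and $f(p) \to \infty$ as $p \to (n+2)^-$; since $p_k$ is strictly increasing and cannot converge to a finite limit $p^\star \in (1, n+2)$ (the fixed-point equation $p^\star = f(p^\star)$ forces $p^\star = 1$), there exists a finite index $K$ with $p_K \geq n+2$.

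First I would iterate: assume inductively that $\|\nabla v\|_{L^{p_k}(\Omega_T)} \leq C_k$. By Lemma~\ref{u-Lp} applied with $p = p_k$ (provided $p_k > 2$, which holds since $p_1 = 4$), I obtain
\begin{equation*}
\|u\|_{L^{l}(\Omega_T)} \leq C \qquad \text{for every finite } l \in \bigl(1, f(p_k)\bigr].
\end{equation*}
In particular, choosing $l = p_{k+1}$ (or any finite $l$ in that range when $p_k \geq n+2$) and feeding this into Lemma~\ref{Lp-v} yields $\|\nabla v\|_{L^{p_{k+1}}(\Omega_T)} \leq C_{k+1}$. After at most $K$ such steps I reach $p_K \geq n+2$, after which a single further application of Lemma~\ref{u-Lp} with any sufficiently large $p$ delivers both
\begin{equation*}
\sup_{0<t<T} \int_\Omega u(x,t)^q\, dx \leq C \quad\text{for all finite } q, \qquad \|u\|_{L^q(\Omega_T)} \leq C \quad\text{for all finite } q,
\end{equation*}
since the exponents $\frac{n(p-1)}{(n+2-p)_+}$ and $\frac{p(n+1)}{(n+2-p)_+}$ are unbounded when $p \geq n+2$. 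Taking $q = p_0$ gives the first part of \eqref{reguv2} for $u$.

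Next, I would apply Lemma~\ref{Lp-v} with $p = p_0$: the right-hand side $C(1 + \|u\|_{L^{p_0}(\Omega_T)})$ is finite by the previous step, yielding $\|\nabla v\|_{L^{p_0}(\Omega_T)} \leq C$, which completes \eqref{reguv2}. Finally, for the additional estimate \eqref{Lpu2}, I would reapply Lemma~\ref{u-Lp}, now with $p = p_0$ and with the verified hypothesis $\|\nabla v\|_{L^{p_0}(\Omega_T)} \leq M$: the second conclusion of that lemma gives $\|u\|_{L^p(\Omega_T)} \leq C(p,T)$ for every finite $p \in \bigl(2, \frac{p_0(n+1)}{(n+2-p_0)_+}\bigr]$, which is exactly \eqref{Lpu2}.

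The only real subtlety is verifying that the bootstrap terminates in finitely many steps uniformly in $n$; this is handled by the monotonicity/blow-up analysis of $f$ above and does not depend on any fine properties of the solution. Everything else is a mechanical chain of applications of Lemma~\ref{L4-v}, Lemma~\ref{Lp-v}, and Lemma~\ref{u-Lp}, so I anticipate no serious technical obstacle beyond bookkeeping the constants (which may depend on $T$ and the initial data through each iteration but remain finite at each stage).
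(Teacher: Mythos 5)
Your overall strategy matches the paper's: start from the $L^4$-bound on $\nabla v$ given by Lemma~\ref{L4-v}, then alternate Lemma~\ref{u-Lp} and Lemma~\ref{Lp-v} to climb the integrability scale, using the fact that the map $p\mapsto p(n+1)/(n+2-p)_+$ is strictly increasing on $(1,n+2)$ with no fixed point above $1$ and blows up at $n+2$, so the exponents escape $(1,n+2)$ in finitely many steps. Your termination argument (fixed-point plus blow-up at $n+2$) is a clean alternative to the paper's explicit geometric lower bound $l_{i+1}\ge l_1\big(\tfrac{n+1}{n-2}\big)^i$; both are fine.

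However, there is a gap in the middle of your induction that the paper avoids. Lemma~\ref{Lp-v} is only stated (and only provable from Theorem~\ref{vreg} together with \eqref{easygrad}) for $p\in(2,p_0]$; it does not give $\|\nabla v\|_{L^p(\Omega_T)}\le C(1+\|u\|_{L^p(\Omega_T)})$ for $p>p_0$, because the estimate on $[0,\bar t]$ comes from $u_0,v_0\in W^{1,p_0}$ and is only $L^{p_0}$. So your inductive step ``feeding this into Lemma~\ref{Lp-v} yields $\|\nabla v\|_{L^{p_{k+1}}(\Omega_T)}\le C_{k+1}$'' fails once $p_{k+1}>p_0$, and the claim that you ``reach $p_K\ge n+2$'' is not reachable via Lemma~\ref{Lp-v} whenever $p_0<n+2$. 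The correct bookkeeping (and what the paper does) is to cap the climb at $\min\{p_0,n+2\}$: let $l_k$ be the first exponent with $l_k\ge\min\{p_0,n+2\}$; the iteration gives $\|u\|_{L^{l_k}(\Omega_T)}\le C$, and then one applies Lemma~\ref{Lp-v} with $p=\min\{p_0,n+2\}$ followed by Lemma~\ref{u-Lp} with the same $p$. If $p_0<n+2$ one must also check that $q=p_0$ is admissible in Lemma~\ref{u-Lp}, i.e. $p_0(n+2-p_0)\le n(p_0-1)$; this holds because $p_0>n\ge 1+\sqrt{n+1}$ for $n\ge3$, while the case $n=2$ is already covered by $l_1=4=n+2$. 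With that correction your concluding two steps (Lemma~\ref{Lp-v} at $p=p_0$ and then Lemma~\ref{u-Lp} at $p=p_0$) go through as you describe and give \eqref{reguv2} and \eqref{Lpu2}.
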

\begin{proof}  
Since \eqref{Lpu2} follows \eqref{reguv2} and the second inequality of \eqref{abc} in Lemma \ref{u-Lp}, it suffices to prove \eqref{reguv2} only. 
Let $l_1 = 4$. 

\textit{Step 1.} If $l_1 \geq \min\{p_0, n+2\}$, then \eqref{reguv2} can be obtained directly  from Lemmas \ref{Lp-v} and \ref{u-Lp}.  Now, we consider the case $l_1 < \min\{p_0, n + 2\}$. Note in this case that $n>2$.  We then infer from Lemmas \ref{u-Lp} and \ref{L4-v} that
\[
\int_{\Omega_T}|u(x,t)|^{l_2}dxdt \leq C(T)\quad \text{with} \quad l_2  = \frac{l_1(n+1)}{(n+2 -l_1)_+}= \frac{l_1(n+1)}{n-2}.\]

\textit{Step 2.}  If $l_2 \geq \min\{p_0, n+2\}$, we can use Lemmas \ref{Lp-v} and \ref{u-Lp} to obtain \eqref{reguv2} and we then stop.  We therefore only consider the case  $l_2 < \min\{p_0, n+2\}$. By Lemma \ref{Lp-v}, we 
see that
\[
\int_{\Omega_T} |\nabla v(x,t)|^{l_2} dxdt \leq C(T) < \infty.
\]
From this and Lemma \ref{u-Lp}, we have 
\[ \int_{\Omega_T} u^{l_3}(x,t) dx dt\leq C(T) < \infty  \quad \text{with} \quad l_3  = \frac{l_2(n+1)}{n+2 - l_2}.\]
Observe that 
\[l_3 = \frac{l_2(n+1)}{n+2 - l_2} > \frac{l_2(n+1)}{n-2} = l_1 \Big( \frac{n+1}{n-2}\Big)^2. \]

We will repeat this procedure.
For $i\ge 3$, define $\begin{displaystyle}
l_{i+1} = \frac{l_i(n+1)}{n+2-l_i}.                       
                      \end{displaystyle}
$
Then the sequence $\{l_i\}_{i=1}^\infty$ is strictly increasing and 
\[
l_{i+1} \ge  l_1\Big(\frac{n+1}{n-2}\Big)^i \quad 
\forall\ i \ge 1.
\]
Hence $\lim_{i\to\infty}l_i=\infty$.
Let $k$ be the smallest integer in $[2,\infty)$ such that $l_k \geq \min\{p_0, n+2\}$.

We repeat Step 2 above with $l_i$, for $i=3,\ldots,k-1$, to arrive at Step $(k-1)$
and obtain 
\[
 \int_{\Omega_T}|\nabla v(x,t)|^{l_i} dx dt  \leq C \quad \forall \ i = 1, \cdots, k-1, \quad \text{and} \quad 
 \int_{\Omega_T} |u(x,t)|^{l_k} dx dt \leq C.
\]
Since $l_k \geq \min \{p_0, n+2\}$, we, again,  obtain \eqref{reguv2}  from Lemmas \ref{Lp-v} and \ref{u-Lp}. The proof is complete.
\end{proof}

The next estimate for $\nabla v$ is crucial for obtaining 
the boundedness of $u$.
\begin{lemma} \label{p1-lemma}There exists $p_1> n+2$ and a constant $C(T)>0$ such that
\beq\label{gvC}
\norm{\nabla v}_{L^{p_1}(\Omega \times [\bar{t}, T])} \leq C(T).
\eeq
\end{lemma}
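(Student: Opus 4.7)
My plan is to apply Theorem~\ref{vreg} to the $v$-equation at some exponent $p_1>n+2$. Following the proof of Lemma~\ref{Lp-v}, the equation for $v$ is of the form \eqref{vsys} with $\lambda=M_0^{-1}$, $\theta=1$, $\alpha=2a_{22}/(\lambda d_2)$, and $c=(c_2-\lambda)v+b_2u\ge 0$; since $v$ is bounded by Lemma~\ref{cr-mp}, $\|c\|_{L^{p_1}(\Omega_T)}\le C\bigl(1+\|u\|_{L^{p_1}(\Omega_T)}\bigr)$. So \eqref{gvC} will follow from estimate \eqref{global-estimate-Lipschitz} once $u\in L^{p_1}(\Omega_T)$ is established for some $p_1>n+2$.

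The first step is to notice that Lemma~\ref{reg-uv-suffi} already provides $u\in L^p(\Omega_T)$ for every finite $p\le\tilde p:=p_0(n+1)/(n+2-p_0)_+$. A short calculation shows $\tilde p>n+2$ is equivalent to $p_0(2n+3)>(n+2)^2$, which holds automatically for $p_0>n$ whenever $n\ge 3$, and holds for $n=2$ provided $p_0>16/7$. In these cases I would simply pick $p_1\in(n+2,\tilde p]$ and conclude directly via Theorem~\ref{vreg}.

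In the remaining edge case ($n=2$ and $p_0\le 16/7$), I would run a short bootstrap, alternating Lemma~\ref{u-Lp} and Theorem~\ref{vreg}. Define $q_0=p_0$ and $q_{k+1}:=q_k(n+1)/(n+2-q_k)_+$; since $(n+1)/(n+2-q_k)>1$ for $q_k>1$, this sequence is strictly increasing and exceeds $n+2$ in a finite number $K$ of steps. Starting from $\nabla v\in L^{q_0}(\Omega_T)$ and $u\in L^{q_1}(\Omega_T)$ given by Lemma~\ref{reg-uv-suffi}, at each step I would apply Theorem~\ref{vreg} with the initial time translated to some $\bar t_k\in(0,\bar t)$ (at which the solution is smooth by Theorem~\ref{local-existence}) to upgrade $u\in L^{q_k}$ to $\nabla v\in L^{q_k}(\Omega\times[\bar t_k,T])$; combined with the $L^{p_0}$-bound on $[0,\bar t]$ coming from \eqref{easygrad} and an $L^\infty$-bound on $[\bar t_k/2,\bar t]$ coming from smoothness, this yields $\nabla v\in L^{q_k}(\Omega_T)$, and Lemma~\ref{u-Lp} then produces $u\in L^{q_{k+1}}(\Omega_T)$. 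After $K$ rounds $u\in L^{q_K}(\Omega_T)$ with $q_K>n+2$, and a final application of Theorem~\ref{vreg} delivers \eqref{gvC}.

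The main technical obstacle will be the bookkeeping of shifted initial times in the bootstrap: each use of Theorem~\ref{vreg} above the exponent $p_0$ costs a small portion of the initial time interval, so one must fix a nested decreasing sequence $\bar t>\bar t_1>\cdots>\bar t_K>0$ at the outset. Since at each $\bar t_k>0$ the pair $(u(\cdot,\bar t_k),v(\cdot,\bar t_k))$ is smooth by Theorem~\ref{local-existence}, the time-shifted versions of Theorem~\ref{vreg} and Lemma~\ref{u-Lp} apply with constants depending only on these smooth data and on already-established bounds, so the finite iteration closes without difficulty.
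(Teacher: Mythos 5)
Your argument for $n\ge 3$ is essentially the paper's: since $p_0>n\ge 3$, the quantity $p_0(n+1)/(n+2-p_0)_+$ exceeds $n+2$, so Lemma~\ref{reg-uv-suffi} already supplies $u\in L^{p_1}(\Omega_T)$ for some $p_1>n+2$, and Theorem~\ref{vreg} finishes. You correctly identify that this direct route breaks down when $n=2$ and $p_0\le 16/7$. The paper, however, handles all of $n=2$ in one stroke by going back to Lemma~\ref{L4-v}, which gives $\|\nabla v\|_{L^4(\Omega_T)}\le C$ unconditionally; feeding $p=4$ into Lemma~\ref{u-Lp} with $n=2$ makes $(n+2-p)_+=0$, so the admissible range for $l$ is all of $(1,\infty)$ and one gets $u\in L^l(\Omega_T)$ for \emph{every} finite $l$ with no iteration at all. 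Your proposal misses this shortcut and instead tries to bootstrap.

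The bootstrap as written has a genuine gap. After the first nontrivial step the exponent $q_k$ exceeds $p_0$, and you claim to recover $\nabla v\in L^{q_k}(\Omega_T)$ by patching the estimate from Theorem~\ref{vreg} on $[\bar t_k,T]$ with the $L^\infty$ interior smoothness bound on $[\bar t_k/2,\bar t]$ and the $L^{p_0}$ bound from \eqref{easygrad} on $[0,\bar t]$. But near $t=0$ the only available control on $\nabla v$ is $L^{p_0}$ (the initial data lie in $W^{1,p_0}$, and $C^\infty$ regularity from Theorem~\ref{local-existence} holds only on $(0,t_{\max})$, away from $t=0$), so this patching yields $\nabla v\in L^{p_0}(\Omega_T)$ and nothing better. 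Since Lemma~\ref{u-Lp} as stated demands $\nabla v\in L^p(\Omega_T)$ over the whole cylinder, you cannot feed $p=q_k>p_0$ into it, and the iteration does not close. One could repair this by restating Lemma~\ref{u-Lp} on a shifted cylinder $\Omega\times[\bar t_k,T]$ with the smooth slice $u(\cdot,\bar t_k)$ as initial data, but you do not carry this out, and in any case the $L^4$ observation makes the whole bootstrap unnecessary for $n=2$.
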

\begin{proof} If $n =2$, from Lemmas \ref{u-Lp} and \ref{L4-v}, we obtain
\[
\norm{u}_{L^{p}(\Omega_T)} \leq C(p,T), \quad \forall \ p \in (2,\infty).
\]
This together with Theorem \ref{vreg} imply that
\[
\norm{\nabla v}_{L^p(\Omega \times [\bar{t}, T])} \leq 
C\Big[ 1 + \norm{u}_{L^p(\Omega_T)}  \Big] \leq C(p, T), \quad \forall \ p \in [2, \infty).
\]
Hence we obtain \eqref{gvC} for $n =2$. Now, consider $n >2$. Since $p_0>n$ and $n\geq 3$, a simple calculation gives 
\[
\frac{p_0(n+1)}{(n+2 -p_0)_+} > n+2.
\]
Therefore, it follows from Lemma \ref{reg-uv-suffi} that there exists $p_1 > n+2$ such that 
\[
\norm{u}_{L^{p_1}(\Omega_T)} \leq C(T).
\]
Then applying Theorem \ref{vreg} again, we obtain
\[
\norm{\nabla v}_{L^{p_1}(\Omega \times [\bar{t}, T]} \leq 
C\Big[1 + \norm{u}_{L^{p_1}(\Omega_T)} \Big] \leq C(T),
\]
which proves \eqref{gvC}.
\end{proof}
We now show that $u$ is bounded:
\begin{lemma} \label{u-infty}There exists a constant $C(T) >0$ such that
\begin{equation} \label{eqn-Lu-in}
\norm{u}_{L^\infty(\Omega_T)} \leq C(T).
\end{equation}
\end{lemma}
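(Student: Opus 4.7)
The plan is to exploit the divergence-form representation
\[
u_t = \nabla \cdot\!\big[(d_1 + 2a_{11}u + a_{12}v)\nabla u + a_{12}\, u\, \nabla v\big] + u(a_1 - b_1 u - c_1 v),
\]
in which the effective diffusion coefficient $A(x,t) := d_1 + 2a_{11}u + a_{12}v$ satisfies $A \geq d_1 > 0$ (because $u,v \geq 0$), and to run a Moser iteration powered by the three ingredients already assembled: the $L^\infty$-bound $0 \leq v \leq M_0$ from Lemma \ref{cr-mp}, the arbitrarily high $L^p$-integrability of $u$ from Lemma \ref{reg-uv-suffi}, and, crucially, the bound $\nabla v \in L^{p_1}(\Omega \times [\bar t, T])$ with $p_1 > n+2$ from Lemma \ref{p1-lemma}. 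The hypothesis $p_1 > n+2$ is exactly the subcritical threshold for an $L^{p_1}$ drift in a parabolic equation in $\mathbb{R}^n$.

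For a fixed exponent $p \geq 2$ I would test the equation against $u^{p-1}$, use the Neumann boundary condition, and apply Cauchy--Schwarz to the cross term $a_{12}(p-1)\int u^{p-1}|\nabla v||\nabla u|$, absorbing half of the $d_1 (p-1)\int u^{p-2}|\nabla u|^2$ contribution on the left. Discarding the nonpositive $-b_1 p\int u^{p+1}$ term and rewriting $u^{p-2}|\nabla u|^2 = \frac{4}{p^2}|\nabla(u^{p/2})|^2$, this yields
\[
\frac{d}{dt}\int_\Omega u^p\, dx + c_p\int_\Omega |\nabla(u^{p/2})|^2\, dx \leq C_p \int_\Omega u^p |\nabla v|^2\, dx + C_p\int_\Omega u^p\, dx,
\]
with $c_p,C_p$ polynomial in $p$. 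The right-hand side is controlled via Hölder's inequality,
\[
\int_{\Omega_T} u^p |\nabla v|^2\, dx dt \leq \|\nabla v\|_{L^{p_1}}^2\, \|u^{p/2}\|_{L^{2p_1/(p_1-2)}}^2,
\]
and the parabolic Sobolev--Ladyzhenskaya inequality applied to $w = u^{p/2}$ gives
\[
\|w\|_{L^{2(n+2)/n}(\Omega\times[\bar t, T])}^2 \leq C\Big(\sup_{t}\|w\|_{L^2(\Omega)}^2 + \|\nabla w\|_{L^2(\Omega_T)}^2\Big).
\]
Since $p_1 > n+2$ ensures $2p_1/(p_1-2) < 2(n+2)/n$, interpolation lets me close a self-improving inequality
\[
\|u\|_{L^{\kappa p}(\Omega\times[\bar t,T])} \leq (C_p)^{1/p}\big(1 + \|u\|_{L^{p}(\Omega\times[\bar t,T])}\big),
\]
with a fixed ratio $\kappa > 1$ depending only on $n$ and $p_1$.

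Iterating this estimate starting from $p_0 > n$, with exponents $p_k = p_0 \kappa^k \to \infty$, and bookkeeping the product $\prod_k (C_{p_k})^{1/p_k}$ in the usual Moser fashion (the polynomial growth of $C_p$ makes the logarithms summable), gives
\[
\|u\|_{L^\infty(\Omega\times[\bar t, T])} \leq C(T)\,\big(1 + \|u\|_{L^{p_0}(\Omega\times[\bar t, T])}\big),
\]
which, together with Lemma \ref{reg-uv-suffi}, yields \eqref{eqn-Lu-in}. The main obstacle is the quasilinear cross term $a_{12}u\nabla v$: its Cauchy--Schwarz absorption must leave behind a surviving $\int u^p|\nabla v|^2$ that is still tractable, and it is precisely the strict inequality $p_1 > n+2$ that produces a ratio $\kappa$ strictly greater than $1$, allowing the Moser iteration to terminate in an $L^\infty$ bound. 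As a shortcut, this Moser iteration with $L^{p_1}$ drift under the self-diffusion assumption $a_{11} > 0$ is carried out in detail in \cite{LeD-Mis, TVP, Tuoc}, so the cleanest presentation is to verify the hypotheses and quote the corresponding result from those references.
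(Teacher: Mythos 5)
Your Moser iteration scheme is a valid alternative to what the paper does; the paper instead runs De Giorgi's level-set truncation iteration. Both methods rest on exactly the same three ingredients you identify — $0 \le v \le M_0$ from Lemma~\ref{cr-mp}, the uniform $L^{p_0}$ control on $u$ from Lemma~\ref{reg-uv-suffi}, and above all $\nabla v \in L^{p_1}(\Omega\times[\bar t,T])$ with $p_1>n+2$ from Lemma~\ref{p1-lemma} — and both exploit that the strict inequality $p_1>n+2$ is the subcritical threshold. In the paper's De Giorgi argument this manifests as the positivity of the exponent $\delta=\frac{p_1-2}{p_1}-\frac{n}{n+2}$, while in your Moser argument it manifests as $\frac{2p_1}{p_1-2}<\frac{2(n+2)}{n}$, which gives the interpolation room you need. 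The two routes have complementary technical wrinkles. The paper's De Giorgi estimate
\[
\|W_k\|_{V_2}^2 \le C_1\Big[\|W_k\|_{V_2}^2\,\mu(k)^\delta + k^2\mu(k)^{\frac{p_1-2}{p_1}}\Big]
\]
requires $C_1\mu(k)^\delta<1$ to absorb, and since $\mu(k)$ is only bounded by $|\Omega|\,(t_1-\bar t)$, the paper must partition $[\bar t,T]$ into finitely many short subintervals and bootstrap the $L^\infty$ bound across them; your Moser route avoids the time-splitting because the absorption is done by Young's inequality with a small parameter rather than by a smallness of measure. In exchange, you carry the burden of tracking the $p$-dependence of the constants $C_p$ through the iteration $p_k=p_0\kappa^k$; your remark that polynomial growth of $C_p$ makes the logarithms summable is correct, but it is a step that needs explicit bookkeeping (the absorption parameter $\epsilon\sim c_p/C_p$ shrinks with $p$, inflating $C_p$ further), whereas the De Giorgi iteration has a fixed, explicit closure condition. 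Two small additional points: your $\kappa$ equals $(n+2)/n$ and depends only on $n$, not on $p_1$ as you wrote (only the constants depend on $p_1$); and, as the paper does in its first two lines, you should record separately that $u\in L^\infty(\Omega\times[0,\bar t])$ by Theorem~\ref{local-existence} and Morrey embedding, since the iteration is run only on $[\bar t,T]$ and needs a bounded starting slice $u(\cdot,\bar t)$. Your closing suggestion to quote \cite{LeD-Mis, TVP, Tuoc} is reasonable, but the paper deliberately carries out the iteration in detail rather than citing it.
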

\begin{proof} From Theorem \ref{local-existence}, we have $u \in C([0, \bar{t}], W^{1,p_0}(\Omega))$ with $p_0>n$. By Morrey's imbedding theorem, there exists $\bar C_0=\bar C_0(\bar{t}) >0$ such that
\begin{equation} \label{Lu-infty1}
\norm{u}_{L^\infty(\Omega \times [0, \bar{t}])} \leq\bar C_0.
\end{equation}
We thus only need to prove that $u$ is bounded in $\Omega\times [\bar{t}, T]$. For each $t_1 \in (\bar{t}, T]$, and each $ k >\bar C_0$, denote  $W_k(x,t)=\max\{u(x,t) - k, 0\}$ and
\[
 \Omega_{t_1}(k)=\{(x,t) \in \Omega \times [\bar{t}, t_1]: u(x,t) >k \}.
\]
We write the equation of $u$ as 
\begin{equation} \label{A.u-eqn}
u_t = \nabla\cdot[A(x,t)\nabla u]  + a_{12}\nabla\cdot[u \nabla v] + f(x,t) \quad \text{in }\Omega_T,
\end{equation}
with homogeneous Neumann boundary condition, where 
\begin{equation} \label{Af.def}
A(x,t) = d_1 + 2 a_{11}u(x,t) + a_{1,2}v(x,t), \quad 
f(x,t) = u(x,t)[a_1 - b_1u(x,t) -c_1v(x,t)].
\end{equation}
We note 
that $A(x,t)$ is bounded below by $d_1>0$, and not known to be bounded above. However, we can follow the De Giorgi's iteration technique \cite[Theorem 7.1, p. 181]{La} to prove \eqref{eqn-Lu-in}. By multiplying the equation \eqref{A.u-eqn} with $W_k$ and using the integration by parts, we 
obtain
\begin{equation*} 
\begin{split}
& \frac{1}{2}\int_{\Omega} W_k^2(x,t) dx + d_1 \int_{\Omega_{t_1}(k)} |\nabla W_k|^2dxdt  
+ a_{11} \int_{\Omega_{t_1}(k)} |\nabla W_k|^2 u dx dt \\
& \leq 2 a_{12} \int_{\Omega_{t_1}(k)} |\nabla v| |\nabla W_k| u dx dt + 2 a_1 \int_{\Omega_{t_1}(k)(k)} u W_k dxdt \\
& \leq \frac{d_1}{2} \int_{\Omega_{t_1}(k)} |\nabla W_k|^2 dx dt 
+ C\int_{\Omega_{t_1}(k)}\Big [|\nabla v|^2 +1\Big] \Big[W_k^2 + k^2 \Big] dxdt, \quad 
\forall \ t \in [\bar{t},t_1].
\end{split}
\end{equation*}
Therefore, there exists a constant $C>0$ depending only on the coefficients $d_1$ and $a_1, a_{11}, a_{12}$ 
such that 
\begin{equation} \label{Giorgi-1}
\begin{split}
\norm{W_k}_{V_2}^2 &:= \sup_{0 < t< t_1}\int_\Omega W_k^2(x,t)dxdt + \int_{\Omega\times(\bar t,t_1)}|\nabla W_k(x,t)|^2 dxdt \\
&\leq C \int_{\Omega_{t_1}(k)} \Big[|\nabla v(x,t)|^2 +1 \Big]\Big[W_k^2 + k^2 \Big] dxdt.
\end{split}
\end{equation}
We estimate the right hand side of \eqref{Giorgi-1}.  Let $p_1$ be as 
in Lemma \ref{p1-lemma}. Then by H\"{o}lder's inequality and Lemma \ref{p1-lemma}, we have
\beq\label{RHS}
\begin{split}
  \int_{\Omega_{t_1}(k)}[|\nabla v|^2 +1][W_k^2 +k^2] dxdt 
  &\leq 
 \norm{\,|\nabla v| +1}_{L^{p_1}(\Omega_{t_1}(k))}^2\norm{W_k +k}_{L^{\frac{2p_1}{p_1-2}}(\Omega_{t_1}(k))}^2 \\
& \leq C(T) \left[\norm{W_k}_{L^{\frac{2p_1}{p_1-2}}(\Omega_{t_1}(k))}^2 + k^2\mu(k)^{\frac{p_1-2}{p_1}}  \right],
\end{split}
\eeq
where $\mu(k) := |\Omega_{t_1}(k)|$.
Observe that $p_1 > n+2$ implies
\[
\frac{p_1}{p_1-2}< \frac{n+2}n,\quad \text{hence}\quad
\delta:= \frac{p_1-2}{p_1} - \frac{n}{n+2} > 0.
\]
Therefore,  we can apply H\"{o}lder's inequality and then the parabolic imbedding theorem (\cite[(3.4), p. 75]{La} to infer that
\beq\label{imbed2}
 \norm{W_k}_{L^{\frac{2p_1}{p_1-2}}(\Omega_{t_1}(k))}^2 \leq \norm{W_k}_{L^{\frac{2(n+2)}{n}}(\Omega_{t_1}(k))}^2  \mu(k)^{\delta} 
\leq C(T) \norm{W_k}_{V_2}^2  \mu(k)^{\delta}.
\eeq
By \eqref{Giorgi-1}, \eqref{RHS} and \eqref{imbed2}, there exists $C_1(T) >0$ such that 
\begin{equation} \label{Giorgi-2}
\norm{W_k}_{V_2}^2 \leq C_1(T) \Big[\norm{W_k}_{V_2}^2  \mu(k)^{\delta} 
+  k^2\mu(k)^\frac{p_1-2}{p_1} \Big].
\end{equation}
Let $t_1\in (\bar t,T]$ satisfy 
$C_1(T) \left [|\Omega | (t_1 -\bar{t})  \right]^{\delta} \le  1/2$. 
Then the inequality \eqref{Giorgi-2} yields 
\begin{equation} \label{Giorgi-3}
\begin{split}
\norm{W_k}_{V_2}^2 \leq 2C_1(T) k^2 \mu(k) ^{\frac{p_1-2}{p_1}}.
\end{split}
\end{equation}
From this and the standard iteration technique (\cite[Theorem 6.1, p. 102]{La}), we deduce that 
\beq\label{step0}
\sup_{\Omega \times [\bar{t}, t_1]} u(x,t) \leq \bar{C}_1 .
\eeq
where $\bar{C}_1 = \bar{C}_1(T)>0$.

Now, partition the interval $[\bar t,T]$ evenly with
\[
\bar{t}=t_0 < t_1 < t_2 < \cdots < t_{N+1} = T,\quad t_j=t_0 + jh,
\]
where the time step $h>0$ is chosen to satisfy $  C_1(T) [ h \,|\Omega| ]^{\delta} \le 1/2$. Repeating  the proof of \eqref{step0} on each time interval $[t_j, t_{j+1}]$ for $j =1, 2,\cdots, N$, we obtain
\beq\label{stepj}
\sup_{\Omega \times [t_j,t_{j+1}]} u(x,t) \leq \bar{C}_{j+1}, 
\eeq
where $\bar{C}_{j+1}=\bar{C}_{j+1}(T)>0$ also depends on the preceding bound $\bar{C}_{j}$ on $[t_{j-1},t_j]$. (In the proof, we replace $\bar{C}_0$ by $\bar{C}_{j}$.)
Therefore, we obtain
\begin{equation} \label{Lu-infty2}
\sup_{\Omega \times [\bar{t}, T]} u \leq C(T).
\end{equation}
The estimate \eqref{eqn-Lu-in} then follows from \eqref{Lu-infty1} and \eqref{Lu-infty2}.
\end{proof}

We are now ready to prove Theorem~\ref{global-existence}.

\begin{proof}[\textbf{Proof of Theorem~\ref{global-existence}}] 
For all $(x,t) \in \Omega_T$, as in the proof of Lemma \ref{u-infty}, let $A(x,t)$, $f(x,t)$ be defined by \eqref{Af.def} and let 
\[
B(x,t)  = d_2 + 2a_{22}v(x,t), 
\quad
g(x,t) = v(x,t)[a_2 -b_2 u(x,t) - c_2v(x,t)].
\]
From Lemmas \ref{cr-mp} and \ref{u-infty}, there are constants $\Lambda>0$ and $C>0$ 
such that
\begin{equation} \label{para-uv}
\Lambda^{-1} \leq A(x,t) \leq \Lambda \quad\text{and}\quad \Lambda^{-1} \leq B(x,t) \leq \Lambda 
\quad \forall \ (x,t) \in \Omega_T,
\end{equation}
\begin{equation} \label{f-g.bound}
\norm{f}_{L^\infty(\Omega_T)} + \norm{g}_{L^\infty(\Omega_T)} \leq C.
\end{equation}
We rewrite the equation \eqref{v-div-eqn} as
\[
v_t = \nabla \cdot[B(x,t)\nabla v] + g.
\]
From \eqref{para-uv}, \eqref{f-g.bound}, and the classical H\"{o}lder regularity theory 
(\cite[Theorem~10.1, p.~204]{La}, \cite[Theorem 1.3, Remark~1.1, p. 43]{DiB}), 
there exist $\alpha_1 \in(0,1)$ and $C(T) >0$ such that 
\begin{equation} \label{v-holder}
   \norm{v}_{C^{\alpha_1, \frac{\alpha_1}{2}}(\overline{\Omega}_T)} \leq C(T).
\end{equation}
Next, we rewrite the equation of $u$ as
\begin{equation} \label{u-v-short}
u_t = \nabla \cdot[A(x,t) \nabla u]  + \nabla\cdot \vec{F}_1 + \nabla \cdot \vec{F}_2+ f,
\end{equation}
where
\[
\vec{F}_1(x,t) = a_{12}\, \chi_{[0,\bar{t}]}(t) u(x,t) \nabla v(x,t) \quad \text{and}\quad
\vec{F}_2(x,t) = a_{12}\, \chi_{[\bar{t}, T]}(t) u(x,t) \nabla v(x,t).
\]
Here, $\chi_I$ denotes the characteristic function of the set $I\subset \mathbb R$. From 
Theorem \ref{local-existence}, Lemmas \ref{p1-lemma} and \ref{u-infty}, we see that
\[
\norm{\vec{F}_1}_{L^\infty([0, T]; L^{p_0}(\Omega))} \leq C(\bar{t}) \quad\text{and}\quad 
\norm{\vec{F}_2}_{L^{p_1}(\Omega_T)} \leq C(T),
\]
where $p_1>n+2$ is given by Lemma~\ref{p1-lemma}.
Therefore, we can again apply the classical H\"{o}lder regularity theory 
(\cite[Theorem~10.1, p.~204]{La}, \cite[Theorem 1.3, Remark 1.1, p. 43]{DiB}) to the equation 
\eqref{u-v-short} to infer that there are $\alpha_2\in(0,1)$ and $C(T)>0$ such that
\[
\norm{u}_{C^{\alpha_2, \frac{\alpha_2}{2}}(\overline{\Omega_T})} \leq C(T).
\] 
Let $w = (d_2 +a_{22}v) v$, then $w$ solves
\[
w_t = B(x,t)\Delta w + B(x,t) g(x,t) \quad \text{in }\Omega_T
\]
with homogeneous Neumann boundary condition. Since all of the 
coefficients in the equation of $w$ are H\"{o}lder continuous, we apply the Schauder estimate (\cite[Theorem 5.3, p. 320--321]{La}) to obtain 
\[
\norm{v}_{C^{2+\beta, \frac{2+\beta}{2}}(\overline{\Omega} \times [\bar{t} ,T])} \leq C(\bar{t}, T)\quad\text{fore some }\beta \in (0,1).
\]
Using this fact and, again, the Schauder estimate for the equation of 
$w_1 := (d_1 + a_{11}u + a_{12}v)u$, we find that
\[
\norm{u}_{C^{2+\mu, \frac{2+\mu}{2}}(\overline{\Omega} \times [\bar{t} ,T])} \leq C(\bar{t}, T)\quad\text{for some } \mu \in (0,1).
\]
Thus \eqref{contra.global} follows and the proof is complete.
\end{proof} 



\appendix

\section{Proof of Lemma \ref{W1infty-est}}\label{apend}
\newcommand{\esssup}{\mathop{{\rm ess\_sup}}}

Since $0 \leq  \bar v \leq 1$, the equation \eqref{UEQ} is uniformly parabolic. From this, the boundedness of the nonlinear term in \eqref{UEQ}, we see
 that $\bar{v}$ is locally H\"older continuous (see \cite[Theorem 6.28, p.~130]{Lib} or \cite[Theorem 1.1, p.~419]{La}). This, \cite[Theorem 3.1, p.~437]{La} and Schauder estimates for linear uniformly parabolic equations further imply that $\bar v\in C^{2,\alpha}_{loc}(Q_4)$.
Therefore, there exists some constants $\alpha\in (0,1)$ and $C>0$ depending only on $n$ such that 
\begin{equation}\label{Schauder-Ref}
\|\bar v\|_{C^{2,\alpha}(Q_{\frac72})}\leq C.
\end{equation}
Let $i=1,2,\ldots,n$ and denote $w=\bar v_{x_i}$. Taking partial derivative of \eqref{UEQ} in $x_i$, we have
\begin{equation}\label{GUED}
w_t = \nabla\cdot \big[ (1  +  \bar v) \A_0\nabla w +   w \A_0\nabla \bar v\big]+  \theta^2 (1 - 2   \bar v) w. 
\end{equation}
With \eqref{Schauder-Ref}, we can apply the well-known De Giorgi-Nash-Moser iteration technique to this 
quasilinear uniformly parabolic equation to obtain
\begin{equation}\label{AAA}
\|w\|_{L^\infty(Q_3)}\leq C_n \left(\fint_{Q_4}{|w|^2 \, dxdt}\right)^{\frac{1}{2}}.
\end{equation}
This immediately yields the inequality \eqref{regularity-reference-eq}.  
For the sake of completeness, we give the detailed proof for \eqref{AAA}.
(For further references, one can see  \cite[Theorem 8.1, p. 192, Theorem 3.1, p.~437]{La},  \cite[Theorem~2.1]{Po}, or \cite[Theorem 1.3]{T}.)  
For an $n\times n$ matrix $\A$, we denote its operator norm by $\|\A\|_{\rm op}$ and $\| \A_0\|_\infty =\esssup_{(-16,16)} \|\A_0(t)\|_{\rm op}$.
By \eqref{eigenvalues-controlled}, we have
$\|\A_0\|_\infty ,\|\A_0^{-1}\|_\infty \le \Lambda$, and $\|\A_0^{1/2}\|_\infty ,\|\A_0^{-1/2}\|_\infty\le \Lambda^{1/2}$.
Let $M_0=\| \bar v\|_{L^\infty(Q_{7/2})}$ and $M_1=\|\nabla  \bar v\|_{L^\infty(Q_{7/2})}$.
For $k\ge 0$, define 
$$w^{(k)}=\max\{w-k,0\}\quad\text{and}\quad S_k=\{(x,t) : w(x,t)>k\}.$$
Let $\zeta(x,t)$ be a cut-off function in $Q_4$. Multiplying equation \eqref{GUED} by $w^{(k)}\zeta^2$, integrating over $B_4$ and using integration by parts yield
\begin{align*}
&\frac12\frac d{dt}\int_{B_4} |w^{(k)}\zeta|^2 dx
=\int_U |w^{(k)}|^2 \zeta \zeta_t dx -\int_{B_4} [(1 +  \bar v)\A_0\nabla w+   w\A_0\nabla  \bar v]\cdot \nabla (w^{(k)}\zeta^2 )dx\\
&\quad  +\theta^2 \int_{B_4} (1 -2   \bar v) w w^{(k)}\zeta^2 dx
= \int_{B_4} |w^{(k)}|^2 \zeta \zeta_t dx  -\int_{B_4} (1 +   \bar v)\A_0\nabla w^{(k)} \cdot \nabla (w^{(k)} \zeta)^2 dx\\
&\quad\quad -\int_{B_4}   (w^{(k)}+k)\A_0 \nabla  \bar v \cdot \nabla (w^{(k)}\zeta^2) dx
 +\theta^2 \int_{B_4} (1 -2  \bar v) (w^{(k)}+k) w^{(k)} \zeta^2 dx. 
\end{align*}
We estimate
\begin{align*}
&\frac12\frac d{dt}\int_{B_4} |w^{(k)}\zeta|^2 dx
\le \int_{B_4} |w^{(k)}|^2 \zeta |\zeta_t| dx  - \int_{B_4} (1  +    \bar v) \zeta \A_0 \nabla w^{(k)}\cdot (\nabla (w^{(k)}\zeta)+w^{(k)}\nabla \zeta)dx\\
&\quad +   M_1\|\A_0\|_\infty \int_{B_4} [w^{(k)}+k] ( \zeta|\nabla (w^{(k)} \zeta)| + |w^{(k)} |\zeta |\nabla \zeta|)dx
 + \theta^2  \int_{B_4}[w^{(k)}+k] w^{(k)}\zeta^2 dx. 
\end{align*}
Note that
\begin{align*}
 \zeta \A_0 \nabla w^{(k)}\cdot (\nabla (w^{(k)}\zeta)+w^{(k)}\nabla \zeta)
&= \A_0^{1/2}(\nabla (w^{(k)}\zeta) - w^{(k)}\nabla \zeta)\cdot \A_0^{1/2}(\nabla (w^{(k)}\zeta)+w^{(k)}\nabla \zeta)\\
&= |\A_0^{1/2}\nabla (w^{(k)}\zeta)|^2 - |w^{(k)}\A_0^{1/2}\nabla \zeta|^2.
\end{align*}
Then 
\begin{align*}
&\frac12\frac d{dt}\int_{B_4} |w^{(k)}\zeta|^2 dx
\le \int_{B_4} |w^{(k)}|^2 \zeta |\zeta_t| dx  - \int_{B_4} (1  +    \bar v) (|\A_0^{1/2}\nabla (w^{(k)}\zeta)|^2 - |w^{(k)}\A_0^{1/2}\nabla \zeta|^2)dx\\
&\quad +   M_1 \|\A_0\|_\infty\int_{B_4} [w^{(k)}+k] \zeta |\A_0^{-1/2}\A_0^{1/2}\nabla (w^{(k)} \zeta)| 
+ [w^{(k)}+k] w^{(k)}  \zeta |\nabla \zeta| dx
 + \theta^2 \int_{B_4} [w^{(k)}+k] w^{(k)}\zeta^2 dx. 
\end{align*}
By Cauchy's inequality
\begin{align*}
&\frac12\frac d{dt}\int_{B_4} |w^{(k)}\zeta|^2 dx
\le \int_{B_4} |w^{(k)}|^2 \zeta |\zeta_t| dx  -  \int_{B_4} |\A_0^{1/2}\nabla (w^{(k)}\zeta)|^2 dx
+ (1  +   M_0) \int_{B_4}  |w^{(k)}\A_0^{1/2}\nabla \zeta|^2 dx\\
&\quad  + \left \{ \frac {1 }2  \int_{B_4} |\A_0^{1/2}\nabla (w^{(k)} \zeta)|^2 dx 
+\frac{   M_1^2 \|\A_0\|_\infty^2 \|\A_0^{-1/2}\|_\infty^2}{2 } \int_{B_4} \chi(S_k)  [w^{(k)}+k]^2 \zeta^2 dx \right\}\\
&\quad   +   M_1\|\A_0\|_\infty \int_{B_4} \chi(S_k) [w^{(k)}+k]^2 \zeta |\nabla \zeta|dx
 + \theta^2\int_{B_4} \chi(S_k) [w^{(k)}+k]^2\zeta^2 dx. 
\end{align*}
Therefore,
\begin{align*}
&\frac12\frac d{dt}\int_{B_4} |w^{(k)}\zeta|^2 dx +\frac {1 }2  \int_{B_4} |\A_0^{1/2}\nabla (w^{(k)} \zeta)|^2 dx \\
&\le \int_{B_4} [w^{(k)}+k]^2 \Big[ \zeta |\zeta_t| + (1  +   M_0)\Lambda  |\nabla \zeta|^2 
+\frac{ M_1^2 \Lambda^3 }{2} \zeta^2  +   M_1\Lambda  \zeta |\nabla \zeta|+ \theta^2 \zeta^2\Big] dx\\
&\le \int_{B_4} [w^{(k)}+k]^2 \Big[ \zeta |\zeta_t| + 2(1  +   M_0)\Lambda  |\nabla \zeta|^2 
+\Big(\frac{ M_1^2 \Lambda^3 }{2}  +   M_1^2\Lambda + \theta^2\Big) \zeta^2\Big] dx.  
\end{align*}
Integrating in time and taking maximum for $t\in(-16,16)$ give
\begin{align*}
&\max_{t\in [-16,16]} \int_{B_4} |w^{(k)}\zeta|^2 dx 
+  \Lambda^{-1}\int_{Q_4} |\nabla (w^{(k)} \zeta)|^2 dx dt\\
&\le  4 \int_{Q_4} \chi(S_k)  [w^{(k)}+k]^2 \Big[  \zeta |\zeta_t| + 2(1  +   M_0)\Lambda  |\nabla \zeta|^2 
+ (2 M_1^2\Lambda^3 +\theta^2) \zeta^2\Big] dxdt. 
\end{align*}
We obtain
\begin{equation}\label{localE}
\max_{t\in [-16,16]} \int_{B_4} |w^{(k)}\zeta|^2 dx 
+  \int_{Q_4} |\nabla (w^{(k)} \zeta)|^2 dx dt\\
\le   \int_{Q_4} \chi(S_k)  [w^{(k)}+k]^2 P[\zeta] dxdt,
\end{equation}
where
\begin{equation*}
P[\zeta]= 8\Lambda \Big[  \zeta |\zeta_t| + (1  +   M_0)\Lambda |\nabla \zeta|^2 
+ (M_1^2 \Lambda^3+\theta^2) \zeta^2\Big].
\end{equation*}
Let $r=2(n+2)/n$. The parabolic Sobolev embedding gives
\begin{equation}\label{parS}
 \|w^{(k)} \zeta\|_{L^r(Q_4)} \le C_0 (\max_{t\in [-16,16]} \| w^{(k)} \zeta \|_{L^2(B_4)} +\| \nabla (w^{(k)} \zeta)\|_{L^2(Q_4)} ),
\end{equation}
where $C_0>0$.
Therefore, we have from \eqref{localE} that
\begin{equation}\label{prepare}
  \|w^{(k)} \zeta\|_{L^r(Q_4)} \le 2C_0 \Big(\int_{Q_4} \chi(S_k) [w^{(k)}+k]^2 P[\zeta] dx dt\Big)^{1/2}
\end{equation}

Now, we can use standard De Giorgi's iteration.
Let $K>0$ and $k_j = K(1-2^{-j})$ for $j\ge 0$. Then $k_j\nearrow K$ as $j\nearrow\infty$.
For $j\ge 0$, let $r_j=3+2^{-j-2}$ and $t_j=r_j^2$, 
then $3<r_j<7/2$ and $r_j\searrow 3$ as $j\nearrow\infty$.
Let $\zeta_j=\phi_j(t)\varphi_j(x)$, with $0\le \phi_j,\varphi_j\le 1$,
$\phi_j=1$ on $|t|<r_j^2$, $\phi_j=0$ on $|t|>r_{j-1}^2$, $\varphi_j=1$ on $|x|<r_j$, $\varphi_j=0$ on $|x|>r_{j-1}$.
In other words, $\zeta_j=1$ on $Q_{r_j}$ and ${\rm spt}\ \zeta_j \subset \bar Q_{r_{j-1}}\subset \bar Q_{r_0}  $ for $j\ge 1$.
Also,
\begin{equation}
 |\nabla \zeta_j(x,t)|\le C_1 2^j,\quad  |\zeta_{jt}|\le C_1 4^j,\quad C_1\ge 1.
\end{equation}
Then we have on $Q_4$ that
\begin{equation}\label{Pzj}
 |P[\zeta_j](x,t)|\le 8\Lambda C_1^2 4^j [ 1+(1+M_0)\Lambda +(M_1^2\Lambda^3+\theta^3)]\le C_2^2 4^j, 
\end{equation}
where $C_2>0$ is defined by
\beq\label{C2}
C_2^2 = 8\Lambda C_1^2 \Big( 1+ (1 +  M_0+ M_1^2)\Lambda^3 +\theta^2\Big).\eeq
Let $E_{j,\ell}=\{ (x,t)\in Q_{r_\ell} : w(x,t)>k_j\}$.
For $j\ge 0$, applying \eqref{prepare} with $k=k_{j+1}$ and $\zeta=\zeta_{j+1}$ and using \eqref{Pzj} give
\begin{align*}
  \|w^{(k_{j+1})} \zeta_{j+1}\|_{L^r(Q_4)} 
&\le 2C_0 \Big(\int_{Q_4} \chi(S_{k_{j+1}})  [w^{(k_{j+1})}+k_j]^2 P[\zeta_{j+1}] dx dt\Big)^{1/2}\\
&\le 2^{j+1} C_0 C_2 \Big(\| w^{(k_{j+1})}\|_{L^2(E_{j+1,j})}+K|E_{j+1,j}|^{1/2}\Big)\\
&\le 2^{j+1} C_0 C_2 \Big(\| w^{(k_{j})}\|_{L^2(E_{j,j})}+K|E_{j+1,j}|^{1/2}\Big).
\end{align*}
On the one hand,
\begin{align*}
 \| w^{(k_{j+1})}\|_{L^2(E_{j+1,j+1})} 
&\le  \| w^{(k_{j+1})}\zeta_{j+1}\|_{L^2(E_{j+1,j+1})} 
\le \| w^{(k_{j+1})}\zeta_{j+1}\|_{L^r(E_{j+1,j+1})}  |E_{j+1,j+1}|^{1-2/r}\\
&\le \| w^{(k_{j+1})}\zeta_{j+1}\|_{L^r(Q_4)}  |E_{j+1,j}|^{1-2/r}. 
\end{align*}
On the other hand,
\begin{equation*}
 \| w^{(k_j)}\|_{L^2(E_{j,j})} \ge  \| w^{(k_j)}\|_{L^2(E_{j+1,j})} \ge (k_{j+1}-k_{j}) |E_{j+1,j}|^{1/2} 
=K 2^{-j-1} |E_{j+1,j}|^{1/2}.
\end{equation*}
Hence
$
 |E_{j+1,j}| \le  4^{j+1} K^{-2} \| w^{(k_j)}\|_{L^2(E_{j,j})}^2.  
$
Combing the above gives
\begin{align*}
 \| w^{(k_{j+1})}\|_{L^2(E_{j+1,j+1})} 
&\le \| w^{(k_{j+1})}\zeta_{j+1}\|_{L^r(Q_4)}  |E_{j+1,j}|^{1-2/r}\\
&\le  2^{j+1} C_0 C_2 (\| w^{(k_j)}\|_{L^2(E_{j,j})} + K|E_{j+1,j}|^{1/2}) |E_{j+1,j}|^{1-2/r}\\
&\le  2^{j+1} C_0 C_2 \Big[\| w^{(k_j)}\|_{L^2(E_{j,j})} + 2^{j+1} \| w^{(k_j)}\|_{L^2(E_{j,j})}\Big] \Big[ \frac{4^{j+1}}{K^2} \| w^{(k_j)}\|_{L^2(E_{j,j})}^2\Big]^{1-\frac2r}.
\end{align*}
Thus,
\begin{align*}
 \| w^{(k_{j+1})}\|_{L^2(E_{j+1,j+1})} 
&\le \frac{2\cdot 16^{j+1} C_0C_2}{K^\mu} \| w^{(k_j)}\|_{L^2(E_{j,j})}^{1+\mu},
\end{align*}
where 
$\mu=2(1-\frac2r)>0$.
Letting  $Y_j= \| w^{(k_j)}\|_{L^2(E_{j,j})}$ then we have
\begin{equation}
 Y_{j+1}\le AB^j Y_j^{1+\mu}, \quad j\ge 0,
\end{equation}
where $B=16$ and $A=32C_0C_2/K^\mu$.
The classical result on sequences  with fast geometric convergence states that if $Y_0\le A^{-1/\mu} B^{-1/\mu^2}$ then 
\begin{equation}\label{Y}
\|w^{(K)}\|_{L^2(Q_3)}=\lim_{j\to\infty} Y_j=0. 
\end{equation}
Since $k_0=0$, then $Y_0\le \|w\|_{L^2(Q_4)}$, and we choose $K$ such that
$$\|w\|_{L^2(Q_4)} \le  (32C_0C_2/K^\mu)^{-1/\mu} 16^{-1/\mu^2}.$$
Specifically, select $K= \|w\|_{L^2(Q_4)} 16^{1/\mu^2}  (32C_0C_2)^{1/\mu}$.
Then from \eqref{Y}, $w\le K$ a.e. in $Q_3$. Replacing $w$ by $-w$ we obtain $|w|\le K$ a.e. in $Q_3$. This, \eqref{C2}, definitions of $M_0$ and $M_1$, and interior estimate \eqref{Schauder-Ref} together imply \eqref{AAA}.
The proof is complete.


\end{document}